\documentclass[12pt]{amsart}

\usepackage[margin = 1in]{geometry}

\usepackage{tikz}
\usepackage{tikz-cd}
\usepackage{url, hyperref}
\usepackage{float}
\usepackage{mathtools}

\tikzset{->-/.style={decoration={
  markings,
  mark=at position .45 with {\arrow{>}}},postaction={decorate}}}
  
\usetikzlibrary{shapes.arrows}
\usetikzlibrary{decorations.pathreplacing}
\usepackage[all]{xy}

\tikzset{->-/.style={decoration={
  markings,
  mark=at position .45 with {\arrow{>}}},postaction={decorate}}}

\usepackage{amsmath}
\usepackage{amssymb}
\usepackage{enumitem}
\usepackage{graphicx}
\usepackage{mathdots}
\usepackage{color}
\usepackage{diagbox}
\usepackage{array, makecell}
\usepackage{rotating}
\usepackage{amsmath}
\usepackage{tikz-cd}
\usetikzlibrary{calc, decorations.pathmorphing, shapes.misc}

\newcount\colveccount
\newcommand*\colvec[1]{
        \global\colveccount#1
        \begin{pmatrix}
        \colvecnext
}
\def\colvecnext#1{
        #1
        \global\advance\colveccount-1
        \ifnum\colveccount>0
                \\
                \expandafter\colvecnext
        \else
                \end{pmatrix}
        \fi
}

\usepackage{amsthm}

\theoremstyle{definition}
\newtheorem{definition}{Definition}[subsection]
\newtheorem{theorem}[definition]{Theorem}

\newtheorem{proposition}[definition]{Proposition}
\newtheorem{situation}[definition]{Situation}

\newtheorem{construction}[definition]{Construction}

\newtheorem{corollary}[definition]{Corollary}
\newtheorem{lemma}[definition]{Lemma}

\newtheorem{remark}[definition]{Remark}

\newtheorem{question}[definition]{Question}
\newtheorem{notation}[definition]{Notation}

\def\bK{\mathbb{K}}

\def\bP{\mathbb{P}}

\def\bZ{\mathbb{Z}}

\def\cC{\mathcal{C}}

\def\cE{\mathcal{E}}
\def\cF{\mathcal{F}}

\def\cH{\mathcal{H}}

\def\cL{\mathcal{L}}
\def\cM{\mathcal{M}}

\def\cO{\mathcal{O}}

\def\cR{\mathcal{R}}

\def\E{\mathrm{E}}
\def\H{\mathrm{H}}

\DeclareMathOperator{\Bl}{Bl}
\DeclareMathOperator{\charac}{char}

\DeclareMathOperator{\ev}{ev}

\DeclareMathOperator{\Gr}{Gr}

\DeclareMathOperator{\Inc}{Inc}

\DeclareMathOperator{\pr}{pr}

\DeclareMathOperator{\rank}{rank}
\DeclareMathOperator{\rel}{rel}

\DeclareMathOperator{\rk}{rk}

\DeclareMathOperator{\Tev}{Tev}

\DeclareMathOperator{\univ}{univ}

\title{Interpolation for rational curves with secants}

\author{Alessio Cela}
\address{ University of Cambridge, Department of pure mathematics and mathematical statistics
\hfill \newline\texttt{}
 \indent Centre for Mathematical Sciences, Wilberforce Road Cambridge, UK} \email{{\tt ac2758@cam.ac.uk}}

 \author{Carl Lian}
\address{Washington University in St. Louis, Department of Mathematics, 1 Brookings Drive
\hfill \newline\texttt{}
 \indent  St. Louis, MO 63130} \email{{\tt clian@wustl.edu}}

\date{\today}

\usepackage{graphicx}

\begin{document}

\maketitle

\begin{abstract}
In arbitrary characteristic, we determine the maximum number of general points through which a rational curve of degree $d$ in $\bP^r$ passes, subject to an additional secancy condition along a linear space. We consider the cases both where the points on the curve are unprescribed and prescribed, which amount to the determination of the normal and restricted tangent bundles of a general rational curve in $\Bl_{\bP^s}(\bP^r)$, respectively. In the appendix, we enumerate the interpolating curves in the case of prescribed points on the curve.

\end{abstract}

\setcounter{tocdepth}{1}

\tableofcontents

\section{Introduction}

\subsection{Interpolation with secants}

Let \(\bK\) be an algebraically closed field of arbitrary characteristic. In this paper, we address the following questions.

\begin{question}\label{q:interpolation}
Let \(d,k,r,s,n\) be integers with \(d > k \geq 0\) and \(r-2 \geq s \geq 0\). Let \(\bP^s \subset \bP^r\) be a linear space, and let \(x_1,\ldots,x_n \in \bP^r\) be general points. Write $C=\bP^1$.
\begin{enumerate}
\item[(a)] Does there exist a rational curve \(f \colon C \to \bP^r\) of degree \(d\), meeting \(\bP^s \subset \bP^r\) with multiplicity \(k\), and passing through \(x_1,\ldots,x_n\)?
\item[(b)] Let \(p_1,\ldots,p_n \in C\) be general points. Does there exist a rational curve \(f \colon C \to \bP^r\) of degree \(d\), meeting \(\bP^s \subset \bP^r\) with multiplicity \(k\), and such that \(f(p_i)=x_i\) for all \(i\)?
\end{enumerate}
\end{question}
When the set of maps $f$ is finite, it is also natural to ask for the number of $f$. One can more generally replace the points $x_i\in \bP^r$ with higher-dimensional subvarieties. These are instances of the question of \emph{interpolation}, which has been studied in different guises by many authors, see \cite{sacchiero,EV81, EV82,ramella, ran, atanasov,  larson,  aly,CR,Ran_rational_scrolls,  clv1,  lv, clv2,Ran-hypersurface, Mio2025} for an incomplete list.

A rational curve \(f \colon C \to \bP^r\) of degree \(d\) meeting \(\bP^s \subset \bP^r\) with multiplicity \(k\) is equivalently a map
\[
f \colon C \to X_{r,s} := \Bl_{\bP^s}(\bP^r)
\]
of degree \((d,k)\). By degree \((d,k)\), we mean that the degree of \(f\) against the hyperplane class, pulled back from \(\bP^r\), equals \(d\), and the degree of \(f\) against the exceptional divisor of \(X_{r,s}\) equals \(k\). We assume that \(d>k\) so that the composition of \(f\) with the projection \(\pi \colon X_{r,s} \to \bP^{r-s-1}\) is not constant.

In this language, Questions~\ref{q:interpolation}(a) and (b) ask whether the evaluation maps
\[
\begin{gathered}
\rho \colon \cM_{0,n}(X_{r,s},(d,k)) \to (X_{r,s})^n, \quad \text{and} \\
\tau \colon \cM_{0,n}(X_{r,s},(d,k)) \to M_{0,n} \times (X_{r,s})^n
\end{gathered}
\]
respectively, are dominant. \(\cM_{0,n}(X_{r,s},(d,k))\) denotes the moduli space of \(n\)-pointed maps from a rational curve to \(X_{r,s}\) of degree \((d,k)\), which is smooth and irreducible of dimension
\[
(n-3)+(r+1)d-(r-s-1)k+r.
\]
A necessary condition for $\rho$ (resp. $\tau$) to be dominant is for \(\dim(\cM_{0,n}(X_{r,s},(d,k)))\) to be at least $\dim((X_{r,s})^n)$ (resp. $\dim(M_{0,n} \times (X_{r,s})^n)$), but we will see that neither is sufficient in general (\S\ref{sec:obstructions}, Proposition \ref{prop:projection_split_tangent}).

\subsection{Normal and tangent bundles}\label{sec:bundles}

In fact, our main results answer the following more refined questions.

\begin{question}\label{q:bundles}
Let \(d,k,r,s\) be as in Question~\ref{q:interpolation}. Let \(f :C=\bP^1 \to X_{r,s}\) be a general curve of degree $(d,k)$.
\begin{enumerate}
\item[(a)] What is the normal bundle \(N_{C/X_{r,s}}\)?
\item[(b)] What is the restricted tangent bundle \(f^*T_{X_{r,s}}\)?
\end{enumerate}
\end{question}
We will see that \(f\) is unramified (\S\ref{sec:unramified}), so the normal bundle \(N_{C/X_{r,s}}\) is sensible. The bundles in question split as direct sums of line bundles:
\[
\begin{aligned}
N_{C/X_{r,s}} &\cong \cO(a_1) \oplus \cdots \oplus \cO(a_{r-1}), \\
f^*T_{X_{r,s}} &\cong \cO(b_1) \oplus \cdots \oplus \cO(b_r).
\end{aligned}
\]
Question~\ref{q:bundles} asks for the values of the integers \(a_j,b_j\). In characteristic 0, the splitting of the normal and restricted tangent bundles determine the answers to Question~\ref{q:interpolation}(a) and~\ref{q:interpolation}(b), respectively. 

Indeed, if $\charac(\bK)=0$, then the map 
\[
\rho \colon \cM_{0,n}(X_{r,s},(d,k)) \to (X_{r,s})^n
\] is dominant if and only if it is smooth at a general point $f\in \cM_{0,n}(X_{r,s},(d,k))$. The relative obstruction space at $f$ is $H^1(C,N_{C/X_{r,s}}(-p_1-\ldots-p_n))$, which vanishes if and only if $n\le \min(a_j)+1$. Therefore, the answer to Question~\ref{q:interpolation}(a) is affirmative if and only if $n\le \min(a_j)+1$. Similarly, the answer to Question~\ref{q:interpolation}(b) is affirmative if and only if $H^1(C,f^*T_{X_{r,s}}(-p_1-\ldots-p_n))=0$, if and only if $n\le \min(b_j)+1$.

We will determine the answers to Question~\ref{q:bundles} in arbitrary characteristic, thereby answering Question~\ref{q:interpolation} in characteristic 0. We will see furthermore that the answers to Question~\ref{q:interpolation} are uniform in the characteristic (Proposition \ref{prop:rho_char_indep}, Corollary \ref{cor:interpolation_tangent}), so are therefore determined for any $\bK$. We will find that the splitting type of $f^*T_{X_{r,s}}$ is also uniform in the characteristic, but the splitting type of $N_{C/X_{r,s}}$ is not. In particular, if $\charac(\bK)=2$, it is already the case when $k=0$ (and remains so when $k>0$) that $\rho$ can be dominant, but inseparable. In this case, the answer to Question~\ref{q:interpolation}(a) can be affirmative even if $n>\min(a_j)+1$.

From the point of view of interpolation, the nicest situation is when the vector bundles in question satisfy the following property.

\begin{definition}\label{def:balanced}
We say that a vector bundle $\cE$ on $\bP^1$ is \emph{balanced} if $\cE\cong \cO(a)^x\oplus \cO(a+1)^y$ for some integers $a,x,y$, possibly 0.
\end{definition}

Up to isomorphism, there is a unique balanced vector bundle on $\bP^1$ of any given degree and rank. If $H^1(\cE)=0$, then $\cE$ is balanced if and only if it \emph{satisfies interpolation}, in the sense of \cite[\S 4.1]{aly}, see Definition \ref{def:interpolation_aly}.  By the discussion above, in characteristic 0, $N_{C/X_{r,s}}$ (resp. $f^*T_{X_{r,s}}$) is balanced if and only if $\rho$ (resp. $\tau$) is dominant for the largest possible value of $n$ allowed by the constraint that the dimension of the target be at most that of the source. See \cite[\S 3, \S 8]{atanasov} for a detailed discussion of different notions of interpolation of vector bundles on curves, including restrictions that amount to imposing incidence conditions at arbitrary linear spaces.

While our results are more general, we will focus in the remainder of the introduction on the question of when $N_{C/X_{r,s}}$ and $f^*T_{X_{r,s}}$ are balanced.

\subsection{Projective spaces}

The case $k=0$ corresponds to rational curves in $\bP^r$ with no secancy condition. We review what is known.

\begin{theorem}\label{thm:Pr}\cite{sacchiero,ran,aly,lv,clv2}
Let $f:C=\bP^1\to \bP^r$ be a general rational curve of degree $d$, where $d>0$ and $r\ge 2$.
\begin{enumerate}
\item[(a)] If $d \geq r$ and $\charac(\bK) \neq 2$, then $N_{C/\bP^r}$ is balanced.
\item[(b)] Suppose $d < r$ or $\charac(\bK) = 2$, and write $\ell\in [0,r-2]$ for the residue of $d-1 \pmod{r-1}$. Then,
\[
N_{C/\bP^r}\cong \cO(\alpha)^{r-1-\ell} \oplus \cO(\alpha+2)^\ell,
\]
where
\[
\alpha = d+2 \cdot \frac{d-1-\ell}{r-1} = d+2 \cdot \left\lfloor \frac{d-1}{r-1} \right\rfloor.
\]
In particular, under these assumptions, $N_{C/\bP^r}$ is balanced if and only if $(r-1) \mid (d-1)$.
\end{enumerate}
\end{theorem}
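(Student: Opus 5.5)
We prove the two parts separately. Part (a) is the main obstacle; part (b) is comparatively formal.

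\textbf{Part (a).} For $d \ge r$ and $\charac(\bK)\neq 2$, we use a degeneration argument in the style of Coskun--Larson--Vogt. Interpolation is open in families, and balancedness of $N_{C/\bP^r}$ (equivalently, interpolation in the sense of \cite{aly}) is an open condition. It therefore suffices to exhibit one unobstructed degree $d$ map whose normal bundle, or a suitable limit of it, is balanced.

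We induct on $d$. The base cases are $d=r$ (rational normal curve) and $r < d < 2r$. For the rational normal curve, $N\cong\cO(r+2)^{r-1}$ by the classical computation of Sacchiero. For the other base cases we use either Sacchiero's explicit computation or Ran's degeneration to a nodal union.

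For the induction step from $d$ to $d+1$, we degenerate $C$ to $C'\cup L$, where $C'$ has degree $d$ and $L$ is a line meeting $C'$ at one point. We then use the elementary modification description
\[
N_{C'\cup L}|_{C'} \cong N_{C'}(+1\text{ pointing towards } L).
\]
By \cite{aly}, balancedness is preserved if the modification direction is general. To control $\min(a_j)$ in larger steps, we also use the degeneration attaching a rational normal curve of degree $r-1$ through general points, and we pass through interpolation for the pointed-positive twists.

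The genuinely hard case, where the characteristic enters, is showing that the limiting modification direction is general. In characteristic 2 the tangent lines of a rational curve are all concurrent-like (strange curves), so the directions fail to be general. With $\charac\neq 2$ we check that the directions at $r-1$ points span, using the derivative of the parametrization. I expect this step to need the most care, and I would rely on the results cited in \cite{lv,clv2}.

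\textbf{Part (b).} When $d<r$, the curve spans a $\bP^{d}$, so
\[
N_{C/\bP^r}\cong N_{C/\bP^d}\oplus\cO(d)^{r-d}.
\]
Here the first summand is the rational normal curve bundle $\cO(d+2)^{d-1}$. This gives $\ell=d-1$ and $\alpha=d$, matching the formula.

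In characteristic 2, we use the Euler sequence description $0\to T_C\to f^*T_{\bP^r}\to N\to0$. Over characteristic 2 the map $T_C\to f^*T_{\bP^r}$ has a distinguished structure: $f'$ becomes a derivative of a square-free-part parametrization. Concretely, writing $f=(f_0,\dots,f_r)$, in characteristic 2 the second derivatives vanish, so $N\cong N'(\,\cdot\,)$ is twisted via the Frobenius relation, and $N(-2)$ or $N$ becomes a pullback of a balanced bundle under an inseparable double structure. In other words, $N\cong F^*(\text{balanced})\otimes\cO(\text{const})$, which forces the even-gap splitting $\cO(\alpha)^{r-1-\ell}\oplus\cO(\alpha+2)^{\ell}$.

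Finally, comparing degrees, $\deg N=(r+1)d-2$, determines $\alpha$ and $\ell$.
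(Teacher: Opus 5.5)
Your treatment of $d<r$ is correct and is exactly the splitting the paper records. Note that the paper does not reprove Theorem~\ref{thm:Pr}: it quotes it from \cite{sacchiero,ran,aly,lv,clv2}, and records only the $d<r$ splitting and the characteristic-$2$ Frobenius obstruction from \cite[\S 2.2]{lv}. Beyond the $d<r$ case, your proposal has two genuine gaps.

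\textbf{Part (b) in characteristic $2$, $d\ge r$.} The vanishing of second derivatives does give the right structure, but it applies to $N_{C/\bP^r}^\vee(1)$, not to ``$N(-2)$ or $N$''. Concretely, $N_{C/\bP^r}^\vee(1)$ is the bundle of tuples $(g_0,\dots,g_r)$ with $\sum g_if_i=\sum g_if_i'=0$. It is stable under $d/dt$ because $f_i''\equiv 0$, and hence it is a Frobenius pullback $F^*E$.

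This only shows that every summand of $N_{C/\bP^r}$ has degree $\equiv d \pmod 2$. In other words, $N_{C/\bP^r}$ can be no more balanced than $\cO(\alpha)^{r-1-\ell}\oplus\cO(\alpha+2)^{\ell}$. It does not show that a general curve attains this bound, i.e.\ that $E$ is balanced. You simply assert this, and your final degree count presupposes it. That existence statement is the substantive content of (b). It needs a separate degeneration argument bounding the extreme summands of a general curve from above, e.g.\ via the inductive degenerations of \cite[\S 9.1]{lv}. Only together with the parity constraint does such a bound pin down the splitting.

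\textbf{Part (a).} The mechanism you outline does not work. Attaching a single line $L$ at $q$ and applying the gluing criterion of \cite[Proposition 8.1]{aly} does not reduce to a one-point elementary modification in a general direction. Since $N|_L\cong\cO(1)^{r-2}\oplus\cO(2)$, the twisting divisor needs two points on $L$. What must then be balanced is $N_{C'}[2q\xrightarrow{+}v]$ (compare the proof of Proposition~\ref{prop:lv_inductive}(b)). This is a length-two modification toward a single pointing line subbundle, and its splitting depends on the first-order behaviour of that subbundle at $q$.

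The zeroth-order direction is general in every characteristic, since $L$ is a general line through $q$. So if generality of the direction sufficed, your one-line step started at the rational normal curve would give balancedness for all $d\ge r$ in characteristic $2$ as well. That contradicts (b), already for $r=3$, $d=4$, where $N_{C/\bP^3}\cong\cO(6)\oplus\cO(8)$.

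Likewise, the characteristic-$2$ failure is not about concurrent tangent lines: a general rational curve of degree at least $3$ is not strange. The obstruction is the Frobenius structure described above. In Larson--Vogt's argument, the characteristic hypothesis enters through the secant-line projection step (Proposition~\ref{prop:lv_inductive}(c) with $k=0$, via \cite[Lemma 6.3]{lv}). Since you ultimately defer to \cite{lv,clv2} at the key point, your (a) amounts to a citation, as in the paper. The sketch you give in front of that citation, however, is not a valid argument.
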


A map \(f \colon C \to \bP^r\) is \emph{non-degenerate} if its image is not contained in any hyperplane. If \(d<r\), then a general map of degree \(d\) is degenerate, factoring through a rational normal curve in \(\bP^d\):
\[
f \colon C \to \bP^d \hookrightarrow \bP^r.
\]
Then, we have a short exact sequence
\[
0 \to N_{C/\bP^d} \to N_{C/\bP^r} \to N_{\bP^d/\bP^r}\vert_{C} \to 0
\]
inducing a splitting $N_{C/\bP^r}\cong N_{\bP^d/\bP^r}\vert_{C}\oplus N_{C/\bP^d}\cong \cO(d)^{r-d}\oplus \cO(d+2)^{d-1}$. Thus, $N_{C/\bP^r}$ is not balanced unless $d=1$. If \(\charac(\bK) \neq 2\), then non-degeneracy is the only obstruction to balancedness of \(N_{C/\bP^r}\).

In characteristic \(2\), there is an additional obstruction, namely that \(N_{C/\bP^r}^\vee(1)\) is isomorphic to the pullback of a vector bundle under Frobenius~\cite[\S 2.2]{lv}. As a result, all summands of \(N_{C/\bP^r}\) must have degree of the same parity as $d$, which obstructs balancedness unless $(r-1) \mid (d-1)$. Nevertheless, it is true \emph{independently of characteristic} that, for any $d\geq r$, there exists a rational curve of degree $d$ in $\bP^r$ passing through
\[
n=\left\lfloor \frac{(r+1)d+r-3}{r-1}\right\rfloor = d + \left\lfloor \frac{2d-2}{r-1}\right\rfloor+1
\]
general points \cite[Theorem 1.2, Lemma 9.1]{lv}. When $n=\alpha+2$ and $\ell\ge\frac{r-1}{2}$, the map 
\[
\rho:\cM_{0,n}(\bP^r,d)\to (\bP^r)^n
\]
is dominant, but inseparable.

The situation is simpler for restricted tangent bundles.

\begin{theorem}\label{thm:Pr_tangent}\cite{ramella,larson}
Let \(f \colon C=\bP^1 \to \bP^r\) be a general rational curve of degree \(d\), where \(d>0\) and \(r \geq 2\). Then, \(f^*T_{\bP^r}\) is balanced, independently of \(\charac (\bK)\).
\end{theorem}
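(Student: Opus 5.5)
Write $f=[f_0:\cdots:f_r]$ with $f_i\in H^0(\cO_C(d))$ spanning a subspace $V\subset H^0(\cO_C(d))$. Pulling back the Euler sequence gives
\[
0\to \cO_C \to V^\vee\otimes\cO_C(d)\to f^*T_{\bP^r}\to 0 .
\]
Dualizing, $f^*\Omega_{\bP^r}(d)$ is the kernel of the evaluation map $V\otimes\cO_C\to\cO_C(d)$. So $f^*T_{\bP^r}$ is balanced if and only if this syzygy bundle $M_V:=\ker(V\otimes \cO\to\cO(d))$ is balanced. $M_V$ has rank $r$ and degree $-d$. Since $f$ is general, $V$ is a general $(r+1)$-dimensional subspace of $H^0(\cO(d))$ when $r\le d$. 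When $r>d$, the map factors through $\bP^d\subset\bP^r$ and $f^*T_{\bP^r}\cong f^*T_{\bP^d}\oplus\cO(d)^{r-d}$ via the sequence for the linear subspace. Balancedness then requires $f^*T_{\bP^d}$ to be balanced with summands of degree $d+1$ or $d$. For the rational normal curve this holds: $f^*T_{\bP^d}\cong\cO(d+1)^d$, as one sees from $M_{H^0(\cO(d))}\cong\cO(-1)^d$. So I reduce to $d\ge r$.

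Balancedness is an open condition in families, by semicontinuity of $h^0(M_V(a))$. It therefore suffices to exhibit a single subspace $V$ of dimension $r+1$, base-point-free, with $M_V$ balanced. This works over any algebraically closed field, so the argument should be characteristic-free.

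Write $d=qr+e$ with $0\le e<r$. The target is $M_V\cong\cO(-q)^{r-e}\oplus\cO(-q-1)^{e}$. Balancedness is equivalent to the vanishing $h^0(M_V(q))=0$ together with $h^1(M_V(q))=0$; for a bundle of this degree and rank, either of the two conditions, together with the correct Euler characteristic, suffices. Concretely, $H^0(M_V(q))$ is the kernel of the multiplication map
\[
\mu_q\colon V\otimes H^0(\cO(q))\to H^0(\cO(d+q)),
\]
and $h^1(M_V(q))=0$ is equivalent to the surjectivity of $\mu_{q+1}$ together with exactness in the middle. Since $\chi(M_V(q+1))\ge 0$, balancedness amounts to requiring that $\mu_q$ be injective and $\mu_{q+1}$ be surjective. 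I'll prove both for a specific $V$.

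The main obstacle is choosing $V$ so that both rank conditions can be verified explicitly in every characteristic. My first attempt is monomials: take $V$ spanned by $x^{a_0}y^{d-a_0},\dots,x^{a_r}y^{d-a_r}$ with $a_0=0$, $a_r=d$, and consecutive gaps $a_{i+1}-a_i\in\{q,q+1\}$. In that case $M_V$ is a direct sum over consecutive pairs? Not quite, but it is torus-equivariant, and the multiplication maps $\mu_q,\mu_{q+1}$ are $0/1$ matrices on monomials. Injectivity and surjectivity then reduce to checking that the sumsets $\{a_i\}+[0,q]$ and $\{a_i\}+[0,q+1]$ behave correctly: the first should have no collisions among the relevant syzygies, and the second should cover $[0,d+q+1]$.

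If coincidences among monomials spoil injectivity of $\mu_q$, I'd instead degenerate differently. One option is to do induction on $r$ via a projection from a point. Another is to compute $M_V$ directly by exhibiting $r$ explicit syzygies between consecutive generators of degrees $q$ or $q+1$, namely $y^{\,g_i}\cdot x^{a_i}y^{d-a_i}$ versus $x^{g_i}\cdot x^{a_{i-1}}\cdots$, where $g_i$ is the gap. I would then check that these $r$ syzygies generate and are independent in every fiber. This gives an explicit splitting $M_V\cong\bigoplus_i\cO(-g_i)$, which is balanced by the choice of gaps and is clearly independent of characteristic.
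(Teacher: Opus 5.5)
Your overall strategy (reduce to $d\ge r$, use openness, exhibit one torus-fixed curve) is sound, and your last option, the explicit consecutive syzygies, is a complete proof. The cohomological criterion you state in between, however, is off by one and should be fixed.

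\textbf{The off-by-one.} Since $(r+1)(q+1)-(d+q+1)=r-e>0$, the map $\mu_q$ is never injective, so $h^0(M_V(q))=0$ cannot hold. Moreover, $H^1(\cO(t))=0$ for $t\ge -1$, so $H^1(M_V(t))=\operatorname{coker}\mu_t$. Hence $h^1(M_V(q))=0$ means $\mu_q$ is surjective, not $\mu_{q+1}$. The correct criterion is that $M_V$ is balanced iff $h^0(M_V(q-1))=0$ and $h^1(M_V(q))=0$, i.e.\ iff $\mu_{q-1}$ is injective and $\mu_q$ is surjective. Both conditions are needed: $\cO(-q-e)\oplus\cO(-q)^{r-1}$ satisfies the first but is unbalanced for $e\ge 2$.

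\textbf{The monomial example works once corrected.} In the weight of $x^cy^{d+t-c}$, $\mu_t$ is the sum map on those $i$ with $a_i\in[c-t,c]$. Gaps $\ge q$ then give injectivity of $\mu_{q-1}$. Gaps $\le q+1$, together with $a_0=0$ and $a_r=d$, give surjectivity of $\mu_q$. The collisions you worried about come only from the shifted index.

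\textbf{The syzygy route.} The sections $\sigma_i=x^{g_i}e_{i-1}-y^{g_i}e_i$ of $V\otimes\cO(g_i)$ are killed by evaluation. At a point with $y\neq 0$, delete the row of $e_0$; at a point with $y=0$, delete the row of $e_r$. In either case the $(r+1)\times r$ matrix $(\sigma_1,\dots,\sigma_r)$ has a triangular minor with nonzero diagonal. So $\bigoplus_i\cO(-g_i)\to M_V$ is a fiberwise injective map between rank-$r$ bundles, hence an isomorphism; no generation statement is needed. This gives $f^*T_{\bP^r}\cong\bigoplus_i\cO(d+g_i)$ using only coefficients $\pm1$, so the characteristic plays no role.

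\textbf{Comparison with the paper.} The paper gives no proof of this statement and simply cites \cite{ramella,larson}. The argument it adapts, in proving Theorem~\ref{thm:intro_tangent}, is Larson's degeneration:
\begin{itemize}
\item[(1)] break $C$ into $C'\cup L$ with $L$ a line;
\item[(2)] apply Lemma~\ref{lem: criterion interpolation} with $D=2x$ on $L$;
\item[(3)] use Lemma~\ref{lem:interpolation_subspace} to see that the surviving condition at the node (landing in a general line of the fiber) preserves interpolation;
\item[(4)] induct on $d$.
\end{itemize}
Your route replaces that induction with one explicit torus-fixed curve plus semicontinuity. It computes the splitting type exactly and shows that every type $\bigoplus_i\cO(d+g_i)$ with $g_i\ge1$ and $\sum g_i=d$ occurs. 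It also makes characteristic-independence transparent. The degeneration argument is less explicit, but it needs no curve with computable syzygies. That is why it extends to Larson's positive-genus setting, and why it serves as the template for $X_{r,s}$ in this paper, where the projection sequence supplies the base cases.
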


We remark that the more recent works establish interpolation for tangent bundles \cite{larson} and normal bundles (with an explicit, finite list of exceptions) \cite{lv} of general \emph{Brill--Noether curves} of positive genus in \(\bP^r\). Analogous questions for general curves of positive genus in \(X_{r,s}\) are sensible and interesting, due to Farkas's analog of the Brill-Noether theorem for general curves with secants \cite[Theorem 0.1]{farkas} (see also \cite[Corollary 2.3.4]{cl_complete}), but we do not consider them in this paper.

\subsection{Obstructions to balancedness of normal bundles}

In this section, we describe our new results.

\begin{definition}\label{def:degenerate}
We say that $f:C\to X_{r,s}$ is \emph{non-degenerate} if it is so after post-composing with both maps $b:X_{r,s}\to \bP^r$ and $\pi:X_{r,s}\to \bP^{r-s-1}$. We say it is \emph{degenerate} otherwise. 
\end{definition}

A general $f$ of degree $(d,k)$ is non-degenerate if $d\ge r$ and $d-k\ge r-s-1$. We will also say that the tuple $(d,k;r,s)$ (or $(d,k)$, when $r,s$ are implicit) is non-degenerate (resp. degenerate) if these inequalities hold (resp. fail). Equivalently, \(f\) is degenerate if its image factors through the proper transform of a linear space \(Y \subset X_{r,s}\). As in the case of projective spaces, degeneracy can obstruct balancedness of \(N_{C/X_{r,s}}\), see \S\ref{sec:degenerate}.

However, new obstructions arise in our setting. If $s\neq r-2$, then the projection \(\pi \colon X_{r,s} \to \bP^{r-s-1}\) induces a short exact sequence
\begin{equation}\label{eq:proj_seq}
0\to T_{X_{r,s}/\bP^{r-s-1}}|_{C} \to N_{C/X_{r,s}}\to N_{C/\bP^{r-s-1}} \to 0.
\end{equation}

When there is no opportunity for confusion, we write $T_{\rel}:=T_{X_{r,s}/\bP^{r-s-1}}|_{C}$. We also write $N_{(d,k;r,s)}:=N_{C/X_{r,s}}$, or $N_{d,k}$ when $r,s$ are implicit. 

We will see that \(T_{\rel}\) is always balanced, Lemma~\ref{lem:Trel_balanced}. Therefore, whenever \(N_{C/\bP^{r-s-1}}\) is balanced and has sufficiently small slope compared to that of \(T_{\rel}\), the exact sequence \eqref{eq:proj_seq} splits, and induces an unbalanced splitting of $N_{C/X_{r,s}}$. See Proposition \ref{prop:projection_split_char_not2} for a precise statement. We will show that, for non-degenerate curves in characteristic other than 2, the splitting of \eqref{eq:proj_seq} is the \emph{only} obstruction to balancedness of $N_{d,k}$.

\begin{theorem}\label{thm:intro_char_not2}
Suppose that $\charac(\bK)\neq2$ and that $(d,k)$ is non-degenerate. If $s\neq r-2$ and
\[
d < \frac{(r-s)(k-1)+4+(r-s-2)\left\lceil \frac{k}{s+1} \right\rceil}{2} =: L_{r,s}(k),
\]
then the exact sequence \eqref{eq:proj_seq} splits, and $N_{d,k}$ is not balanced. Otherwise, $N_{d,k}$ is balanced. In particular, if $s=r-2$, then $N_{d,k}$ is always balanced.
\end{theorem}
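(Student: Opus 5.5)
We split the proof into the two directions of the statement: the splitting of \eqref{eq:proj_seq} below the threshold $L_{r,s}(k)$, and balancedness otherwise.

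\emph{Splitting below the threshold.} Here we compute both ends of \eqref{eq:proj_seq}. By Lemma~\ref{lem:Trel_balanced}, $T_{\rel}$ is balanced of rank $s+1$. Its degree is obtained from $\deg f^*T_{X_{r,s}} = (r+1)d-(r-s-1)k$ minus $\deg f^*T_{\bP^{r-s-1}}=(r-s)(d-k)$. For the quotient, $\pi\circ f$ is a general rational curve of degree $d-k$ in $\bP^{r-s-1}$, and we assume it is non-degenerate. When $\charac(\bK)\neq 2$, Theorem~\ref{thm:Pr}(a) makes $N_{C/\bP^{r-s-1}}$ balanced, with summands of degree about $(d-k)+2\lceil\cdot\rceil$. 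The sequence splits as soon as $\operatorname{Ext}^1(N_{C/\bP^{r-s-1}},T_{\rel})=H^1(T_{\rel}\otimes N_{C/\bP^{r-s-1}}^\vee)=0$. This holds when the minimal summand degree of $T_{\rel}$ is at least the maximal summand degree of $N_{C/\bP^{r-s-1}}$ minus $1$. The ceilings in $L_{r,s}(k)$ should come exactly from writing out these extremal summand degrees; this is how we would match the inequality $d<L_{r,s}(k)$, i.e. Proposition~\ref{prop:projection_split_char_not2}. Once split, $N_{d,k}\cong T_{\rel}\oplus N_{C/\bP^{r-s-1}}$. Non-balancedness then follows because the slope gap forces summand degrees differing by at least $2$, unless both pieces are concentrated in a single degree, and we check that this never happens in the range considered.

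\emph{Balancedness for $d\ge L_{r,s}(k)$.} We argue by induction on $(d,k)$ using degenerations. Since balancedness is an open condition, it suffices to exhibit one nodal map $f_0=f_1\cup f_2$ in the closure of the main component whose normal bundle (in the sense of \cite{aly}/\cite{lv}, using modifications at the node) degenerates to a balanced bundle. Natural degenerations are:
\begin{enumerate}
\item attaching a line meeting $\bP^s$ once, or a line in the exceptional divisor fiber direction, to pass from $(d-1,k-1)$ to $(d,k)$;
\item attaching a general line, to pass from $(d-1,k)$ to $(d,k)$.
\end{enumerate}
For a one-secant line or general line $L$ attached at a point $p$, the standard exact sequence $N_{f_0}|_{C_1}\cong N_{C_1}(p)$ modified in the direction of $T_pL$ expresses the new bundle as an elementary modification of the old one. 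If $N_{C_1}$ is balanced and $T_pL$ is a general direction, the modification stays balanced; this is the analogue of the \cite{aly} interpolation lemmas. The base cases lie on the boundary $d\approx L_{r,s}(k)$, and for $s=r-2$ on the minimal non-degenerate degrees. They are handled directly: either the extension \eqref{eq:proj_seq} is computed to be as non-split as possible, giving a balanced middle term via a specific curve such as a monomial map, or we use the case $s=r-2$, where $\pi$ goes to $\bP^1$ and $N_{C/\bP^1}=0$, so $N_{d,k}=T_{\rel}$, which is balanced by Lemma~\ref{lem:Trel_balanced}.

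\emph{Main obstacle.} The hard part is the boundary regime just above $L_{r,s}(k)$. There the extension \eqref{eq:proj_seq} must be shown to be maximally non-split, and a general modification direction must be shown to exist. The induction has to be arranged so that every step keeps the curve in the non-split regime; the line-attaching steps can drop us below the threshold, so we must choose which degeneration to use depending on the residue classes producing the ceilings. For this step we would first try an explicit degenerate-to-comb argument: a degree $(d-k)$ rational normal curve direction together with $k$ secant lines through $\bP^s$. We would then compute the extension class directly via the connecting map $H^0(N_{C/\bP^{r-s-1}}^\vee\otimes\cdots)$, and the assumption $\charac(\bK)\neq 2$ enters through Theorem~\ref{thm:Pr}(a).
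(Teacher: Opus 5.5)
Your splitting half is essentially the paper's Proposition \ref{prop:projection_split_char_not2}, with one correction. $\mathrm{Ext}^1$-vanishing is the condition $\lceil\mu(N_{C/\bP^{r-s-1}})\rceil\le\lfloor\mu(T_{\rel})\rfloor+1$, i.e.\ $d\le U_{r,s}(k)$. The ceilings in $L_{r,s}(k)$ come only from the non-balancedness test, so you need $L_{r,s}(k)\le U_{r,s}(k)$ to get splitting when $d<L_{r,s}(k)$.

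The genuine gap is in the balancedness half. Your principle ``$N_{C_1}$ balanced, plus a line attached in a general direction, stays balanced'' only controls the restriction $N|_{C_1}\cong N_{C_1}[p\xrightarrow{+}v]$. Balancedness of that restriction gives nothing for the smoothing. In the gluing criterion (Lemma \ref{lem: criterion interpolation}), what must satisfy interpolation is a constrained space $V$:
\begin{itemize}
\item for a $(1,0)$-line, $V=H^0(N_{C_1}[2p\xrightarrow{+}v](-p))$, a double modification that depends on the first-order behaviour of the pointing subbundle $N_{C_1\to v}$ at $p$, not just on $T_pL$;
\item for a $(1,1)$-line, $V$ is the space of sections with $\sigma(p)\in T_{\rel}[p\xrightarrow{+}v]|_p$, and $T_pL\subset T_{\rel}|_p$ is not a general direction when $s<r-2$.
\end{itemize}
Because your step uses no characteristic hypothesis, it proves false statements. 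In characteristic $2$ on $X_{7,2}$, $N_{7,1}\cong\cO(8)^4\oplus\cO(9)^2$ is balanced (Theorem \ref{thm:d>k(r-s-1)}(b)). Yet $N_{8,2}$ and $N_{8,1}$, reached by your steps (1) and (2), are not balanced (Corollary \ref{cor:char2_obstruction}).

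Your $s=r-2$ shortcut also fails. There is no sequence \eqref{eq:proj_seq} in that case, because $C\to\bP^1$ is a ramified cover of degree $d-k$. $N_{d,k}$ contains $T_{\rel}$ with torsion cokernel of length $2(d-k)-2$, so $N_{d,k}=T_{\rel}$ only when $d=k+1$.

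The missing idea is Construction \ref{construction}. You attach simultaneously $r-s-2$ general $(1,0)$-lines and one $(1,1)$-line, passing from $(d,k)$ to $(d+(r-s-1),k+1)$, and then collide the nodes. The $(1,0)$-lines contribute directions spanning a complement $\Lambda'$ of $T_{\rel}|_q$. This absorbs the $T_{\rel}$-constraint from the $(1,1)$-line, so the combined modification becomes $N_{C'}(-D'-(r-s-2)q)[q\xrightarrow{+}\Lambda]$ with $\Lambda$ a \emph{general} $(r-s-1)$-plane (Proposition \ref{prop:bound_collide_points}). This propagates balancedness, and even improves $m$-almost balanced to $(m-1)$-almost balanced (Propositions \ref{prop:main_inductive}, \ref{prop:main_inductive_refinement}).

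Your ``main obstacle'' is also misdiagnosed: no maximally non-split extension has to be computed. For $L_{r,s}(k)\le d\le U_{r,s}(k)$ the sequence \eqref{eq:proj_seq} splits, and the split sum is nevertheless balanced (Proposition \ref{prop:projection_split_char_not2}(b)). Together with $U_{r,s}(k)\ge L_{r,s}(k-1)+(r-s-1)$ (Lemma \ref{lem:U_L_gap}), this window supplies the base cases for free.

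What remains in the paper's proof:
\begin{itemize}
\item a degenerate predecessor $(d-(r-s-1),k-1)$, shown to be almost balanced via Proposition \ref{prop:degenerate_splittings}(3);
\item the cases $k\le 2$. For $k=2$, $s>0$ this needs the Larson--Vogt projection steps of Proposition \ref{prop:lv_inductive}. Part (c) there is where $\charac(\bK)\neq2$ is used beyond Theorem \ref{thm:Pr}(a) (cf.\ Remark \ref{rem:both_arguments}).
\end{itemize}
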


Thus, when \(\charac(\bK) \neq 2\) and \((d,k)\) is non-degenerate, \(N_{d,k}\) is determined. The degenerate cases are reduced to the non-degenerate ones in Proposition \ref{prop:degenerate_splittings}, see also Theorem \ref{thm:main_char_not2}.

In characteristic \(2\), the situation is more subtle, because \(N_{C/\bP^{r-s-1}}\) is less-often balanced. It is still the case that, if \(d\) is small compared to \(k\), then \eqref{eq:proj_seq} splits and obstructs the balancedness of \(N_{d,k}\), see Proposition \ref{prop:projection_split_char2}. However, it may also happen that \(N_{C/\bP^{r-s-1}}\) has too many summands of small degree to admit a surjection from a balanced $N_{d,k}$, without \eqref{eq:proj_seq} splitting for slope reasons. Nevertheless, this failure also occurs only when $d$ is small compared to \(k\), see Proposition \ref{prop:projection_char2_low_obstruction}.

A different source of failure of balancedness in characteristic \(2\) is the propagation of unbalancedness from curves in $\bP^r$. Namely, we have a short exact sequence
\begin{equation}\label{eq:subsheaf}
0 \to N_{d,k} \to N_{C/\bP^r} \to \cF \to 0,
\end{equation}
where \(\cF\) is a torsion sheaf supported at the intersection of \(C\) with the exceptional divisor of \(X_{r,s}\). Indeed, deformations of $C$ in $X_{r,s}$ are equivalently deformations of $C$ inside $\bP^r$ which continue to meet $\bP^s$ with multiplicity $k$.

If $(r-1)\nmid (d-1)$, then $N_{C/\bP^r}$ is not balanced, by Theorem \ref{thm:Pr}. If $k$ is small, then $\cF$ has degree too small for the subsheaf $N_{d,k}$ to be balanced, see Corollary \ref{cor:char2_obstruction} for a precise statement. Moreover, if $s=0$, then $N_{d,k}\cong N_{C/\bP^r}(-(C\cap \bP^s))$, and can \emph{never} be balanced if $(r-1)\nmid (d-1)$. Our main result for normal bundles in characteristic 2 is that these are the only obstructions to balancedness.

\begin{theorem}\label{thm:intro_char_2}
Suppose that $\charac(\bK)=2$. If $N_{d,k}$ fails to be balanced, then it is explained either by \eqref{eq:proj_seq} or \eqref{eq:subsheaf}. More precisely, the unique balanced vector bundle on $\bP^1$ of degree and rank equal to that of $N_{d,k}$ cannot fit into both exact sequences \eqref{eq:proj_seq},\eqref{eq:subsheaf}.
\end{theorem}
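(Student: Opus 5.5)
Since this is an introductory summary theorem, the proof will assemble later results; I describe the route I would take. The key claim is the contrapositive: whenever the balanced bundle $B$ of rank $r-1$ and degree $\deg N_{d,k}=(r+1)d-(r-s-1)k-2$ is compatible with both \eqref{eq:proj_seq} and \eqref{eq:subsheaf}, a general $f$ of degree $(d,k)$ actually has $N_{d,k}\cong B$. I would argue by induction on $d$ (and $k$) via degeneration. Specialize $f$ to a reducible stable map $C_1\cup_p C_2\to X_{r,s}$, where $C_2$ is a line or conic of degree $(1,0)$, $(1,1)$ or $(2,1)$, or a curve in a fiber of $\pi$. Use the standard elementary-modification description of the normal bundle of a nodal curve restricted to each component, in the style of \cite{aly,lv}. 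Upper semicontinuity of splitting type then reduces balancedness of $N_{d,k}$ to that of $N_{d',k'}$ for smaller $(d',k')$ together with a controlled positive modification at the node. In characteristic 2 the Frobenius structure of $N_{C/\bP^r}$ rules out naive inductions, so I would instead degenerate while keeping the $\bP^{r-s-1}$ projection general, as in \cite{lv}.

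First I would treat the base cases, which are exactly the boundaries of the obstruction regions. For $s=0$, the isomorphism $N_{d,k}\cong N_{C/\bP^r}(-(C\cap\bP^s))$ together with Theorem~\ref{thm:Pr}(b) gives the answer directly. For small $k$ relative to $d$, I would compare with $N_{C/\bP^r}$ via \eqref{eq:subsheaf}. Here the torsion quotient $\cF$ is supported at $k$ general points of $C$, and the subsheaf is obtained by $k(s+1-?)$ negative elementary modifications in the directions transverse to $\bP^s$. I would show that general such modifications of the split bundle $\cO(\alpha)^{r-1-\ell}\oplus\cO(\alpha+2)^\ell$ produce the most balanced bundle allowed by the degree constraints, which is exactly what Corollary~\ref{cor:char2_obstruction} forbids otherwise. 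For $d$ small relative to $k$, I would use \eqref{eq:proj_seq}. Here $T_{\rel}$ is balanced (Lemma~\ref{lem:Trel_balanced}), and $N_{C/\bP^{r-s-1}}$ is known from Theorem~\ref{thm:Pr} applied in $\bP^{r-s-1}$. I would show that a general extension is as balanced as permitted by the sub/quotient constraints, matching Propositions~\ref{prop:projection_split_char2} and~\ref{prop:projection_char2_low_obstruction}.

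The main step, and the expected obstacle, is the middle range. Both sequences are non-obstructive there, yet one must still prove balancedness in characteristic 2, where neither ambient bundle is balanced. I would first work through the numerics of the two compatibility conditions. This identifies the minimal non-obstructed pairs $(d,k)$ and shows that the region is closed under $(d,k)\mapsto(d+1,k)$ and under adding a $(2,1)$ conic. I would then verify balancedness at the minimal pairs by an explicit computation, using a curve whose projection is a general rational curve in $\bP^{r-s-1}$ and whose $\bP^s$ part is given by explicit polynomial data. From there I would propagate upward by the degenerations above, checking at each step that the node modification preserves balancedness. The delicate point is handling parity: in characteristic 2 each summand of $N_{C/\bP^{r-s-1}}$ has degree congruent to $d-k$ mod 2. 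The gluing direction at the node must therefore be chosen to mix summands of different parities coming from $T_{\rel}$.
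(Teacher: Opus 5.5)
There is a genuine gap. The proposal never supplies a step that propagates balancedness in characteristic $2$, and the closure property it relies on is false. The non-obstructed region of Theorem~\ref{thm:main_char2} is not closed under $(d,k)\mapsto(d+1,k)$, nor under adding a $(2,1)$ conic. For example, take $r=4$, $s=0$:
the pair $(4,0)$ is balanced since $\ell=0$;
$(5,0)$ is not balanced, by Theorem~\ref{thm:Pr}(b);
$(6,1)$ is not balanced, by Corollary~\ref{cor:char2_obstruction}.
So any induction through single $(1,0)$, $(1,1)$ or $(2,1)$ components must pass through unbalanced bundles, and ``checking that the node modification preserves balancedness'' has nothing to propagate.

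The missing idea is the degeneration of Construction~\ref{construction}. You attach $r-s-2$ lines of degree $(1,0)$ and one line $M$ of degree $(1,1)$ simultaneously. The positive part of $N_{C/X_{r,s}}|_M$ glues along $T_{\rel}[q\xrightarrow{+}v]$ (Lemma~\ref{lemma:positive_subsheaves_at_q}(b)). After colliding the nodes, the cohomology of $N_{C/X_{r,s}}(-D)$ is bounded by that of $N_{C'/X_{r,s}}(-D'-(r-s-2)q)[q\xrightarrow{+}\Lambda]$, where $\Lambda$ is a \emph{general} $(r-s-1)$-dimensional subspace (Proposition~\ref{prop:bound_collide_points}). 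This step moves along $(d,k)\mapsto(d+(r-s-1),k+1)$. Crucially, it takes $m$-almost balanced bundles to $(m-1)$-almost balanced ones (Propositions~\ref{prop:main_inductive} and~\ref{prop:main_inductive_refinement}), so the induction may start from \emph{unbalanced} bundles.

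Your base cases do not work either. The $s=0$ claim is false: $b\circ f$ has a $k$-fold point, so $N_{C/\bP^r}$ is only a specialization of $N_{d,0}$. For instance, with $r=3$, $s=0$, $(d,k)=(5,3)$, one has $\ell=0$. Yet \eqref{eq:proj_seq} splits for slope reasons, giving $N_{5,3}\cong\cO(8)\oplus\cO(4)$ rather than $N_{5,0}(-3)\cong\cO(6)^{2}$.

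More generally, your ``general modification'' and ``general extension'' arguments assume genericity you do not have. The modifications in \eqref{eq:subsheaf} are all toward the single space $\bP^s$ at the points of $C\cap\bP^s$. The extension class of \eqref{eq:proj_seq} is determined by $f$, not chosen. Semicontinuity applied to these sequences only yields lower bounds such as Lemma~\ref{lem:char2_obstruction_general}, i.e.\ obstructions, never balancedness. The ``explicit computation at minimal pairs'' is likewise unspecified for infinitely many $(r,s)$.

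The paper needs no such computations. It decrements $(d,k)\mapsto(d-(r-s-1),k-1)$ until one of two things happens:
either $k=0$, where $N_{d',0}$ is given by Theorem~\ref{thm:Pr} and is $k$-almost balanced when condition (i) holds (Theorem~\ref{thm:d>k(r-s-1)}(c));
or \eqref{eq:proj_seq} splits for slope reasons. There the split bundle $T'_{\rel}\oplus N_{C/\bP^{r-s-1}}$ is balanced if $\epsilon=0$ and $d\ge L_{r,s}(k)$, or $m$-almost balanced if $\epsilon\neq0$ and \eqref{eq:no_low} holds (Theorems~\ref{thm:d<k(r-s-1)_epsilon_not0} and~\ref{thm:d<k(r-s-1)_epsilon_zero}).
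The obstructions of \S\ref{sec:obstructions} are then used only for the converse and to pin down exact splitting types.
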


In contrast to Theorem \ref{thm:intro_char_not2}, Theorem \ref{thm:intro_char_2} does not require restricting to non-degenerate $(d,k)$. The exact splitting types of $N_{d,k}$ in characteristic 2 are determined in Theorems \ref{thm:d>k(r-s-1)}, \ref{thm:d<k(r-s-1)_epsilon_not0}, and \ref{thm:d<k(r-s-1)_epsilon_zero}.

\subsection{Tangent bundles}\label{sec:intro_tangent}

For any $r,s$ (including $s=r-2$), the projection \(\pi \colon X_{r,s} \to \bP^{r-s-1}\) induces a short exact sequence
\begin{equation}\label{eq:proj_seq_tangent}
0\to T_{\rel} \to f^{*}T_{X_{r,s}} \to f^{*}\pi^{*}T_{\bP^{r-s-1}} \to 0.
\end{equation}
Write $T_{d,k}$ or $T_{(d,k;r,s)}$ for $f^{*}T_{X_{r,s}}$, and $T_{\bP^{r-s-1}}|_{C}$ for $f^{*}\pi^{*}T_{\bP^{r-s-1}}$. Similarly to the case of normal bundles, if the slope of $T_{\bP^{r-s-1}}|_{C}$ is too much smaller than that of $T_{\rel}$, then $f^{*}T_{X_{r,s}}$ cannot be balanced, see Proposition \ref{prop:projection_split_tangent}. This gives rise to the only obstruction to balancedness of the restricted tangent bundle.

\begin{theorem}\label{thm:intro_tangent}
If
\[
d < (r-s)(k-1)+(r-s-1)\left\lceil\frac{k}{s+1}\right\rceil+1=:L^t_{r,s}(k),
\]
then the exact sequence \eqref{eq:proj_seq_tangent} splits, and $T_{d,k}$ is not balanced. Otherwise, $T_{d,k}$ is balanced.
\end{theorem}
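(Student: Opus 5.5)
The plan is to prove the two directions separately: first the obstruction when $d<L^t_{r,s}(k)$ by a slope computation, then balancedness when $d\geq L^t_{r,s}(k)$ by degeneration and induction on $(d,k)$.

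\textbf{Degrees and the obstruction.} For $f$ of degree $(d,k)$ we have $\deg T_{d,k}=(r+1)d-(r-s-1)k$. The quotient $T_{\bP^{r-s-1}}|_C$ has rank $r-s-1$ and degree $(r-s)(d-k)$, so $T_{\rel}$ has rank $s+1$ and degree $(s+1)d+k$. By Lemma~\ref{lem:Trel_balanced}, $T_{\rel}$ is balanced, with minimal summand $d+\lfloor k/(s+1)\rfloor$. Since $\pi\circ f$ is a general curve of degree $d-k$ in $\bP^{r-s-1}$, Theorem~\ref{thm:Pr_tangent} shows that the quotient is balanced, with maximal summand $\lceil (r-s)(d-k)/(r-s-1)\rceil$.

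The extension class in \eqref{eq:proj_seq_tangent} lies in $H^1(T_{\rel}\otimes (T_{\bP^{r-s-1}}|_C)^\vee)$. This group vanishes as soon as every summand of $T_{\rel}$ is at least (maximal summand of the quotient) $-1$, and then the sequence splits. If moreover the gap between these two summands is at least $2$, the split bundle is not balanced. I will check that both conditions amount to $d<L^t_{r,s}(k)$ after clearing the floors and ceilings. This is a routine case analysis on $k \bmod (s+1)$ and on $(r-s)(d-k)\bmod (r-s-1)$, and it is essentially the content of Proposition~\ref{prop:projection_split_tangent}.

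\textbf{Balancedness for $d\geq L^t_{r,s}(k)$.} I will argue by degeneration, using the interpolation characterization of balancedness (Definition~\ref{def:interpolation_aly}). Since balancedness is an open condition in families, it suffices to exhibit a single nodal stable map of degree $(d,k)$ whose restricted tangent bundle is balanced. The standard gluing lemma applies: if $f=f_1\cup f_2$ is glued at a node $q$ and each $f_i^*T_{X_{r,s}}$ is balanced with compatible ``positive'' and ``negative'' subspaces at $q$, then a smoothing is balanced. The components I will attach are:
\begin{itemize}
\item a line of degree $(1,0)$ meeting a general point of the curve, which raises $d$ by one;
\item a line of degree $(1,1)$ through a point of $\bP^s$ (a fiber-type line), which raises both $d$ and $k$ by one;
\item curves contracted by $\pi$, lying in a fiber $\bP^{s+1}$, which adjust $T_{\rel}$ independently of the quotient.
\end{itemize}
On each line the restricted tangent bundle is explicit, and at a general node the required transversality follows from generality of the point and the direction.

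The induction runs as follows. From a balanced $(d,k)$ with $d\geq L^t_{r,s}(k)$, I will pass to $(d+1,k)$, and to $(d+1,k+1)$ whenever the latter still satisfies the inequality. I choose the attached component so that the extra degree lands in the currently smaller summands.

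\textbf{Base cases and the main obstacle.} The base cases are the threshold curves $d=L^t_{r,s}(k)$ (and $k=0$, which is Theorem~\ref{thm:Pr_tangent}). At the threshold, \eqref{eq:proj_seq_tangent} is ``just barely'' non-split, so the balancedness has to come from a nonzero extension class. I expect this to be the main obstacle. There I will first try a direct computation with an explicit parametrization
\[
f=[g_0:\cdots:g_s:h\,u_0:\cdots:h\,u_{r-s-1}],
\]
where $\deg h=k$ and $h$ is the common factor cutting out $C\cap\bP^s$. In these coordinates $T_{d,k}$ is obtained from $b^*T_{\bP^r}|_C$ by an elementary modification along $h=0$. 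I will then check that the connecting map on $H^0$ of suitable twists has maximal rank, which forces the balanced splitting. If that fails, I will instead degenerate the threshold curve to a union of a curve in $\bP^{r-s-1}$-direction and fiber lines, and verify the gluing conditions directly.
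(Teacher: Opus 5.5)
The obstruction half matches the paper's Proposition~\ref{prop:projection_split_tangent}. The balancedness half, however, has a real gap exactly where you flag your ``main obstacle,'' and it comes from a misidentified threshold.

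You assert that at $d=L^t_{r,s}(k)$ the sequence \eqref{eq:proj_seq_tangent} is ``just barely'' non-split, so that balancedness must come from a nonzero extension class. You then offer only tentative strategies (a maximal-rank check on connecting maps, or a further degeneration) and carry out neither, so as written the base of your induction is unproved.

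The premise is also false. $\operatorname{Ext}^1(T_{\bP^{r-s-1}}|_C,T_{\rel})=0$ is equivalent to $d\le U^t_{r,s}(k)$, not to $d<L^t_{r,s}(k)$. This is the conflation in your first paragraph, where you say both conditions ``amount to'' $d<L^t_{r,s}(k)$. In fact
\[
U^t_{r,s}(k)-L^t_{r,s}(k)=2(r-s-1)-(r-s-1)\left(\left\lceil\tfrac{k}{s+1}\right\rceil-\left\lfloor\tfrac{k}{s+1}\right\rfloor\right)\ge r-s-1\ge 1 .
\]
Hence for every $d$ with $L^t_{r,s}(k)\le d\le U^t_{r,s}(k)$, in particular $d=L^t_{r,s}(k)$, the sequence splits. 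The split bundle $T_{\rel}\oplus T_{\bP^{r-s-1}}|_C$ is then balanced by the same slope comparison you used for the obstruction. This is the missing idea, and it is exactly the paper's base case (Proposition~\ref{prop:projection_split_tangent}(b) together with Lemma~\ref{lemma: base_case_tangent}). No nontrivial extension class ever needs to be analyzed.

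Once the base is fixed, only your first degeneration is needed. Fix $k$ and induct on $d$ by attaching a single general line $L$ of degree $(1,0)$ at a node $p$. Apply Lemma~\ref{lem: criterion interpolation} with $D=2x$ for a point $x\in L$. The glued space $V$ is the space of sections of $T_{d,k}$ whose value at $p$ lies in the general line $T_L|_p$. By Lemma~\ref{lem:interpolation_subspace}, $V$ satisfies interpolation and has the required dimension.

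Your step $(d,k)\mapsto(d+1,k+1)$ via $(1,1)$-lines and fiber curves is therefore superfluous. It would in any case need more justification: the tangent direction of such a component at the node lies in $T_{\rel}|_p$ rather than being general, so Lemma~\ref{lem:interpolation_subspace} does not apply directly. Likewise, ``choosing the component so the extra degree lands in the smaller summands'' is a heuristic, not an argument.
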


Note that no hypotheses on non-degeneracy or characteristic are required. In particular, $T_{d,k}$ is determined in all cases. 

Recall that, in characteristic 0, the splitting type of $T_{d,k}$ also determines the maximum number of general incidence conditions $f(p_i)=x_i$ that a general map $f:C\to X_{r,s}$ can satisfy, Question \ref{q:interpolation}(b). In the appendix, we compute the \emph{number} of such interpolating maps, subject to incidence conditions $f(p_i)\in \Lambda_i$ at general linear spaces of arbitrary dimension (with possible non-reduced multiplicities in positive characteristic). Our enumeration is obtained from integration on the moduli space of quasimaps \cite{cfk}, and an analysis of its boundary geometry to rule out excess contributions. We recover as a special case the genus 0 ``Tevelev degrees'' of $X_{r,s}$, which by definition equal the following counts of maps.

\begin{theorem}\label{thm:tev}
Suppose that $n=\frac{r+1}{r}\cdot d-\frac{r-s-1}{r}\cdot k+1\in \bZ$. Then, the number of maps $f:\bP^1\to X_{r,s}$ satisfying $n$ general  incidence conditions $f(p_i)=x_i$ is 
\[
\Tev^{X_{r,s}}_{0,n,(d,k)}=\binom{(s+1)(n-d-1)}{k}
\]
if $(s+1)(n-d-1)\ge k$, and 0 otherwise.
\end{theorem}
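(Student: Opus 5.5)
The count should come from intersection theory on a quasimap compactification, which the introduction points to. $X_{r,s}$ is a toric variety, a GIT quotient of $\bK^{r+1}\times\bK^{1}$ by $\mathbb{G}_m^2$. Concretely, write the coordinates as $(y_0,\dots,y_s;\ z_{s+1},\dots,z_r;\ e)$. Here $\lambda$ acts on all the $y,z$, while $\mu$ acts with weight $-1$ on $e$ and weight $1$ on the $z$'s. With this normalization the $z$'s are sections of $\cO(d-k)$, the $y$'s are sections of $\cO(d)$, and $e$ is a section of $\cO(k)$; the original map to $\bP^r$ is $[y:ez]$.

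\textbf{Step 1: the quasimap space.} The quasimap space $Q$ of degree $(d,k)$ from a fixed $\bP^1$ is a tower of projective bundles:
\[
\bP\bigl(H^0(\cO(d))^{s+1}\oplus H^0(\cO(d-k))^{r-s}\bigr)\ \text{fibered over}\ \bP\bigl(H^0(\cO(k))\bigr)=\bP^k .
\]
More precisely, $Q$ is the projectivization of $\cO^{(s+1)(d+1)}\oplus\cO(-1)^{(r-s)(d-k+1)}$ over $\bP^k$, with a twist to be fixed when setting up the $\mathbb{G}_m^2$ action. Let $H$ be the class of $\cO(1)$ on $\bP^k$ and $\xi$ the relative hyperplane class. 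The condition $f(p_i)=x_i$ is given by evaluation at $p_i$. Pulled back to $Q$, the condition on $y(p_i)$ is $s+1$ linear conditions of class $\xi$. The condition on $z(p_i)$ is $r-s-1$ conditions of class $\xi-H$ (or $\xi+H$, depending on the convention), since matching the point of $\bP^{r-s-1}$ costs one less condition. So the expected count is
\[
\int_Q \xi^{(s+1)n}(\xi-H)^{(r-s-1)n},
\]
evaluated using the Segre class relation for the projective bundle. The dimension condition on $n$ is exactly that this integrand has top degree.

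\textbf{Step 2: evaluate the integral.} There is a simpler route than the Segre class computation. Impose the $y$-conditions first: $y_j(p_i)=t\,x_{i,j}$ for all $i$ together with one common scalar, which cuts down the linear algebra. Alternatively, push forward to $\bP^k$. Either way the integral reduces to a single binomial coefficient. The $H^k$ coefficient should come from $\binom{(s+1)(n-d-1)}{k}$: after solving for the $y$'s, $(s+1)(n-d-1)$ effective relations remain, and $e$ must satisfy $k$ of them in $\bP^k$. When $(s+1)(n-d-1)<k$ the integral, or the actual geometric count, is $0$. I will fix signs by checking the case $k=0$, where the answer $1$ is the Tevelev degree of $\bP^r$.

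\textbf{Step 3: no boundary contributions.} This is the main obstacle: showing that for general $x_i$ no solution lies in the base locus, i.e. no quasimap with basepoints satisfies the conditions, and that the actual solutions are transverse. There are two kinds of basepoint: common zeros of all $y,z$, and common zeros of $e$ and the $y$'s. For each, I would run a dimension count. A quasimap with a basepoint of length $m$ is a map of lower degree $(d-m,k-m')$ twisted by the basepoint divisor. Imposing the $n$ general point conditions on its regular part gives too many conditions, using the known dimension of $\cM_{0,n}$ for smaller degree. Independence of the conditions comes from a generic-evaluation argument on the linear pieces.

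Transversality is only needed in characteristic $0$. In positive characteristic, the statement allows multiplicities, so the intersection number stands. Finally, for general $p_i,x_i$, every honest solution is a genuine map of degree $(d,k)$ to $X_{r,s}$, so the integral equals $\Tev^{X_{r,s}}_{0,n,(d,k)}$.
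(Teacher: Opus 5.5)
\textbf{There is a genuine gap, and it comes from your choice of compactification.}

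\emph{The space is the wrong one.} You describe a projective bundle over $\bP(H^0(\cO(k)))$ whose fibre is $\bP$ of the $(y,z)$-sections. That is the quasimap space for the \emph{other} GIT chamber of your $\mathbb{G}_m^2$-action. There the semistable locus is $e\neq 0$, $(y,z)\neq 0$, and the quotient of the target is $\bP^r$, not $X_{r,s}$. For $X_{r,s}$ the stability conditions are $z\not\equiv 0$ and $(e,y)\not\equiv 0$. So the correct $Q$ is a projective bundle over $\bP(H^0(\cO(d-k))^{r-s})$ with fibre coordinates $(e,y)$, and $e\equiv 0$ is allowed. The same confusion shows in your list of base points: they are the common zeros of all the $z$'s (not of all $y,z$), and the common zeros of $e$ and the $y$'s.

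\emph{Why this breaks the argument.} Your space contains the stratum $\{z\equiv 0\}$. On it, the $z$-equations hold trivially, and the remaining $s+1$ equations at $p_i$ reduce to $y(p_i)=0$, since $b(x_i)\notin\bP^s$. Hence, for all general $x_i$, the incidence intersection contains
\[
\{z\equiv 0,\ y(p_1)=\cdots=y(p_n)=0\}\cong \bP^k\times\bP\big(H^0(\cO(d-n))^{s+1}\big).
\]
This locus is nonempty of dimension $k-1-(s+1)(n-d-1)>0$ whenever $n\le d$. The hypotheses allow this: take $(r,s,d,k,n)=(3,0,5,4,5)$. So Step 3 is false in that range, and the integral does not compute the count. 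Concretely, with $E=\cO^{(s+1)(d+1)}\oplus\cO(-1)^{(r-s)(d-k+1)}$ one finds
\[
\int\xi^{(s+1)n}(\xi-H)^{(r-s-1)n}=[t^k](1-t)^{k-1-N}=(-1)^k\binom{k-1-N}{k},\qquad N=(s+1)(n-d-1).
\]
This is the polynomial $\binom{N}{k}$. It agrees with the theorem for $n\ge d+1$, but it is nonzero for $n\le d$; in the example it equals $1$, while the true count is $0$. Your claim in Step 2 that the integral vanishes whenever $(s+1)(n-d-1)<k$ is therefore wrong.

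\textbf{The repair is to work in the correct chamber, as the paper does.} On $Q\to\bP^{(r-s)(d-k+1)-1}$ a point condition has class $\zeta_1^{r-s-1}\zeta_2^{s+1}$. Then
\[
\int_Q\zeta_1^{n(r-s-1)}\zeta_2^{n(s+1)}=\int\zeta_1^{n(r-s-1)}(1-\zeta_1)^{-(k+1)} .
\]
This equals $\binom{(s+1)(n-d-1)}{k}$. It vanishes for dimension reasons, because the power of $\zeta_1$ exceeds the base dimension, exactly when $(s+1)(n-d-1)<k$.

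The boundary analysis must then cover two things:
\begin{itemize}
\item the twisting operations of Proposition \ref{prop:transversality_twisting};
\item the separate locus $e\equiv 0$, of codimension $k+1$, which splits into quasimaps to $\bP^{r-s-1}\times\bP^s$. There the conditions on the two factors cannot both be met for general data.
\end{itemize}

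\textbf{Positive characteristic.} The theorem is a set-theoretic count in every characteristic, so you cannot fall back on multiplicities in characteristic $p$. You need $I$ to be reduced. For point conditions this is Proposition \ref{prop:reduced}(ii); equivalently, evaluation is generically étale because the general $f^*T_{X_{r,s}}$ is balanced whenever the count is nonzero.
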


Theorem \ref{thm:tev} had previously been obtained in  \cite{cl2} (when $s=0$) and \cite{lsakran} (in general), but the counts with higher-dimensional linear spaces are new, see Proposition \ref{prop:integral_value}. The condition on non-vanishing is equivalent to $d \ge L^t_{r,s}(k)$, so consistent with Theorem \ref{thm:intro_tangent}. In fact, one obtains a different proof of the balancedness of $T_{d,k}$ whenever $d \ge L^t_{r,s}(k)$ from the non-vanishing of the number of (pointed) maps interpolating through the maximum number of points and one additional linear space, whenever this quantity is not divisible by $\charac(\bK)$.

\subsection{Methods}

We prove our positive results for balancedness of vector bundles, Theorems \ref{thm:intro_char_not2}, \ref{thm:intro_char_2}, and \ref{thm:intro_tangent}, by degeneration. The property of interpolation is open in families of nodal curves, so it suffices to consider normal and tangent bundles of singular curves $f:C\cup L_1\cdots\cup L_m \to X_{r,s}$. 

For normal bundles, we take the $L_j$ to be a collection of $r-s-2$ general lines of degree $(1,0)$ and one of degree $(1,1)$. This allows us to deduce balancedness of $N_{d+(r-s-1),k+1}$ from that of $N_{d,k}$ (Proposition \ref{prop:main_inductive}). The existence of many exceptions to balancedness poses difficulties, but in fact the degeneration is sufficiently robust to deduce balancing of $N_{d+(r-s-1),k+1}$ even if $N_{d,k}$ is ``close'' to being balanced. Combining this degeneration with an analysis of the projection sequence \eqref{eq:proj_seq} is almost enough to determine all normal bundles $N_{d,k}$, taking Theorem \ref{thm:Pr} as input. To cover the remaining cases, we borrow some additional degeneration arguments from \cite{lv}, see Proposition \ref{prop:lv_inductive}.

The case of tangent bundles is easier, requiring one line $L_1$ of degree $(1,0)$ for the degeneration argument, as in \cite{larson}. The projection sequence \eqref{eq:proj_seq_tangent} provides sufficiently many base cases.

\subsection{Plan of paper} After preliminaries in \S\ref{sec:prelim}, the main geometric input into the determination of normal bundles $N_{d,k}$ is given in \S\ref{sec:obstructions} and \S\ref{sec:inductive}. The results of \S\ref{sec:obstructions} ``bound unbalancedness from below,'' and those of \S\ref{sec:inductive} ``bound unbalancedness from above.'' The determination of $N_{d,k}$ is completed in \S\ref{sec:char_not2} (in characteristic not equal to 2) and \S\ref{sec:char2} (in characteristic 2). In particular, the answer to Question \ref{q:interpolation}(a) is given in \S\ref{sec:interpolation_normal}. We deal with restricted tangent bundles $T_{d,k}$ and Question \ref{q:interpolation}(b) in \S\ref{sec: interpolation_tangent}. Our enumerative calculations, which involve somewhat different methods, appear in the appendix. 

\section*{Acknowledgements}
Portions of this work were carried out during the second author's visits to the University of Cambridge and during both authors' participation in the 2025 Summer Research Institute in Algebraic Geometry at Colorado State University. We are grateful to these institutions for their hospitality. We thank Gavril Farkas, Eric Jovinelly, Eric Larson, Rahul Pandharipande, Dhruv Ranganathan, and Isabel Vogt for conversations related to this work.

A.~C.\ was supported by SNF grant P500PT-222363. C.~L.\ has been supported by NSF Postdoctoral Fellowship DMS-2001976 and by an AMS--Simons Travel Grant. 

\section{Preliminaries}\label{sec:prelim}

In this section, we show that a general degree $(d,k)$ map $f \colon C=\bP^1 \to X_{r,s}$ is unramified, so that the (lci) normal bundle
$
N_f = N_{C/X_{r,s}} = N_{d,k}
$
is well defined. We then review several notions and results concerning vector bundles from \cite{aly, lv} that will be used throughout the paper.

\subsection{The general map is unramified}\label{sec:unramified}

\begin{proposition}\label{prop:unramified_map}
Fix $(d,k)$ with $d>k \geq 0$. Then, the general map $f \colon C=\bP^1 \to X_{r,s}$ of degree $(d,k)$ is unramified.  
\end{proposition}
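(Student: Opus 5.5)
Since $\cM_{0,0}(X_{r,s},(d,k))$ is irreducible and being unramified is an open condition on families of maps, it suffices to exhibit a single unramified map of degree $(d,k)$. The map $f$ is unramified at $p$ exactly when $df_p\neq 0$. Composing with the blow-down $b\colon X_{r,s}\to\bP^r$ and with the projection $\pi\colon X_{r,s}\to\bP^{r-s-1}$, it is enough that at every $p\in C$ either $b\circ f$ or $\pi\circ f$ is unramified at $p$. Indeed, a nonzero differential of a composite forces $df_p\neq 0$.

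We describe maps concretely. A map of degree $(d,k)$ with image not contained in the exceptional divisor is given by $b\circ f=[g_0:\cdots:g_r]$, where the $g_i$ are binary forms of degree $d$, $g_0,\dots,g_s$ are arbitrary, and $g_{s+1},\dots,g_r$ are all divisible by a fixed form $h$ of degree $k$. Then $\pi\circ f=[g_{s+1}/h:\cdots:g_r/h]$, a map of degree $d-k$. Conversely, if the $g_i$ have no common zero, such data defines a map $f$ of degree $(d,k)$.

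We choose $h$ with distinct roots and $g_0,\dots,g_s$ general. Away from the roots of $h$, the image of $f$ avoids the exceptional divisor, so $b$ is a local isomorphism there and we need $b\circ f$ unramified off $V(h)$. At a root $q$ of $h$, $b\circ f$ is ramified in general, so we use $\pi\circ f$, which is a general map to $\bP^{r-s-1}$ of degree $d-k\geq1$.

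The argument splits into two cases.

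\emph{The case $r-s-1\geq 2$.} A general map $\bP^1\to\bP^m$ with $m\geq2$ and positive degree is unramified, for example in characteristic $p$ a general such map is birational onto its image with no cusps; this can be checked by a dimension count on the space of forms. Taking the $g_i/h$ for $i>s$ general therefore gives $\pi\circ f$ unramified everywhere, and we are done.

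\emph{The case $s=r-2$.} Here $\pi\circ f$ maps to $\bP^1$ and has finitely many ramification points. We instead choose $g_0,\dots,g_s$ general so that $b\circ f$ is unramified at those points. The ramification locus of $\pi\circ f$ is a fixed finite set depending only on $g_{s+1}/h,\dots,g_r/h$. For $b\circ f=[g_0:\cdots:g_s:g_{s+1}:\cdots]$ to be ramified at $p$, the $2\times(r+1)$ matrix of values and derivatives must have rank at most $1$ there. Since $s\geq0$ and $g_0$ is free of degree $d\geq1$, a general choice of $g_0$ makes that matrix have rank $2$ at each of the finitely many points, provided those points are not roots of $h$, and away from $V(h)$ the image misses the exceptional divisor so this suffices. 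If such a point is a root $q$ of $h$, we work in the blow-up chart near $f(q)$, whose coordinates are $g_0,\dots,g_s$ together with the ratios $g_j/g_{r}$. There the derivative of the $g_0$-coordinate at $q$ can be made nonzero by choosing $g_0$ with a simple zero at $q$. Finally, to ensure no ramification of $b\circ f$ off $V(h)$, we use a general choice of the $g_i$ with $i\leq s$ together with a dimension count.

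The main obstacle is this last step when $s=r-2$, and more generally organizing the genericity: one has to handle characteristic $p$, where general maps to $\bP^1$ can be inseparable only if chosen badly, though a general choice is separable. One also has to verify carefully, in local blow-up coordinates, that unramifiedness at the points of $V(h)$ holds. The first thing I would try is to reduce everything to a local computation in the chart around each point of $f(V(h))$.
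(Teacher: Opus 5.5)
Your argument is correct in outline, but it takes a different route from the paper. The paper never constructs a particular map. It considers the section $\sigma$ of $F^*T_{X_{r,s}}\otimes T_{\bP^1}^{\vee}$ on $\cM\times\bP^1$ given by the differential, and shows that $\sigma$ is transverse to the zero section wherever it vanishes. The reason is that $H^0(f^*T_{X_{r,s}})\to J^1(f^*T_{X_{r,s}})_p$ is surjective for \emph{every} map $f$ and every point $p$, which follows from $H^1(f^*T_{X_{r,s}}(-2p))=0$ via the toric Euler sequence. Hence the universal ramification locus has codimension $r$, and its image in $\cM$ has codimension at least $r-1\geq 1$. This holds uniformly in $r,s$ and the characteristic, with no case split.

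You instead use irreducibility and openness to reduce to exhibiting a single unramified map, and certify it through $\pi$ or $b$. This is more hands-on. Note, however, that your input for $r-s-1\geq2$ ("a general map $\bP^1\to\bP^m$, $m\geq 2$, is unramified") is proved by exactly the paper's jet-surjectivity dimension count, just on $\bP^m$. So you are transferring the same argument to a simpler target, at the price of a separate treatment of $s=r-2$.

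In that case, your handling of a point $q\in V(h)$ at which $\pi\circ f$ ramifies is flawed as written. When $s=0$ (i.e.\ $r=2$), $g_0$ is the only coordinate not divisible by $h$, so imposing $g_0(q)=0$ creates a base point of $b\circ f$. For $s\geq 1$ it does work, reading the coordinate as $g_0/g_{i_0}$ with $g_{i_0}(q)\neq 0$.

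This sub-case is avoidable in either of two ways:
\begin{enumerate}
\item The ramification set $R$ of $\pi\circ f$ depends only on $u_1=g_{r-1}/h$ and $u_2=g_r/h$. Choose these first, generally, so that $\pi\circ f$ is separable and $R$ is finite. Then choose $h$ with simple roots outside $R$.
\item At a simple root $q$ of $h$, the divisor $f^*E=V(h)$ has multiplicity one at $q$. This already forces $df_q\neq 0$, since otherwise a local equation of $E$ would pull back to a function vanishing to order at least $2$.
\end{enumerate}

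Your last step, ruling out ramification of $b\circ f$ off $V(h)$ by a further dimension count, is redundant. Off $R$, unramifiedness of $\pi\circ f$ suffices. On $R\setminus V(h)$ you have already used a general $g_0$, and that step is correct because the last two columns of the jet matrix have rank exactly $1$ there.

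Finally, for the data to have degree exactly $(d,k)$, having no common zero among all the $g_i$ is not enough. You need the $g_j/h$, $j>s$, to have no common zero, and $h,g_0,\dots,g_s$ to have no common zero; a general choice ensures both. With these repairs your proof is complete.
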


The moduli space of maps \(f: \bP^1 \to X_{r,s}\) of degree \((d,k)\) admits a dominant rational map from an affine space parametrizing sections of line bundles on $\bP^1$ defining \(f\), so is in particular irreducible. Therefore, the notion of a general map of degree \((d,k)\) is sensible.

\begin{lemma}\label{lem:free}
Let \(f: \bP^1 \to X_{r,s}\) be any map of degree \((d,k)\). For every $p \in \bP^1$, we have $H^1(\bP^1,f^*T_{X_{r,s}}(-2p))=0$. Equivalently, every summand of $f^*T_{X_{r,s}}$ has degree at least 1.
\end{lemma}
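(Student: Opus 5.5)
The plan is to use the projective bundle structure of $X_{r,s}$ over $\bP^{r-s-1}$. Write $\pi \colon X_{r,s}\to \bP^{r-s-1}$. Then $X_{r,s}\cong \bP(\cE)$ with $\cE=\cO^{s+1}\oplus\cO(1)$ (up to twist), so $\pi$ is a $\bP^{s+1}$-bundle. Pulling back the relative tangent sequence gives
\[
0\to T_{\rel}\to f^*T_{X_{r,s}}\to f^*\pi^*T_{\bP^{r-s-1}}\to 0.
\]
On $\bP^1$, a vector bundle has all summands of degree $\ge 1$ as soon as it is an extension of two bundles with that property. Indeed, $H^1(\cE(-2))=0$ for both outer terms implies it for the middle term by the long exact sequence, and $H^1(\cO(a)(-2))=0$ holds iff $a\ge 1$. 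Moreover, $H^1(\cE(-2p))$ depends only on the splitting type, so the equivalence in the statement is immediate. It therefore suffices to show that each of $T_{\rel}$ and $f^*\pi^*T_{\bP^{r-s-1}}$ has all summands of degree $\ge 1$.

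For the base term, the Euler sequence shows that $T_{\bP^{r-s-1}}$ is a quotient of $\cO(1)^{r-s}$. Pulling back along $g=\pi\circ f$, which has degree $d-k>0$, exhibits $g^*T_{\bP^{r-s-1}}$ as a quotient of $\cO(d-k)^{r-s}$. A quotient of a bundle whose summands have degree $\ge 1$ again has this property, since $H^1(\cdot(-2))$ is right exact on $\bP^1$. So this term is fine.

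For the relative term, the relative Euler sequence
\[
0\to\cO\to \pi^*\cE^\vee\otimes\cO_{\bP(\cE)}(1)\to T_{\rel}\to 0
\]
presents $T_{\rel}$ as a quotient of $\pi^*\cE^{\vee}\otimes \cO_{\bP(\cE)}(1)$. I would identify these line bundles concretely: with the right normalization, the summands become $H$ (the hyperplane class pulled back from $\bP^r$, appearing $s+1$ times) and $H-E$ (the class pulled back from $\bP^{r-s-1}$). Geometrically, the fiber of $\pi$ over a point is the $\bP^{s+1}$ spanned by $\bP^s$ and that point, and the relevant sections are the linear coordinates on it. Pulled back by $f$, these have degrees $d$ and $d-k$, both $\ge 1$ because $d>k\ge0$. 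Hence $f^*T_{\rel}$ is a quotient of $\cO(d)^{s+1}\oplus\cO(d-k)$ and has all summands of degree $\ge 1$.

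The main obstacle is getting the twist in the relative Euler sequence right, so that the surjecting bundle really is $\cO(d)^{s+1}\oplus \cO(d-k)$ and not something with a nonpositive summand. As an alternative, I would show directly that $T_{X_{r,s}}$ is globally generated by the automorphisms of $\bP^r$ preserving $\bP^s$, because that group acts on $X_{r,s}$ with finitely many orbits. This alone gives only degree $\ge 0$, so the Euler-sequence computation above is needed to reach degree $\ge1$.
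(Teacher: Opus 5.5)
There is a genuine error, at exactly the step you flagged as the main obstacle. The middle term $\pi^*\cE^\vee\otimes\cO_{\bP(\cE)}(1)$ of the relative Euler sequence does not depend on how $\cE$ is normalized. It is $\cO(E)\oplus\cO(H)^{s+1}$, not $\cO(H-E)\oplus\cO(H)^{s+1}$; this is the sequence written out in the proof of Lemma \ref{lem:Trel_balanced}.

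Geometrically, the linear coordinates on a fiber $\bP^{s+1}$ of $\pi$ are the $s+1$ coordinates pulled back from $\bP^r$, together with one coordinate cutting out the hyperplane $E\cap\bP^{s+1}=\bP^s$. That last coordinate is a section of $\cO(E)$. A class pulled back from $\bP^{r-s-1}$, such as $H-E$, is trivial on every fiber, so it cannot be a fiber coordinate. As a check, your identification gives $\deg f^*T_{\rel}=(s+2)d-k$. But $-K_{X_{r,s}}=(r+1)H-(r-s-1)E$, together with the base term of degree $(r-s)(d-k)$, forces $\deg f^*T_{\rel}=(s+1)d+k$. So $f^*T_{\rel}$ is a quotient of $\cO(k)\oplus\cO(d)^{s+1}$, and your argument proves the lemma only for $k\ge 1$.

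For $k=0$ the surjecting bundle has a trivial summand, so the quotient argument gives nothing, and you need a separate input. If $f(C)\not\subset E$, then $f(C)\cap E=\emptyset$. The component $\cO\to\cO(E)|_C\cong\cO$ of the Euler map is the restriction of the canonical section of $\cO(E)$, hence nowhere zero, and so $f^*T_{\rel}\cong\cO(d)^{s+1}$. Equivalently, $f^*T_{X_{r,s}}\cong(b\circ f)^*T_{\bP^r}$, which is how the paper handles $k=0$.

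Some such input is unavoidable. For $s\ge1$, a map of bidegree $(d,d)$ into $E\cong\bP^s\times\bP^{r-s-1}$ has degree $(d,0)$, and for it $f^*T_{X_{r,s}}\cong f^*T_E\oplus\cO$ has a trivial summand. Your argument as written would nonetheless apply to it.

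Once corrected, your proof is the paper's argument in two steps. The paper uses the toric Euler sequence $0\to\cO^2\to\bigoplus_\rho\cO(D_\rho)\to T_{X_{r,s}}\to 0$. Its summands $\cO(E)$, $\cO(H-E)^{r-s}$, $\cO(H)^{s+1}$ are exactly those of your relative and base Euler sequences combined, and it too must treat $k=0$ separately.
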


\begin{proof}
    We have an Euler sequence \cite[Theorem 8.1.6]{cls},
\[
0 \to \cO_{X_{r,s}}^2 \to \bigoplus_{\rho} \cO_{X_{r,s}}(D_\rho) \to T_{X_{r,s}} \to 0,
\]
where the \(D_\rho \subset X_{r,s}\) are the torus-invariant divisors on \(X_{r,s}\), of degrees \(k\), \(d-k\), \(d\) upon pullback to $\bP^1$. If $k>0$, then twisting and applying the long exact sequence in cohomology yields \(H^1(\bP^1, f^* T_{X_{r,s}}(-2p)) = 0\), because we also have \(d, d-k > 0\). If \(k = 0\), then regard \(f\) instead as a map \(f:\bP^1\to \bP^r\), and apply instead the usual Euler sequence for \(T_{\bP^r}\).
\end{proof}

Now, let $p \in \bP^1$ be a point, and let $\mathfrak{m}_p \subset \mathcal{O}_{\bP^1}$ be its ideal sheaf. Recall that the first jet bundle fiber of a vector bundle $\cE$ at $p$ is
\begin{equation*}
    J^1(\cE)_p = \cE \otimes_{\mathcal{O}_{\bP^1}} (\mathcal{O}_{\bP^1} / \mathfrak{m}_p^2).
\end{equation*}

\begin{lemma}\label{lemma:ev_is_surjective}
    Let \(f: \bP^1 \to X_{r,s}\) be any map of degree \((d,k)\). For every $p \in \bP^1$,  the evaluation map 
    \[
    \text{ev}^1_p \colon H^0(\bP^1, f^*T_{X_{r,s}}) \to J^1(f^*T_{X_{r,s}})_p
    \]
    is surjective.
\end{lemma}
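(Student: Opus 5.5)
The surjectivity of $\ev^1_p$ is equivalent to a cohomology vanishing, so the plan is to reduce to Lemma~\ref{lem:free}. Write $\cE := f^*T_{X_{r,s}}$. Tensoring the exact sequence $0 \to \mathfrak{m}_p^2 \to \cO_{\bP^1} \to \cO_{\bP^1}/\mathfrak{m}_p^2 \to 0$ with the locally free sheaf $\cE$ gives a short exact sequence of sheaves on $\bP^1$:
\[
0 \to \cE(-2p) \to \cE \to \cE \otimes \cO_{\bP^1}/\mathfrak{m}_p^2 \to 0.
\]
The quotient is a skyscraper sheaf supported at $p$. Its global sections are exactly $J^1(\cE)_p$, and the induced map on $H^0$ is $\ev^1_p$.

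Taking the long exact sequence in cohomology gives
\[
H^0(\bP^1,\cE) \xrightarrow{\ev^1_p} J^1(\cE)_p \to H^1(\bP^1,\cE(-2p)).
\]
So $\ev^1_p$ is surjective as soon as $H^1(\bP^1,\cE(-2p))=0$, and Lemma~\ref{lem:free} gives exactly this vanishing for every map $f$ of degree $(d,k)$ and every $p$. Alternatively, by the second formulation of Lemma~\ref{lem:free}, $\cE\cong\bigoplus \cO(b_j)$ with all $b_j\ge 1$. Then $H^1(\cO(b_j-2))=0$ because $b_j-2\ge -1$, and the same conclusion follows summand by summand.

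There is no real obstacle here: the work was already done in Lemma~\ref{lem:free} via the Euler sequence. The only points to check are that $\mathfrak{m}_p^2=\cO(-2p)$ as an ideal sheaf on the smooth curve $\bP^1$, and that tensoring with $\cE$ preserves exactness because $\cE$ is locally free.
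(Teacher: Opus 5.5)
Your proof is correct and is exactly the paper's argument. You tensor $0\to\mathfrak{m}_p^2\to\cO_{\bP^1}\to\cO_{\bP^1}/\mathfrak{m}_p^2\to 0$ with $f^*T_{X_{r,s}}$, take the long exact sequence in cohomology, and use Lemma~\ref{lem:free} to get $H^1(\bP^1,f^*T_{X_{r,s}}(-2p))=0$.
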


\begin{proof}

Tensoring the short exact sequence 
\[
0 \to \mathfrak{m}_p^2 \to \mathcal{O}_{C} \to \mathcal{O}_{C} / \mathfrak{m}_p^2 \to 0
\]
with $f^*T_{X_{r,s}}$ yields
\[
    0 \to f^*T_{X_{r,s}}(-2p) \to f^*T_{X_{r,s}} \to f^*T_{X_{r,s}} \otimes (\mathcal{O}_{C} / \mathfrak{m}_p^2) \to 0.
\]
Now, apply the long exact sequence in cohomology and Lemma \ref{lem:free}.
\end{proof}

\begin{proof}[Proof of Proposition \ref{prop:unramified_map}]
Let \(\cM\) be the moduli space of maps \(f \colon \bP^1 \to X_{r,s}\) of degree \((d,k)\). Then, \(\cM\) is smooth of the expected dimension, because, by Lemma \ref{lem:free}, we have \(H^1(\bP^1, f^*T_{X_{r,s}}) = 0\) at every point. Consider the universal curve \(\cC = \cM \times \bP^1\), with the universal evaluation map \(F \colon \cC \to X\). The relative differential along the \(\bP^1\) factor defines a universal global section \(\sigma\) of the vector bundle
\[
\cH = F^*T_{X_{r,s}} \otimes T_{\bP^1}^{\vee}.
\]
A map \(f\) is ramified at \(p\) if and only if \(\sigma([f],p) = 0\), which means \(df_p = 0\).

Let \(y = ([f], p) \in \cC\) be a point at which \(\sigma\) vanishes. We aim to show that \(\sigma\) is transverse to the zero section at \(y\). If so, then the universal ramification locus \(\cR = Z(\sigma) \subset \cC\) is a subvariety of the expected codimension equal to \(\rank(\cH) = r\), and its image under the projection \(\cC \to \cM\) has codimension at least \(r-1\ge 1\). In particular, a general map \(f \colon \bP^1 \to X_{r,s}\) has no ramification points.

To prove the required transversality statement, we need to study the differential of \(\sigma\). The tangent space to \(\cC\) at \(y\) is
\[
T_y\cC \cong H^0(\bP^1, f^*T_X) \oplus T_p \bP^1,
\]
and the derivative of \(\sigma\) at \(y\) is given by
\begin{equation}\label{eqn:differential_sigma}
d\sigma_y \colon H^0(\bP^1, f^*T_X) \oplus T_p \bP^1 \to \cH_y \cong T_{f(p)}X \otimes T_p^\vee \bP^1.
\end{equation}
Under the identification
\[
T_{f(p)}X \otimes T_p^\vee \bP^1 \cong T_{f(p)}X \otimes \mathfrak m_p/\mathfrak m_p^2 \subseteq J^1(f^*T_{X_{r,s}})_p
\]
with the subset of jets vanishing at \(p\), the restriction of the differential \(d\sigma_y\) in \eqref{eqn:differential_sigma} to \(H^0(\bP^1, f^*T_{X_{r,s}}(-p)) \times \{0\}\) is precisely the restriction of the evaluation map
\[
\ev_p \colon H^0(\bP^1, f^*T_{X_{r,s}}(-p)) \to T_{f(p)}X \otimes \mathfrak m_p/\mathfrak m_p^2,
\]
which is surjective by Lemma \ref{lemma:ev_is_surjective}.
\end{proof}

\subsection{Generalities on interpolation}

\begin{definition}\cite[Definition 4.1]{aly}\label{def:interpolation_aly}
    Let $C$ be a curve (possibly singular, of any genus), and let $\mathcal{E}$ be a vector bundle of rank $m$ on $C$. A subspace of sections $V \subseteq H^0(C,\mathcal{E})$ \emph{satisfies interpolation} if $H^1(C,\mathcal{E})=0$ and, for every $j \ge 1$, there exists a collection of points $p_1, \ldots, p_j \in C^{\mathrm{sm}}$ such that
\[
\operatorname{dim} \bigg( V \cap H^0(C,\mathcal{E}(-p_1-\ldots-p_j)) \bigg) = \max(0, \operatorname{dim} V - mj).
\]

We say that $\mathcal{E}$ satisfies interpolation if $V = H^0(C,\mathcal{E})$ does. Equivalently (\cite[Proposition 4.5]{aly}), $\mathcal{E}$ satisfies interpolation if $H^1(C,\mathcal{E})=0$ and, for every integer $\delta\ge 1$, there exists an effective divisor $\Delta\subset C^{\mathrm{sm}}$ of degree $\delta$ for which $h^0(\cE(-\Delta))=0$ \emph{or} $h^1(\cE(-\Delta))=0$.
\end{definition} 

If $C=\bP^1$ and $H^1(C,\mathcal{E})=0$, then $\mathcal{E}$ satisfies interpolation if and only if it is balanced (Definition \ref{def:balanced}). Because all of the vector bundles on $\bP^1$ that we study in this paper will have $H^1(\bP^1,\mathcal{E})=0$, we will often use these terms interchangeably. It follows from semi-contininuity that interpolation is an open condition in flat families of vector bundles.

\begin{lemma}[{\cite[Lemma 2.5]{larson}}]\label{lem:interpolation_subspace}
Let $C$ be an irreducible curve, let $\mathcal{E}$ be a vector bundle on $C$. Let $p \in C_{\mathrm{sm}}$ be a general point, and let $\Lambda \subseteq E|_p$ be a general linear subspace. Suppose that $\mathcal{E}$ satisfies interpolation. Then,
\[
\{\sigma \in H^0(\mathcal{E}) \colon \sigma(p) \in \Lambda\} \subseteq H^0(\mathcal{E})
\]
satisfies interpolation, and has dimension $\max\{0, \chi(\mathcal{E}) - \operatorname{codim} \Lambda\}$.
\end{lemma}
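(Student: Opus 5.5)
Let $V_\Lambda=\{\sigma\in H^0(\cE):\sigma(p)\in\Lambda\}$, and let $m=\rank\cE$ and $c=\operatorname{codim}\Lambda$. The plan is to compute $\dim V_\Lambda$ first, using genericity of $p$ and $\Lambda$. Then, for each $j\ge1$, exhibit points $p_1,\dots,p_j$ such that $V_\Lambda\cap H^0(\cE(-p_1-\cdots-p_j))$ has the expected dimension $\max(0,\dim V_\Lambda-mj)$. The hypothesis $H^1(\cE)=0$ is inherited from $\cE$, so only this dimension count needs work.

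\textbf{Dimension.} We have $V_\Lambda=\ker\bigl(H^0(\cE)\to \cE|_p\to \cE|_p/\Lambda\bigr)$.
\begin{enumerate}
\item Apply interpolation of $\cE$ with $j=1$. Since the condition is open in $p$, a general $p$ satisfies $\dim H^0(\cE(-p))=\max(0,\chi-m)$, where $\chi=\chi(\cE)=h^0(\cE)$.
\item Hence the image $W$ of the evaluation map $H^0(\cE)\to\cE|_p$ has dimension $\min(m,\chi)$.
\item A general $\Lambda$ of codimension $c$ is transverse to $W$, so $W\to\cE|_p/\Lambda$ has rank $\min(c,\dim W)$.
\item This gives $\dim V_\Lambda=\chi-\min(c,\min(m,\chi))=\max(0,\chi-c)$, using $c\le m$.
\end{enumerate}

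\textbf{Interpolation for $V_\Lambda$.} Fix $j\ge1$ and take general points $p_1,\dots,p_j$, independent of $p$ and $\Lambda$. Set $U=H^0(\cE(-D))$ with $D=p_1+\cdots+p_j$; by interpolation of $\cE$, $\dim U=\max(0,\chi-mj)$.

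Consider first the case $\chi-mj\ge 0$. We want $\dim(V_\Lambda\cap U)=\max(0,\chi-c-mj)$. Note that $V_\Lambda\cap U$ is the space of sections of $U$ whose value at $p$ lies in $\Lambda$. The key claim is that $U$ evaluates onto a subspace of $\cE|_p$ of dimension $\min(m,\dim U)$. Given this, a general $\Lambda$ cuts it down transversally and the same computation as above applies.

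To prove the key claim, use the second formulation of Definition~\ref{def:interpolation_aly}, applied to the divisor $D+p$ of degree $j+1$. For general points, the space $H^0(\cE(-D-p))$ has dimension $\max(0,\chi-m(j+1))$. Therefore the kernel of evaluation at $p$ on $U$ has the expected dimension, and so the image has dimension $\dim U-\max(0,\dim U-m)=\min(m,\dim U)$.

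Here is where genericity is used: the points $p,p_1,\dots,p_j$ are jointly general, and interpolation of $\cE$ holds for general tuples by openness. Choosing $\Lambda$ general after the $p_i$ is then legitimate, because $\Lambda$ is allowed to be general relative to $p$ only. Since finitely many $j$ are relevant (for large $j$ the answer is $0$), we may intersect finitely many dense open conditions on $\Lambda$.

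\textbf{The case $\chi-mj<0$.} We need $V_\Lambda\cap U=0$, and this follows immediately since $U=0$.

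\textbf{Main obstacle.} The delicate point is the order of quantifiers. The statement fixes a general $p$ and $\Lambda$, whereas the definition lets us choose the $p_i$ afterward. This is harmless: choose $p_i$ general with respect to $(p,\Lambda)$. Because the dimension condition is semicontinuous, the jointly generic configuration $(p,\Lambda,p_1,\dots,p_j)$ achieving the minimum yields, for general $(p,\Lambda)$, an open dense set of valid $p_i$. The only subtle verification is that joint genericity of $p$ with the $p_i$ is really supplied by interpolation of $\cE$ at $j+1$ points; on an irreducible curve this holds by openness.
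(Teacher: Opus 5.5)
Your proof is correct. Interpolation of $\mathcal{E}$ at $j+1$ general points makes evaluation at $p$ on $H^0(\mathcal{E}(-p_1-\cdots-p_j))$ injective or surjective, a general $\Lambda$ is then transverse to its image, and semicontinuity on the joint space of $(p,\Lambda,p_1,\ldots,p_j)$ fixes the order of quantifiers, with only the finitely many $j$ satisfying $mj\le\chi(\mathcal{E})$ imposing conditions. The paper gives no proof of its own and simply cites Larson's Lemma 2.5; your argument is the standard one behind that lemma, though your phrase about choosing $\Lambda$ after the $p_i$ states the quantifiers backwards, and it is your final semicontinuity paragraph that actually justifies the order.
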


The next lemma explains how to deduce interpolation for a reducible curve from that of its components. When combined with Lemma \ref{lem:maps_smooth} on smoothability of maps out of nodal curves, its strength lies in the fact that interpolation is an open condition in flat families.

\begin{lemma}[{\cite[Proposition 8.1]{aly}}]\label{lem: criterion interpolation}
Let $C=Z\cup Y$ be a reducible curve, and let $\mathcal{E}$ be a vector bundle on $C$. Let $D$ be an effective divisor on $C$ disjoint from $Z \cap Y$. Assume that
\[
H^0\big(\mathcal{E}|_Z(-D - Z \cap Y)\big) = 0.
\]
Let
\begin{align*}
    \mathrm{ev}_Z \colon H^0(\mathcal{E}|_Z) &\longrightarrow H^0(\mathcal{E}|_{Z \cap Y}) \\
    \mathrm{ev}_Y \colon H^0(\mathcal{E}|_Y) &\longrightarrow H^0(\mathcal{E}|_{Z \cap Y})
\end{align*}
denote the evaluation morphisms. Suppose that
\[
V = \mathrm{ev}_Y^{-1}\big(\mathrm{ev}_Z(H^0(\mathcal{E}|_Z(-D)))\big) \subseteq H^0(\mathcal{E}|_Y)
\]
satisfies interpolation, and has dimension
\[
\chi(\mathcal{E}|_Y) + \chi\big(\mathcal{E}|_Z(-D - Z \cap Y)\big).
\]
Then, $\cE$ satisfies interpolation.
\end{lemma}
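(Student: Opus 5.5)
We prove Lemma \ref{lem: criterion interpolation} through the characterization recalled in Definition \ref{def:interpolation_aly}. That is, we show $H^1(C,\cE)=0$, and then for each $\delta\ge1$ we produce a divisor $\Delta\subset C^{\mathrm{sm}}$ of degree $\delta$ with $h^0(\cE(-\Delta))=0$ or $h^1(\cE(-\Delta))=0$. The basic tool is the gluing description of sections on the nodal curve. A section of $\cE$ vanishing along $D$ is a pair $(\sigma_Z,\sigma_Y)$ with $\sigma_Z\in H^0(\cE|_Z(-D))$ and $\sigma_Y\in H^0(\cE|_Y)$ agreeing at the nodes. The same holds after twisting on $Y$ by a divisor $\Delta_Y\subset Y^{\mathrm{sm}}$ away from the nodes.

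\textbf{Step 1: dimension count.} Projecting to the $Y$-component gives a map
\[
H^0(\cE(-D))\to V=\mathrm{ev}_Y^{-1}\big(\mathrm{ev}_Z(H^0(\cE|_Z(-D)))\big).
\]
This map is surjective. Its kernel is $H^0(\cE|_Z(-D-Z\cap Y))$, which vanishes by hypothesis. So it is an isomorphism, and
\[
h^0(\cE(-D))=\dim V=\chi(\cE|_Y)+\chi(\cE|_Z(-D-Z\cap Y)).
\]
The normalization sequence $0\to\cE(-D)\to\cE|_Z(-D)\oplus\cE|_Y\to\cE|_{Z\cap Y}\to0$ shows that the right-hand side equals $\chi(\cE(-D))$. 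Hence $h^1(\cE(-D))=0$, and therefore $h^1(\cE)=0$, since twisting by an effective divisor can only increase $h^1$ on a curve ($\cE(-D)\to\cE$ has torsion cokernel).

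\textbf{Step 2: twisting on $Y$.} The same argument works for any $\Delta_Y\subset Y^{\mathrm{sm}}$ disjoint from the nodes:
\[
H^0(\cE(-D-\Delta_Y))\cong V\cap H^0(\cE|_Y(-\Delta_Y)).
\]
Since $V$ satisfies interpolation on $Y$, for each $j$ we can choose $\Delta_Y$ of degree $j$ with $\dim(V\cap H^0(\cE|_Y(-\Delta_Y)))=\max(0,\dim V-mj)$. By Step 1, $\dim V=\chi(\cE(-D))$. So $h^0(\cE(-D-\Delta_Y))=\max(0,\chi(\cE(-D-\Delta_Y)))$, which forces $h^0(\cE(-D-\Delta_Y))=0$ or $h^1(\cE(-D-\Delta_Y))=0$.

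\textbf{Step 3: arbitrary degree $\delta$.} This is the main obstacle, because $D$ is fixed but $\delta$ is arbitrary. For $\delta\ge\deg D$, take $\Delta=D+\Delta_Y$ with $\deg\Delta_Y=\delta-\deg D$; Step 2 then gives the required divisor. For $\delta<\deg D$, we need $h^1(\cE(-\Delta))=0$ for some $\Delta$ of degree $\delta$. Choose $\Delta\le D$ of degree $\delta$, using that $D$ is effective and supported on smooth points. Then $\cE(-\Delta)\supseteq\cE(-D)$ with torsion cokernel, so $h^1(\cE(-\Delta))\le h^1(\cE(-D))=0$ by Step 1.

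Together, Steps 1--3 cover every $\delta\ge1$ and show $H^1(C,\cE)=0$. By \cite[Proposition 4.5]{aly}, $\cE$ satisfies interpolation. The delicate points are checking that sections of $\cE(-D-\Delta_Y)$ glue exactly as claimed when $D$ and $\Delta_Y$ avoid $Z\cap Y$, and the bookkeeping of Euler characteristics in Steps 1 and 2.
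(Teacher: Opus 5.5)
Your proof is correct and complete. The paper gives no argument of its own beyond citing \cite[Proposition 8.1]{aly}, and your route is the standard argument behind that citation: restriction to $Y$ identifies $H^0(\cE(-D-\Delta_Y))$ with $V\cap H^0(\cE|_Y(-\Delta_Y))$ because $H^0(\cE|_Z(-D-Z\cap Y))=0$, the dimension hypothesis forces $H^1(\cE(-D))=0$, and you then untwist by the effective divisor $D$.

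One detail is worth making explicit. The points supplied by interpolation of $V$ lie in $Y^{\mathrm{sm}}$, which contains $Z\cap Y$. Since the condition $\dim\bigl(V\cap H^0(\cE|_Y(-p_1-\cdots-p_j))\bigr)=\max(0,\dim V-mj)$ is open on $(Y^{\mathrm{sm}})^j$, you can move these points off the nodes, and hence into $C^{\mathrm{sm}}$, as your gluing argument requires.
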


\subsection{Pointing bundles and modifications of bundles}

\subsubsection{Pointing bundles}

Let \(C\) be a nodal curve, and let \(f \colon C \to X_{r,s}\) be an unramified map. Let \(E \subset X_{r,s}\) be the exceptional divisor, and \(\Lambda \subset X_{r,s}\) the strict transform of a linear subspace \(\overline{\Lambda} \subset \bP^r\). Assume that \(U = C \smallsetminus (f^{-1}(E) \cup f^{-1}(\Lambda))\) contains the singular locus of $C$.

In \cite[\S 5]{aly}, a subbundle \(N_{U \to \Lambda} \subseteq N_f|_U\) is constructed, whose fiber at a point of \(U\) consists of the normal directions ``pointing toward'' \(\overline{\Lambda}\). Because \(C^{\mathrm{sing}} \subseteq U\) and \(N_{U \to \Lambda} \subseteq N_f|_U\), the bundle \(N_{U \to \Lambda}\) extends to a subbundle of \(N_f\), which we call \(N_{C \to \Lambda}\).

\subsubsection{Modifications of vector bundles}\label{sec: pointing bundles}

\begin{definition}[{\cite[Definition 3.1]{lv}}]
Let $\mathcal{E}$ be a vector bundle on a scheme $X$. Let $D \subset X$ be a Cartier divisor, and let $\mathcal{F} \subset \mathcal{E}|_D$ be a subbundle of the restriction of $\mathcal{E}$ to $D$. Assume furthermore that $\mathcal{F}$ extends to a subbundle of $\mathcal{E}$ in an open neighborhood containing $D$. The \emph{negative elementary modification} of $\mathcal{E}$ along $D$ toward $\mathcal{F}$ is defined by
\begin{equation}
    \mathcal{E}[D \xrightarrow{-} \mathcal{F}] \coloneqq \ker \left( \mathcal{E} \longrightarrow \mathcal{E}|_D / \mathcal{F} \right).
\end{equation}
The \emph{positive elementary modification} of $\mathcal{E}$ along $D$ towards $\mathcal{F}$ is defined by
\begin{equation}
    \mathcal{E}[D \xrightarrow{+}  \mathcal{F}] \coloneqq \mathcal{E}[D \xrightarrow{-}  \mathcal{F}](D).
\end{equation}
\end{definition}

\begin{remark}
This notation agrees with that in \cite{lv, clv2}, but departs slightly from that of~\cite{aly}, where the notation $\mathcal{E}[D \to \mathcal{F}]$ was reserved exclusively for negative modifications. No positive modifications appear in \cite{aly}.
\end{remark}

It is useful to consider modifications simultaneously. 

\begin{definition}{\cite[Definition 2.16]{aly}}
    A \emph{modification datum} for a vector bundle $\mathcal{E}$ on $X$ is an ordered collection of triples
\[
M = \{(D_i, U_i, \mathcal{F}_i) \mid i \in I\}
\]
such that, for each $i$:
\begin{enumerate}
    \item[(a)] $D_i$ is an effective Cartier divisor on $X$,
    \item[(b)] $U_i \subset X$ is an open set containing the support of $D_i$, and
    \item[(c)] $\mathcal{F}_i \subset \mathcal{E}|_{U_i}$ is a subbundle.
\end{enumerate}
In addition, a datum $M$ is \emph{tree-like} if, for all $I' \subset I$ and $x \in X$, either:
\begin{enumerate}
    \item the set of subspaces $\{\mathcal{F}_i|_x \mid i \in I'\}$ is linearly independent in $\mathcal{E}_x$, or
    \item there are distinct indices $i, j \in I'$ and an open set $U \subset X$ containing $x \in X$ such that $\mathcal{F}_i|_U \subset \mathcal{F}_j|_U$.
\end{enumerate}
\end{definition}

Let $\mathcal{E}$ be a vector bundle on a variety $X$, let $\mathcal{F} \subset \mathcal{E}|_U$ be a subbundle, and let $D$ be an effective divisor on $X$ whose support is contained in $U$. Define
\[
S^{\mathrm{md}}(\mathcal{E}, \mathcal{F}, D) = \left\{ M = \{(D_i, U_i, \mathcal{F}_i)\} \;\middle|\; \{(D, U, \mathcal{F})\} \cup M \text{ is a tree-like modification datum} \right\}.
\]

In \cite[Proposition 2.17]{aly}, a bijection 
\begin{align*}
\varphi^{\mathrm{md}} : S^{\mathrm{md}}(\mathcal{E}, \mathcal{F}, D) &\longrightarrow S^{\mathrm{md}}(\mathcal{E}[D \xrightarrow{-} \mathcal{F}], \mathcal{F}, D)\\
\{(D_i, U_i, \mathcal{F}_i)\} &\longmapsto \{(D_i, U_i, \phi(\mathcal{F}_i))\}
\end{align*}
is constructed, such that

\begin{enumerate}
    \item[\text{(a)}] $\varphi^{\mathrm{md}}$ is compatible with pullbacks, and
    \item[\text{(b)}] if $D = \emptyset$ and we identify $S^{\mathrm{md}}(E, F, D)$ and $S^{\mathrm{md}}(E[D \xrightarrow{-} F], F, D)$, then $\varphi^{\mathrm{md}}$ becomes the identity map.
\end{enumerate}
This gives a mechanism to make sense of performing a sequence of modifications to $\cE$ one at a time, namely by canonically reinterpreting subsequent modifications of $\cE$ as modifications of an intermediate modification.

\begin{definition}{\cite[Definition 2.18]{aly}}
    Let $M$ be a tree-like modification datum for $\mathcal{E}$. If $M$ is empty, then we define $\mathcal{E}[\emptyset] = \mathcal{E}$. Otherwise, if $M = \{(D, U, \mathcal{F})\} \cup M'$, then define
    \[
        \mathcal{E}[M] = \mathcal{E}[D \xrightarrow{-} \mathcal{F}][\varphi^{\mathrm{md}}(M')].
    \]
    When $M = \{(D_1, U_1, \mathcal{F}_1), \dots, (D_m, U_m, \mathcal{F}_m)\}$, we will write
    \[
        \mathcal{E}[M] = E[D_1 \xrightarrow{-} \mathcal{F}_1] \cdots [D_m \xrightarrow{-} \mathcal{F}_m].
    \]
\end{definition}

We summarize below some properties that multi-modifications satisfy. 

\begin{proposition} \label{prop:multi_mod_properties}
Multi-modifications satisfy the following properties:
\begin{enumerate}[label=(\alph*), ref=\theproposition(\alph*)]
    \item \label{prop:pullback} \textbf{\textup{Compatibility with Pullbacks:}} Vector bundle modifications commute with pullbacks. For a morphism $f: Y \to X$ of schemes, if the support of the divisors in $M$ does not contain any component of the image of $f$, then the pullback datum $f^*M = \{(f^*D_i, f^{-1}(U_i), f^*\mathcal{F}_i)\}$ remains tree-like and fits into a natural isomorphism $f^*\mathcal{E}[M] \cong (f^*\mathcal{E})[f^*M]$ \cite[Corollary 2.19]{aly}.
    
    \item \label{prop:commutativity} \textbf{\textup{Commutativity of Modifications:}} If $M'$ is any reordering of a tree-like modification datum $M$, then there exists a natural pullback-compatible isomorphism $\mathcal{E}[M] \cong \mathcal{E}[M']$ \cite[Proposition 2.20]{aly}.
    
    \item \label{prop:twisting} \textbf{\textup{Compatibility with Twisting:}} Given a Cartier divisor $D$ on $X$, if we twist the datum to form $M(D) = \{(D_i, U_i, \mathcal{F}_i(D))\}$, then $M(D)$ is a valid tree-like datum for the twisted bundle $\mathcal{E}(D)$, yielding a natural pullback-compatible isomorphism $\mathcal{E}[M](D) \cong \mathcal{E}(D)[M(D)]$ \cite[Proposition 2.21]{aly}.

    \item \label{prop:combining} \textbf{\textup{Combining Modifications:}} Consider a tree-like modification datum 
    \[ M = \{(aD, U, \mathcal{F}_1), (bD, U, \mathcal{F}_2)\} \] 
    for $\mathcal{E}$, where $a, b$ are non-negative integers.
    \begin{enumerate}[label=(\roman*), ref=\theenumi(\roman*)]
        \item \label{prop:combining_linear} If $\mathcal{F} = \mathcal{F}_1 = \mathcal{F}_2$, then
        \[ \mathcal{E}[aD \xrightarrow{-} \mathcal{F}][bD \xrightarrow{-} \mathcal{F}] \cong \mathcal{E}[(a + b)D \xrightarrow{-} \mathcal{F}]. \]
        \item \label{prop:combining_independent} If $\mathcal{F}_1, \mathcal{F}_2$ are linearly independent and $a = b = 1$, then
        \[ \mathcal{E}[D \xrightarrow{-} \mathcal{F}_1][D \xrightarrow{-} \mathcal{F}_2] \cong \mathcal{E}[D \xrightarrow{-} \mathcal{F}_1 + \mathcal{F}_2](-D). \]
    \end{enumerate}
    In addition, both isomorphisms are compatible with pullbacks. See \cite[Proposition 2.23]{aly}.
\end{enumerate}
\end{proposition}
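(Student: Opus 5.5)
Each item is a statement about coherent subsheaves of $\mathcal{E}$, and the claimed isomorphisms are natural. So my plan is to reduce everything to local computations and then glue. Concretely, I would prove each isomorphism on the open sets $U_i$ where the data live. Away from $\bigcup_i \operatorname{supp} D_i$, all the bundles in sight are canonically identified with $\mathcal{E}$, since the modifications are isomorphisms there. The local isomorphisms are canonical inclusions of subsheaves of $\mathcal{E}\otimes K(X)$, so they agree on overlaps and glue. This is also how one would recover the cited results \cite[Corollary 2.19, Propositions 2.20, 2.21, 2.23]{aly}; in the paper it suffices to cite them.

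\textbf{Local model.} The key local input is the tree-like hypothesis. Near a point $x$, after shrinking, it lets me choose a local frame $e_1,\dots,e_m$ of $\mathcal{E}$ adapted to all the $\mathcal{F}_i$ at once. By this I mean that each $\mathcal{F}_i$ is spanned by a subset $S_i$ of the frame, and the subsets form a chain-or-disjoint (tree) family. I would get this by induction on the poset of inclusions: in case (1) the $\mathcal{F}_i|_x$ are independent and extend to a frame; in case (2) there are nested subbundles, and I build the frame compatibly with the flag. In such a frame, with $D_i$ cut out by $t_i$, the modification $\mathcal{E}[D_i\xrightarrow{-}\mathcal{F}_i]$ is spanned by $e_j$ for $j\in S_i$ and $t_ie_j$ for $j\notin S_i$. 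The map $\varphi^{\mathrm{md}}$ sends each $\mathcal{F}_k$ to the saturation of its image, which is again spanned by the corresponding rescaled frame vectors. So the iterated modification is the span of $\bigl(\prod_{i:\, j\notin S_i} t_i\bigr)e_j$.

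\textbf{Deducing (a)--(d).} With this diagonal description, the items follow in turn.
\begin{enumerate}
\item[(b)] The product formula is symmetric in the order of $i$, which gives commutativity.
\item[(c)] Twisting by $\mathcal{O}(D)$ multiplies every frame vector by the same local generator, so it commutes with the construction.
\item[(d)(i)] Modifying twice toward the same $\mathcal{F}$ gives the factor $t^at^b=t^{a+b}$ on the complement of $S$.
\item[(d)(ii)] Here $S_1\cap S_2=\emptyset$. Each $e_j$ acquires the factor $t$ exactly once if $j\in S_1\cup S_2$, and $t^2$ otherwise. This is precisely $\mathcal{E}[D\xrightarrow{-}\mathcal{F}_1+\mathcal{F}_2](-D)$.
\item[(a)] Pulling back the frame along $f$ gives a frame of $f^*\mathcal{E}$, and $f^*t_i$ is a nonzerodivisor because no component of the image lies in $\operatorname{supp} D_i$. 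Hence $f^*D_i$ is Cartier, the kernel sequence stays exact after pullback ($\mathrm{Tor}_1$ vanishes), and the diagonal formula pulls back. Tree-likeness is pointwise linear algebra, so it is preserved.
\end{enumerate}

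\textbf{Main obstacle.} I expect the main difficulty to be constructing the simultaneously adapted frame in case (2) of tree-likeness. The inclusions $\mathcal{F}_i|_U\subset\mathcal{F}_j|_U$ hold only on some neighborhood, and the pattern of inclusions can change with $I'$. I would handle this by working locally at $x$ with the minimal elements of the inclusion poset and inducting on $|I|$. A second subtlety is checking that $\varphi^{\mathrm{md}}$, i.e.\ saturating the image, matches the rescaled-frame description. I would verify this directly in the frame rather than appealing to the abstract bijection.
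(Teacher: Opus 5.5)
The paper gives no proof of this proposition: each item is quoted from \cite{aly} (Corollary 2.19 and Propositions 2.20, 2.21, 2.23). Your proposal is therefore a genuinely different, self-contained route, and it is sound. Suppose that near each point you have a frame $e_1,\dots,e_m$ in which each $\mathcal{F}_i$ is spanned by $\{e_j\}_{j\in S_i}$, for a laminar family of index sets. Then the iterated negative modification is the subsheaf of $\mathcal{E}$ spanned by $\bigl(\prod_{i:\,j\notin S_i}t_i\bigr)e_j$. Items (b), (c), (d) become equalities of subsheaves of $\mathcal{E}$ (or $\mathcal{E}(D)$) that can be read off this formula. Naturality and pullback-compatibility are automatic, since no choices survive globally. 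The citation buys brevity and inherits the precise hypotheses of \cite{aly}. Your argument buys transparency about exactly where tree-likeness enters: in the existence of the adapted frame, and in the fact that $\varphi^{\mathrm{md}}$ preserves it.

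Three things should be tightened.

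\emph{The adapted frame.} The facts you need are the following:
\begin{itemize}
\item near $x$, two nonzero $\mathcal{F}_i,\mathcal{F}_j$ are either nested on a neighborhood or independent at $x$ (tree-likeness for $I'=\{i,j\}$);
\item the members containing a given nonzero $\mathcal{F}_i$ form a chain, since any two of them share $\mathcal{F}_i$ and so are not independent;
\item a subfamily with no nested pair is independent (tree-likeness for that $I'$).
\end{itemize}
Then take a complement to the direct sum of the maximal members, and recurse inside each member using its maximal proper sub-members. This replaces your case split on conditions (1)/(2).

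\emph{The map $\varphi^{\mathrm{md}}$.} Describe $\varphi^{\mathrm{md}}(\mathcal{F}_k)$ as the unique subbundle of $\mathcal{E}'=\mathcal{E}[D\xrightarrow{-}\mathcal{F}]$ agreeing with $\mathcal{F}_k$ off $D$. It is unique because $t$ is a nonzerodivisor. Avoid calling it a ``saturation of the image'': $\mathcal{E}'\subset\mathcal{E}$, so the natural object is $\mathcal{F}_k\cap\mathcal{E}'$, and saturation need not give a subbundle on a general scheme. In your frame this subbundle is visibly spanned by the rescaled $e_j$, $j\in S_k$, so the adapted frame persists with the same index sets.

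\emph{The hypothesis in (a).} The step ``$f^*t_i$ is a nonzerodivisor'' does not follow from the hypothesis as literally phrased. Take $Y=Y_1\cup Y_2$ with $f(Y_1)\not\subset\operatorname{supp}D_i$ and $Y_2$ contracted to a point of $f(Y_1)\cap\operatorname{supp}D_i$. Then no component of the image lies in $\operatorname{supp}D_i$, yet $f^*D_i$ is not Cartier and $(f^*\mathcal{E})[f^*M]$ is undefined. The hypothesis should be read as ``no associated point of $Y$ maps into $\operatorname{supp}D_i$.'' With that reading, your $\mathrm{Tor}_1$ argument is correct.
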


\begin{notation}
When it is clear that $M$ is a tree-like modification datum for $\mathcal{E}$, we will simply write $M$ in place of $M(D)$ in the situation of Proposition \ref{prop:multi_mod_properties}(c). With this notation, 
\[
\mathcal{E}[M](D)=\mathcal{E}(D)[M].
\]
That is, modifications and twists commute.
\end{notation}

The main setting where we will use modifications of bundles is the following.

\begin{situation}\label{sit: modification_normal}
Let $f:C=C' \cup L \to X_{r,s}$ be an unramified morphism out of the union of a line $L$ (of degree $(1,0)$ or $(1,1)$) and a nodal curve $C'$ meeting at a single node $q=C'\cap L$.

Let $N'_{C'\cup L}$ be a vector bundle on $C' \cup L$. Assume that $N'_{C'\cup L}$ is equipped with an isomorphism to the normal bundle $N_{C/X_{r,s}}$ over an open set of $C' \cup L$ containing the entire line $L$, and in particular containing an open neighborhood $U\subset C$ of $q$. Let $N'_{C'}$ be the vector bundle on $C'$ obtained by gluing $N'_{C'\cup L}|_{C' \setminus q}$ along $U \setminus q$, via the given isomorphism, to the restricted normal bundle $N_{C'/X_{r,s}}|_{U \cap C'}$. 

Let $v \in L$ and $w \in T_qC'$ be points distinct from $q$.
\end{situation}

The simplest example of Situation \ref{sit: modification_normal} is when $N'_{C'\cup L}=N_{C/X_{r,s}}$, in which case $N'_{C'}=N_{C'/X_{r,s}}$. More generally, we will consider the situation in which $N'_{C'\cup L}$ is a modification of $N_{C/X_{r,s}}$ at finitely many points of $C'\setminus q$, in which case $N'_{C'}$ is the corresponding modification of $N_{C'/X_{r,s}}$.

\begin{proposition}{\cite[Corollary 3.2]{HH} or \cite[Proposition 8.3]{aly}}\label{prop:hh-isomorphism}

In Situation \ref{sit: modification_normal}, we have isomorphisms
\[
N'_{C'\cup L}|_{C'} \cong N'_{C'}[q \xrightarrow{+} v] \quad \text{and} \quad N'_{C'\cup L}|_{L} \cong N_L[q \xrightarrow{+} w].
\]
\end{proposition}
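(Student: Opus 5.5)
The plan is to reduce both isomorphisms to a local computation at the node $q$. Away from $q$, $N'_{C'\cup L}$ is already glued from modifications of normal bundles of the components, so everything happens in a neighbourhood $U$ of $q$. There, by the hypotheses of Situation~\ref{sit: modification_normal}, $N'_{C'\cup L}$ agrees with $N_{C/X_{r,s}}$ and $N'_{C'}$ agrees with $N_{C'/X_{r,s}}$. It therefore suffices to prove the local statements
\[
N_{C/X_{r,s}}|_{C'\cap U}\cong N_{C'/X_{r,s}}|_{U}[q\xrightarrow{+}v], \qquad N_{C/X_{r,s}}|_{L}\cong N_{L}[q\xrightarrow{+}w],
\]
where $v$ and $w$ are interpreted as tangent directions. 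Specifically, $v$ is the image of $T_qL$ in $N_{C'/X_{r,s}}|_q$, and $w$ is the image of $T_qC'$ in $N_L|_q$. The map $f$ is unramified, so $f(C')$ and $f(L)$ meet transversally at $f(q)$ in the smooth variety $X_{r,s}$, and these images are nonzero. By symmetry it is enough to treat the restriction to $C'$; the one-point modification then extends globally because it is supported at $q$ and compatible with the gluing defining $N'_{C'}$.

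For the local computation I will choose étale (or formal) coordinates $x_1,\dots,x_r$ at $f(q)$. In these coordinates $C'$ is $\{x_2=\dots=x_r=0\}$ and $L$ is $\{x_1=x_3=\dots=x_r=0\}$, and the node is the lci subscheme $C=\{x_1x_2=0,\ x_3=\dots=x_r=0\}$. Its conormal sheaf $I/I^2$ is free on the generators $x_1x_2, x_3,\dots,x_r$, so $N_C$ is the dual frame $(x_1x_2)^*, x_3^*,\dots,x_r^*$. Restricting to $C'$, where $x_2=0$, we compare this with $N_{C'}$, which has frame $x_2^*,x_3^*,\dots,x_r^*$. The natural map $I_{C'}/I_{C'}^2\to I_C/I_C^2|_{C'}$ sends $x_1x_2\mapsto x_1\cdot x_2$. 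Dually, the inclusion $N_{C'}\hookrightarrow N_C|_{C'}$ sends $x_2^*\mapsto x_1\,(x_1x_2)^*$. The cokernel is therefore the skyscraper at $q$ in the $x_2^*$-direction, which is exactly the normal direction of $L$, namely $v$.

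It follows that $N_C|_{C'}$ contains $N_{C'}$, agrees with it away from $q$, and is obtained by allowing a simple pole in the $v$-direction. Equivalently, $N_C|_{C'}=N_{C'}[q\xrightarrow{-}v](q)=N_{C'}[q\xrightarrow{+}v]$. The same computation with the roles of $x_1$ and $x_2$ exchanged gives the statement for $L$, and for $L$ no further gluing is needed because $N'$ agrees with $N_C$ on all of $L$.

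The main obstacle is bookkeeping rather than geometry. One must check that the gluing construction of $N'_{C'}$ from $N'_{C'\cup L}|_{C'\setminus q}$ and $N_{C'}|_{U}$ is compatible with the local isomorphism above, i.e. that the modification at $q$ commutes with the finitely many modifications elsewhere on $C'$. This follows from Proposition~\ref{prop:multi_mod_properties} (commutativity and pullback compatibility), since the modifications have disjoint supports. The unramified hypothesis is used to ensure that $f$ is a local embedding near $q$, so that the coordinate description is valid.
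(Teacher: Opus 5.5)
Your proposal is correct. The paper gives no proof of its own and simply cites \cite[Corollary 3.2]{HH} and \cite[Proposition 8.3]{aly}; your local computation of the conormal sheaf of the node $\{x_1x_2=0,\ x_3=\cdots=x_r=0\}$, carried out formally locally on the source so that non-injectivity of $f$ is harmless, is the standard proof of that result.

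There is one slip, in the direction of the conormal map. Since $I_C\subset I_{C'}$, the natural map is $I_C/I_C^2|_{C'}\to I_{C'}/I_{C'}^2$, given by $x_1x_2\mapsto x_1\cdot x_2$, so your arrow is reversed. Its dual $N_{C'}\hookrightarrow N_C|_{C'}$, $x_2^*\mapsto x_1(x_1x_2)^*$, is stated correctly, so the conclusion is unaffected.

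Your appeal to commutativity of modifications is also unnecessary. The gluing that defines $N'_{C'}$ already reduces the claim to a neighbourhood of $q$, and there the natural identification away from $q$ is used on both sides.
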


\subsection{Smoothing of maps from reducible curves}

We will be interested in proving statements about maps out of a smooth rational curve by degeneration. As such, we will need the following criterion to ensure that maps out of singular curves can be smoothed.

\begin{lemma}\label{lem:maps_smooth}
    Let $X$ be a smooth, projective variety. Let $C=C_1 \cup\ldots \cup C_t$ be the decomposition into irreducible components of a nodal curve $C$, and let $f: C \to X$ be any map to $X$. Suppose that $H^1(C,f^*T_X)=0$. Then, $f$ deforms to a map from a smooth curve. More precisely, there is a flat family of stable maps $F:\mathcal{C} \to X$, where $\mathcal{C}/\Delta$ is a family of curves over the spectrum $\Delta=\mathrm{Spec}(R)$ of a DVR, with special fiber $f:C \to X$.
\end{lemma}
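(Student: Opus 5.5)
The plan is to use standard deformation theory of stable maps, with the vanishing $H^1(C,f^*T_X)=0$ as the key input. Let $\overline{\cM}$ be the moduli stack of stable maps to $X$ with the genus and class of $f$; if $f$ is not stable (it has contracted components with too few special points), first add marked points on those components to make it stable. Forgetting these points at the end does not affect the conclusion. The obstruction theory of $\overline{\cM}$ at $[f]$ is governed by the complex $R\Gamma$ of $[f^*\Omega_X\to\Omega_C]^\vee$. Its hypercohomology gives the exact sequence
\[
\operatorname{Ext}^1(\Omega_C,\cO_C)\to \operatorname{Ext}^2(f^*\Omega_X\to \Omega_C,\cO_C)\to H^1(C,f^*T_X)\to \operatorname{Ext}^2(\Omega_C,\cO_C).
\]
Here $\operatorname{Ext}^2(\Omega_C,\cO_C)=0$ for a nodal curve. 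The obstruction group $\operatorname{Ext}^2$ therefore surjects from $\operatorname{Ext}^1(\Omega_C,\cO_C)$, or vanishes when suitably compared. The cleaner route is to work relative to the stack $\mathfrak{M}$ of prestable curves. There the relative obstruction space is $H^1(C,f^*T_X)=0$, so $\overline{\cM}\to\mathfrak{M}$ is smooth at $[f]$.

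The second step uses that $\mathfrak{M}$ is smooth. The locus of curves with at least one node is a normal crossings divisor whose branches correspond to the nodes: near $[C]$, the versal deformation of $C$ is smooth with coordinates $t_1,\dots,t_\delta$ smoothing the $\delta$ nodes independently. Choose a DVR $\Delta$ and a map $\Delta\to\mathfrak{M}$ sending the closed point to $[C]$, with all $t_i$ nonzero on the generic point (for instance $t_i=t$ along a generic line). This gives a family $\cC/\Delta$ with special fiber $C$ and smooth generic fiber.

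The third step lifts this family. Since $\overline{\cM}\to\mathfrak{M}$ is smooth at $[f]$, the formal lifting property (or smoothness, after an étale base change of $\Delta$, which is still a DVR spectrum) lets us lift $\Delta\to\mathfrak{M}$ to $\Delta\to\overline{\cM}$ through $[f]$. This gives $F:\cC\to X$ extending $f$ with smooth general fiber, which is the required flat family. Smoothness yields lifts over complete local rings; one can pass to the completion of a DVR, or use Artin approximation, to obtain an honest DVR.

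The main obstacle is justifying that the relative obstruction of $\overline{\cM}\to\mathfrak{M}$ is exactly $H^1(C,f^*T_X)$, and that $\mathfrak{M}$ has independent node smoothings, in arbitrary characteristic. I would cite the standard constructions of the obstruction theory (Behrend–Fantechi) and of $\mathfrak{M}$ as a smooth Artin stack, both of which are characteristic-free. An alternative, more hands-on argument is to glue deformations: deform the node $xy=0$ to $xy=t$ locally, and use $H^1=0$ to extend the map order by order, since each obstruction lies in $H^1(C,f^*T_X)$.
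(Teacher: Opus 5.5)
Your proposal is correct and follows the same route as the paper. The paper simply notes the exact sequence $\mathrm{Def}(C,f)\to\mathrm{Def}(C)\to H^1(C,f^*T_X)=0$ and concludes that any smoothing of $C$ lifts to a deformation of $(C,f)$. Your smoothness of $\overline{\cM}\to\mathfrak{M}$ at $[f]$, your choice of $\Delta\to\mathfrak{M}$ smoothing every node, and your lifting step spell out exactly this argument more carefully.

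One small correction: your first displayed sequence has its middle terms in the wrong order. It should read $\operatorname{Ext}^1(\Omega_C,\cO_C)\to H^1(C,f^*T_X)\to\operatorname{Ext}^2([f^*\Omega_X\to\Omega_C],\cO_C)\to\operatorname{Ext}^2(\Omega_C,\cO_C)=0$. In this form it gives at once both the vanishing of the absolute obstruction space and the surjectivity $\mathrm{Def}(C,f)\to\mathrm{Def}(C)$. Since you drop that detour in favour of the relative argument, the error does not affect your proof.
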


\begin{proof}

    Let $\mathrm{Def}(C,f)$ (resp. $\mathrm{Def}(C)$) denote the space of deformations of the map (resp. of the curve), and let $\mathrm{Ob}(f)$ denote the obstruction space for deformations of $f$ that keep the curve $C$ constant. There is an exact sequence
    \[
    \mathrm{Def}(C,f) \to \mathrm{Def}(C) \to \mathrm{Ob}(f) \cong H^1(C,f^*T_X)=0.
    \]
    Therefore, a deformation of $C$ to a smooth curve lifts to a deformation of $(C,f)$.
\end{proof}

\begin{remark}\label{rmk:H1_vanishing}
Let $\nu \colon \widetilde{C} \to C$ be the normalization map. Then, there is an exact sequence
\[
0 \to f^*T_X \to \nu_*\nu^*(f^*T_X)
   \to \bigoplus_{p \,\text{node}} T_X|_{f(p)}
   \to 0.
\]
Taking the associated long exact sequence in cohomology, we see that the condition
$
H^1(C,f^*T_X)=0
$
is equivalent to the following two conditions:
\begin{enumerate}
    \item[$\bullet$] $H^1(C_i,f^*T_X|_{C_i})=0$ for all $i=1,\ldots,t$;
    \item[$\bullet$] the evaluation map
    \[
    \bigoplus_i H^0(C_i,f^*T_X|_{C_i})
    \longrightarrow
    \bigoplus_{p\,\text{node}} T_X|_{f(p)}
    \]
    is surjective.
\end{enumerate}
\end{remark}

\subsection{The Harder-Narasimhan Filtration and the positive subsheaf}

For any vector bundle $\mathcal{E}$ on a smooth curve $C$, denote by $\mathrm{HN}_\bullet(\mathcal{E})$ the  Harder-Narasimhan filtration of $\mathcal{E}$. We refer to~$\mathrm{HN}_1(\mathcal{E})$ as the \emph{positive subsheaf} of~$\mathcal{E}$. 

When $C=\bP^1$, the vector bundle $\mathcal{E}$ splits as $\mathcal{E} \cong \cO(a_1)^{x_1} \oplus \ldots \oplus \cO(a_t)^{x_t}$ for some $a_t <\ldots <a_1$. Then, the Harder-Narasimhan filtration 
$\mathrm{HN}_\bullet(\mathcal{E})$ of $\mathcal{E}$ is 
$$
\mathrm{HN}_i(\mathcal{E})= \oplus_{j \leq i} \cO(a_j)^{x_j}.
$$

\section{Obstructions to balancedness}\label{sec:obstructions}

In \S\ref{sec:obstructions}--\ref{sec:char2}, we will be concerned with determining the normal bundle \(N_{d,k}\) of a general map \(f :C=\bP^1 \to X_{r,s}\). In this section, we explain three ways in which \(N_{d,k}\) can fail to be balanced. We will see later that these comprise a complete set of obstructions to balancedness.

\begin{notation}\label{notation:C_P1}
    For the remainder of the paper, $C$ always denotes a connected nodal curve of arithmetic genus 0. When we speak of a ``general map (curve) $f :C \to X_{r,s}$,'' it is implicit that $C=\bP^1$, unless $C$ has already been specified to be reducible.
\end{notation}

\begin{notation}\label{slope}
    Let $\cE$ be a vector bundle on $C$. Then, $\mu(\cE)=\frac{\deg(\cE)}{\mathrm{rank}(\cE)}$ denotes its slope.
\end{notation}

\subsection{Degenerate curves}\label{sec:degenerate}

Fix $r$, $s$, and suppose that $(d,k)$ is degenerate (Definition \ref{def:degenerate}). This can happen in three ways.

\begin{situation}\label{sit:degenerate}
\,
\begin{enumerate}
\item $d-k < r-s-1$ and $k \geq s+1$,
\item $d-k < r-s-1$ and $k < s+1$, and
\item $d-k \geq r-s-1$, in which case $d < r$.
\end{enumerate}
\end{situation}

Let $f \colon C \to X_{r,s}$ be a general curve of degree $(d,k)$. In each of the three cases of Situation \ref{sit:degenerate}, the image of $C$ is contained in a linear space $Y \subset X_{r,s}$, itself isomorphic to a blow-up of a lower-dimensional projective space along a linear subspace. We compute this linear space $Y$ in each of the three cases.

\begin{enumerate}
\item[(1)] Suppose first that $d-k < r-s-1$. Then, the image of $C$ in $\bP^{r-s-1}$ spans a linear subspace $Z \subset \bP^{r-s-1}$ of dimension $d-k$, cut out by $(r-s-1)-(d-k)$ general linear equations. The pullback $\pi^{-1}(Z) \subset X_{r,s}$ is isomorphic to $\Bl_{\bP^s}(\bP^{s+1+d-k})$. If $k \geq s+1$, then $C$ is a general, non-degenerate curve of degree $(d,k)$ on $Y := \pi^{-1}(Z)$.

\item[(2)] If $k < s+1$, then the degree $d$ of $C$ in $\bP^{s+1+d-k}$ is strictly less than $\dim \Bl_{\bP^s}(\bP^{s+1+d-k})$, so $C$ lies on a further linear subspace $Y \subset \pi^{-1}(Z)$. The subspace $Y$ is cut out by $s+1-k$ additional general linear equations, pulled back from $\bP^r$. Then, $Y \cong \Bl_{\bP^{k-1}}(\bP^d)$, and $C$ is a general, non-degenerate curve of degree $(d,k)$ on $Y$. When $k=0$, we set $\Bl_{\bP^{k-1}}(\bP^d)=\bP^d$. 

\item[(3)] Finally, if $d-k \geq r-s-1$, then the image of $C$ in $\bP^{r-s-1}$ is non-degenerate, but its span $Y$ in $X_{r,s}$ is cut out by $r-d$ general linear equations pulled back from $\bP^r$. Then, $Y \cong \Bl_{\bP^{s-(r-d)}}(\bP^d)$, and $C$ is once more a general, non-degenerate curve on $Y$.
\end{enumerate}

In all three cases, we have a short exact sequence
\[
0 \to T_Y \to T_{X_{r,s}}|_Y \to N_{Y/X_{r,s}} \to 0.
\]
Let $Y' \subset X_{r,s}$ be a linear subspace of dimension $\dim X_{r,s}-\dim Y-1$, with the property on the one hand that $Y \cap Y' = \emptyset$, but on the other hand that the images of $Y,Y'$ span both $\bP^r$ and $\bP^{r-s-1}$ under the respective projections. Then, the rational map $X_{r,s} \dashrightarrow Y$ given by projecting from $Y'$ induces a splitting $T_{X_{r,s}}|_Y \to T_Y$.

In particular, after passing to the quotient by $T_C$, we have $N_{(d,k;r,s)} \cong \left. N_{Y/X_{r,s}} \right|_C \oplus N_{C/Y}$ in all three cases. Because $Y \subset X_{r,s}$ is cut out by hyperplane sections in all three cases, we may compute the following.
\begin{proposition}\label{prop:degenerate_splittings}
In the three cases of Situation \ref{sit:degenerate}, we have
\begin{enumerate}
\item If $d-k < r-s-1$ and $k \geq s+1$, then
\[
N_{(d,k;r,s)} \cong N_{(d,k;s+1+d-k,s)} \oplus \cO(d-k)^{(r-s-1)-(d-k)}.
\]
\item If $d-k < r-s-1$ and $k < s+1$, then
\[
N_{(d,k;r,s)} \cong N_{(d,k;d,k-1)} \oplus \cO(d-k)^{(r-s-1)-(d-k)} \oplus \cO(d)^{s+1-k}.
\]
where when $k=0$, we set $N_{(d,k;d,k-1)}= N_{C/\bP^d}$.
\item If $d-k \geq r-s-1$ and $d < r$, then
\[
N_{(d,k;r,s)} \cong N_{(d,k;d,s-(r-d))} \oplus \cO(d)^{r-d}.
\]
\end{enumerate}
\end{proposition}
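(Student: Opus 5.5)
The plan is to use the splitting $N_{(d,k;r,s)} \cong N_{Y/X_{r,s}}|_C \oplus N_{C/Y}$ established just before the statement. It then suffices, in each of the three cases, to identify $Y$ with the correct blow-up, so that $N_{C/Y}$ is the normal bundle of a general non-degenerate curve there, and to compute $N_{Y/X_{r,s}}|_C$. For the first summand, the discussion preceding the statement already identifies $Y$ and shows that $C$ is general of degree $(d,k)$ on it. In case (1), $Y\cong \Bl_{\bP^s}(\bP^{s+1+d-k})$, giving $N_{(d,k;s+1+d-k,s)}$. In case (2), $Y\cong\Bl_{\bP^{k-1}}(\bP^d)$, giving $N_{(d,k;d,k-1)}$, read as $N_{C/\bP^d}$ when $k=0$. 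In case (3), $Y\cong \Bl_{\bP^{s-(r-d)}}(\bP^d)$. One point to check in each case is that the divisor classes restrict correctly: the pullback of the hyperplane class of $\bP^r$ to $Y$ is the hyperplane class of the smaller projective space, and $E\cap Y$ is the exceptional divisor of $Y$. Then the degree of $C$ in $Y$ is still $(d,k)$. This holds because the linear equations cutting out $Y$ are general among those of the given type, so $Y$ is the strict transform of a linear space meeting $\bP^s$ in the expected dimension.

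For the second summand, I will use the fact that $Y$ is a complete intersection of divisors in $X_{r,s}$ of two types. The first type is pullbacks of hyperplanes from $\bP^{r-s-1}$, whose class is $H-E$ and which restrict to $\cO(d-k)$ on $C$. The second type is pullbacks of general hyperplanes from $\bP^r$, with class $H$, which restrict to $\cO(d)$. The divisors of class $H$ must contain $\bP^s$ in case (1), and need not in cases (2) and (3). Since $Y$ is a transverse intersection of these divisors (transversality to be checked in local toric coordinates, or by a dimension count), we get $N_{Y/X_{r,s}} \cong \bigoplus_i \cO_Y(D_i)|_Y$. Restricting to $C$ then gives the direct sum of line bundles.

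It remains to count the equations of each type. In case (1) there are $(r-s-1)-(d-k)$ equations of class $H-E$. In case (2) there are the same $(r-s-1)-(d-k)$ equations of class $H-E$, together with $s+1-k$ further equations of class $H$ pulled back from $\bP^r$. These impose conditions on $\pi^{-1}(Z)\cong \Bl_{\bP^s}(\bP^{s+1+d-k})$ that cut $\bP^s$ down to $\bP^{k-1}$. Since they do not contain $\bP^s$, their proper transforms have class $H$, not $H-E$. In case (3) there are $r-d$ equations of class $H$. This yields exactly the three formulas.

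The main obstacle is the class bookkeeping in case (2): a general hyperplane through a point of $\bP^r$ that does not contain $\bP^s$ must pull back to class $H$ with no $E$-component. To settle this, I will verify that the intersection is transverse along $E$, which can be done in a toric chart. The equation count matches in each case, since the ranks add up: $(r-1)=(\dim Y-1)+\operatorname{codim} Y$.
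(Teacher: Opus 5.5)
Your proposal is correct and matches the paper's argument: the paper also uses the splitting $N_{(d,k;r,s)}\cong N_{Y/X_{r,s}}|_C\oplus N_{C/Y}$ (obtained by projecting from a complementary linear space $Y'$), and reads off $N_{Y/X_{r,s}}|_C$ from the fact that $Y$ is cut out by hyperplane sections. These are of class $H-E$ (pulled back from $\bP^{r-s-1}$, restricting to $\cO(d-k)$) and of class $H$ (from hyperplanes of $\bP^r$ not containing $\bP^s$, restricting to $\cO(d)$), and your count of each type and your transversality check make explicit what the paper leaves implicit — except for one wording slip: in case (1) the divisors are proper transforms of hyperplanes \emph{containing} $\bP^s$, so of class $H-E$, not ``divisors of class $H$ containing $\bP^s$.''
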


In particular, the normal bundles of general, degenerate curves are determined by those of general, non-degenerate curves.

\begin{corollary}\label{cor:most_degenerate_not_balanced}
Suppose that $d-k < r-s-1$ (cases (1) and (2) of Situation \ref{sit:degenerate}). Then, $N_{(d,k;r,s)}$ is not balanced, unless $(d,k)=(1,0)$.
\end{corollary}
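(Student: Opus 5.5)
The plan is to read off an explicit summand of small degree from the splittings in Proposition \ref{prop:degenerate_splittings}, and then show by a degree count that the remaining summands are too large. Write $e:=d-k\ge 1$. In cases (1) and (2) the summand $\cO(e)$ appears with multiplicity $(r-s-1)-e\ge 1$. If $N_{(d,k;r,s)}$ were balanced, every other summand would then have degree at most $e+1$. So it suffices to exhibit one more direct summand $\cE$ of $N_{(d,k;r,s)}$, of rank $\rho\ge 1$, with $\deg \cE>(e+1)\rho$. Such an $\cE$ must have a summand of degree at least $e+2$, which contradicts balancedness.

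To compute degrees I use $\deg N_{C/X}=-K_X\cdot C-2$ for an unramified rational curve. For $X_{r',s'}$ and a curve of degree $(d,k)$ this gives
\[
\deg N_{(d,k;r',s')}=(r'+1)d-(r'-s'-1)k-2,
\]
and the rank is $r'-1$.

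\emph{Case (1)} ($k\ge s+1$). Take $\cE=N_{(d,k;s+1+e,s)}$, which has rank $s+e\ge 1$ and degree $(s+2+e)(e+k)-ek-2=(s+2+e)e+(s+2)k-2$. Subtracting $(e+1)(s+e)$ leaves
\[
e+(s+2)k-2-s\ \ge\ e+(s+2)(s+1)-(s+2)>0 .
\]
So $\cE$ has a summand of degree at least $e+2$, and $N$ is not balanced.

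\emph{Case (2)} ($k<s+1$). Here $\cO(d)^{s+1-k}$ is a nonzero summand. If $k\ge 2$, then $d\ge e+2$, and it coexists with $\cO(e)$, so $N$ is unbalanced.

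If $k=1$, take $\cE=N_{(d,1;d,0)}$. It has rank $d-1\ge 1$, since $d>k=1$, and degree $(d+1)d-(d-1)-2=d^2-1$. This is greater than $d(d-1)=(e+1)\rho$ because $d>1$.

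If $k=0$, then $\cE=N_{C/\bP^d}\cong \cO(d+2)^{d-1}$, which has a summand as soon as $d\ge 2$. This summand has degree $d+2=e+2$, so $N$ is unbalanced. The only survivor is $d=1$, i.e.\ $(d,k)=(1,0)$. In that case $N=\cO(1)^{r-1}$, which is indeed balanced and accounts for the exception in the statement.

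The only delicate point is the bookkeeping of ranks and degrees in each subcase. In particular one must check $\rho\ge 1$, and must isolate $(1,0)$ as the unique case where the extra summand is empty. I expect no real obstacle beyond this arithmetic.
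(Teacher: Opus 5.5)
Your proof is correct and follows essentially the same route as the paper. In case (1) you show that the summand $N_{(d,k;s+1+d-k,s)}$ has slope exceeding $d-k+1$; your inequality $e+(s+2)k-2-s>0$ is exactly the paper's $d+k-2+s(k-1)>0$. In case (2) you use the coexisting summands $\cO(d-k)$ and $\cO(d)$ to reduce to $k\le 1$, then settle $k=0$ via the rational normal curve (Theorem~\ref{thm:Pr}) and $k=1$ via the slope $d+1$ of $N_{(d,1;d,0)}$.
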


\begin{proof}
Consider first case (1) of Proposition \ref{prop:degenerate_splittings}. We claim that $\mu\left(N_{(d,k;s+1+d-k,s)}\right) > d-k+1$, which implies that $N_{(d,k;r,s)}$ cannot be balanced. This inequality is equivalent to
\[
(s+2+d-k)d-(d-k)k-2 > (d+s-k)(d-k+1),
\]
which in turn is equivalent to $d+k-2+s(k-1)>0$. This last inequality holds because $d > k \geq s+1 \geq 1$.

Next, consider case (2) of Proposition \ref{prop:degenerate_splittings}. Clearly, $N_{(d,k;r,s)}$ can only be balanced if $k=0,1$. If $k=0$, then by Theorem~\ref{thm:Pr}, $N_{(d,k;r,s)}$ cannot be balanced unless $d=1$. If $k=1$, then $\mu\left(N_{(d,1;d,0)}\right)=d+1$, so $N_{(d,1;r,s)}$ cannot be balanced.
\end{proof}

\subsection{Projection sequence obstructions}\label{sec:splitting}
For $s < r-2$, recall the exact sequence \eqref{eq:proj_seq} 
\begin{equation*}
0\to T_{\rel} \to N_{d,k}\to N_{C/\bP^{r-s-1}} \to 0.
\end{equation*}

We have:
\begin{itemize}
\item $\deg(T_{\rel})=(s+1)d+k$ and $\rk(T_{\rel})=s+1$,
\item $\deg(N_{d,k})=(r+1)d-(r-s-1)k-2$ and $\rk(N_{d,k})=r-1$,
\item $\deg(N_{C/\bP^{r-s-1}})=(r-s)(d-k)-2$ and $\rk(N_{C/\bP^{r-s-1}})=r-s-2$.
\end{itemize}

\begin{lemma}\label{lem:Trel_balanced}
$T_{\rel}$ is balanced.
\end{lemma}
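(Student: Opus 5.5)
The plan is to write $T_{\rel}$ explicitly using the projective bundle structure of $X_{r,s}$, and then recognize it as a positive elementary modification of a balanced bundle at general points in general directions. Recall $X_{r,s}\cong \bP_{\bP^{r-s-1}}(V)$, where $V=\cO^{s+1}\oplus\cO(-1)\subset\cO^{r+1}$. The fiber over $z\in\bP^{r-s-1}$ is the $\bP^{s+1}$ spanned by $\bP^s$ and $z$. The tautological subbundle is $\cO(-H)$, where $H$ is the hyperplane class pulled back from $\bP^r$. The relative Euler sequence reads
\[
0\to \cO\to \pi^*V\otimes\cO(H)\to T_{X_{r,s}/\bP^{r-s-1}}\to 0 .
\]
Pulling back along $f$ gives $f^*\pi^*V\otimes\cO(d)\cong \cO(d)^{s+1}\oplus\cO(k)$, because $\pi\circ f$ has degree $d-k$. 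Hence
\[
0\to\cO\xrightarrow{(x_0,\ldots,x_s,g)}\cO(d)^{s+1}\oplus\cO(k)\to T_{\rel}\to 0,
\]
which matches $\deg T_{\rel}=(s+1)d+k$. Concretely, write $f=(x_0:\cdots:x_s:gh_1:\cdots:gh_{r-s})$, where $\deg g=k$ and $g$ cuts out $f^{-1}(E)$. Then $x_0,\ldots,x_s$ are the $\bP^s$-coordinates and $g$ is the section of $\cO(k)$. Since we only need the statement for general $f$, we may take $x_0,\ldots,x_s,g$ to be general forms of the indicated degrees with no common zero.

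Next, I would compose the inclusion $\cO(d)^{s+1}\hookrightarrow\cO(d)^{s+1}\oplus\cO(k)$ with the quotient map to $T_{\rel}$. This composite is injective as a map of sheaves: its kernel is the intersection of the image of $\cO$ with $\cO(d)^{s+1}$, and that intersection is zero because $g\neq0$. A local computation shows that the cokernel is a torsion sheaf of length $k$ supported at the $k$ zeros $p_1,\ldots,p_k$ of $g$. At $p_j$, the modification enlarges $\cO(d)^{s+1}$ in the direction $(x_0(p_j):\cdots:x_s(p_j))$. Thus
\[
T_{\rel}\cong \cO(d)^{s+1}[p_1\xrightarrow{+}v_1]\cdots[p_k\xrightarrow{+}v_k],\qquad v_j=(x_0(p_j),\ldots,x_s(p_j)).
\]
Because $d>k$, the forms $x_i$ can take arbitrary prescribed values at the $k$ distinct points $p_j$. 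Hence, for general $f$, the points $p_j$ are general points of $\bP^1$ and the directions $v_j$ are general lines in the fibers.

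The remaining input, and the main step, is the following claim: if $\cE$ is balanced on $\bP^1$, $p$ is a general point and $v$ is a general line in $\cE|_p$, then $\cE[p\xrightarrow{+}v]$ is balanced. Applying this inductively $k$ times, starting from the balanced bundle $\cO(d)^{s+1}$, proves the lemma. To prove the claim, write $\cE=\cO(a)^x\oplus\cO(a+1)^y$ with $x\geq1$; when $x=0$, twist down by one so that the summands become $\cO(a)^{y}$ with $a$ replaced by $a+1$. Then $\cE[p\xrightarrow{+}v]$ has degree $\deg\cE+1$. Its summands all have degree at least $a$, since it contains $\cE$. It is balanced exactly when $h^0$ of its twist by $\cO(-a-2)$ vanishes. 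A section of that twist is a section of $\cE(-a-2)(p)$ whose polar part at $p$ lies along $v$. Since $\cE(-a-2)$ has no sections and $h^0(\cE(-a-2)(p))=y$, with the polar parts of these sections spanning exactly the $\cO(a+1)$-part of the fiber at $p$, a general $v$ (which has nonzero $\cO(a)$-component because $x\geq1$) admits no such section. Alternatively, one can simply invoke the standard fact that a general syzygy bundle on $\bP^1$ is balanced.
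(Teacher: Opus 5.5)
Your proof is correct, but it takes a genuinely different route from the paper.

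\textbf{The paper's route.} The paper uses the relative Euler sequence only for $(d,k)=(1,0)$. It handles $d=k$ by placing $C$ in a fiber $\bP^{s+1}$ and invoking Theorem~\ref{thm:Pr_tangent}. It then inducts on $d$ with $k$ fixed: it degenerates to $C'\cup L$ with $L$ a line of degree $(1,0)$, and applies Lemma~\ref{lem: criterion interpolation} together with openness of interpolation.

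\textbf{Your route.} You apply the Euler sequence for arbitrary $(d,k)$ and identify
\[
T_{\rel}\cong\cO(d)^{s+1}[p_1\xrightarrow{+}v_1]\cdots[p_k\xrightarrow{+}v_k],
\]
where the $p_j$ are the points of $f^{-1}(E)$ and the $v_j$ are the $\bP^s$-coordinates of $f(p_j)$. The direction is right: near $p_j$, the image of the $\cO(k)$ generator is the rational section $-x/g$ of $\cO(d)^{s+1}$. Its polar part lies along $x(p_j)$, which is nonzero by base-point-freeness. After that, you only need the elementary fact that a positive modification in a direction outside the positive subsheaf keeps a balanced bundle balanced. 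When you iterate, say explicitly that the fiber at $p_j$ of the intermediate bundle is canonically that of $\cO(d)^{s+1}$. Independence of the $v_j$ then makes each $v_j$ general relative to the relevant Harder--Narasimhan filtration.

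\textbf{Comparison.} Your argument is self-contained and visibly characteristic-free. It needs neither the Ramella--Larson input nor smoothing of nodal maps, and it gives the splitting type of $T_{\rel}$ explicitly. The paper's argument instead reuses the degeneration template that drives the rest of the paper, for instance the proof of Theorem~\ref{thm:intro_tangent}.

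\textbf{One correction.} Drop your ``alternatively'' remark: a general syzygy bundle on $\bP^1$ need not be balanced. Take the kernel of a general map $\cO(-1)^2\oplus\cO(-100)\to\cO$. It contains the Koszul kernel $\cO(-2)$ of the first two components, and the extension splits, giving $\cO(-2)\oplus\cO(-100)$. Balancedness in your case depends on the degrees being $(d,\ldots,d,k)$ with $d>k$, and that is exactly what your modification argument establishes.
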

\begin{proof}
We first compute $T_{\rel}$ in the cases $(d,k) = (1,0)$ and for $d=k$. While we typically assume $d>k$ throughout this paper, it is convenient in this proof to allow $d=k$ for the inductive argument.

Suppose $(d,k) = (1,0)$. Identifying $X_{r,s} \cong \bP(\cO_{\bP^{r-s-1}}(-1) \oplus \cO^{s+1})$, we obtain the relative Euler sequence
\[
0 \to \cO \to \cO_{X_{r,s}}(1) \otimes \bigg( \cO_{\bP^{r-s-1}}(-1) \oplus \cO^{s+1} \bigg) \to T_{X_{r,s}/\bP^{r-s-1}} \to 0
\]
on $X_{r,s}$. Restricting to $C$, this becomes
\[
0 \to \cO \xrightarrow{f} \cO \oplus \cO(1)^{ s+1} \to T_{\rel} \to 0.
\]
The injection $f$ is determined by its component maps
\[
f = (c, s_1, s_2, \dots, s_{s+1}),
\]
where $c \in H^0(\mathcal{O}) \cong \mathbb{C}$ and $s_i \in H^0(\mathcal{O}(1))$. The component $c$ is the restriction to $C$ of the canonical section of $\mathcal{O}_{X_{r,s}}(1) \otimes \mathcal{O}_{\bP^{r-s-1}}(-1) \cong \mathcal{O}_{X_{r,s}}(E)$ defining $\mathcal{E}$.

Since $C$ does not meet the blow-up center $\mathbb{P}^s$, the section $c$ is nowhere-vanishing along $C$. In particular, $c \neq 0$, so $f$ projects isomorphically onto the first summand $\mathcal{O}$ of the middle term. It follows that
\[
T_{\mathrm{rel}}|_{C} \simeq \frac{\mathcal{O} \oplus \mathcal{O}(1)^{ s+1}}{\mathcal{O}} \simeq \mathcal{O}(1)^{ s+1}.
\]

Now, suppose $d=k$. Then, $C$ lies in a fiber $\bP^{s+1}$ of the projection $X_{r,s} \to \bP^{r-s-1}$, and $T_{\rel}|_{C}= T_{\bP^{s+1}}|_{C}$ is balanced by Theorem \ref{thm:Pr_tangent}.

Finally, suppose $d > k \geq 0$. We proceed by induction on $d$. Let $C'\cup L$ be a union of smooth curves meeting at a single node $q$, and by slight abuse of notation, let $f:C'\cup L\to X_{r,s}$ be a map such that the restrictions of $f$ to $C'$ and $L$ have degrees $(d-1, k)$ and $(1,0)$, respectively, and are general. 

By Lemma \ref{lem:maps_smooth} and Remark \ref{rmk:H1_vanishing}, the map $f$ smooths to a map of degree $(d,k)$ with smooth domain. Indeed, Lemma \ref{lem:free} shows that the hypothesis on vanishing $H^1$ is satisfied, and the evaluation map is surjective, since
\[
f^*T_X|_{L} \simeq \cO(1)^{\oplus (r-2)} \oplus \cO(2)^{\oplus 2}.
\] Because interpolation is open in flat families, it suffices to show that the restriction of the relative tangent bundle of $f:C'\cup L\to X_{r,s}$ to $C'\cup L$ satisfies interpolation.

Let $D$ be the divisor on $L$ consisting of a reduced point in $L \smallsetminus \{q\}$.
Observe that
\[
H^0( L, T_{\rel}(-q-D)) = 0.
\]
As in Lemma \ref{lem: criterion interpolation}, define
\[
V = \mathrm{ev}_{C'}^{-1}\big(\mathrm{ev}_L(H^0(L, T_{\rel}|_L(-D)))\big).
\]

Then, $V = H^0(C', T_{\rel}|_{C'})$, which satisfies interpolation by the inductive hypothesis, and has dimension
\[
\chi(T_{\rel}|_{C'}) = \chi(T_{\rel}|_{C'}) + \chi\big(T_{\rel}|_L(-D - q)\big).
\]
The result follows from Lemma \ref{lem: criterion interpolation}.

\end{proof}

\begin{proposition}\label{prop:projection_split_char_not2}
Suppose that $s \neq r-2$, and that either
\begin{enumerate}
\item[(i)] $\charac(\bK)\neq 2$ and $d-k \geq r-s-1$, or 
\item[(ii)] $\charac(\bK)=2$ and $(r-s-2)\mid (d-k-1)$.
\end{enumerate}
Then, we have the following.
\begin{enumerate}
\item[(a)] The exact sequence \eqref{eq:proj_seq} splits if
\[
\mu(N_{C/\bP^{r-s-1}})=\frac{(r-s)(d-k)-2}{r-s-2} \leq \left\lfloor \frac{(s+1)d+k}{s+1} \right\rfloor + 1 = \left\lfloor \mu(T_{\rel}) \right\rfloor + 1.
\]
This inequality is equivalent to
\[
d \leq \frac{(r-s)(k+1)+(r-s-2)\left\lfloor \frac{k}{s+1} \right\rfloor}{2} =: U_{r,s}(k).
\]
\item[(b)] If $d \leq U_{r,s}(k)$, then $N_{d,k} \cong T_{\rel} \oplus N_{C/\bP^{r-s-1}}$ is balanced if and only if
\[
\mu(N_{C/\bP^{r-s-1}})=\frac{(r-s)(d-k)-2}{r-s-2} \geq \left\lceil \frac{(s+1)d+k}{s+1} \right\rceil - 1 = \left\lceil \mu(T_{\rel}) \right\rceil - 1.
\]
This inequality is equivalent to
\[
d \geq \frac{(r-s)(k-1)+4+(r-s-2)\left\lceil \frac{k}{s+1} \right\rceil}{2} =: L_{r,s}(k).
\]
\end{enumerate}
In particular, if $d<L_{r,s}(k)$, then $N_{d,k}$ is not balanced.
\end{proposition}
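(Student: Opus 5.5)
The plan is to reduce everything to explicit splitting types on $\bP^1$. First, I determine $N_{C/\bP^{r-s-1}}$. The composite $\pi\circ f\colon C\to\bP^{r-s-1}$ is a general rational curve of degree $d-k$: its degree is $d-k$, and a general map of degree $(d,k)$ projects to a general map of degree $d-k$.

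In case (i), $d-k\ge r-s-1$ and $\charac(\bK)\neq 2$, so Theorem \ref{thm:Pr}(a) says $N_{C/\bP^{r-s-1}}$ is balanced. In case (ii), Theorem \ref{thm:Pr}(b) with $(r-s-2)\mid(d-k-1)$ gives $\ell=0$. Hence $N_{C/\bP^{r-s-1}}\cong\cO(\alpha)^{r-s-2}$, again balanced; this also covers the subcase $d-k<r-s-1$, where divisibility forces $d-k=1$. By Lemma \ref{lem:Trel_balanced}, $T_{\rel}$ is balanced too, so both ends of \eqref{eq:proj_seq} have known balanced splitting types.

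For (a), the extension class of \eqref{eq:proj_seq} lives in $\mathrm{Ext}^1(N_{C/\bP^{r-s-1}},T_{\rel})=H^1(T_{\rel}\otimes N_{C/\bP^{r-s-1}}^\vee)$. This group is a sum of groups $H^1(\cO(t-n))$, where $t$ runs over the degrees of summands of $T_{\rel}$, namely $\lfloor\mu(T_{\rel})\rfloor$ and possibly $\lfloor\mu(T_{\rel})\rfloor+1$, and $n$ runs over the degrees of summands of $N_{C/\bP^{r-s-1}}$. It vanishes as soon as $t-n\ge -1$ for all such pairs, that is, when $\max n\le\lfloor\mu(T_{\rel})\rfloor+1$.

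Since $N_{C/\bP^{r-s-1}}$ is balanced, $\max n=\lceil\mu(N_{C/\bP^{r-s-1}})\rceil$. The condition $\lceil\mu\rceil\le\lfloor\mu(T_{\rel})\rfloor+1$ is equivalent to $\mu\le\lfloor\mu(T_{\rel})\rfloor+1$, because the right side is an integer, and this is the stated inequality. The main care needed is translating it into $d\le U_{r,s}(k)$. I write $\lfloor\mu(T_{\rel})\rfloor=d+\lfloor k/(s+1)\rfloor$ and clear denominators:
\[
(r-s)(d-k)-2\le (r-s-2)\bigl(d+\lfloor k/(s+1)\rfloor+1\bigr).
\]
This rearranges to $2d\le (r-s)k+(r-s-2)\lfloor k/(s+1)\rfloor+(r-s)$, which is $d\le U_{r,s}(k)$.

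For (b), the sequence splits, so $N_{d,k}\cong T_{\rel}\oplus N_{C/\bP^{r-s-1}}$. A direct sum of two balanced bundles is balanced if and only if all summand degrees lie in an interval of length $1$. Under the standing inequality from (a), the top degree of $N_{C/\bP^{r-s-1}}$ is at most $\lfloor\mu(T_{\rel})\rfloor+1$. So the only remaining way to fail is for some degree to be too small, and balancedness is equivalent to $\min n\ge \max t-1=\lceil\mu(T_{\rel})\rceil-1$.

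This requires a little case care. The condition $\lfloor\mu(N)\rfloor\ge\lceil\mu(T_{\rel})\rceil-1$ is the same as $\mu(N)\ge\lceil\mu(T_{\rel})\rceil-1$, since the right side is an integer. I also need to check that the two-sided conditions are jointly sufficient: if all degrees lie in $[\lceil\mu_T\rceil-1,\lfloor\mu_T\rfloor+1]$, this interval has length $1$ when $\mu(T_{\rel})\notin\bZ$. When $\mu(T_{\rel})\in\bZ$, $T_{\rel}$ is $\cO(\mu_T)^{s+1}$, and the length-$2$ interval is fine only if $N$ stays in $[\mu_T-1,\mu_T]$ or $[\mu_T,\mu_T+1]$. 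Balancedness of $N$ guarantees this: its degrees differ by at most $1$, and the split case $\mu_T-1$ together with $\mu_T+1$ cannot occur. I expect this integer-slope bookkeeping to be the main place to be careful.

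Finally, I convert the condition into $d\ge L_{r,s}(k)$ using $\lceil\mu(T_{\rel})\rceil=d+\lceil k/(s+1)\rceil$, clearing denominators exactly as in (a). The concluding statement follows: $L_{r,s}(k)\le U_{r,s}(k)$ by comparing the formulas, so $d<L_{r,s}(k)$ implies $d\le U_{r,s}(k)$, the sequence splits by (a), and $N_{d,k}$ is unbalanced by (b).
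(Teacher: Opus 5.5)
Your proposal is correct and follows the paper's proof. Both ends of \eqref{eq:proj_seq} are balanced by Theorem \ref{thm:Pr} and Lemma \ref{lem:Trel_balanced}, $d\le U_{r,s}(k)$ is exactly the vanishing of $\operatorname{Ext}^1(N_{C/\bP^{r-s-1}},T_{\rel})$, and $d\ge L_{r,s}(k)$ is exactly the condition that the split sum is balanced. You carry out explicitly the floor/ceiling arithmetic and the integer-slope check that the paper leaves implicit, and they are correct.
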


\begin{proof}
    The map $f:C \longrightarrow X_{r,s}$ is given by 
    \[
    [f_0f_1:\cdots:f_0f_{r-s}:f_{r-s+1}:\cdots:f_{r+1}],
    \]
    where $f_{r-s+1},\ldots,f_{r+1}\in H^0(C,\cL)$ are sections of a line bundle $\cL$ of degree $d$, $f_0\in H^0(C,\mathcal{O}(D))$ for a divisor $D$ of degree $k$, and $f_1,\ldots,f_{r-s}\in H^0(C,\cL(-D))$. The induced map to \(\mathbb{P}^{r-s-1}\) is determined by the sections $f_1,\ldots,f_{r-s}$, and hence its image is a general rational curve of degree $d-k$. In particular, the normal bundle $N_{C/\mathbb{P}^{r-s-1}}$ is balanced by Theorem \ref{thm:Pr}. The restriction of $T_{\rel}$ is also balanced by Lemma \ref{lem:Trel_balanced}.

    Thus, the inequality $d \leq U_{r,s}(k)$ is precisely the condition ensuring that
    \[
    \operatorname{Ext}^1\!\bigl(
    N_{C/\mathbb{P}^{r-s-1}},
    \, T_{\mathrm{rel}}|_{C}
    \bigr)=0,
    \]
    while the range $L_{r,s}(k)\leq d\leq U_{r,s}(k)$ is exactly the range for which the resulting split bundle is balanced.
\end{proof}

\begin{definition}\label{def:UL_edge}
By convention, we define $U_{r,r-2}(k)=L_{r,r-2}(k)=k+1$, consistent with the formulas of Proposition \ref{prop:projection_split_char_not2}.
\end{definition}

\begin{situation}\label{sit:char2_notdiv}
Suppose that $\charac(\bK) = 2$, that $s \neq r-2$, and that $(r-s-2) \nmid (d-k-1)$. Let $\epsilon \in [1,r-s-3]$ be the residue of $d-k-1 \pmod{r-s-2}$, so that
\[
N_{C/\bP^{r-s-1}} \cong \cO(a)^{r-s-2-\epsilon} \oplus \cO(a+2)^\epsilon,
\]
where
\[
a = (d-k)+2 \cdot \left\lfloor \frac{d-k-1}{r-s-2} \right\rfloor.
\]
\end{situation}

\begin{proposition}\label{prop:projection_split_char2}
In Situation \ref{sit:char2_notdiv}, if
\[
\mu(T_{\rel}) \geq a+1
\qquad \Longleftrightarrow \qquad
\frac{s+2}{s+1} \cdot k - 1 \geq 2 \cdot \left\lfloor \frac{d-k-1}{r-s-2} \right\rfloor,
\]
then the exact sequence \eqref{eq:proj_seq} splits, and $N_{d,k} \cong T_{\rel} \oplus N_{C/\bP^{r-s-1}}$ is not balanced.
\end{proposition}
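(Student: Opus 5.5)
We will imitate the proof of Proposition \ref{prop:projection_split_char_not2}: first compute the splitting types of the outer terms of \eqref{eq:proj_seq}, then show that the relevant $\operatorname{Ext}^1$ group vanishes, and finally check that the resulting direct sum is unbalanced.

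The first step is to identify the two outer terms. The composite $C \to X_{r,s} \to \bP^{r-s-1}$ is given by the sections $f_1,\ldots,f_{r-s}$ of $\cL(-D)$. Since $f$ is general, these are general sections, so the composite is a general rational curve of degree $d-k$ in $\bP^{r-s-1}$. By Theorem \ref{thm:Pr}(b) in characteristic $2$, $N_{C/\bP^{r-s-1}} \cong \cO(a)^{r-s-2-\epsilon}\oplus\cO(a+2)^\epsilon$ with $a$ as in Situation \ref{sit:char2_notdiv}. For the sub, Lemma \ref{lem:Trel_balanced} shows that $T_{\rel}$ is balanced of degree $(s+1)d+k$. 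Its summands are therefore $\cO(\lfloor\mu(T_{\rel})\rfloor)$ and possibly $\cO(\lfloor\mu(T_{\rel})\rfloor+1)$.

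Next we show that the sequence splits. We have
\[
\operatorname{Ext}^1(N_{C/\bP^{r-s-1}},T_{\rel}) = \bigoplus H^1(\cO(b-c)),
\]
with $b$ ranging over the degrees of the summands of $T_{\rel}$ and $c\in\{a,a+2\}$. Each term vanishes exactly when $b-c\ge -1$. The hypothesis $\mu(T_{\rel})\ge a+1$ gives $\lfloor \mu(T_{\rel})\rfloor\ge a+1$, since $a$ is an integer. Hence $b\ge a+1$, and so $b-c\ge a+1-(a+2)=-1$ for every pair. The $\operatorname{Ext}^1$ group vanishes, so \eqref{eq:proj_seq} splits and $N_{d,k}\cong T_{\rel}\oplus N_{C/\bP^{r-s-1}}$.

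For unbalancedness, the direct sum contains the summand $\cO(a)$, because $\epsilon\le r-s-3$ forces $r-s-2-\epsilon\ge1$. It also contains some summand of degree $\lceil\mu(T_{\rel})\rceil$ or $\lfloor \mu(T_{\rel})\rfloor$; either way this degree is $\ge a+1$. Most simply, it contains $\cO(a+2)$, because $\epsilon\ge1$. The two summands $\cO(a)$ and $\cO(a+2)$ differ in degree by $2$, so the bundle is not balanced. This uses only that $\epsilon\in[1,r-s-3]$, so no further slope comparison is needed.

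It remains to check the stated equivalence of inequalities, which is an algebraic rewriting. We have $\mu(T_{\rel})=d+\frac{k}{s+1}$, and $a+1=d-k+1+2\lfloor\frac{d-k-1}{r-s-2}\rfloor$. Subtracting $d$ from both sides turns $\mu(T_{\rel})\ge a+1$ into
\[
\frac{k}{s+1}+k-1\ge 2\left\lfloor\frac{d-k-1}{r-s-2}\right\rfloor,
\]
and the left side equals $\frac{s+2}{s+1}k-1$, as claimed. The only real input is the identification of the projected curve as a general curve of degree $d-k$, so that Theorem \ref{thm:Pr}(b) applies. This is the step that needs the most care, and it is handled exactly as in the proof of Proposition \ref{prop:projection_split_char_not2}.
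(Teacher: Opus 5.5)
Your proof is correct and follows the paper's route. The paper's proof simply says "similar to Proposition \ref{prop:projection_split_char_not2}": identify $T_{\rel}$ as balanced and $N_{C/\bP^{r-s-1}}$ via Theorem \ref{thm:Pr}(b), then use the slope hypothesis to get $\operatorname{Ext}^1(N_{C/\bP^{r-s-1}},T_{\rel})=0$. Your observation that the summands $\cO(a)$ and $\cO(a+2)$, both present because $1\le\epsilon\le r-s-3$, already force unbalancedness supplies the step the paper leaves implicit.
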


\begin{proof}
    Similar to Proposition \ref{prop:projection_split_char_not2}.
\end{proof}

\begin{proposition}\label{prop:projection_char2_low_obstruction}
In Situation \ref{sit:char2_notdiv}, if
\[
\deg(N_{d,k}) > (r-s-2-\epsilon)a + (s+1+\epsilon)(a+1)
\Longleftrightarrow
d+(s+1)k-s-2 > (r+s)\left\lfloor \frac{d-k-1}{r-s-2} \right\rfloor,
\]
then $N_{d,k}$ is not balanced.
\end{proposition}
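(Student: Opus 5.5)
The plan is to argue by contradiction, using the surjection $N_{d,k}\to N_{C/\bP^{r-s-1}}$ from \eqref{eq:proj_seq}. Suppose $N_{d,k}$ is balanced, say $N_{d,k}\cong \cO(b)^{x}\oplus\cO(b+1)^{y}$ with $x+y=r-1$ and $y<r-1$. By Situation \ref{sit:char2_notdiv}, $N_{C/\bP^{r-s-1}}\cong \cO(a)^{r-s-2-\epsilon}\oplus\cO(a+2)^{\epsilon}$ with $1\le \epsilon\le r-s-3$.

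First, I will bound $b$ from above. Since $\cO(a)^{r-s-2-\epsilon}$ with $r-s-2-\epsilon\ge 1$ is a quotient of $N_{C/\bP^{r-s-1}}$, composing gives a surjection from $N_{d,k}$ onto a nonzero bundle all of whose summands have degree $a$. Any surjection from a bundle with all summands of degree $\ge a+1$ onto $\cO(a)$ is impossible, because such a bundle has no nonzero maps to $\cO(a)$. Hence $N_{d,k}$ must contain a summand of degree $\le a$, so $b\le a$.

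Next, I will refine this using the rank of the $\cO(a)$ part. Composing $N_{d,k}\to N_{C/\bP^{r-s-1}}\to \cO(a)^{r-s-2-\epsilon}$ gives a surjection. The summand $\cO(b+1)^y$ maps to zero if $b+1>a$. So if $b=a$, the summand $\cO(a)^x$ must surject onto $\cO(a)^{r-s-2-\epsilon}$, which requires $x\ge r-s-2-\epsilon$ and hence $y\le s+1+\epsilon$. Consequently the degree satisfies
\[
\deg N_{d,k}=(r-1)b+y\le (r-s-2-\epsilon)a+(s+1+\epsilon)(a+1),
\]
and in the case $b\le a-1$ the degree is at most $(r-1)a$, which is even smaller. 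Either way we contradict the hypothesis $\deg(N_{d,k})>(r-s-2-\epsilon)a+(s+1+\epsilon)(a+1)$. This is the main step, and the only care needed is in handling the two cases $b=a$ and $b<a$ uniformly.

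Finally, I will verify the stated equivalence of inequalities. This is routine algebra: substitute $\deg N_{d,k}=(r+1)d-(r-s-1)k-2$, write $d-k-1=(r-s-2)q+\epsilon$ with $q=\lfloor (d-k-1)/(r-s-2)\rfloor$ and $a=d-k+2q$, and simplify. The right-hand side becomes $(r-1)(d-k+2q)+s+1+\epsilon$. Eliminating $\epsilon=d-k-1-(r-s-2)q$ should give
\[
d+(s+1)k-s-2>(r+s)q,
\]
which is the claimed form.
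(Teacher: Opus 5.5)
Your proof is correct and takes the same approach as the paper. A balanced $N_{d,k}$ with all summands of degree $\ge a+1$ cannot surject onto the $\cO(a)^{r-s-2-\epsilon}$ part of $N_{C/\bP^{r-s-1}}$. Otherwise the degree hypothesis forces the form $\cO(a)^x\oplus\cO(a+1)^y$ with $x<r-s-2-\epsilon$, which again cannot surject. Your algebraic verification of the stated equivalence is also correct: the right-hand side simplifies to $r(d-k)+s+(r+s)q$. The paper leaves this step implicit.
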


\begin{proof}
Suppose for sake of contradiction that $N_{d,k}$ is balanced. If $\mu(N_{d,k}) \geq a+1$, then $N_{d,k}$ cannot surject onto
\[
N_{C/\bP^{r-s-1}} \cong \cO(a)^{r-s-2-\epsilon} \oplus \cO(a+2)^\epsilon.
\]
Otherwise,
\[
N_{d,k} \cong \cO(a)^x \oplus \cO(a+1)^y,
\]
where $x < r-s-2-\epsilon$. Again, $N_{d,k}$ cannot surject onto $N_{C/\bP^{r-s-1}}$.
\end{proof}

\subsection{Propagation of unbalancedness of $(d,0)$}

\begin{lemma}\label{lem:char2_obstruction_general}
Suppose $\charac(\bK) = 2$. With notation as in Theorem \ref{thm:Pr}(b), we have
\begin{enumerate}
\item[(a)] $h^0(N_{d,k}(-\alpha-2))\ge \ell-k(r-s-1)$, and
\item[(b)] $h^1(N_{d,k}(-\alpha-2+k)) \ge (r-1-\ell)-ks$.
\end{enumerate}
\end{lemma}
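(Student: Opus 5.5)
We compare $N_{d,k}$ with $N_{C/\bP^r}$ through the exact sequence \eqref{eq:subsheaf},
\[
0 \to N_{d,k} \to N_{C/\bP^r} \to \cF \to 0,
\]
where the map $C\to\bP^r$ is a general rational curve of degree $d$, so Theorem \ref{thm:Pr}(b) applies in characteristic 2: $N_{C/\bP^r}\cong \cO(\alpha)^{r-1-\ell}\oplus\cO(\alpha+2)^\ell$. First, for a general $f$ of degree $(d,k)$ the underlying map to $\bP^r$ should be a general degree $d$ map (composition with $b$ from a general map of degree $(d,k)$, $k\le d$). I expect this is not literally true, since meeting $\bP^s$ $k$ times is a closed condition. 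However, by upper semicontinuity, specializing from a general degree $d$ curve can only increase $h^0(N_{C/\bP^r}(-\alpha-2))$ and $h^1(N_{C/\bP^r}(-\alpha-2+k)\otimes\cdot)$-type quantities. So the bounds $h^0(N_{C/\bP^r}(-\alpha-2))\ge \ell$ and $h^1(N_{C/\bP^r}(-\alpha-1))\ge r-1-\ell$ still hold for the special curve. Indeed, the characteristic 2 Frobenius structure of \cite[\S2.2]{lv} (all summands have degree $\equiv d \pmod 2$, with the same degree and rank) forces exactly the generic splitting lower bounds via semicontinuity.

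Next we need the length of $\cF$ and its local structure. At each of the $k$ points $p_i\in C\cap b^{-1}(\bP^s)$, a deformation in $\bP^r$ stays in $X_{r,s}$ iff its normal direction at $p_i$ lies in the image of $T_{\bP^s}$. Hence $\cF$ is locally the quotient $N_{C/\bP^r}|_{p_i}/(\text{image of }T_{p_i}\bP^s)$, of length $r-1-s$ at each point. In other words,
\[
N_{d,k}=N_{C/\bP^r}[\textstyle\sum p_i \xrightarrow{-} \cF_i],
\]
a negative modification with $\rk\cF_i=s$. We check this against degrees: $\deg N_{C/\bP^r}-k(r-1-s)=(r+1)d-2-(r-s-1)k$, which matches.

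For (a), twist by $-\alpha-2$ and take global sections:
\[
0\to H^0(N_{d,k}(-\alpha-2))\to H^0(N_{C/\bP^r}(-\alpha-2))\to H^0(\cF),
\]
and $h^0(\cF)=k(r-s-1)$, giving $h^0\ge \ell-k(r-s-1)$. For (b), use the inclusion the other way: $N_{C/\bP^r}(-\textstyle\sum p_i)\subset N_{d,k}$, with cokernel of length $ks$ (the $\cF_i$ directions). Twisting by $-\alpha-2+k$ and using $\cO(-\sum p_i)\cong\cO(-k)$ gives
\[
N_{C/\bP^r}(-\alpha-2)\hookrightarrow N_{d,k}(-\alpha-2+k),
\]
with torsion cokernel of length $ks$. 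Since $H^1$ of torsion vanishes, the long exact sequence gives $h^1(N_{d,k}(-\alpha-2+k))\ge h^1(N_{C/\bP^r}(-\alpha-2))-ks$. Finally, $h^1(N_{C/\bP^r}(-\alpha-2))\ge r-1-\ell$, coming from the $\cO(\alpha)^{r-1-\ell}$ summands, each of which contributes $h^1(\cO(-2))=1$.

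The main obstacle is the first step: justifying that these lower bounds on $h^0$ and $h^1$ for $N_{C/\bP^r}$ hold for the particular (non-general in $\bP^r$) curve. The plan is to argue that both $h^0(N(-\alpha-2))$ and $h^1(N(-\alpha-2))$ are upper semicontinuous in the family of all degree $d$ maps $\bP^1\to\bP^r$, which contains our curves as a closed subfamily. Then the generic values, $\ell$ and $r-1-\ell$ by Theorem \ref{thm:Pr}(b), are lower bounds for every member, at least among unramified maps, which includes general $f$ by Proposition \ref{prop:unramified_map}.
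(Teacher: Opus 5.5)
Your proposal is correct and matches the paper's proof essentially step for step. You use the same two sequences, $N_{d,k}\subset N_{C/\bP^r}$ with cokernel of length $k(r-s-1)$ and $N_{C/\bP^r}(-\sum p_i)\subset N_{d,k}$ with cokernel of length $ks$, together with semicontinuity from a general degree $d$ curve in $\bP^r$ to get $h^0(N_{C/\bP^r}(-\alpha-2))\ge \ell$ and $h^1(N_{C/\bP^r}(-\alpha-2))\ge r-1-\ell$. Two minor fixes:
\begin{itemize}
\item In your first paragraph, the bound ``$h^1(N_{C/\bP^r}(-\alpha-1))\ge r-1-\ell$'' is false, since the generic value is $0$. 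The twist by $-\alpha-2$, which you use correctly later, is the right one.
\item Unramifiedness of $b\circ f$ comes from $f$ meeting the exceptional divisor transversally at $k$ distinct points. It does not follow from Proposition \ref{prop:unramified_map} alone.
\end{itemize}
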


\begin{proof}
Let $f:C\to X_{r,s}$ be a general curve of degree $(d,k)$, which may also be viewed as a curve in $\bP^r$ via the blow-up $b:X_{r,s}\to \bP^r$. We have short exact sequences
\[
0\to N_{d,k}(-\alpha-2)\to N_{C/\bP^r}(-\alpha-2) \to \bigoplus_{p\in C\cap \bP^s} (N_{C/\bP^r}|_p/T_{\bP^s}|_p) \to 0
\]
and (see \cite[Equation (2.2)]{aly})
\[
0\to N_{C/\bP^r}(-\alpha-2) \to N_{d,k}((-\alpha-2)+k) \to \bigoplus_{p\in C\cap \bP^s} T_{\bP^s}|_p \to 0.
\]
The composition $b \circ f:C\to \bP^r$ may not be a general rational curve of degree $d$ in $\bP^r$, but $N_{C/\bP^r}$ is certainly a specialization of $N_{d,0}$, so by semi-continuity and Theorem \ref{thm:Pr}(b), we have $h^0(N_{C/\bP^r}(-\alpha-2))\ge \ell$ and $h^1(N_{C/\bP^r}(-\alpha-2))\ge r-1-\ell$. The conclusions follow from the long exact sequence of cohomology groups. 
\end{proof}

\begin{corollary}\label{cor:char2_obstruction}
Suppose $\charac(\bK) = 2$. Fix $r,s,d,k$. Write $\ell \in [0,r-2]$ for the residue of $d-1 \pmod{r-1}$, and suppose that $\ell > 0$. If
\begin{equation}\label{eq:char2_obstruction_bound}
k < \max\left( \left\lceil \frac{\ell}{r-s-1} \right\rceil, \left\lceil \frac{r-1-\ell}{s} \right\rceil \right),
\end{equation}
then $N_{d,k}$ is not balanced.

If $s = 0$, then the right-hand side of \eqref{eq:char2_obstruction_bound} is taken to be $\infty$. In particular, if $s=0$ and $N_{d,k}$ is balanced, then $(r-1) \mid (d-1)$.
\end{corollary}

\begin{proof}
Assume that $k< \left\lceil \frac{\ell}{r-s-1} \right\rceil$, or equivalently that $\ell-k(r-s-1)\ge 1$. By Lemma \ref{lem:char2_obstruction_general}(a), $N_{d,k}$ contains summands of degree at least $\alpha+2$. If $N_{d,k}$ is balanced, then its degree can furthermore be at least $(\alpha+1)(r-1)+\ell-k(r-s-1)$. On the other hand, we have
\[
\deg(N_{d,k})=(r-1)\alpha+2\ell-(r-s-1)k,
\]
and the inequality $(r-1)\alpha+2\ell-(r-s-1)k \ge (\alpha+1)(r-1)+\ell-k(r-s-1)$ is equivalent to $\ell\ge r-1$, a contradiction.

If instead $k< \left\lceil \frac{r-1-\ell}{s} \right\rceil $, or equivalently (valid also when $s=0$), if $(r-1-\ell)-ks\ge 1$, then Lemma \ref{lem:char2_obstruction_general}(b) shows similarly that $\deg(N_{d,k})\le (\alpha-k)(r-1)+(\ell+ks)$. This simplifies to $\ell\le 0$, contradicting the assumption that $\ell>0$.
\end{proof}

\section{Inductive steps}\label{sec:inductive}

\subsection{Main arguments}

Our main inductive arguments, Propositions \ref{prop:main_inductive} and \ref{prop:main_inductive_refinement}, compute $N_{d+(r-s-1),k+1}$ from $N_{d,k}$, assuming good properties of $N_{d,k}$.

\begin{definition}
Fix $r,s$, and assume in addition that $s>0$. We say that $N_{d,k}$ is \emph{almost balanced} if either it is balanced, or $N_{d,k}\cong \cO(a)^x\oplus \cO(a+1)^y\oplus \cO(a+2)^z$ for some integers $a,x,y,z$ with $x+y+z=r-1$, and in addition,
\begin{equation*}
\frac{x}{r-s-1},\frac{z}{s}\le 1.
\end{equation*}

More generally, for any positive integer $m$, we say that  $N_{d,k}$ is \emph{$m$-almost balanced} if either it is balanced, or, with notation as above,
\begin{equation*}
\frac{x}{r-s-1},\frac{z}{s}\le m.
\end{equation*}
\end{definition}
By convention, we consider ``$0$-almost balanced'' to be synonymous with ``balanced.''

\begin{proposition}\label{prop:main_inductive}
Fix $r,s$. (We allow $s=0$ in (a).)
\begin{enumerate}
\item[(a)] If $N_{d,k}$ is balanced, then so is $N_{d+(r-s-1),k+1}$. 
\item[(b)] If $N_{d,k}$ is $m$-almost balanced, then $N_{d+(r-s-1),k+1}$ is $(m-1)$-almost balanced. In particular, if $N_{d,k}$ is $m$-almost balanced, then $N_{d+m(r-s-1),k+m}$ is balanced.
\end{enumerate}
\end{proposition}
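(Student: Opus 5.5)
Following the outline in the Methods section, I would argue by degeneration. Let $f':C'\to X_{r,s}$ be a general curve of degree $(d,k)$, and attach to it at distinct general points $q_1,\ldots,q_{r-s-1}\in C'$ lines $L_1,\ldots,L_{r-s-2}$ of degree $(1,0)$ and one line $L_{r-s-1}$ of degree $(1,1)$. The total degree is then $(d+(r-s-1),k+1)$. Each line has $f^*T_{X_{r,s}}$ with all summands of degree $\ge 1$ by Lemma \ref{lem:free}, so by Remark \ref{rmk:H1_vanishing} and Lemma \ref{lem:maps_smooth} the reducible map smooths to a general map of degree $(d+(r-s-1),k+1)$. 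The map stays unramified along the family. Since interpolation is open in flat families, it suffices to show that $N_{C'\cup L_1\cup\cdots\cup L_{r-s-1}}$ satisfies interpolation in case (a), and is $(m-1)$-almost balanced in case (b). For (b) I would instead use upper semicontinuity of $h^0(\cdot(-a-2))$ and $h^1(\cdot(-a))$, so that the bound on $x$ and $z$ passes to the general member. I would choose the attachment points and the free endpoints $v_j\in L_j$ generally; in particular the directions $v_j$ toward which the modifications point should be general in $N_{C'}|_{q_j}$.

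Next, by Proposition \ref{prop:hh-isomorphism}, the restriction to $C'$ is $N_{C'}[q_1\xrightarrow{+}v_1]\cdots[q_{r-s-1}\xrightarrow{+}v_{r-s-1}]$, a tree-like datum since the $v_j$ are at distinct points. On each $L_j$ the restriction is $N_{L_j}[q_j\xrightarrow{+}w_j]$. Here $N_{L_j}$ is explicit: for a $(1,0)$ line it is $\cO(1)^{r-1}$, and for the $(1,1)$ line it is $\cO(1)^{r-s-2}\oplus\cO^{s+1}$ up to the exact splitting. I would apply Lemma \ref{lem: criterion interpolation} with $Z=\bigcup L_j$, $Y=C'$, and $D$ a suitable number of general points on each $L_j$ chosen so that $H^0(N|_{L_j}(-D-q_j))=0$. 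The sections of $N|_{L_j}(-D)$ then impose a general subspace $\Lambda_j$ at $q_j$ of the expected codimension, and the space $V$ becomes the subspace of $H^0$ of the positively modified bundle on $C'$ with values constrained to $\Lambda_j$ at $q_j$.

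The key computation is to show that $V$ satisfies interpolation. Fixing the line $\langle v_j\rangle$, the modified bundle $N_{C'}[q_j\xrightarrow{+}v_j]$ with a general constraint at $q_j$ behaves like $N_{C'}$ with one extra degree added in a general direction. In case (a), $N_{C'}$ is balanced, so each general elementary positive modification raises the degree of a lowest-degree summand; Lemma \ref{lem:interpolation_subspace} then handles the constraints $\Lambda_j$ and yields interpolation with the correct dimension. The dimension count $\chi(\cE|_Y)+\sum_j\chi(\cE|_{L_j}(-D-q_j))$ should check directly from the degree formula $\deg N_{d,k}=(r+1)d-(r-s-1)k-2$: raising $d$ by $r-s-1$ and $k$ by $1$ adds $(r+1)(r-s-1)-(r-s-1)=r(r-s-1)$.

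For (b), the point is that the $r-s-1$ general positive modifications raise up to $r-s-1$ summands of the lowest degree $a$. The $(1,1)$ line contributes less positivity, roughly decreasing the degree by $s$ in $s$ directions relative to a $(1,0)$ line, which plays the role of lowering up to $s$ top summands $\cO(a+2)$. Together these reduce $x/(r-s-1)$ and $z/s$ by one each. The iteration statement then follows by induction on $m$. The main obstacle I expect is making this modification bookkeeping rigorous for the $(1,1)$ line. Its normal bundle is not balanced, and its $\cO^{s+1}$ part points into the exceptional fiber, so I must check that the subspace it imposes at $q$ is general enough relative to the Harder--Narasimhan filtration of $N_{C'}$. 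I would first try to use the pointing bundle $N_{C\to\Lambda}$ toward the exceptional divisor to identify it explicitly.
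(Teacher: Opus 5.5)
There is a genuine gap where you treat the conditions imposed at the nodes as general subspaces; they are not. Your setup (same degeneration, smoothing, splitting off the lines via Lemma~\ref{lem: criterion interpolation}) matches the paper's. The problem is the analysis of $V$.

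\textbf{The node constraints are special.} On a $(1,0)$ line, $N_C|_{L_j}\cong\cO(1)^{r-2}\oplus\cO(2)$. With $D=2v_j$, the only sections of $N_C|_{L_j}(-2v_j)$ come from the $\cO(2)$ summand. By Lemma~\ref{lemma:positive_subsheaves_at_q}(a) they evaluate at $q_j$ onto the line $N_{C'\to v_j}(q_j)|_{q_j}$, which is exactly the direction you just modified toward. So the net effect on $C'$ is $N_{C'}[q_j\xrightarrow{+}v_j][q_j\xrightarrow{-}v_j]=N_{C'}[2q_j\xrightarrow{-}v_j](q_j)$, not ``one extra degree in a general direction.''

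For the $(1,1)$ line $M$, note first that its normal bundle is $\cO^{r-s-1}\oplus\cO(1)^s$, not $\cO(1)^{r-s-2}\oplus\cO^{s+1}$, so $N_C|_M\cong\cO^{r-s-2}\oplus\cO(1)^{s+1}$. The subspace imposed at $q$ is the fiber of $T_{\rel}|_{C'}[q\xrightarrow{+}v]$ (Lemma~\ref{lemma:positive_subsheaves_at_q}(b)). This is tied to the projection $\pi$ and is not general with respect to the Harder--Narasimhan filtration of $N_{C'}$. For instance, if \eqref{eq:proj_seq} splits, then $T_{\rel}$ is a direct summand, and the constraint simply twists the complementary summand down by $q$. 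Hence Lemma~\ref{lem:interpolation_subspace} cannot be invoked, and the obstacle you flag at the end is the heart of the proof, not a bookkeeping detail.

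\textbf{The missing idea is the collision trick of Proposition~\ref{prop:bound_collide_points}.} Lemma~\ref{lem:H0_from_C_to_C'} identifies $H^0(C,N_C(-D))$ with sections on $C'$ of
$N_{C'}(-D')[q_j\xrightarrow{+}v_j][q\xrightarrow{+}v][q_j\xrightarrow{-}v_j][q\xrightarrow{-}T_{\rel}]$. Then:
\begin{enumerate}
\item Specialize all $q_j$ to $q$, using semicontinuity.
\item Fix a general $(r-s-1)$-plane $\Lambda\subset N_{C'}|_q$. Take $v$ spanning the line $\Lambda\cap T_{\rel}|_q$, and take $v_1,\ldots,v_{r-s-2}$ spanning a complement $\Lambda'$ of it in $\Lambda$.
\item The positive modifications then combine to $[q\xrightarrow{+}\Lambda]$. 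The negative ones, toward $v_1,\ldots,v_{r-s-2}$ and $T_{\rel}$, together span the whole fiber, so they combine to a pure twist.
\end{enumerate}
This gives $h^i(C,N_C(-D))\le h^i(N_{C'}(-D'-(r-s-2)q)[q\xrightarrow{+}\Lambda])$. Only at this point is the relevant modification general.

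\textbf{Consequence for (b).} The collision also supplies the correct mechanism for (b), which your heuristic about the $(1,1)$ line ``lowering $s$ top summands'' does not capture. A general positive modification toward an $(r-s-1)$-plane raises the $r-s-1$ lowest summands:
\begin{itemize}
\item if $x\ge r-s-1$, it gives $\cO(a)^{x-(r-s-1)}\oplus\cO(a+1)^{y+r-s-1}\oplus\cO(a+2)^{z}$;
\item if $z\ge s$, it gives $\cO(a+1)^{x}\oplus\cO(a+2)^{y+s}\oplus\cO(a+3)^{z-s}$.
\end{itemize}
Semicontinuity with suitable choices of $\deg D'$ then yields Proposition~\ref{prop:main_inductive_refinement}. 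The implication from $m$-almost balanced to $(m-1)$-almost balanced follows by cases; when $x<r-s-1$ and $z<s$, the bundle $N_{C'}[q\xrightarrow{+}\Lambda]$ is already balanced.
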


\begin{proposition}\label{prop:main_inductive_refinement}
Fix $r,s$. Suppose that $N_{d,k}\cong \cO(a)^x\oplus \cO(a+1)^y\oplus \cO(a+2)^z$.
\begin{enumerate}
\item[(a)] Suppose that $x\ge r-s-1$. Then, 
\[
N_{d+(r-s-1),k+1}\cong \cO(a+(r-s-1))^{x'}\oplus \cO((a+1)+(r-s-1))^{y'}\oplus \cO((a+2)+(r-s-1))^{z'},
\]
where $x'\le x-(r-s-1)$ and $z'\le z$.
\item[(b)] Suppose that $z\ge s$. Then,
\[
N_{d+(r-s-1),k+1}\cong \cO(a+(r-s))^{x'}\oplus \cO((a+1)+(r-s))^{y'}\oplus \cO((a+2)+(r-s))^{z'},
\]
where $x'\le x$ and $z'\le z-s$.
\end{enumerate}
\end{proposition}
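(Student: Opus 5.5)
We degenerate the general curve of degree $(d+(r-s-1),k+1)$ to a nodal curve $C'\cup L_1\cup\cdots\cup L_{r-s-1}$. Here $f|_{C'}$ is a general curve of degree $(d,k)$. The lines $L_1,\ldots,L_{r-s-2}$ have degree $(1,0)$, $L_{r-s-1}$ has degree $(1,1)$, and each $L_j$ meets $C'$ at a single general point $q_j$. Lemma \ref{lem:free} and Remark \ref{rmk:H1_vanishing} give $H^1=0$, so this map smooths by Lemma \ref{lem:maps_smooth}. The smoothing is general of degree $(d+(r-s-1),k+1)$, since the moduli space is irreducible. Because a splitting type can only become more balanced under generization, it suffices to exhibit the claimed splitting, or an upper bound on $x',z'$, for a suitable modification of $N_{C'\cup\bigcup L_j}$.

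By Proposition \ref{prop:hh-isomorphism}, the restriction to $C'$ is $N_{d,k}[q_1\xrightarrow{+}v_1]\cdots[q_{r-s-1}\xrightarrow{+}v_{r-s-1}]$, where $v_j\in L_j$. For the $(1,0)$ lines, $v_j$ is a general direction at $q_j$. For the $(1,1)$ line, $L_{r-s-1}$ joins $f(q_{r-s-1})$ to a point of $E$, so $v_{r-s-1}$ points toward $\bP^s$. We may choose that point of $E$ so that the direction is general among those lying in the rank $(s+1)$ pointing subbundle $N_{C'\to \bP^s}$, which is essentially the image of $T_{\rel}$.

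The key numerology is as follows. Each positive modification at a general point in a general direction raises the degree by one, and that point and direction can be used to kill one of the lowest summands. Doing this $r-s-1$ times at the $(1,0)$ lines would raise all of the $\cO(a)^x$ summands if $x\ge r-s-1$. The remaining $r-s-1$ units of degree are absorbed by an overall twist. The degree bookkeeping confirms this: $\deg N$ increases by $(r+1)(r-s-1)-(r-s-1)=r(r-s-1)$, which equals $(r-1)(r-s-1)+(r-s-1)$.

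To formalize, I would twist by the divisor $q_1+\cdots+q_{r-s-1}$ pulled back from $\cO(1)$ restrictions. Equivalently, I would apply Lemma \ref{lem: criterion interpolation} with $Y=C'$ and $Z=\bigcup L_j$. Then $V$ is the space of sections of $N_{d,k}$ twisted appropriately, subject to conditions $\sigma(q_j)\in\Lambda_j$. By Lemma \ref{lem:interpolation_subspace}, applied at the general points $q_j$, these conditions behave like general linear conditions on the summands of lowest degree.

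For part (a), the balanced case, Lemma \ref{lem:interpolation_subspace} gives interpolation of $V$ directly, with the correct dimension. Lemma \ref{lem: criterion interpolation} then yields balancedness. For part (b), and for Proposition \ref{prop:main_inductive_refinement}, I would compute $h^0(N'(-a-1-(r-s-1)))$ and $h^1(N'(-a-2-(r-s-1)))$ for the limiting bundle $N'$. The $r-s-2$ general $(1,0)$ modifications and the one $(1,1)$ modification each cut the count of $\cO(a)$-type summands by one, provided $x\ge r-s-1$. This gives $x'\le x-(r-s-1)$, while nothing forces new $\cO(a+2)$ summands, so $z'\le z$. Semicontinuity then transfers these bounds to the general curve.

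Case (b) is dual. I would use the negative-modification viewpoint from \eqref{eq:subsheaf}: the $(1,1)$ line imposes a condition on $s$ directions inside $T_{\bP^s}$, and this lowers the $\cO(a+2)$ multiplicity by $s$ after the extra twist by $1$. The iterated statement in Proposition \ref{prop:main_inductive}(b) follows by induction on $m$.

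The main obstacle is the $(1,1)$ line. Its modification direction is not general in the full fiber; it is constrained to the pointing subbundle toward $\bP^s$. I must check that this direction is still general enough relative to the Harder–Narasimhan filtration of $N_{d,k}$. Concretely, the projection of $N_{C'\to\bP^s}|_{q}$ onto the lowest-degree summand must be surjective, for (a)-type reductions, and the direction must avoid the highest summands, for (b)-type reductions. I would try to prove this first, by moving $q$ and the point of $E$ in a one-parameter family and using that the $T_{\rel}$ subbundle is balanced (Lemma \ref{lem:Trel_balanced}). This should show that it is not contained in any fixed HN piece.
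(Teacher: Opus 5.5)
Your degeneration is exactly the paper's Construction~\ref{construction}, but the argument built on it has a genuine gap. You never give the mechanism that turns sections of the normal bundle of the nodal curve into sections of one computable bundle on $C'$, and that mechanism is where the ``overall twist by $r-s-1$'' comes from.

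On $C'\cup\bigcup L_j$ there is no splitting type to generize. What one uses is upper semicontinuity of $h^0(N_C(-D))$ and $h^1(N_C(-D))$, and the placement of $D$ is essential. The paper takes $D=2v_1+\cdots+2v_{r-s-2}+v+D'$: two points on each $(1,0)$ line, one on the $(1,1)$ line $M$, and the rest on $C'$. Then $N_C(-D-q_i)|_{L_i}$ and $N_C(-D-q)|_M$ have no sections, so a section is determined by its restriction to $C'$. The gluing forces $\sigma(q_i)$ into the fiber of the positive subsheaf $\cO_{L_i}(2)$, which is $N_{C'\to v_i}(q_i)|_{q_i}$. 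It forces $\sigma(q)$ into the fiber of $\cO_M(1)^{s+1}$, which is $T_{\rel}[q\xrightarrow{+}v]|_q$ (Lemmas~\ref{lemma:positive_subsheaves_at_q} and~\ref{lem:H0_from_C_to_C'}). So the relevant bundle on $C'$ is $N_{d,k}(-D')$, with positive modifications toward $v_1,\ldots,v_{r-s-2},v$, followed by negative modifications toward $v_1,\ldots,v_{r-s-2}$ and toward the whole rank-$(s+1)$ subbundle $T_{\rel}$.

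Your description of $V$ as sections of $N_{d,k}$ with $\sigma(q_j)\in\Lambda_j$ misses these identifications. In particular, the condition at the $(1,1)$ node is membership in a fixed, non-general $(s+1)$-dimensional subspace, and Lemma~\ref{lem:interpolation_subspace} does not apply to it. Note also that Lemma~\ref{lem: criterion interpolation} only yields balancedness, not the bounds on $x',z'$ that the statement requires.

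The second missing idea removes your ``main obstacle'' altogether. By semicontinuity you may collide all the nodes $q_i$ into $q$ and choose the points $v_i,v$ freely.
\begin{itemize}
\item The negative modifications toward $v_1,\ldots,v_{r-s-2}$ and $T_{\rel}|_q$ together span the fiber. By Proposition~\ref{prop:multi_mod_properties} they collapse to the twist by $-(r-s-2)q$.
\item The positive modifications collapse to $[q\xrightarrow{+}\Lambda]$ with $\Lambda=\langle v_1,\ldots,v_{r-s-2},v\rangle$.
\item Since $\dim\Lambda+\rank T_{\rel}=r>r-1$, a \emph{general} $(r-s-1)$-plane $\Lambda$ meets $T_{\rel}|_q$ in a line. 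Take $v$ to span that line and the $v_i$ to span a complement of it in $\Lambda$.
\end{itemize}
Thus $\Lambda$ is fully general, and no statement about where $T_{\rel}$ sits relative to the Harder--Narasimhan filtration is needed.

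The statement you propose to prove first, that the pointing subbundle is not contained in any fixed HN piece, cannot come from Lemma~\ref{lem:Trel_balanced}. Balancedness of $T_{\rel}$ says nothing about its position inside $N_{d,k}$. The statement is also false in general. When \eqref{eq:proj_seq} splits, as in the base case of the proof of Theorem~\ref{thm:d<k(r-s-1)_epsilon_not0} (where this proposition is applied), $T_{\rel}$ lies in a proper HN piece at every point.

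The outcome of the correct argument (Proposition~\ref{prop:bound_collide_points}) is
\[
h^i(N_C(-D))\le h^i\bigl(N_{d,k}[q\xrightarrow{+}\Lambda](r-s-1-\deg D)\bigr).
\]
With suitable $\deg D'$, parts (a) and (b) both follow from the splitting of $N_{d,k}[q\xrightarrow{+}\Lambda]$ for general $\Lambda$:
\begin{itemize}
\item When $x\ge r-s-1$, the modification raises $r-s-1$ copies of $\cO(a)$.
\item When $z\ge s$, write it as $N_{d,k}[q\xrightarrow{-}\Lambda](q)$ with $\Lambda$ of codimension $s$. It then lowers $s$ copies of $\cO(a+2)$ before the twist by $q$.
\end{itemize}
Your route to (b) through \eqref{eq:subsheaf} and ``$s$ directions inside $T_{\bP^s}$'' does not produce this.
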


The proofs of Propositions~\ref{prop:main_inductive} and~\ref{prop:main_inductive_refinement} will both use the following construction.

\begin{construction}\label{construction}
Let  
\[
C= C' \cup \bigg( \bigcup_{i=1}^{r-s-2} L_i \bigg) \cup M
\]
be the union of a smooth rational curve $C'$ attached to smooth rational components $L_1,\ldots,L_{r-s-2},M$ at distinct nodes $q_1,\ldots,q_{r-s-2},q$, respectively.

Let $f: C \to X_{r,s}$ be a general map, with the property that the restriction of $f$ to $C'$ has degree $(d,k)$, the restriction of $f$ to each $L_i$ has degree $(1,0)$, and the restriction of $f$ to $M$ has degree $(1,1)$. Assume that $f(q)$ does not lie in the exceptional divisor of $X_{r,s}$. Let $Y \subseteq X_{r,s}$ be the fiber of $\pi: X_{r,s} \to \bP^{r-s-1}$ containing $M$.

\begin{center}
\begin{tikzpicture}[scale=1.3, every node/.style={font=\small, text=black}]

    \coordinate (x1) at (0.5, 0.65);
    \coordinate (x2) at (1.8, 1.1);
    \coordinate (xr) at (4.0, 0.7);
    \coordinate (p)  at (6.0, 0.2);

    \draw[thick] plot[smooth, tension=0.6] 
        coordinates {(-0.8, 0.1) (x1) (x2) (3.0, 1.0) (xr) (5.0, 0.5) (p) (7.2, -0.2)};
    \node[right] at (7.2, -0.2) {$C'$};

    \draw[thick] ($(x1) + (265:1.2)$) -- ($(x1) + (85:1.6)$) node[above] {$L_1$};
    \fill[black] ($(x1) + (85:1.0)$) circle (1.8pt) node[right] {$v_1$};

    \draw[thick] ($(x2) + (270:1.2)$) -- ($(x2) + (90:1.6)$) node[above] {$L_2$};
    \fill[black] ($(x2) + (90:1.0)$) circle (1.8pt) node[right] {$v_2$};

    \node at (2.9, 0.6) {$\cdots$};

    \draw[thick] ($(xr) + (260:1.2)$) -- ($(xr) + (80:1.6)$) node[above] {$L_{r-s-2}$};
    \fill[black] ($(xr) + (80:1.0)$) circle (1.8pt) node[right] {$v_{r-s-2}$};

    \draw[thick] ($(p) + (230:1.2)$) -- ($(p) + (50:1.6)$) node[above right] {$M$};
    \fill[black] ($(p) + (50:1.0)$) circle (1.8pt) node[above left] {$v$};

    \fill[black] (x1) circle (1.8pt) node[below right, xshift=-2pt] {$q_1$};
    \fill[black] (x2) circle (1.8pt) node[below right, xshift=-2pt] {$q_2$};
    \fill[black] (xr) circle (1.8pt) node[below right, xshift=-2pt] {$q_{r-s-2}$};
    \fill[black] (p) circle (1.8pt) node[below left, xshift=6.5pt] {$q$};

\end{tikzpicture}
\end{center}
\end{construction}

By Lemma \ref{lem:maps_smooth} and Remark \ref{rmk:H1_vanishing}, a map $f$ as in Construction \ref{construction} smooths to a map of degree $(d+(r-s-1),k+1)$ out of a smooth curve. We will deduce properties of $N_{d+(r-s-1),k+1}$ from spaces of sections of the normal bundle $N_{C/X_{r,s}}$ of the singular curve in Construction \ref{construction}, twisted by appropriate divisors.

\begin{lemma}\label{lem:construction_restricted_normal}
In the setting of Construction \ref{construction}, we have the following.
\begin{enumerate}[label=(\alph*), ref=\theproposition(\alph*)]
\item \label{lemma:modified_normal_dk}
The restriction of $N_{C/X_{r,s}}$ to $C'$ is given by
\[
N_{C/X_{r,s}}|_{C'} = N_{C'/X_{r,s}}[q_1 \xrightarrow{+} v_1] \cdots [q_{r-s-2} \xrightarrow{+} v_{r-s-2}][q \xrightarrow{+} v],
\]
\item \label{lemma:modified_normal_line}For all $i=1,\ldots,r-s-2$, the restriction of $N_{C/X_{r,s}}$ to $L_i$ is given by
\[
N_{C/X_{r,s}}|_{L_i} = N_{L_i/X_{r,s}}[q_i \xrightarrow{+} T_{q_i}C'] \cong \cO_{L_i}(1)^{ r-s-2} \oplus \cO_{L_i}(2)
\]
\item The restriction of $N_{C/X_{r,s}}$ to $M$ is given by
\[
N_{C/X_{r,s}}|_{M} = N_{M/X_{r,s}}[q \xrightarrow{+} T_qC']
\cong  \cO^{ r-s-2} \oplus \cO_{M}(1)^{ s+1}.
\]
\end{enumerate}
\end{lemma}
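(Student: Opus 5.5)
The plan is to obtain all three statements from Proposition \ref{prop:hh-isomorphism}, applied in Situation \ref{sit: modification_normal} one line at a time. Then, for the lines, we compute the normal bundle of each line in $X_{r,s}$ directly and check how the positive modification at the node acts on it.

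For (a), we take $N'_{C'\cup L}$ to be the normal bundle of the whole curve and attach the lines one at a time. The nodes $q_1,\ldots,q_{r-s-2},q$ are distinct, and each modification is supported at a single node. So Proposition \ref{prop:hh-isomorphism}, applied with $L=L_i$ (respectively $M$), contributes the factor $[q_i\xrightarrow{+} v_i]$ (respectively $[q\xrightarrow{+}v]$). Here $C'$ is viewed as a curve carrying the modifications already performed at the other nodes, which are away from the node currently being treated. These modifications are supported at distinct points, so the datum is tree-like. By Proposition \ref{prop:multi_mod_properties}(b), the order does not matter, and the composite is well defined. This gives (a). For the first equalities in (b) and (c), we apply the same proposition with the roles of $C'$ and the line as in its second isomorphism. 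Each line meets the rest of $C$ only at its node, so no other modifications appear on it.

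For the splitting types, we first compute $N_{L_i/X_{r,s}}$ and $N_{M/X_{r,s}}$. A line $L_i$ of degree $(1,0)$ misses the exceptional divisor, so near $L_i$ the space $X_{r,s}$ is just $\bP^r$, and $N_{L_i/X_{r,s}}\cong\cO(1)^{r-1}$. A positive modification at one point toward a one-dimensional subspace turns $\cO(1)^{r-1}$ into $\cO(1)^{r-2}\oplus\cO(2)$. The direction is the line sub-bundle of $N_{L_i}$ spanned by $T_{q_i}C'$, which extends as a trivial sub-bundle $\cO(1)\subset\cO(1)^{r-1}$, because all sub-line-bundles of degree 1 of $\cO(1)^{r-1}$ are constant directions. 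This gives (b).

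For $M$, note that $M$ lies in the fiber $Y\cong\bP^{s+1}$ of $\pi$, as a line meeting $\bP^s$ once. From the sequence
\[
0\to N_{M/Y}\to N_{M/X_{r,s}}\to N_{Y/X_{r,s}}|_M\to0,
\]
with $N_{M/Y}\cong\cO(1)^{s}$ and $N_{Y/X_{r,s}}|_M\cong\pi^*T_{\bP^{r-s-1}}|_M\cong\cO^{r-s-1}$, we expect the sequence to split, by the projection argument of \S\ref{sec:degenerate} or because $\mathrm{Ext}^1(\cO,\cO(1))=0$. Hence
\[
N_{M/X_{r,s}}\cong\cO(1)^s\oplus\cO^{r-s-1}.
\]
The direction $T_qC'$ projects nontrivially to $N_{Y/X_{r,s}}$ at $q$, since $C'$ is general and is not contained in the fiber. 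A positive modification at $q$ toward this direction raises one $\cO$ summand to $\cO(1)$. This requires a small argument: the direction lies in a trivial sub-bundle $\cO\subset\cO(1)^s\oplus\cO^{r-s-1}$, and we may choose it to be a constant $\cO$-direction after a choice of splitting. The result is $\cO^{r-s-2}\oplus\cO(1)^{s+1}$.

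The main obstacle is this last check that the modification direction extends to a sub-bundle of degree $0$ rather than lying partly in $\cO(1)^s$. Any line sub-bundle $\cO\hookrightarrow\cO(1)^s\oplus\cO^{r-s-1}$ through a vector with nonzero $\cO^{r-s-1}$-component exists: take a constant section of the $\cO^{r-s-1}$ part plus a suitable section of $\cO(1)^s$ matching at $q$. The modification toward it yields the stated splitting, since the degree goes up by one and the positive part $\cO(1)^{s+1}$ has the right rank by a direct $h^0(\cdot(-1))$ count.
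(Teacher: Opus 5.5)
Your proposal is correct and follows the paper's proof. Both get (a) by applying Proposition \ref{prop:hh-isomorphism} successively to $L_1,\ldots,L_{r-s-2},M$, carrying the earlier node modifications along on $C'$, and both get the first equalities in (b), (c) by applying it with $N'_{C'\cup L}=N_{C/X_{r,s}}$. Your explicit computations of $N_{L_i/X_{r,s}}\cong\cO(1)^{r-1}$, of $N_{M/X_{r,s}}\cong\cO(1)^s\oplus\cO^{r-s-1}$, and of the effect of a single positive modification (using that $T_qC'$ is not vertical) just fill in the splitting types, which the paper states without further justification.
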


\begin{proof}
    To prove (a), apply Proposition \ref{prop:hh-isomorphism}, to the components $L=L_1,\ldots,L_{r-s-2},M$, in order. At the $j$-th step, the bundle $N'_{C'\cup L}$ is taken to be
    \[
    N_{C'/X_{r,s}}[q_1 \xrightarrow{+} v_1] \cdots [q_{j-1} \xrightarrow{+} v_{j-1}].
    \]
    To prove (b) and (c), apply Proposition \ref{prop:hh-isomorphism} with $N'_{C'\cup L}=N_{C/X_{r,s}}$.
\end{proof}

We next match the fiber of the positive subsheaf of each $N_{C/X_{r,s}}|_{L_i}$ (resp. $N_{C/X_{r,s}}|_{M}$) over each nodal point $q_i$ (resp. $q$) with the fibers of natural subsheaves of $N_{C/X_{r,s}}|_{C'}$.

\begin{lemma}\label{lemma:positive_subsheaves_at_q}
\,
\begin{enumerate}
\item[(a)] Consider the subsheaf 
\[
N_{C' \to v_i}(q_i)\subset N_{C/X_{r,s}}|_{C'}.
\]
Then, the fiber of $N_{C' \to v_i}(q_i)$ at $q_i\in C$ coincides with the fiber of the positive subsheaf $\cO_{L_i}(2)\subset N_{C/X_{r,s}}|_{L_i}$ (see Lemma \ref{lem:construction_restricted_normal}(b)) at $q_i$.
\item[(b)] Consider the subsheaf 
\[
T_{X_{r,s}/\bP^{r-s-1}}|_{C'}[q \xrightarrow{+} v]\subset N_{C/X_{r,s}}|_{C'}.
\]
Then, the fiber of $T_{X_{r,s}/\bP^{r-s-1}}|_{C'} [q \xrightarrow{+} v]$ at $q\in C$ coincides with the fiber of the positive subsheaf $\cO_{M}(1)^{s+1}\subset N_{C/X_{r,s}}|_{M}$ (see Lemma \ref{lem:construction_restricted_normal}(c)) at $q$.
\end{enumerate}
\end{lemma}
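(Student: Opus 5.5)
Both parts are local statements about the fiber at a single node, so I would work in an analytic (or formal) neighborhood of the node and identify the relevant subspaces of $N_{C/X_{r,s}}|_{q_i}$ (resp. $|_q$) explicitly. The normal bundle of the nodal curve at a node is glued from the two branches, as in Proposition \ref{prop:hh-isomorphism}. The fiber of $N_{C/X_{r,s}}|_{C'} = N_{C'/X_{r,s}}[\cdots][q \xrightarrow{+} v]$ at the node is an $(r-1)$-dimensional space. It contains the line spanned by the tangent direction of the other branch, $T_{q}L$ (resp. $T_qM$), viewed as a normal direction to $C'$; this line is the image of the positive modification. The remaining part is the image of $N_{C'/X_{r,s}}|_q$ modulo that direction. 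The same fiber, viewed from the other branch, is $N_{L/X_{r,s}}[q\xrightarrow{+} T_qC']|_q$. The plan is to compute, on each side, which $(r-1)$-dimensional identification is used, and then match the two subspaces inside the common space $T_{f(q)}X_{r,s}/(T_qC'+T_qL)$ together with the two distinguished lines.

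For (a), I would first describe the positive subsheaf $\cO_{L_i}(2)\subset N_{L_i/X_{r,s}}[q_i\xrightarrow{+}T_{q_i}C']$. Since $L_i$ has degree $(1,0)$, it is a line in $\bP^r$ avoiding $\bP^s$, so $N_{L_i/X_{r,s}}\cong \cO(1)^{r-1}$. The modification toward the direction $T_{q_i}C'$ produces a unique $\cO(2)$. This summand is the sub-line-bundle of $N_{L_i}$ whose fiber at $q_i$ is the line $T_{q_i}C'$: it is the normal bundle of $L_i$ inside the plane spanned by $L_i$ and $T_{q_i}C'$, after the twist. In the gluing, the fiber of $N_{C/X}$ at $q_i$ restricted from the $L_i$ side identifies the image of the $\cO(2)$ fiber with the image of $T_{q_i}L_i$ in the normal space of $C'$ at $q_i$. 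The latter is exactly the fiber at $q_i$ of the pointing bundle $N_{C'\to v_i}$ twisted up by $q_i$. This is the standard computation of \cite[\S 5]{aly}: the line $\overline{f(q_i) v_i}$ equals $L_i$, so the direction pointing toward $v_i$ at $q_i$ is $T_{q_i}L_i$, and the $+q_i$ twist accounts for the pointing bundle vanishing to first order at $q_i$ before the modification. I would verify the two lines agree by a local-coordinate computation, taking $C'$ and $L_i$ as coordinate axes in an affine chart.

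For (b), the analogue is that $M$ lies in the fiber $Y\cong\bP^{s+1}$ of $\pi$. Hence $N_{M/X}=N_{M/Y}\oplus N_{Y/X}|_M\cong \cO(1)^{s}\oplus\cO^{r-s-1}$, where $N_{M/Y}=T_{\rel}|_M/T_M$. Modifying toward $T_qC'$, which is transverse to $Y$ since $C'$ maps nondegenerately and is general, raises one $\cO$ to $\cO(1)$. The positive subsheaf $\cO(1)^{s+1}$ therefore has fiber at $q$ equal to the image of $T_{\rel}|_q = T_{f(q)}Y$ together with $T_qC'$, modulo $T_qM+T_qC'$. On the $C'$ side, $T_{\rel}|_{C'}\subset N_{C'}$ has fiber $T_{f(q)}Y$ modulo $T_qC'$. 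Its modification $[q\xrightarrow{+}v]$ adjoins the direction $T_qM\subset T_{f(q)}Y$, with $v\in M\subset Y$, so the tree-like compatibility of the modification datum holds. The resulting fiber is again the image of $T_{f(q)}Y$. Both sides then give the same $(s+1)$-dimensional subspace.

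The main obstacle is bookkeeping in the gluing isomorphism of Proposition \ref{prop:hh-isomorphism}. One has to check that, on the $C'$ side, the modified sheaf's fiber at the node lands on the correct subspace and is not shifted by the twist by $q$. One also has to check that the subsheaf is saturated, so that fibers are well defined. I would handle this with the explicit local model of a node in $\mathbb{A}^r$: $C'$ and the other branch are coordinate axes, the normal sheaf of $xy=0$ is computed directly, and both restrictions are read off.
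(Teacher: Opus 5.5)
Your part (a) is essentially the paper's argument. The $\cO_{L_i}(2)$ summand is $N_{L_i\to w_i}(q_i)$ with $w_i\in T_{q_i}C'$. The equality of its fiber at $q_i$ with $N_{C'\to v_i}(q_i)|_{q_i}$ is exactly \cite[Proposition 8.4]{aly}, which the paper cites and your local computation would reprove. Since $C'$ is not a line, you must straighten it with non-linear coordinates and recompute the pointing section there. Its correction terms are $O(t^2)$, and that is why the check works.

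Part (b), however, has a genuine gap. Inside $N_{C/X_{r,s}}|_q$ there is a canonical hyperplane $H=T_{f(q)}X_{r,s}/(T_qC'+T_qM)$, the common image of $N_{C'/X_{r,s}}|_q$ and $N_{M/X_{r,s}}|_q$. There is \emph{no} canonical complementary line. Contrary to your opening paragraph, the direction $T_qM$ in $N_{C'/X_{r,s}}|_q$ is precisely the kernel of $N_{C'/X_{r,s}}|_q\to N_{C'/X_{r,s}}[q\xrightarrow{+}v]|_q$, so it does not survive as a line. For a subsheaf $\mathcal{S}$ with $\mathcal{S}|_q$ equal to that direction, the line $\mathcal{S}(q)|_q$ depends on the $1$-jet of $\mathcal{S}$ at $q$: replacing a generator $\sigma$ by $\sigma+t\tau$ moves it by $\tau(q)\in H$.

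Read as subspaces of $N_{C/X_{r,s}}|_q$, both of your descriptions in (b) give only the $s$-dimensional space $W=(T_{f(q)}Y+T_qC')/(T_qC'+T_qM)\subset H$. These are ``$T_{f(q)}Y$ together with $T_qC'$, modulo $T_qM+T_qC'$'' and ``again the image of $T_{f(q)}Y$.'' The extra line on each side is exactly what must be matched. Saying both sides give the same $(s+1)$-dimensional subspace asserts this rather than proves it.

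The paper closes this in two steps.
\begin{enumerate}
\item[(i)] The positive subsheaf on $M$ is $N_{M/Y}\oplus N_{M\to w}(q)$, and $N_{M\to w}(q)|_q=N_{C'\to v}(q)|_q$ by \cite[Proposition 8.4]{aly} again.
\item[(ii)] This line lies in the fiber of $T_{\rel}|_{C'}[q\xrightarrow{+}v]$.
\end{enumerate}
Step (ii) is a genuinely first-order statement. For $s<r-2$ (it is automatic near $q$ when $s=r-2$), the modification is compatible with the projection sequence, since $N_{C'\to v}|_q\subset T_{\rel}|_q$. This gives $0\to T_{\rel}[q\xrightarrow{+}v]\to N_{C'/X_{r,s}}[q\xrightarrow{+}v]\to N_{C'/\bP^{r-s-1}}\to 0$. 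So (ii) says that the image of a local generator of $N_{C'\to v}$ in $N_{C'/\bP^{r-s-1}}$ vanishes to order at least $2$ at $q$. This holds because that image is the direction from $\pi(f(p))$ toward $\pi(v)=\pi(f(q))$, a fixed point on the same curve $\pi(C')$. Such a secant direction vanishes to second order in the normal bundle at that point. The paper records (ii) as the containment $N_{C'\to v}\subseteq T_{\rel}|_{C'}$; what is really used is that this containment holds to first order at $q$.

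With (i) and (ii) in hand, the $W$-parts agree via the natural map $T_{f(q)}X_{r,s}\to N_{C/X_{r,s}}|_q$, as in your sketch.
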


\begin{proof}
We start with (a). For $i=1,\ldots,r-s-2$, let $w_i \in T_{q_i}C' \smallsetminus \{q_i\}$ be any point. Recall that we have an inclusion of bundles $N_{L_i \to w_i} \subseteq N_{L_i/X_{r,s}}$ from \S\ref{sec: pointing bundles}. By Lemma \ref{lemma:modified_normal_line}, the restriction of $N_{C/X_{r,s}}$ to $L_i$ is precisely the modified bundle
$$
N_{L_i}[q_i \xrightarrow{+} w_i]=N_{L_i}[q_i \xrightarrow{+} N_{L_i \to w_i}].
$$
Thus, we have a natural inclusion $\cO_{L_i}(2) \cong N_{L_i \to w_i}(q_i) \subseteq N_{C}|_{L_i}$ identifying the positive subsheaf of 
$ N_{C/X_{r,s}}|_{L_i}
$
with
$
N_{L_i \to w_i}(q_i). 
$
Moreover, by \cite[Proposition 8.4]{aly}, one has
\begin{equation*}
N_{L_i \to w_i}(q_i)|_{q_i}
=
N_{C' \to v_i}(q_i)|_{q_i}
\subseteq
N_{C/X_{r,s}}|_{q_i},
\qquad i=1,\ldots,r-s-2.
\end{equation*}
This concludes the proof of (a). 

Next, we prove (b). Let $w \in T_{q}C' \smallsetminus \{q\}$  be any point.
Consider the subbundle $N_{M \to w} \subseteq N_{M/X_{r,s}}$. We claim that $N_{M \to w}$ does not lie in the positive subsheaf of 
\[
\cO_M(1)^{s}\subset N_{M/X_{r,s}}\cong \cO_M^{r-s-1}\oplus \cO_M(1)^{s}.
\] 
Indeed, the positive subsheaf of $N_{M/X_{r,s}}$ corresponds to the subbundle of deformations inside the fiber $Y$ of $\pi:X_{r,s}\to \bP^{r-s-1}$ containing $M$, and if $d>k$, then $T_{q}C'$ is not contained in this fiber. 

In particular, there is an inclusion of bundles
\[
\cO_M \cong N_{M \to w} \subseteq N_{M/X_{r,s}},
\]
and hence
\[
\cO_{M}(1) \cong N_{M \to w}(q) \subseteq N_{C/X_{r,s}}|_M.
\]

The natural map 
\[
N_{M/\bP^{s+1}} \oplus N_{M \to w}(q) \to N_{C/X_{r,s}}|_{M}
\]
identifies $N_{M/\bP^{s+1}} \oplus N_{M \to w}(q)$ with the positive subsheaf $\cO_{M}(1)^{s+1}$ of $N_{C/X_{r,s}}|_{M}$.

We claim that the fiber at $q$ of $\cO_{M}(1)^{s+1} = N_{M/Y} \oplus N_{M \to w}(q) \subseteq N_{C/X_{r,s}}|_M$ coincides with
\begin{equation*}
    \cO_{M}(1)^{ s+1}|_q= T_{X_{r,s}/\bP^{r-s-1}}|_{C'}[q \xrightarrow{+} v]|_q
\end{equation*}
when viewed in $N_{C/X_{r,s}}|_q$. To see this, it suffices to show that 
\[
(N_{M/Y})|_q \oplus (N_{M \to w}(q))|_q\subset T_{X_{r,s}/\bP^{r-s-1}}|_{C'}[q \xrightarrow{+} v]|_q
\]
as subspaces of $N_{C/X_{r,s}}|_q$. By \cite[Proposition 8.4]{aly}, we have
\[
N_{M \to w}(q)|_q= N_{C' \to v}(q)|_q \subseteq N_C|_{q}
\]
which lies in $T_{X_{r,s}/\bP^{r-s-1}}|_{C'}[q \xrightarrow{+} v]|_q$ because $N_{C' \to v}\subseteq T_{X_{r,s}/\bP^{r-s-1}}|_{C'}$. The inclusion $(N_{M/Y})|_q \subseteq T_{X_{r,s}/\bP^{r-s-1}}|_{C'}[q \xrightarrow{+} v]|_q$ instead follows from the commutative diagram 

\begin{equation*}
\begin{tikzcd}
T_Y|_q = T_{X_{r,s}/\bP^{r-s-1}}|_q
  \arrow[r]
  \arrow[d, two heads]
&
T_{X_{r,s}/\bP^{r-s-1}}|_{C'}[q \xrightarrow{+} v]|_q
  \arrow[d, hookrightarrow]
\\
N_{M/Y}|_q
  \arrow[r, hookrightarrow]
&
N_{C/X_{r,s}}|_q
\end{tikzcd}
.
\end{equation*}

\end{proof}

\begin{notation}\label{notation:D}
Consider the effective divisor on $C$ given by
\[
    D = 2v_1 + \cdots + 2v_{r-s-2} + v + D',
\]
where $D'$ is a divisor supported on
$
    C \setminus \bigg( \bigcup_i L_i \cup M \bigg),
$ to be specified later.
\end{notation}

\begin{lemma}\label{lem:H0_from_C_to_C'}
Let $D\subset C$ be as in Notation \ref{notation:D}. We have 
\begin{equation*}
 H^0(C, N_{C/X_{r,s}}(-D))  \cong H^0(C', N_{C/X_{r,s}}(-D)|_{C'}[q_1 \xrightarrow{-}v_1] \ldots [q_{r-s-2} \xrightarrow{-}v_{r-s-2}][q \xrightarrow{-} T_{X_{r,s}/\bP^{r-s-1}}|_{C'}]).
\end{equation*}
\end{lemma}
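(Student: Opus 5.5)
The idea is to show that every global section of $N_{C/X_{r,s}}(-D)$ is determined by its restriction to $C'$, and that the possible restrictions are exactly the sections of the stated negative modification. A section $\sigma$ of $N_{C/X_{r,s}}(-D)$ is the same as a compatible tuple $(\sigma_{C'},\sigma_{L_1},\ldots,\sigma_{L_{r-s-2}},\sigma_M)$, one section on each component, agreeing at the nodes $q_i$ and $q$. Since $D$ restricted to $L_i$ is $2v_i$, and $D$ restricted to $M$ is $v$, Lemma~\ref{lem:construction_restricted_normal}(b),(c) gives
\[
N_{C/X_{r,s}}(-D)|_{L_i}\cong \cO(-1)^{r-s-2}\oplus\cO, \qquad N_{C/X_{r,s}}(-D)|_{M}\cong \cO(-1)^{r-s-2}\oplus\cO^{s+1}.
\]
So the sections on each line are exactly the sections of the degree-zero summand, i.e. the positive subsheaf twisted down. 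Such a section is determined by its value at the node. Moreover, that value is an arbitrary vector in the fiber of the positive subsheaf at the node (and is zero in the complementary directions).

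The restriction map from $H^0(C,N_{C/X_{r,s}}(-D))$ to sections on $C'$ is therefore injective. Its image consists of those $\sigma_{C'}$ that satisfy two conditions. First, at each $q_i$ the value $\sigma_{C'}(q_i)$ lies in the fiber at $q_i$ of the positive subsheaf $\cO_{L_i}(2)$. Second, at $q$ the value $\sigma_{C'}(q)$ lies in the fiber at $q$ of the positive subsheaf $\cO_M(1)^{s+1}$. Conversely, any such $\sigma_{C'}$ extends uniquely over each line, so the correspondence is a bijection.

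Next, I translate these fiber conditions using Lemma~\ref{lemma:positive_subsheaves_at_q}. At $q_i$, the required subspace is the fiber of $N_{C'\to v_i}(q_i)$. At $q$, it is the fiber of $T_{X_{r,s}/\bP^{r-s-1}}|_{C'}[q\xrightarrow{+}v]$. Twisting by $-D$ does not move these subspaces, because $D'$ is supported away from the nodes and the modifications commute with twists by Proposition~\ref{prop:multi_mod_properties}(c). Requiring a section of a bundle to take values in a subbundle's fiber at a point is, by definition, the negative elementary modification toward that subbundle. Hence the allowed $\sigma_{C'}$ are precisely the sections of
\[
N_{C/X_{r,s}}(-D)|_{C'}[q_1\xrightarrow{-}v_1]\cdots[q_{r-s-2}\xrightarrow{-}v_{r-s-2}][q\xrightarrow{-}T_{X_{r,s}/\bP^{r-s-1}}|_{C'}].
\]
Here $N_{C/X_{r,s}}|_{C'}$ is the bundle from Lemma~\ref{lem:construction_restricted_normal}(a), the data $v_i$ stand for the pointing subbundles $N_{C'\to v_i}(q_i)$, and at $q$ the subbundle is the image of $T_{\rm rel}[q\xrightarrow{+}v]$ inside the modified bundle.

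The main obstacle is bookkeeping. The subsheaves named in Lemma~\ref{lemma:positive_subsheaves_at_q} are subsheaves of the positively modified bundle $N_{C/X_{r,s}}|_{C'}$. To write them as negative modifications, I must use the bijection $\varphi^{\mathrm{md}}$ on modification data and check that the datum is tree-like. This holds because the nodes $q_i$ and $q$ are distinct points, so the modifications are supported at different points and do not interact. Once this is checked, the commutativity of modifications (Proposition~\ref{prop:multi_mod_properties}(b)) lets me perform the modifications one node at a time, and the isomorphism of $H^0$ follows.
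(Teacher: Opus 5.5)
Your proposal is correct and is essentially the paper's proof. Restriction to $C'$ is injective because sections on each line vanish once twisted down by the node, and Lemma~\ref{lemma:positive_subsheaves_at_q} identifies the image, via the node fibers of the positive subsheaves, with the stated negative modifications; one harmless slip is that on $L_i$ the twisted bundle is $\cO(-1)^{r-2}\oplus\cO$ (rank $r-1$), with your exponent $r-s-2$ inherited from a typo in Lemma~\ref{lem:construction_restricted_normal}(b).
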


\begin{proof}
We have
\begin{align*}
    H^0(L_i, N_{C/X_{r,s}}(-D-q_i)|_{L_i})&=0,\\
    H^0(M,  N_{C/X_{r,s}}(-D-q)|_{M})&=0,
\end{align*}
for $i=1,\ldots, r-s-2$. Thus, the restriction yields an inclusion
\begin{equation}\label{eqn: inclusion_I}
    H^0(C, N_{C/X_{r,s}}(-D)) \hookrightarrow H^0(C', N_{C/X_{r,s}}(-D)|_{C'}).
\end{equation}
The image of the map \eqref{eqn: inclusion_I} coincides with the subspace of sections $ \sigma \in H^0(C', N_{C/X_{r,s}}(-D)|_{C'})$ such that 
\[
\begin{aligned}
(\sigma(q_1), \ldots, \sigma(q_{r-s-2}),\sigma(q)) \in \operatorname{im} \Bigg( 
    &\bigg( \bigoplus_{i=1}^{r-s-2} H^0(L_i, N_{C/X_{r,s}}(-D)|_{L_i}) \bigg) \oplus H^0(M, N_{C/X_{r,s}}(-D)|_{M}) \Bigg. \\
    &\Bigg. \xrightarrow{\mathrm{ev}} \bigg( \bigoplus_{i=1}^{r-s-2} N_{C/X_{r,s}}|_{q_i} \bigg) \oplus N_{C/X_{r,s}}|_q \Bigg)
\end{aligned}
\]
By Lemma \ref{lemma:positive_subsheaves_at_q}(a), the image of the evaluation map $\mathrm{ev}$ is equal to
\[
N_{C' \to v_1}(q_1)|_{q_1} \oplus \ldots \oplus N_{C' \to v_{r-s-2}}(q_{r-s-2})|_{q_{r-s-2}} \oplus T_{X_{r,s}/\bP^{r-s-1}}|_{C'}[q \xrightarrow{+} v]|_{q},
\]
which yields the needed identification.
\end{proof}

\begin{proposition}\label{prop:bound_collide_points}
Let $\Lambda \subset N_{C'/X_{r,s}}|_q$ be a general subspace of dimension $r-s-1$. Then, for $i=0,1$, we have
\[
h^i(C, N_{C/X_{r,s}}(-D))\le h^i(C',N_{C'/X_{r,s}}(-D'-(r-s-2)q)[q\xrightarrow{+} \Lambda]).
\]
\end{proposition}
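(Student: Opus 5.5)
The plan is to pass from the nodal curve $C$ to the single component $C'$ using Lemma \ref{lem:H0_from_C_to_C'}. We then specialize the nodes $q_1,\ldots,q_{r-s-2}$ into $q$ and apply upper semicontinuity of $h^0$ and $h^1$ in a flat family of vector bundles on $C'$.

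\emph{Step 1 (reduce to $C'$).} Since $v_i$ and $v$ do not lie on $C'$, we have $D|_{C'}=D'$. By Lemma \ref{lem:H0_from_C_to_C'} and Lemma \ref{lemma:modified_normal_dk}, $H^0(C,N_{C/X_{r,s}}(-D))$ equals $H^0(C',\cE)$, where
\[
\cE=N_{C'/X_{r,s}}(-D')[q_1\xrightarrow{+}v_1][q_1\xrightarrow{-}v_1]\cdots[q\xrightarrow{+}v][q\xrightarrow{-}T_{X_{r,s}/\bP^{r-s-1}}|_{C'}[q\xrightarrow{+}v]].
\]
The same identification should hold for $h^1$. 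I would derive this from an Euler characteristic comparison: the restriction sequence to the components, together with the vanishing of $H^0$ on $L_i(-D-q_i)$ and $M(-D-q)$ and the surjectivity statements used in that lemma, should give $\chi(C,N_{C/X_{r,s}}(-D))=\chi(C',\cE)$. Equivalently, one can show $H^1$ on the lines contributes nothing; this holds because the bundles on $L_i$ and $M$ are $\cO(1)^{r-s-2}\oplus\cO(2)$ and $\cO^{r-s-2}\oplus\cO(1)^{s+1}$, and the twists by $D$ are mild.

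\emph{Step 2 (local simplification).} Choose local frames and use Proposition \ref{prop:multi_mod_properties} to rewrite the combined modifications. At each $q_i$, a positive and then a negative modification toward the line $\ell_i:=N_{C'\to v_i}|_{q_i}$ gives
\[
(-)[q_i\xrightarrow{+}\ell_i][q_i\xrightarrow{-}\ell_i(q_i)]\cong(-)(-q_i)[2q_i\xrightarrow{+}\ell_i].
\]
At $q$, writing $T:=T_{X_{r,s}/\bP^{r-s-1}}|_{C'}$ and using $N_{C'\to v}\subset T$, the combination becomes $(-)(-q)[q\xrightarrow{+}T][q\xrightarrow{+}v]$.

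A degree check confirms consistency with the target. The total change of degree relative to $N_{C'/X_{r,s}}(-D')$ is $(3-r)(r-s-2)+(3-r+s)$. This equals the degree change of $N_{C'/X_{r,s}}(-(r-s-2)q)[q\xrightarrow{+}\Lambda]$, namely $-(r-1)(r-s-2)+(r-s-1)$. So $\chi$ agrees on both sides.

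\emph{Step 3 (collision and semicontinuity).} Let the points $q_1,\ldots,q_{r-s-2}$ vary in a one-parameter family $B$ and tend to $q$, keeping the $v_i$ general, so that the directions $\ell_i$ limit to general lines in $N_{C'/X_{r,s}}|_q$. The modification data then form a family of vector bundles $\cE_b$ on $C'\times B$ of constant $\chi$. Upper semicontinuity gives $h^i(\cE_{\text{gen}})\le h^i(\cE_0)$. It then remains to identify the special fiber $\cE_0$ with $N_{C'/X_{r,s}}(-D'-(r-s-2)q)[q\xrightarrow{+}\Lambda]$ for a general $\Lambda$ of dimension $r-s-1$, or with a further specialization of it. Invoking semicontinuity once more in the family of all $\Lambda\in\Gr(r-s-1,N|_q)$ would then give the inequality for general $\Lambda$.

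\emph{Main obstacle.} The hard part is the computation of the flat limit $\cE_0$, since colliding modifications are not simply additive. Based on the degree count, I would expect the $r-s-2$ double positive modifications along the general lines $\ell_i$, together with $[q\xrightarrow{+}T][q\xrightarrow{+}v]$ and the total twist $-(r-s-1)q$, to degenerate to $(-(r-s-2)q)[q\xrightarrow{+}\Lambda]$, with $\Lambda$ spanned by $\ell_1,\ldots,\ell_{r-s-2}$ and the image of $v$. First I would verify this in a local frame, colliding one point at a time. If the limit is not exactly of this form, I would instead aim to show that it contains the target as a subsheaf with torsion quotient of the correct length, or else satisfies the matching inequality on $h^0$ and $h^1$ directly. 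Since $\chi$ is equal on both sides, either comparison suffices.
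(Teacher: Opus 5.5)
Your plan is the paper's proof with slightly different bookkeeping: reduce to $C'$ by Lemma \ref{lem:H0_from_C_to_C'}, pass from $h^0$ to $h^1$ via equal Euler characteristics, and collide the $q_i$ into $q$ by semicontinuity. Your ``main obstacle'' also dissolves: the collided datum stays tree-like on $C'\times B$, so the special fiber is just the naive collision (Proposition \ref{prop:multi_mod_properties}), and a local-frame check with $\ell_1,\ldots,\ell_{r-s-2}$ complementary to $T$ and $v\subset T$ gives exactly $N_{C'/X_{r,s}}(-D'-(r-s-2)q)[q\xrightarrow{+}\Lambda]$ with $\Lambda=\langle\ell_1,\ldots,\ell_{r-s-2},v\rangle$, which is a general $(r-s-1)$-plane since a general one meets $T$ in a line (the paper instead chooses the $v_i,v$ so that their directions span the given $\Lambda$, then applies the combining rules).

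So none of your fallbacks is needed. That is fortunate, because the one via semicontinuity over $\Lambda\in\Gr(r-s-1,N_{C'/X_{r,s}}|_q)$ points the wrong way: a special $\Lambda_0$ has $h^0$ at least the generic value, so it cannot yield the bound for general $\Lambda$.
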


\begin{proof}
A straightforward computation shows that 
\[
N_{C/X_{r,s}}(-D) \qquad \text{and} \qquad N_{C'/X_{r,s}}(-D'-(r-s-2)q)[q\xrightarrow{+} \Lambda]
\]
have the same degree and rank, hence the same Euler characteristic. It therefore suffices to consider the case $i=0$.

By Lemma \ref{lem:H0_from_C_to_C'}, it suffices to bound from above the dimension of the space of global sections of 
    
    \[
 N_{C/X_{r,s}}(-D)|_{C'}[q_1 \xrightarrow{-}v_1] \ldots [q_{r-s-2} \xrightarrow{-}v_{r-s-2}][q \xrightarrow{-} T_{X_{r,s}/\bP^{r-s-1}}|_{C'}].
 \]
 By Lemma \ref{lemma:modified_normal_dk} and Proposition \ref{prop:twisting}, we also have
\[
N_{C/X_{r,s}}(-D)|_{C'}\cong N_{C'/X_{r,s}}(-D')[q_1 \xrightarrow{+}v_1] \ldots [q_{r-s-2} \xrightarrow{+}v_{r-s-2}][q \xrightarrow{+} v]
\]

By semi-continuity, it suffces in turn to assume that $q_i=q$ for all $i=1,\ldots,r-s-2$. We are also free to choose the points $v_1,\ldots,v_{r-s-2},v$, as follows. Because
\[
\dim(\Lambda) + \rank(T_{X_{r,s}/\bP^{r-s-1}}) = r > \rank(N_{C'/X_{r,s}}),
\]
$\Lambda$ intersects the fiber of the subbundle $T_{X_{r,s}/\bP^{r-s-1}}|_q$ in dimension 1. Therefore, we may choose the points $v_1, \ldots, v_{r-s-2}, v$ in such a way that $\Lambda$ is equal to the span of the normal vectors at $q$ in the directions of $v_1, \ldots, v_{r-s-2}, v$, and $\Lambda\cap T_{X_{r,s}/\bP^{r-s-1}}|_q$ is spanned by the normal vector in the direction of $v$. In particular, the line through $q,v$ has degree $(1,1)$, and the span $\Lambda'\subset \Lambda$ of the normal vectors in the directions of $v_1, \ldots, v_{r-s-2}$ intersects $T_{X_{r,s}/\bP^{r-s-1}}|_q$ in the zero subspace.

Note that all modification data are tree-like. By Propositions \ref{prop:twisting} and \ref{prop:combining}, $h^0(C, N_{C/X_{r,s}}(-D))$ is bounded above by the dimension of the space of global sections of
\begin{align*}
    &N_{C'/X_{r,s}}(-D')[q \xrightarrow{+}v_1] \ldots [q \xrightarrow{+}v_{r-s-2}][q \xrightarrow{+} v]\\
    &\qquad\qquad\qquad\qquad[q \xrightarrow{-}v_1] \ldots [q \xrightarrow{-}v_{r-s-2}][q \xrightarrow{-} T_{X_{r,s}/\bP^{r-s-1}}|_{C'}]\\
    &\cong N_{C'/X_{r,s}}(-D'-(r-s-1)q)[q\xrightarrow{+} \Lambda][q \xrightarrow{-} \Lambda'+T_{X_{r,s}/\bP^{r-s-1}}]\\
    &\cong N_{C'/X_{r,s}}(-D'-(r-s-2)q)[q\xrightarrow{+} \Lambda],
\end{align*}
because $\Lambda'$ and $T_{X_{r,s}/\bP^{r-s-1}}|_q$ span $N_{C'/X_{r,s}}|_q$.

\end{proof}

We are now ready to prove Propositions \ref{prop:main_inductive} and \ref{prop:main_inductive_refinement}.

\begin{proof}[Proof of Proposition \ref{prop:main_inductive}(a)]

Suppose that 
\[
N_{d,k}=N_{C'/X_{r,s}} \cong \cO(a)^{ x} \oplus \cO(a+1)^{ z}
\]
is balanced. Let $\Lambda \subset N_{C'/X_{r,s}}|_q$ be a general subspace of dimension $r-s-1$ as in Proposition \ref{prop:bound_collide_points}. Then, $\Lambda$ is in particular transverse to the fiber over $q$ of the positive subsheaf of $N_{C'/X_{r,s}}(-D'-(r-s-2)q)$, so
\[
N_{C'/X_{r,s}}(-D'-(r-s-2)q)[q\xrightarrow{+} \Lambda]
\]
is also balanced.

We have either $x \geq r-s-1$ or $z \geq s$. If $x \geq r-s-1$, then
\[
N_{C'/X_{r,s}}[q\xrightarrow{+} \Lambda]\cong \cO(a)^{x-(r-s-1)} \oplus \cO(a+1)^{z+(r-s-1)}.
\]
Then, taking $D'\subset C'$ of degree $(a+1)-(r-s-2)+1$ in Proposition \ref{prop:bound_collide_points} shows that
\[
h^0(C, N_{C/X_{r,s}}(-D))=h^1(C, N_{C/X_{r,s}}(-D))=0,
\]
so $N_{C/X_{r,s}}(-D)$ (and therefore $N_{C/X_{r,s}}$) must satisfy interpolation. Similarly, if $z \geq s$, then taking $D'\subset C'$ of degree $(a+2)-(r-s-2)+1$ shows that $N_{C/X_{r,s}}$ satisfies interpolation. Because interpolation is open, it follows in both cases that $N_{d+(r-s-1),k+1}$ is balanced.
\end{proof}

\begin{proof}[Proof of Proposition~\ref{prop:main_inductive_refinement}]
We prove (a); (b) is similar. Write
\[
N_{d,k}=N_{C'/X_{r,s}} \cong \cO(a)^x \oplus \cO(a+1)^y \oplus \cO(a+2)^z,
\]
where $x \geq r-s-1$. Let $\Lambda \subset N_{C'/X_{r,s}}|_q$ be a general subspace of dimension $r-s-1$ as in Proposition \ref{prop:bound_collide_points}. Then, $\Lambda$ is transverse to all of the pieces of the Harder-Narasimhan filtration of $N_{C'/X_{r,s}}$, and thus
\[
N_{C'/X_{r,s}}[q\xrightarrow{+} \Lambda]\cong \cO(a)^{x-(r-s-1)} \oplus \cO(a+1)^{y+(r-s-1)} \oplus \cO(a+2)^z.
\]

Now, semi-continuity gives the following statements, which together imply Proposition \ref{prop:main_inductive_refinement}(a).
\begin{itemize}
\item[(1)] Taking $D'\subset C'$ of degree $(a+2)-(r-s-2)+1$ in Proposition \ref{prop:bound_collide_points} shows that $h^0(N_{C/X_{r,s}}(-D))=0$ whenever $D$ has degree $(a+2)+(r-s)$. Thus, $N_{d+(r-s-1),k+1}$ has no summands of degree greater than $(a+2)+(r-s-1)$
\item[(2)] Taking $D'\subset C'$ of degree $(a+2)-(r-s-2)-1$ shows that $h^1(N_{C/X_{r,s}}(-D))=0$ whenever $D$ has degree $a+(r-s)$. Thus, $N_{d+(r-s-1),k+1}$ has no summands of degree less than $a+(r-s-1)$.
\item[(3)] Taking $D'\subset C'$ of degree $(a+2)-(r-s-2)$ shows that $h^0(N_{C/X_{r,s}}(-D))\le z$ and $h^1(N_{C/X_{r,s}}(-D))\le x-(r-s-1)$ whenever $D$ has degree $(a+1)+(r-s)$. Thus, $N_{d+(r-s-1),k+1}$ has at most $z$ summands of degree exactly $(a+2)+(r-s)-1$, and at most $x-(r-s-1)$ summands of degree exactly $a+(r-s-1)$. 
\end{itemize}
\end{proof}

\begin{proof}[Proof of Proposition \ref{prop:main_inductive}(b)]

We are done by Proposition~\ref{prop:main_inductive_refinement} if $m>1$. If $m=1$, then $N_{C'/X_{r,s}}[q\xrightarrow{+} \Lambda]$ is balanced, and we conclude as in the proof of Proposition \ref{prop:main_inductive}(a).
 
\end{proof}

\subsection{Auxiliary arguments}

Larson-Vogt's proof of Theorem \ref{thm:Pr} (see \cite[\S 9.1]{lv}) uses the inductive steps of Proposition \ref{prop:lv_inductive} below, for $k=0$. The proofs go through with minimal changes for $k>0$. We will only use Proposition \ref{prop:lv_inductive} in the case $k=2$. 

\begin{proposition}\label{prop:lv_inductive}(cf. \cite[Proposition 6.1, Proposition 8.2]{lv})
\,
\begin{enumerate}
\item[(a)] Suppose that $\frac{r+2-sk}{2} \leq d \leq \frac{3r-2-sk}{2}$. If $N_{(d-1,k;r-1,s)}$ is balanced, then so is $N_{(d,k;r,s)}$.
\item[(b)] Suppose that $\frac{3r-sk}{2} \leq d \leq \frac{5r-sk-4}{2}$. If $N_{(d-2,k;r-1,s)}$ is balanced, then so is $N_{(d,k;r,s)}$.
\item[(c)] Suppose that $s < r-2$, that $\charac(\bK) \neq 2$, and that $d = \frac{3r-1-sk}{2}$. If $N_{(d-3,k;r-2,s)}$ is balanced, then so is $N_{(d,k;r,s)}$.
\end{enumerate}
\end{proposition}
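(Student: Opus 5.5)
We follow Larson--Vogt's degenerations in \cite[\S 6, \S 8]{lv}, keeping the secancy condition on a component that is left untouched. The strategy in all three parts is the same. We build a nodal curve $f\colon C=C'\cup L_1\cup\cdots\cup L_m\to X_{r,s}$ of total degree $(d,k)$, where $C'$ is a general curve of the smaller degree $(d-1,k)$, $(d-2,k)$ or $(d-3,k)$ lying in a hyperplane section $H\cong X_{r-1,s}$ (resp. a codimension-two linear section $\cong X_{r-2,s}$). These sections are pulled back from $\bP^r$ and contain $\bP^s$. The $L_j$ are lines of degree $(1,0)$ leaving $H$. Because $C'$ meets $\bP^s$ inside $H$, the full multiplicity $k$ is carried by $C'$, and $C'$ is a general curve of degree $(d-1,k)$ in $X_{r-1,s}$. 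By Lemma \ref{lem:free}, Remark \ref{rmk:H1_vanishing} and Lemma \ref{lem:maps_smooth}, $f$ smooths to a general map of degree $(d,k)$. Since interpolation is open, it suffices to show that $N_{C/X_{r,s}}$ satisfies interpolation.

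For (a), take $m=1$, with $L$ meeting $C'$ at $q$ and transverse to $H$. Restricting to $C'$ gives
\[
0\to N_{C'/H}\to N_{C/X_{r,s}}|_{C'}\to N_{H/X_{r,s}}|_{C'}(q)\to 0,
\]
using Proposition \ref{prop:hh-isomorphism}: the modification $[q\xrightarrow{+}v]$ points out of $H$. Here $N_{C'/H}=N_{(d-1,k;r-1,s)}$ is balanced by hypothesis, and the quotient is $\cO(d-1+1)=\cO(d)$. We then apply Lemma \ref{lem: criterion interpolation} with $Z=L$, where $N|_L\cong\cO(1)^{r-2}\oplus\cO(2)$, and a divisor $D$ on $L$ consisting of a point with multiplicity one or two. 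This reduces the problem to showing that a subspace of $H^0(N_{C/X_{r,s}}|_{C'})$, cut out by requiring the value at $q$ to lie in a prescribed subspace, satisfies interpolation. We handle that with Lemma \ref{lem:interpolation_subspace}, once we know $N_{C/X_{r,s}}|_{C'}$ is balanced. Balancedness of an extension of $\cO(d)$ by a balanced bundle of slope
\[
\mu=\frac{r\,(d-1)-(r-s-2)k-2}{r-2}
\]
holds exactly when $d$ lies within one of that slope, up to the twist. Unwinding this condition gives the window
\[
\frac{r+2-sk}{2}\le d\le \frac{3r-2-sk}{2}.
\]
In the part of the window where $\cO(d)$ is not within one of the slope, we must also check that the extension is general enough. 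The plan here is to vary $q$ and the direction $v$, since the extension class depends on them, as in \cite[Prop.~6.1]{lv}.

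For (b), we use two lines $L_1,L_2$, or a conic, attached to $C'\subset H$. The quotient becomes $\cO(d)$ twisted at two points, which explains why the window shifts by $r$. The same criterion applies with $D$ supported on the lines.

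For (c), we use a codimension-two section containing $\bP^s$, which needs $s<r-2$, together with three lines. The quotient $N_{X_{r-2,s}/X_{r,s}}|_{C'}$ is modified at three points. The characteristic hypothesis is used exactly as in \cite[Prop.~8.2]{lv}: there the pointing-bundle modification directions at the nodes must be independent, and this fails in characteristic $2$ because of the Frobenius structure.

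The main obstacle is verifying that the extensions in (a)–(c) are balanced at the ends of the windows. This is a parameter count showing the extension class is nonzero in the relevant $\mathrm{Ext}^1$, and it must be redone with $k>0$, where the degrees in the formulas change by $sk$. We expect the arguments of \cite{lv} to carry over verbatim once the degree bookkeeping is updated.
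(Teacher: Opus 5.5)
Your route is not the paper's. For (a) the paper does no degeneration: it projects a general $C$ of degree $(d,k)$ from a general point $p\in C$, getting $0\to N_{C\to p}\to N_{C/X_{r,s}}\to N_{\overline{C}/X_{r-1,s}}(p)\to 0$ with $\deg N_{C\to p}=d-k+2$. The window is exactly the slope condition for this sequence to split into a balanced bundle. For (b) and (c) the paper reduces via \cite[Lemma 8.5]{aly} to balancedness of $N_{C'}[2x\xrightarrow{+}y]$, then specializes $y$ onto $C'$ (resp.\ onto a secant line) and projects. A linear-section route can be made to work, but as written yours has two concrete errors.

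\begin{enumerate}
\item[(i)] \emph{Wrong quotient degree.} The proper transform $\widetilde H$ of a hyperplane containing $\bP^s$ has class $\mathrm{H}-\mathrm{E}=\pi^*\cO(1)$. So $N_{\widetilde H/X_{r,s}}|_{C'}\cong\cO(d-1-k)$, and your quotient is $\cO(d-k)$, not $\cO(d)$. With $\cO(d)$, no version of ``within one of $\mu$'' produces the $-sk$ in the stated window. For example, $|\mu-d|\le1$ unwinds to $\frac{(r-s-2)k+4}{2}\le d\le r+\frac{(r-s-2)k}{2}$, which is wrong even for $k=0$.
\item[(ii)] \emph{Misapplied lemma.} Lemma \ref{lem: criterion interpolation} requires $H^0(N|_L(-D-q))=0$, which forces $\deg D\ge 2$, say $D=2v$. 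The resulting condition at $q$ is membership in the specific line $N_{C'\to v}(q)|_q$. This line is not general (it depends only on $L$, not on $v$), so Lemma \ref{lem:interpolation_subspace} does not apply. The correct identification is $V=H^0(N_{C'}[2q\xrightarrow{+}v](-q))$. So the bundle that must be balanced is $N_{C'}[2q\xrightarrow{+}v]$, not $N_{C/X_{r,s}}|_{C'}$. Your criterion is off by one in slope: on the upper part $2r-sk<2d\le 3r-2-sk$ of the window, $N_{C/X_{r,s}}|_{C'}\cong N_{C'/\widetilde H}\oplus\cO(d-k)$ is split and unbalanced, yet the conclusion holds.
\end{enumerate}

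With both corrections, consider $0\to N_{(d-1,k;r-1,s)}\to N_{C'}[2q\xrightarrow{+}v]\to\cO(d-k+1)\to 0$. It splits for slope reasons once $2d\ge r+2-sk$, and is then balanced iff $2d\le 3r-2-sk$ --- exactly the stated window. So the extension-class parameter count you call the main obstacle never arises.

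Part (b) works the same way with two lines: the sub is $N_{(d-2,k;r-1,s)}$ and the quotient is $\cO(d-k+2)$.

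For (c), take $C'\subset\pi^{-1}(\Pi)$ with $\Pi\subset\bP^{r-s-1}$ of codimension two, plus three lines.
\begin{enumerate}
\item[$\bullet$] The quotient is $\cO(d-3-k)^2$ modified at $2q_1,2q_2,2q_3$ toward constant directions. After normalizing these directions to $e_1,e_2,e_1+e_2$, it is $\cO(d-k)^2$ iff no nonzero $(\alpha,\beta)$ satisfies $\alpha(x-q_2)^2\equiv\beta(x-q_1)^2 \pmod{(x-q_3)^2}$.
\item[$\bullet$] That congruence has no nonzero solution iff $\charac(\bK)\ne2$. This computation, not an appeal to Frobenius structure, is where the hypothesis enters.
\item[$\bullet$] At $d=\frac{3r-1-sk}{2}$ the sub $N_{(d-3,k;r-2,s)}$ is $\cO(d-k)^{r-3}$, so the extension splits into $\cO(d-k)^{r-1}$.
\end{enumerate}

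Finally, your construction cannot reach the edge cases the paper explicitly allows: $s=r-2$ in (a), (b), and $s=r-3$ in (c). In these cases every linear section of the required codimension containing $\bP^s$ is a fiber of $\pi$. A fiber contains no curve of degree $(d',k')$ with $d'>k'$. The paper's projections handle these cases uniformly.
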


In parts (a) and (b), we allow $s = r-2$. In this case, we interpret the statement that $N_{(d',k;r-1,r-2)}$ is balanced as the statement that a general rational curve of degree $d'$ in $\bP^{r-1}$ has balanced normal bundle. Similarly, we allow $s = r-3$ in (c).

\begin{proof}
Let $f:C=C\to X_{r,s}$ be a general map of degree $(d,k)$.
\begin{enumerate}
\item[(a)] Projecting to $X_{r-1,s}$ from a general point $p \in C$ induces a pointing bundle exact sequence \cite[(8)]{lv}
\[
0 \to N_{C \to p} \to N_{C/X_{r,s}} \to N_{\overline{C}/X_{r-1,s}}(p) \to 0.
\]
where $\overline{C}$ denotes the curve $C$ viewed in $X_{r-1,s}$. The bundle on the right is balanced of degree $rd-(r-s-2)k-4$ and rank $r-2$, and the term on the left is a line bundle of degree $d-k+2$. By assumption,
\[
d-k+1 \leq \frac{rd-(r-s-2)k-4}{r-2} \leq d-k+3,
\]
so the sequence splits, and $N_{d,k}$ is balanced.
\item[(b)] Replace $C$ with a union $C' \cup L$, where $L$ is a line of degree $(1,0)$. Let $x = C' \cap L$, and let $y \in L$ be a general point. By the proof of \cite[Lemma 8.5]{aly} (or Lemma \ref{lem: criterion interpolation}), it suffices to show that $N_{C'}[2x \overset{+}{\to} y]$ is balanced. Specialize $y$ to a point $p \in C'$. Project from $p$, we obtain the pointing bundle exact sequence
\[
0 \to N_{C' \to p}(2x) \to N_{C'/X_{r,s}}[2x \overset{+}{\to} p] \to N_{\overline{C}'/X_{r-1,s}}(p) \to 0.
\]
The bundle on the right is balanced of degree $rd-(r-s-2)k-(r+4)$ and rank $r-2$, and the term on the left is a line bundle of degree $d-k+3$. Conclude if
\[
d-k+2 \leq \frac{rd-(r-s-2)k-(r+4)}{r-2} \leq d-k+4.
\]

\item[(c)] Replace $C$ with $C' \cup L$, where $L$ is a line of degree $(1,0)$. Let $x = C' \cap L$ and $y \in L$ be a general point. It suffices to show that $N_{C'}[2x \overset{+}{\to} y]$ is balanced. Let $u,v \in C$ be general points, specialize $y$ onto the line $\overline{uv}$, and project from $\overline{uv}$. We obtain
\[
0 \to \left(N_{C' \to u} \oplus N_{C' \to v}\right)[2x \overset{+}{\to} y] \to N_{C'/X_{r,s}}[2x \overset{+}{\to} y] \to N_{\overline{C}'/X_{r-2,s}}(u+v) \to 0.
\]
The bundle on the right is isomorphic to $\cO(d-k+2)^{ r-3}$, and under the assumption that $\charac(\bK) \neq 2$, the term on the left is isomorphic to $\cO(d-k+2)^{ 2}$ (cf. \cite[Lemma 6.3]{lv}).
\end{enumerate}

\end{proof}

\section{Normal bundles in characteristic not 2}\label{sec:char_not2}

In this section, we determine all normal bundles $N_{(d,k;r,s)}$ assuming $\charac(\bK)\neq2$. We deduce the answer to Question \ref{q:interpolation}(a) in \emph{all} characteristics in \S\ref{sec:interpolation_normal}. 

Most of the remaining work is combinatorial, exploiting the tension between the results proved in \S \ref{sec:obstructions} and \S\ref{sec:inductive}. As such, throughout this section and the next (dealing with characteristic 2), we will sometimes abuse terminology, by saying \((d,k)\) or $(d,k;r,s)$ is balanced or $m$-almost balanced, to mean that \(N_{d,k}\) is so. 

\begin{theorem}\label{thm:main_char_not2}
Suppose that $\charac(\bK)\neq2$.
\begin{enumerate}
\item[(a)] If $(d,k)$ is non-degenerate and if $d\ge L_{r,s}(k)$, then $N_{(d,k;r,s)}$ is balanced. 
\item[(b)] (Proposition \ref{prop:projection_split_char_not2}) If $(d,k)$ is non-degenerate and if $d<L_{r,s}(k)$, then $N_{(d,k;r,s)}$ is not balanced, and $N_{(d,k;r,s)}\cong T_{\rel}\oplus N_{C/\bP^{r-s-1}}$. The summands are determined by Lemma \ref{lem:Trel_balanced} and Theorem \ref{thm:Pr}, respectively. 
\item[(c)] If $(d,k)$ is degenerate, then $N_{(d,k;r,s)}$ is determined by Proposition \ref{prop:degenerate_splittings}.
\end{enumerate}
\end{theorem}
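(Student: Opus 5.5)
Parts (b) and (c) are immediate from Proposition \ref{prop:projection_split_char_not2} and Proposition \ref{prop:degenerate_splittings}. For (b), note that when $(d,k)$ is non-degenerate and $\charac(\bK)\neq 2$, hypothesis (i) of Proposition \ref{prop:projection_split_char_not2} holds. One also checks that $L_{r,s}(k)\le U_{r,s}(k)+1$, so that $d<L_{r,s}(k)$ forces $d\le U_{r,s}(k)$ and the sequence \eqref{eq:proj_seq} splits. The content is therefore part (a), which I would prove by induction along the ``diagonal'' steps $(d,k)\mapsto(d+(r-s-1),k+1)$ of Proposition \ref{prop:main_inductive}. Since $k\ge 0$ is bounded below, every non-degenerate $(d,k)$ with $d\ge L_{r,s}(k)$ can be traced back along such steps to a base pair. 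A base pair is one of:
\begin{itemize}
\item $(d_0,0)$ with $d_0\ge r$, which is balanced by Theorem \ref{thm:Pr}(a);
\item a pair near the boundary $d\approx L_{r,s}(k)$;
\item a pair near the degeneracy boundary $d\approx r$ or $d-k\approx r-s-1$.
\end{itemize}

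The plan in order is as follows. First, compute how $L_{r,s}$ transforms: $L_{r,s}(k+1)-L_{r,s}(k)$ equals $\frac{r-s}{2}$ plus $\frac{r-s-2}{2}$ whenever $\lceil k/(s+1)\rceil$ jumps. Hence the line $d=L_{r,s}(k)$ has average slope about $r-s-1$ per unit of $k$ only in an averaged sense. Stepping back diagonally from a pair above $L_{r,s}$ may cross below it, and the induction cannot start there because those pairs are genuinely unbalanced. Second, on the strip $L_{r,s}(k)\le d\le U_{r,s}(k)$, Proposition \ref{prop:projection_split_char_not2}(b) directly gives balancedness of $T_{\rel}\oplus N_{C/\bP^{r-s-1}}$, and these pairs serve as base cases. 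Third, just below the strip, $N_{d,k}\cong T_{\rel}\oplus N_{C/\bP^{r-s-1}}$ is a sum of two balanced bundles whose slopes differ by a controlled amount. I would verify that it is $m$-almost balanced for a small explicit $m$, namely $x\le m(r-s-1)$ summands of the smallest degree and $z\le ms$ of the largest. Proposition \ref{prop:main_inductive}(b) and Proposition \ref{prop:main_inductive_refinement} then show that after $m$ diagonal steps the bundle becomes balanced, and I would check this happens exactly once $d\ge L_{r,s}(k)$. This reconciles the irregular jumps of $L_{r,s}$ with the uniform diagonal step.

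Fourth, I would handle pairs whose diagonal predecessors become degenerate, i.e.\ with $d$ close to $r$ or $d-k$ close to $r-s-1$. For predecessors with $d-k<r-s-1$, Proposition \ref{prop:degenerate_splittings} writes $N$ as a direct sum with explicit trivial-twist summands, and I would again test $m$-almost balancedness. For predecessors with $d<r$, I would instead use Proposition \ref{prop:lv_inductive}, which steps down in $r$, together with induction on $r$. The case $s=r-2$ (where $L=U=k+1$ and \eqref{eq:proj_seq} is absent) I would handle by the same diagonal induction starting from $k=0,1,2$. Here Proposition \ref{prop:lv_inductive}(c), with $k=2$, covers the one residual parity case, which is why the characteristic assumption enters.

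The main obstacle I expect is the combinatorial bookkeeping in the third and fourth steps. One must show that every non-degenerate pair with $d\ge L_{r,s}(k)$ is reached by some chain of $m$ diagonal steps from a pair that is $m$-almost balanced or a known base case. This requires comparing the slopes of $T_{\rel}$ and $N_{C/\bP^{r-s-1}}$ precisely, accounting for the residues of $k$ mod $s+1$ and of $d-k-1$ mod $r-s-2$. The finitely many small cases near both boundaries, where neither the strip nor the almost-balanced estimate applies, would be resolved by Proposition \ref{prop:lv_inductive}.
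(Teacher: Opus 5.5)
Your skeleton matches the paper's: the strip $L_{r,s}(k)\le d\le U_{r,s}(k)$ serves as base cases via Proposition~\ref{prop:projection_split_char_not2}(b), and diagonal steps go through Proposition~\ref{prop:main_inductive}. However, the step you assign to degenerate predecessors with $d'<r$ fails.

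For $k\ge 3$, the pairs not already in the strip have $d>U_{r,s}(k)$, i.e.\ $2(d-k-1)>(r-s-2)(k+1+\lfloor k/(s+1)\rfloor)\ge 4(r-s-2)$. For $(d',k')=(d-(r-s-1),k-1)$ this gives $d'-k'\ge r-s-1$. So the predecessors with $d'-k'<r-s-1$, which you plan to split and test for $m$-almost balancedness, never occur there. Every degenerate predecessor has $d'<r$, and for these you rely on Proposition~\ref{prop:lv_inductive}. That proposition only reaches $d\le \frac{5r-sk-4}{2}$, and part (c) needs $s<r-2$; the window shrinks as $sk$ grows, which is the content of Remark~\ref{rem:both_arguments}.

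Concretely, take $(d,k;r,s)=(r+1,4;r,r-3)$ with $r\ge 7$, e.g.\ $(9,4;8,5)$. It is non-degenerate with $d>U_{r,r-3}(4)=\frac{15}{2}$. Its predecessor $(r-1,3)$ is degenerate with $r-1<r$. Since $r+1>\frac{r+8}{2}$, none of (a)--(c) applies. So these are not finitely many residual small cases, and your plan gives no way to reach them.

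The missing idea is to test almost-balancedness of exactly these predecessors, fed by induction on $r$ for the theorem itself.
\begin{itemize}
\item By Proposition~\ref{prop:degenerate_splittings}(3), $N_{(d',k';r,s)}\cong N_{(d',k';d',s')}\oplus\cO(d')^{r-d'}$ with $s'=s-(r-d')$.
\item The pair $(d',k')$ is non-degenerate in $X_{d',s'}$. The condition $d'\ge L_{d',s'}(k')$ reduces to $2d\ge (r-s)(k+1)$, which follows from $d>U_{r,s}(k)$. So the first summand is balanced by the inductive hypothesis on $r$.
\item Its slope lies in $[d',d'+2]$. There are $r-d'\le r-s-1$ summands $\cO(d')$ (since $d\ge r$), and at most $d-(r-s-1)k-1\le s$ summands $\cO(d'+2)$.
\item Hence the predecessor is almost balanced, and Proposition~\ref{prop:main_inductive}(b) with $m=1$ finishes. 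In the example, $N_{(7,3;8,5)}\cong \cO(7)\oplus\cO(8)^6$.
\end{itemize}

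Three smaller points.
\begin{itemize}
\item Your third step is unnecessary. Lemma~\ref{lem:U_L_gap} gives $U_{r,s}(k)\ge L_{r,s}(k-1)+(r-s-1)$, so the predecessor of any pair with $d>U_{r,s}(k)$ already satisfies $d'\ge L_{r,s}(k-1)$. Nothing ever has to be reached from below the strip.
\item Proposition~\ref{prop:lv_inductive}(c) is unavailable for $s=r-2$, so it cannot be where the characteristic enters there. For $s=r-2$ and $k=2$, the paper instead uses that $(r-2,0)$ and $(r-1,0)$ are $2$- and $1$-almost balanced.
\item For $k=2$ and $d=r\le 2s+1$, the inductive hypothesis does not supply the inputs of Proposition~\ref{prop:lv_inductive}; the paper uses that $(2s-r+2,0)$ is $2$-almost balanced. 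The case $k=1$ is obtained as a general elementary modification of $N_{d,0}$.
\end{itemize}
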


Only (a) remains to be proven. We proceed by induction on $r$ and $k$. More precisely, we prove the claim for $(d,k;r,s)$, assuming that it holds for all $(d',k';r',s')$ with either $r' < r$, or $(r',s') = (r,s)$ and $k' < k$. Assume throughout this section that $\charac(\bK) \neq 2$. 

\subsection{Small $k$}

\begin{lemma}\label{lem:k_zero_or_one}
Theorem~\ref{thm:main_char_not2}(a) holds when $k=0,1$.
\end{lemma}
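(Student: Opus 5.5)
The plan is to reduce both cases directly to Theorem~\ref{thm:Pr}(a), without using the inductive machinery of \S\ref{sec:inductive}. In both cases, non-degeneracy of $(d,k)$ forces $d\ge r$. Since $\charac(\bK)\neq 2$, Theorem~\ref{thm:Pr}(a) then says that a general rational curve of degree $d$ in $\bP^r$ has balanced normal bundle. The inequality $d\ge L_{r,s}(k)$ will not really be needed for $k=0,1$. For instance, $L_{r,s}(0)=\frac{4-(r-s)}{2}$ and $L_{r,s}(1)=\frac{r-s+2}{2}$ are both at most $r$, so the hypothesis is implied by $d\ge r$.

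\emph{Case $k=0$.} A general map of degree $(d,0)$ misses the exceptional divisor. The blow-down $b$ is an isomorphism near the image, so $N_{d,0}\cong N_{C/\bP^r}$. Moreover $b\circ f$ is a general rational curve of degree $d$ in $\bP^r$, since avoiding $\bP^s$ is an open condition. Hence $N_{d,0}$ is balanced by Theorem~\ref{thm:Pr}(a).

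\emph{Case $k=1$.} A general curve of degree $(d,1)$ is the strict transform of a rational curve $b\circ f$ of degree $d$ that meets $\bP^s$ transversely at a single point $x=f(p)$. By the sequence \eqref{eq:subsheaf}, as in the proof of Lemma~\ref{lem:char2_obstruction_general}, $N_{d,1}$ is the negative elementary modification
\[
N_{d,1}\cong N_{C/\bP^r}[p\xrightarrow{-}\Lambda].
\]
Here $\Lambda\subset N_{C/\bP^r}|_p$ is the image of $T_x\bP^s$. It has dimension $s$ because $C$ is transverse to $\bP^s$. I would check the degree bookkeeping: $\deg N_{C/\bP^r}-(r-1-s)=(r+1)d-2-(r-s-1)$, which agrees with $\deg N_{d,1}$.

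The key step is a generality argument using the transitive action of $PGL_{r+1}$. I would first fix a general degree-$d$ curve $C\subset\bP^r$ and a general point $p\in C$. Then I would choose a general $s$-plane $\bP^s$ through $b(f(p))$. Up to projective equivalence, every pair (curve, $s$-plane meeting it at one point) arises this way, and so the resulting curve in $X_{r,s}$ is a general curve of degree $(d,1)$. As $\bP^s$ varies among planes through $x$, its tangent space ranges over all $s$-dimensional subspaces of $T_x\bP^r$. Hence $\Lambda$ is a general $s$-dimensional subspace of the normal fiber, where $s\le r-2<r-1$.

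Now $N_{C/\bP^r}\cong\cO(a)^x\oplus\cO(a+1)^y$ is balanced. A negative modification at a point along a general subspace is transverse to the Harder--Narasimhan filtration, so it lowers degrees in the most balanced way possible. This is the same reasoning used for positive modifications in the proof of Proposition~\ref{prop:main_inductive}(a), after twisting by $\cO(p)$. Concretely, it drops $r-1-s$ of the summands by one, taking the $\cO(a+1)$'s first. The result is again balanced.

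I expect the main obstacle to be justifying that $\Lambda$ is genuinely general in the fiber, jointly with the generality of $C$. The homogeneity argument above should handle this. It is also fine to have $s=r-2$, so that $\Lambda$ is a hyperplane in the fiber, since nothing in the argument uses the projection sequence.
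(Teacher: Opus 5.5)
Your proposal is correct and follows the same route as the paper. The case $k=0$ is Theorem~\ref{thm:Pr}(a), and for $k=1$ you realize $N_{d,1}$ as a negative elementary modification of the balanced $N_{d,0}$ at a general point toward a general $s$-dimensional subspace, which stays balanced. The paper states this in one line; you additionally justify the genericity of $\Lambda$ via the $PGL_{r+1}$-homogeneity argument and give the degree and Harder--Narasimhan bookkeeping.
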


\begin{proof}
In both cases, the assumption $d \geq L_{r,s}(k)$ is vacuous, given the non-degeneracy hypothesis $d \geq r$.

The case $k=0$ is Theorem~\ref{thm:Pr}(a). The case $k=1$ follows from the case $k=0$, because $N_{d,1}$ is obtained from $N_{d,0}$ by applying an elementary modification with respect to a general subspace of the fiber at a general point.
\end{proof}

\begin{lemma}\label{lem:k_equals_2_s_equals_0}
Theorem~\ref{thm:main_char_not2}(a) holds when $k=2$ and $s=0$.
\end{lemma}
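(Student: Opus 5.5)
The plan is to combine the two inductive mechanisms of \S\ref{sec:inductive} with the projection sequence \eqref{eq:proj_seq}, which handles one boundary value of $d$. First I record what the hypotheses mean when $s=0$ and $k=2$. Non-degeneracy amounts to $d\ge r+1$. Next,
\[
L_{r,0}(2)=\frac{r+4+2(r-2)}{2}=\frac{3r}{2},
\]
so the claim is that $N_{(d,2;r,0)}$ is balanced for every $d\ge \max(r+1,\lceil 3r/2\rceil)$. Throughout I use the inductive hypothesis of this section: Theorem \ref{thm:main_char_not2}(a) holds for all smaller $r$. The cases $k=0,1$ are available from Lemma \ref{lem:k_zero_or_one}.

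\emph{Large $d$.} Suppose $d\ge 2r-1$. Then $(d-(r-1),1)$ is non-degenerate on $X_{r,0}$, since $d-(r-1)\ge r$. By Lemma \ref{lem:k_zero_or_one}, $N_{d-(r-1),1}$ is balanced. Proposition \ref{prop:main_inductive}(a), which is allowed for $s=0$, then gives that $N_{d,2}$ is balanced.

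\emph{Middle range.} Suppose $\lceil (3r+1)/2\rceil\le d\le 2r-2$. I apply Proposition \ref{prop:lv_inductive}(b) with $s=0$ and $k=2$. Its numerical window is $3r/2\le d\le (5r-4)/2$, and this contains our range because $2r-2\le (5r-4)/2$. It therefore suffices that $N_{(d-2,2;r-1,0)}$ is balanced. I check that $(d-2,2;r-1,0)$ satisfies the hypotheses of Theorem \ref{thm:main_char_not2}(a) for $r-1$. It is non-degenerate, since $d-2\ge r$ holds for these $d$. It also satisfies $d-2\ge L_{r-1,0}(2)=3(r-1)/2$, which is exactly $d\ge (3r+1)/2$. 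So induction on $r$ applies. The edge case $r-1=2$, where $s=r'-2$, is read via the convention of Definition \ref{def:UL_edge} and the interpretation stated after Proposition \ref{prop:lv_inductive}. Very small $r$ (say $r\le 3$), where the ranges collapse, I would check directly.

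\emph{The boundary value $d=3r/2$ ($r$ even).} This is the expected obstacle. Here $(d-2,2;r-1,0)$ lies strictly below $L_{r-1,0}(2)$, so it is unbalanced and the $(b)$-step cannot be used. Instead I use \eqref{eq:proj_seq} directly, assuming $r\ge 3$ so that $s\neq r-2$.
\begin{itemize}
\item $T_{\rel}$ is a line bundle of degree $d+2$.
\item $N_{C/\bP^{r-1}}$ is balanced by Theorem \ref{thm:Pr}(a), since $d-2\ge r-1$ and $\charac(\bK)\neq 2$.
\item Its slope is $\frac{r(d-2)-2}{r-2}$, which for $d=3r/2$ equals exactly $d+1$.
\end{itemize}
Hence $N_{C/\bP^{r-1}}\cong\cO(d+1)^{r-2}$. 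Then $\operatorname{Ext}^1(\cO(d+1)^{r-2},\cO(d+2))=H^1(\cO(1))^{r-2}=0$, so the sequence splits and
\[
N_{d,2}\cong\cO(d+2)\oplus\cO(d+1)^{r-2},
\]
which is balanced. This is consistent with $d=L_{r,0}(2)\le U_{r,0}(2)$ in Proposition \ref{prop:projection_split_char_not2}(b).

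Together these three ranges cover every non-degenerate $d\ge 3r/2$. The only delicate bookkeeping is confirming the non-degeneracy and threshold conditions at each inductive call, together with the small-$r$ edge cases.
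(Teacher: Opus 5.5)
Your proof is correct, but in the middle range it takes a longer route than the paper. The paper's proof has only two cases. The first is $d\ge 2r-1$, handled exactly as you do via Lemma~\ref{lem:k_zero_or_one} and Proposition~\ref{prop:main_inductive}(a). The second is the whole interval $\frac{3r}{2}=L_{r,0}(2)\le d\le U_{r,0}(2)=\frac{5r-4}{2}$, handled at once by Proposition~\ref{prop:projection_split_char_not2}(b). Since $2r-2\le\frac{5r-4}{2}$, these two cases already cover every $d$.

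Your computation at $d=3r/2$ is exactly this projection-sequence argument, applied at a single value of $d$. It extends verbatim to the whole interval. There $T_{\rel}\cong\cO(d+2)$, and $N_{C/\bP^{r-1}}$ is balanced of slope $d+\frac{2(d-r-1)}{r-2}$. The condition $\frac{3r}{2}\le d\le\frac{5r-4}{2}$ says precisely that every summand of $N_{C/\bP^{r-1}}$ has degree in $[d+1,d+3]$. Hence $\operatorname{Ext}^1(N_{C/\bP^{r-1}},\cO(d+2))=0$, and $\cO(d+2)\oplus N_{C/\bP^{r-1}}$ is balanced.

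You instead cover $\frac{3r+1}{2}\le d\le 2r-2$ with Proposition~\ref{prop:lv_inductive}(b) and induction on $r$. Your checks are right: $(d-2,2;r-1,0)$ is non-degenerate and satisfies $d-2\ge L_{r-1,0}(2)$. The section's inductive setup does allow invoking Theorem~\ref{thm:main_char_not2}(a) for $r-1$, so this step is valid. It just buys nothing here. It imports a degeneration argument from \cite{lv} and an induction on $r$ where a direct $\operatorname{Ext}^1$ computation suffices. It is also the only reason you must single out $d=3r/2$, where $(d-2,2;r-1,0)$ falls below the threshold.

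The small-$r$ cases you defer need no check. For $r=3$ only $d\ge 5=2r-1$ occurs, so neither the middle range nor the boundary value arises, and the edge case $X_{2,0}$ you mention is never invoked. For $r=2$ the normal bundle is a line bundle.
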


\begin{proof}
In this case, the assumption $d \geq L_{r,s}(k)$ is that $d \geq \frac{3}{2}r$. If $d\ge 2r-1$, then $(d-(r-1),1)$ is balanced by Lemma \ref{lem:k_zero_or_one}, and conclude by Proposition \ref{prop:main_inductive}(a). If $d\le U_{r,0}(2)=\frac{5r-4}{2}$, then $(d,2)$ is balanced by Proposition \ref{prop:projection_split_char_not2}.
\end{proof}

\begin{lemma}\label{lem:k_equals_2_s_greater-0}
Theorem~\ref{thm:main_char_not2}(a) holds when $k=2$ and $s>0$.
\end{lemma}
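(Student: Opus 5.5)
We must show: if $k=2$, $s>0$, $(d,2)$ non-degenerate and $d\ge L_{r,s}(2)$, then $N_{(d,2;r,s)}$ is balanced, for $\charac(\bK)\neq 2$.

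Since $s\ge 1$, we have $\lceil 2/(s+1)\rceil=1$. Hence
\[
L_{r,s}(2)=\frac{(r-s)+4+(r-s-2)}{2}=r-s+1,
\qquad
U_{r,s}(2)=\frac{3(r-s)+(r-s-2)\lfloor 2/(s+1)\rfloor}{2},
\]
where the floor is $1$ if $s=1$ and $0$ if $s\ge 2$. Non-degeneracy already gives $d\ge r\ge L_{r,s}(2)$, so the hypothesis is automatic. I would cover the range $d\ge r$ by three tools, taken in order.

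\textbf{Large $d$.} If $d-(r-s-1)\ge r$, then $(d-(r-s-1),1)$ is non-degenerate, since $d-(r-s-1)-1\ge r-s-1$. By Lemma \ref{lem:k_zero_or_one} it is balanced, and Proposition \ref{prop:main_inductive}(a) gives that $(d,2)$ is balanced. This handles $d\ge 2r-s-1$.

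\textbf{Small $d$, $s<r-2$.} Here Proposition \ref{prop:projection_split_char_not2}(b) applies: every $d$ with $L_{r,s}(2)\le d\le U_{r,s}(2)$ and $d-2\ge r-s-1$ is balanced. The condition $d-2\ge r-s-1$ holds since $d\ge r$ and $s\ge1$. So it remains to treat
\[
U_{r,s}(2)<d<2r-s-1,
\]
together with all of $r\le d<2r-s-1$ when $s=r-2$. When $s\ge 2$ we have $U_{r,s}(2)=\tfrac32(r-s)$, which can be well below $2r-s-1$, so a genuine gap remains.

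\textbf{The gap: induction on $r$.} For the gap I would use Proposition \ref{prop:lv_inductive}(a),(b) with $k=2$, assuming balancedness for $r-1$ by the induction hypothesis. Part (a) covers
\[
\frac{r+2-2s}{2}\le d\le \frac{3r-2-2s}{2},
\]
and part (b) covers
\[
\frac{3r-2s}{2}\le d\le \frac{5r-2s-4}{2}.
\]
Together these cover every integer in $[\tfrac{r+2-2s}{2},\tfrac{5r-2s-4}{2}]$ except possibly $d=\tfrac{3r-1-2s}{2}$ when that is an integer, and this interval contains $[r,2r-s-2]$ for reasonable $r$. The inputs must be checked: $(d-1,2;r-1,s)$ resp. $(d-2,2;r-1,s)$ must be non-degenerate with $d'\ge L_{r-1,s}(2)=r-s$, or else fall in the case $s=r-3$ with $r-1$, where the convention of Definition \ref{def:UL_edge} and the $s'=r'-2$ statement apply. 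Balancedness of these inputs then comes from the induction hypothesis. If the input is degenerate, I would reroute to the other part or use Proposition \ref{prop:degenerate_splittings}.

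\textbf{The main obstacle} is the single value $d=\tfrac{3r-1-2s}{2}$ (when $r$ is odd), which lies between (a) and (b). For it I would use Proposition \ref{prop:lv_inductive}(c): it needs $s<r-2$ and $\charac(\bK)\ne2$, and reduces to $(d-3,2;r-2,s)$, which is balanced by induction. When $s=r-2$ this value equals $(r+3)/2<r$, so it is degenerate and irrelevant. I would also verify the base cases of small $r$, e.g. $r=s+2$ with $s=1$, directly from Theorem \ref{thm:Pr} or Lemma \ref{lem:k_zero_or_one} via Proposition \ref{prop:lv_inductive}(a). Finally, I would check the boundary inequalities so that each $d$ in the gap satisfies at least one criterion; this arithmetic is where I expect the care to lie.
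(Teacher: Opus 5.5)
There is a genuine gap at $d=r$ when $s+2\le r\le 2s+1$. This range is nonempty for every $s\ge1$; examples are $(3,2;3,1)$, $(5,2;5,2)$ and $(6,2;6,3)$. None of your tools reaches these tuples:
\begin{itemize}
\item The large-$d$ step needs $r\ge 2r-s-1$, which fails.
\item The projection-splitting step needs $s<r-2$ and $r\le U_{r,s}(2)$. For $s\ge2$ the latter forces $r\ge 3s>2s+1$. For $s=1$ the only bad tuple is $(3,2;3,1)$, where $s=r-2$.
\item Proposition~\ref{prop:lv_inductive}(a) needs $r\le\frac{3r-2-2s}{2}$, i.e.\ $r\ge 2s+2$.
\item That leaves parts (b) and (c), whose inputs $(r-2,2;r-1,s)$ and $(r-3,2;r-2,s)$ are degenerate. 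Your induction hypothesis does not supply them, and your fallback cannot help, because they are genuinely unbalanced.
\end{itemize}
For instance, if $r\le 2s$ and $s\le r-3$, Proposition~\ref{prop:degenerate_splittings}(3) gives $N_{(r-2,2;r-1,s)}\cong N_{(r-2,2;r-2,s-1)}\oplus\cO(r-2)$. The first summand has rank $r-3$ and degree $(r-3)(r-1)+(2s+1-r)$, hence slope $>r-1$. In the interpreted cases ($s=r-2$ in (b), $s=r-3$ in (c)) the input is a degenerate rational curve in projective space, such as a conic in $\bP^3$. The input of (c) at $r=2s+1$ fails for the same reason.

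So the real obstruction is $d=r$, not the value $d=\frac{3r-1-2s}{2}$ as such. For $d>r$ the input to part (c) is non-degenerate and your argument is fine there. Likewise, your suggested base-case check via Proposition~\ref{prop:lv_inductive}(a) cannot reach $(3,2;3,1)$, since the range of (a) there is $[\frac32,\frac52]$. Your claim $\frac{r+3}{2}<r$ also fails at $r=3$.

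The missing idea is almost-balancedness: apply Proposition~\ref{prop:main_inductive}(b) to a \emph{degenerate} predecessor. Stepping back twice from $(r,2)$ gives $(r-2(r-s-1),0)=(2s-r+2,0)$, which is a valid degree when $r\le 2s+1$. Its normal bundle is $N_{2s-r+2,0}\cong\cO(2s-r+2)^{2(r-s-1)}\oplus\cO(2s-r+4)^{2s-r+1}$. This is $2$-almost balanced, so $(r,2)$ is balanced. This is how the paper closes the case $d=r\ge\frac{3r-1-2s}{2}$ in the inductive step. It also handles the base case $r=s+2$ this way: there $(r-2,0)$ is $2$-almost balanced and $(r-1,0)$ is $1$-almost balanced.

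Your projection-splitting step is a legitimate extra tool that the paper does not use here. For $s=1$, $r\ge4$ it even makes the induction unnecessary. However, it does not touch the missing case.
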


\begin{proof}
If $k=2$ and $s>0$, then the assumption $d \geq L_{r,s}(k)$ is vacuous. We fix $s$ and proceed by induction on $r$. If $r=s+1$, then we interpret the statement that $(d,2;r,s)$ is balanced as the statement that a general rational curve of degree $d \geq r$ has balanced normal bundle in $\bP^r$. If $r=s+2$, then $(r-2,0)$ is $2$-almost balanced and $(r-1,0)$ is $1$-almost balanced by Theorem~\ref{thm:Pr}(a). Furthermore, $(d,0)$ is balanced for $d \geq r$ by Theorem~\ref{thm:Pr}(a). It follows from Proposition~\ref{prop:main_inductive} that $(d,2)$ is balanced for all $d \geq r$.

Now, assume that $r \geq s+3$ and $(d,2;r',s)$ is balanced whenever $d\ge r'$ and $r' \in [s+1,r)$, and consider $(d,2;r,s)$. If $d \geq 2r-s-1$, then $(d-(r-s-1),1)$ is balanced by Lemma~\ref{lem:k_zero_or_one}. Thus, we may conclude from Proposition~\ref{prop:main_inductive} that $(d,2)$ is balanced. If $d < 2r-s-1$, then we have in particular that $r \leq d \leq \frac{5r-2s-4}{2}$. In this case, we may apply one of the three cases of Proposition~\ref{prop:lv_inductive}, unless $d=r$ and $d \geq \frac{3r-1-2s}{2}$, in which case the inductive hypothesis does not give that $(d-2,k;r-1,s)$ and $(d-3,k;r-2,s)$ are balanced.

On the other hand, if $d=r \geq \frac{3r-1-2s}{2}$, or equivalently $r\le 2s+1$, then $(r-2(r-s-1),0)=(2s-r+2,0)$ is 2-almost-balanced in $X_{r,s}$. Indeed,
\[
N_{2s-r+2,0}\cong \cO(2s-r+2)^{ 2(r-s-1)}\oplus \cO(2s-r+4)^{ 2s-r+1},
\]
and $\frac{2(r-s-1)}{r-s-1},\frac{2s-r+1}{s}\le 2$. We conclude again by Proposition~\ref{prop:main_inductive}.
\end{proof}

\subsection{Non-degenerate predecessor}

We assume henceforth that $k \geq 3$. If $d \leq U_{r,s}(k)$, then we are done by Proposition~\ref{prop:projection_split_char_not2}(b). Therefore, we may assume that $d > U_{r,s}(k)$. If $d \geq L_{r,s}(k)$, then we will deduce balancedness of $(d,k)$ from (almost-)balancedness of the predecessor $(d-(r-s-1),k-1)$. 

\begin{lemma}\label{lem:U_L_gap}
We have
\[
U_{r,s}(k) \geq L_{r,s}(k-1)+(r-s-1).
\]
\end{lemma}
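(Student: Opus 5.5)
The inequality reduces to an elementary comparison of a floor with a ceiling. I will clear denominators and compare $2U_{r,s}(k)$ with $2L_{r,s}(k-1)+2(r-s-1)$ directly from the definitions in Proposition~\ref{prop:projection_split_char_not2}. I treat the edge case $s=r-2$ separately via Definition~\ref{def:UL_edge}.

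First suppose $s<r-2$. Substituting the definitions gives
\[
2U_{r,s}(k)-2L_{r,s}(k-1) = (r-s)(k+1)-(r-s)(k-2)-4+(r-s-2)\left(\left\lfloor \tfrac{k}{s+1}\right\rfloor-\left\lceil \tfrac{k-1}{s+1}\right\rceil\right),
\]
which simplifies to
\[
3(r-s)-4+(r-s-2)\left(\left\lfloor \tfrac{k}{s+1}\right\rfloor-\left\lceil \tfrac{k-1}{s+1}\right\rceil\right).
\]
Subtracting $2(r-s-1)$, the claim becomes
\[
(r-s-2)\left(1+\left\lfloor \tfrac{k}{s+1}\right\rfloor-\left\lceil \tfrac{k-1}{s+1}\right\rceil\right)\ge 0 .
\]
Since $r-s-2> 0$, it suffices to show $\left\lceil \frac{k-1}{m}\right\rceil\le \left\lfloor \frac{k}{m}\right\rfloor+1$ for $m=s+1\ge 1$. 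This follows from the identity $\left\lceil \frac{k-1}{m}\right\rceil=\left\lfloor \frac{k-2}{m}\right\rfloor+1$ together with the monotonicity $\left\lfloor \frac{k-2}{m}\right\rfloor\le\left\lfloor \frac{k}{m}\right\rfloor$.

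In the edge case $s=r-2$, the convention $U_{r,r-2}(k)=L_{r,r-2}(k)=k+1$ reduces the statement to $k+1\ge k+1$, which holds with equality since $r-s-1=1$.

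There is no real obstacle here. The only points needing care are the floor/ceiling comparison and remembering to handle $s=r-2$ via the convention rather than the formula.
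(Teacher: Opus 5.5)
Your proposal is correct and follows the paper's proof: clearing denominators reduces the claim to $(r-s-2)\bigl(\lfloor k/(s+1)\rfloor-\lceil (k-1)/(s+1)\rceil+1\bigr)\ge 0$, and your floor/ceiling identity supplies the final step that the paper leaves implicit. Treating $s=r-2$ separately is harmless but unnecessary, since the convention agrees with the general formulas and the factor $r-s-2$ vanishes in that case.
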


\begin{proof}
The needed inequality is equivalent to
\[
\frac{(r-s)(k+1)+(r-s-2)\left\lfloor \frac{k}{s+1} \right\rfloor}{2}
\geq
\frac{(r-s)(k-2)+4+(r-s-2)\left\lceil \frac{k-1}{s+1} \right\rceil}{2}
+(r-s-1),
\]
which in turn is equivalent to
\[
(r-s-2)\left(\left\lfloor \frac{k}{s+1} \right\rfloor-\left\lceil \frac{k-1}{s+1} \right\rceil+1\right) \geq 0.
\]
\end{proof}

In particular, if the predecessor $(d-(r-s-1),k-1)$ is itself non-degenerate, then it is balanced by the inductive hypothesis. By Proposition~\ref{prop:main_inductive}, we may conclude that $(d,k)$ is balanced.

\subsection{Degenerate predecessor}

It now suffices to consider the following situation.
\begin{situation}\label{sit:degenerate_predecessor}
Consider $r,s,d,k$ such that:
\begin{itemize}
\item $k \geq 3$,
\item $d > U_{r,s}(k)$,
\item $(d,k)$ is non-degenerate, and
\item $(d-(r-s-1),k-1)$ is degenerate.
\end{itemize}
\end{situation}

Write $d':=d-(r-s-1)$ and $k':=k-1$.

\begin{lemma}\label{lem:degenerate_case_3}
In Situation~\ref{sit:degenerate_predecessor}, we have $d'-k'\ge r-s-1$. That is, case (3) of Situation~\ref{sit:degenerate} holds for $(d',k')$.
\end{lemma}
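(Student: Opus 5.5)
The plan is to show directly that $d'-k'\ge r-s-1$, using only the hypotheses $k\ge 3$ and $d>U_{r,s}(k)$ from Situation~\ref{sit:degenerate_predecessor}. Once this inequality holds, cases (1) and (2) of Situation~\ref{sit:degenerate} are excluded for $(d',k')$, since both require $d'-k'<r-s-1$. Because $(d',k')$ is assumed degenerate, it must then fall into case (3). The non-degeneracy of $(d,k)$ is not needed for this lemma.

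Since $d'-k' = d-k-(r-s-2)$, the target inequality is equivalent to
\[
d-k \ge 2(r-s)-3 .
\]

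First suppose $s<r-2$. Subtracting $k$ from the definition of $U_{r,s}(k)$ in Proposition~\ref{prop:projection_split_char_not2}, the hypothesis $d>U_{r,s}(k)$ gives
\[
d-k > \frac{(r-s-2)k+(r-s)+(r-s-2)\left\lfloor \frac{k}{s+1}\right\rfloor}{2}.
\]
We then bound the right-hand side from below, using $r-s-2\ge 0$, $k\ge 3$ and $\left\lfloor \frac{k}{s+1}\right\rfloor\ge 0$:
\[
\frac{(r-s-2)k+(r-s)+(r-s-2)\left\lfloor \frac{k}{s+1}\right\rfloor}{2}
\ \ge\ \frac{3(r-s-2)+(r-s)}{2} \;=\; 2(r-s)-3 .
\]
The inequality for $d-k$ is strict and $d-k$ is an integer, so in fact $d-k\ge 2(r-s)-2$, which is more than enough.

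Next suppose $s=r-2$. By Definition~\ref{def:UL_edge}, $U_{r,r-2}(k)=k+1$, so $d>k+1$, i.e.\ $d-k\ge 2$. Here the target reads $d'-k'\ge 1$. Since $d'-k'=d-k$ when $r-s-2=0$, this holds.

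There is no real obstacle. The only points needing care are the sign of $r-s-2$ when discarding the floor term, which is nonnegative in the range $s\le r-2$, and treating the edge case $s=r-2$ via the convention for $U_{r,r-2}$.
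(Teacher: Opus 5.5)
Your proof is correct and matches the paper's argument: both rewrite $d>U_{r,s}(k)$ as a lower bound on $d-k$, then use $k\ge 3$ and $\lfloor k/(s+1)\rfloor\ge 0$ to obtain $d-k\ge 2(r-s)-3$, which is equivalent to $d'-k'\ge r-s-1$. Treating $s=r-2$ separately is harmless but not needed, since the convention $U_{r,r-2}(k)=k+1$ coincides with the general formula at $s=r-2$.
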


\begin{proof}
The assumption that $d > U_{r,s}(k)$ is equivalent to
\begin{equation}\label{eq:d>U_equiv}
2(d-k-1) > (r-s-2)\left(k+1+\left\lfloor \frac{k}{s+1} \right\rfloor\right).
\end{equation}
If $k \geq 3$, then the right-hand side is at least $4(r-s-2)$. Therefore, we have $2(d-k-1) \ge 4(r-s-2)$, which is equivalent to $d'-k'\ge r-s-1$.
\end{proof}

Write $s':=s-(r-d')=d-2r+2s+1$, so that $d'-s'=r-s$. By Proposition~\ref{prop:degenerate_splittings}(3), we deduce that 
\[
N_{(d',k';r,s)}\cong N_{(d',k';d',s')}\oplus\cO(d')^{r-d'}.
\]

\begin{lemma}\label{lem:degenerate_in_balanced_range}
With notation as above, assume that the conclusion of Theorem~\ref{thm:main_char_not2}(a) holds for all $r' < r$. Then, $N_{(d',k';d',s')}$ is balanced.
\end{lemma}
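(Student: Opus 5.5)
We apply the inductive hypothesis, i.e.\ Theorem~\ref{thm:main_char_not2}(a) for $r'=d'<r$, to the tuple $(d',k';d',s')$. Two things must be checked: that $(d',k';d',s')$ is a legitimate and non-degenerate tuple, and that $d'\ge L_{d',s'}(k')$.

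First, the range conditions. Since $(d',k')$ is degenerate of type (3) in $X_{r,s}$ (Lemma~\ref{lem:degenerate_case_3}), we have $d'<r$, so $d'<r$ is a genuinely smaller ambient dimension. We need $0\le s'\le d'-2$. The upper bound is automatic from $d'-s'=r-s\ge 2$. The lower bound $s'\ge 0$ is $d\ge 2r-2s-1$. This follows from $d>U_{r,s}(k)$ with $k\ge 3$, since by \eqref{eq:d>U_equiv} we have $d-k-1>2(r-s-2)$, hence $d\ge 2r-2s+k-2\ge 2r-2s+1$. If $s'$ came out to be $-1$, the ambient space would just be $\bP^{d'}$, and that case is covered by Theorem~\ref{thm:Pr} anyway. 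We also need $k'<d'$, which is clear.

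Next, non-degeneracy in $X_{d',s'}$. The degree condition $d'\ge d'$ is trivial. The other condition is $d'-k'\ge d'-s'-1=r-s-1$, which is exactly Lemma~\ref{lem:degenerate_case_3}. Note that the projection target $\bP^{d'-s'-1}=\bP^{r-s-1}$ is unchanged, which is why this matches.

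The main step is the inequality $d'\ge L_{d',s'}(k')$, where
\[
L_{d',s'}(k')=\frac{(r-s)(k-2)+4+(r-s-2)\left\lceil \frac{k-1}{s'+1}\right\rceil}{2},
\]
using $d'-s'=r-s$. Compare this with Lemma~\ref{lem:U_L_gap}, which gives $d' > U_{r,s}(k)-(r-s-1)\ge L_{r,s}(k-1)$. The only difference between $L_{d',s'}(k')$ and $L_{r,s}(k-1)$ is that $s+1$ is replaced by $s'+1$ in the ceiling. Since $s'<s$, the ceiling can only grow, so this is not immediate and is the expected obstacle.

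To handle it, I would use the explicit lower bound $d\ge U_{r,s}(k)+1$ to bound the gap
\[
2d'-2L_{d',s'}(k') \;\ge\; (r-s)\cdot(\text{const}) + (r-s-2)\left(\left\lfloor \tfrac{k}{s+1}\right\rfloor - \left\lceil \tfrac{k-1}{s'+1}\right\rceil + \text{const}\right),
\]
and then exploit that $s'+1=d-2r+2s+2$ grows with $d$. Since $d>U_{r,s}(k)\approx \frac{(r-s)(k+1)}{2}$, we get $s'+1$ at least of order $(r-s)k/2-2(r-s)$. This makes $\lceil (k-1)/(s'+1)\rceil$ small (typically $1$ or $2$). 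The remaining cases, where $r-s$ is small (e.g.\ $r-s=3$) or $s'$ is small, would be checked by hand. If the inequality fails in some edge case, then $(d',k';d',s')$ falls in the range $d'\le U_{d',s'}(k')$. There I would invoke Proposition~\ref{prop:projection_split_char_not2}(b) directly, and balancedness would still follow as long as $d'\ge L_{d',s'}(k')$. So in the end everything reduces to verifying that single inequality.
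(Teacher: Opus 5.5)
Your setup is correct: apply Theorem~\ref{thm:main_char_not2}(a) in $X_{d',s'}$ with $d'<r$, check $0\le s'\le d'-2$ and non-degeneracy, and reduce to $d'\ge L_{d',s'}(k')$. But you never prove that inequality, and it is the whole content of the lemma. Saying that $\left\lceil \frac{k-1}{s'+1}\right\rceil$ is ``typically 1 or 2'' and that the rest would be ``checked by hand'' is not an argument. In fact, a value of $2$ would not in general be enough to conclude from $d>U_{r,s}(k)$ alone, so ``small'' is not sufficient; you need exactly $1$. Your fallback is also circular. In the range $d'\le U_{d',s'}(k')$, Proposition~\ref{prop:projection_split_char_not2}(b) says balancedness is \emph{equivalent} to $d'\ge L_{d',s'}(k')$, so it cannot rescue a case where that inequality fails.

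The missing observation is one you already have. Since $s'+1=d'-(r-s-1)$, the non-degeneracy condition $d'-k'\ge d'-s'-1=r-s-1$, which you verified via Lemma~\ref{lem:degenerate_case_3}, is literally the inequality $k'\le s'+1$. Together with $k'=k-1\ge 2$, this gives $\left\lceil \frac{k'}{s'+1}\right\rceil=1$ exactly, so replacing $s$ by $s'$ in the ceiling costs nothing. The required inequality $d'\ge L_{d',s'}(k')$ then reads $2(d-(r-s-1))\ge (r-s)(k-2)+4+(r-s-2)$, that is, $2d\ge (r-s)(k+1)$. This is immediate from $d>U_{r,s}(k)\ge \frac{(r-s)(k+1)}{2}$, equivalently from \eqref{eq:d>U_equiv}.

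Your own growth estimate, made precise, would also have closed the gap: $2(s'+1)=2d-4(r-s)+4>(r-s)(k-3)+4\ge 2k-2$, so $s'+1>k'$. This is the paper's argument. With it inserted, your range and non-degeneracy checks complete the proof.
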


\begin{proof}
It suffices to show that $d' \geq L_{d',s'}(k')$. This is equivalent to
\[
d' \geq \frac{(d'-s')(k'-1)+4+(d'-s'-2)\left\lceil \frac{k'}{s'+1} \right\rceil}{2}.
\]
By Lemma~\ref{lem:degenerate_case_3} and the assumption $k\ge 3$, we have $\left\lceil \frac{k'}{s'+1} \right\rceil = 1$. Therefore, the needed inequality is
\[
2(d-(r-s-1)) \geq (r-s)(k-2)+4+(r-s-2).
\]
This inequality simplifies to $2d \geq (r-s)(k+1)$, which is immediate from \eqref{eq:d>U_equiv}.
\end{proof}

\begin{lemma}\label{lem:degenerate_almost_balanced}
With notation as above, $N_{(d',k';r,s)}\cong N_{(d',k';d',s')}\oplus\cO(d')^{r-d'}$ is almost balanced.
\end{lemma}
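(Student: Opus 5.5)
The plan is to compute the splitting type of $N_{(d',k';d',s')}\oplus\cO(d')^{r-d'}$ explicitly from degrees and ranks. By Lemma~\ref{lem:degenerate_in_balanced_range}, $N_{(d',k';d',s')}$ is balanced, so its splitting type is determined by its degree and rank. The formulas of \S\ref{sec:splitting}, applied in $X_{d',s'}$ and using $d'-s'-1=r-s-1$, give rank $d'-1$ and degree
\[
(d'+1)d'-(r-s-1)k'-2=(d'+2)(d'-1)-(r-s-1)k'.
\]
Hence its slope is $d'+2-\frac{(r-s-1)k'}{d'-1}$. I would then split into two cases according to the size of $(r-s-1)k'$ relative to $d'-1$.

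\emph{Case 1: $(r-s-1)k'\le d'-1$.} Here the balanced bundle is $\cO(d'+1)^{(r-s-1)k'}\oplus\cO(d'+2)^{d'-1-(r-s-1)k'}$. So the total bundle has the shape $\cO(a)^x\oplus\cO(a+1)^y\oplus\cO(a+2)^z$ with
\[
a=d',\qquad x=r-d',\qquad z=d'-1-(r-s-1)k'.
\]
The bound on $z$ is easy. Since $(d',k')$ is degenerate but, by Lemma~\ref{lem:degenerate_case_3}, lies in case (3) of Situation~\ref{sit:degenerate}, we have $d'<r$. Therefore
\[
z\le r-s-2-(r-s-1)(k'-1)\le s,
\]
where the last step uses $k'\ge 2$ (indeed even $z\le r-s-2$ would do whenever $r-s-2\le s$; in general the factor $(r-s-1)k'$ absorbs it). For the bound on $x$, I need $x=r-d'\le r-s-1$, that is, $d'\ge s+1$.

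\emph{Case 2: $(r-s-1)k'> d'-1$.} Now the slope of the balanced part lies in $[d',d'+1)$, provided $(r-s-1)k'\le 2(d'-1)$, which I would verify. Its summands then have degrees $d'$ and $d'+1$, so together with $\cO(d')^{r-d'}$ the whole bundle is balanced, and in particular almost balanced.

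The main obstacle is the inequality $d'\ge s+1$ in Case 1, together with the upper bound $(r-s-1)k'\le 2(d'-1)$ in Case 2. For both I would use the hypothesis $d>U_{r,s}(k)$ in the form \eqref{eq:d>U_equiv}, namely
\[
2(d-k-1)>(r-s-2)\Bigl(k+1+\Bigl\lfloor \tfrac{k}{s+1}\Bigr\rfloor\Bigr),
\]
and rewrite it in terms of $d'=d-(r-s-1)$ and $k'=k-1$. Combining this with $d'-k'\ge r-s-1$ and $s'=d'-(r-s)\ge0$ should give $d'\ge s+1$ and $(r-s-1)k'\le 2(d'-1)$.

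If $d'\ge s+1$ fails in some boundary range, I would instead regroup with $a=d'$ differently. Alternatively, I would observe that the summands $\cO(d')$ can be traded via the non-degeneracy of $(d,k)$ (which gives $d\ge r$) for a sharper lower bound on $d'$. I would check the extreme cases $r=s+3$ and $k=3$ by hand.
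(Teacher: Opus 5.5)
Your proposal follows the paper's proof essentially step for step. You compute $\deg N_{(d',k';d',s')}=(d'+2)(d'-1)-(r-s-1)k'$ and use balancedness from Lemma~\ref{lem:degenerate_in_balanced_range}. You place the slope in $[d',d'+2]$: the lower bound $(r-s-1)k'\le 2(d'-1)$ is equivalent to $2d\ge (r-s)(k+1)-(k-1)$, and this does follow from \eqref{eq:d>U_equiv} as you propose. In the upper range you check $x=r-d'\le r-s-1$ and $z=d-(r-s-1)k-1\le s$. Two points need correcting.

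First, the step you call the main obstacle is a one-liner, and only your fallback proves it. The inequality $d'\ge s+1$ is literally $d=d'+(r-s-1)\ge r$, which is the non-degeneracy of $(d,k)$ from Situation~\ref{sit:degenerate_predecessor}; this is exactly how the paper argues. Your primary route via \eqref{eq:d>U_equiv}, $d'-k'\ge r-s-1$ and $s'\ge 0$ cannot work, because these do not imply it. For $(r,s,k,d)=(20,17,3,7)$ all three hold, but $d'=5<18=s+1$. That tuple is excluded only because $(7,3)$ is degenerate.

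Second, your displayed chain for $z$ contains a false intermediate inequality. The bound $z\le r-s-2-(r-s-1)(k'-1)$ amounts to $d'\le 2r-2s-2$. This fails for $(r,s,d,k)=(20,17,20,3)$, which lies in Situation~\ref{sit:degenerate_predecessor} and has $z=13$, while your bound gives $-1$. The correct estimate uses $d'\le r-1$ and $k'\ge 2$: $z=d'-1-(r-s-1)k'\le (r-2)-2(r-s-1)=2s-r\le s$.
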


\begin{proof}
We claim first that $d' \leq \mu\left(N_{(d',k';d',s')}\right) \leq d'+2$. Indeed, we compute
\[
\deg\left(N_{(d',k';d',s')}\right) = (d'+1)d'-(r-s-1)k'-2 \leq (d'-1)(d'+2),
\]
and $\rk\left(N_{(d',k';d',s')}\right)=d'-1$. On the other hand, we have
\[
\deg\left(N_{(d',k';d',s')}\right) \geq (d'-1)d'
\qquad \Longleftrightarrow \qquad
2d-(r-s)(k+1)+(k-1) \geq 0,
\]
which follows from \eqref{eq:d>U_equiv}.

If $d' \leq \mu\left(N_{(d',k';d',s')}\right) \leq d'+1$, then we are done by Lemma \ref{lem:degenerate_in_balanced_range}. Otherwise, assume that $d'+1 <\mu\left(N_{(d',k';d',s')}\right) \leq d'+2$. Then, by Lemma \ref{lem:degenerate_in_balanced_range},
\[
N_{(d',k';r,s)}\cong \cO(d')^{r-d'} \oplus \cO(d'+1)^{d'-1-\delta} \oplus \cO(d'+2)^{\delta},
\]
where 
\[
\delta=\deg\left(N_{(d',k';d',s')}\right)-(d'+1)(d'-1)=d-(r-s-1)k-1.
\]
We need to show that $r-d'\le r-s-1$ and that $\delta\le s$. The inequality $r-d'\le r-s-1$ is equivalent to $d\ge r$, which holds by the non-degeneracy assumption. The inequality $\delta\le s$ holds because 
\[
\delta=d-(r-s-1)k-1\le (r+(r-s-1))-2(r-s-1)-1=s.
\]
\end{proof}

We conclude by Proposition~\ref{prop:main_inductive} that $N_{(d,k;r,s)}$ is balanced. This completes the proof of Theorem \ref{thm:main_char_not2}(a), and therefore of Theorem \ref{thm:intro_char_not2}.

\begin{remark}\label{rem:both_arguments}
The application of Proposition~\ref{prop:main_inductive} breaks down when $k=2$. For example, on $\Bl_{\bP^2}(\bP^7)$, we have, in arbitrary characteristic,
\[
N_{4,1} \cong \cO(3) \oplus \cO(4)^2 \oplus \cO(5)^3.
\]
In particular, $N_{4,1}$ is not almost balanced, so we cannot conclude from Proposition~\ref{prop:main_inductive} that $N_{8,2}$ is balanced. This is for good reason, as Corollary~\ref{cor:char2_obstruction} shows that $N_{8,2}$ is not balanced in characteristic $2$. For this reason, our proof that $N_{8,2}$ is balanced when $\charac(\bK)\neq2$ passes instead through Proposition~\ref{prop:lv_inductive}(c), which requires this hypothesis on the characteristic.

On the other hand, Proposition~\ref{prop:lv_inductive} by itself is also not sufficient to prove Theorem~\ref{thm:main_char_not2}(a), due to the restrictive upper bounds on $d$.
\end{remark}

\subsection{Interpolation revisited}\label{sec:interpolation_normal}

Recall from \S\ref{sec:bundles} that the answer to Question \ref{q:interpolation}(a) is determined in characteristic 0 by Theorem \ref{thm:main_char_not2}. Namely, the map 
\[
\rho \colon \cM_{0,n}(X_{r,s},(d,k)) \to (X_{r,s})^n
\]
is dominant if and only if $n$ is at most 1 more than the smallest degree summand of $N_{d,k}$. In fact, the splitting of $N_{d,k}$ in characteristic 0 decides the dominance of $\rho$ \emph{in any characteristic}, including 2.

\begin{proposition}\label{prop:rho_char_indep}
Suppose that $\operatorname{char}(\bK) = p > 0$. Then, the evaluation morphism $\rho$ is dominant over $\mathbb{C}$ (or equivalently over $\mathbb{Q}$) if and only if it is dominant over $\bK$.
\end{proposition}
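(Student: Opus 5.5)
Dominance of $\rho$ is a question about the dimension of the image, and I would compare the generic fiber dimension over $\mathbb{Q}$ with that over $\bK$. The spaces $\cM_{0,n}(X_{r,s},(d,k))$ and $(X_{r,s})^n$ and the map $\rho$ are defined over $\bZ$. The source is smooth and irreducible over every field, because it is dominated by an open subset of an affine space of sections (as in \S\ref{sec:unramified}), and it has the same dimension in every characteristic. So $\rho$ is dominant exactly when the generic fiber dimension equals $\dim\cM-\dim(X_{r,s})^n$. By upper semicontinuity of fiber dimension over $\operatorname{Spec}\bZ$, dominance in characteristic $p$ implies dominance in characteristic 0. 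Indeed, the locus in the source (a smooth scheme over $\bZ$ with irreducible fibers) where the fiber dimension is at most the expected value is open, and if it meets the special fiber, its complement cannot contain the whole generic fiber.

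For the converse I cannot appeal to the differential of $\rho$ in characteristic $p$, because $\rho$ may be inseparable there (as in characteristic 2). Instead I would use the answer in characteristic 0: $\rho$ is dominant iff $n\le \min(a_j)+1$, where the $a_j$ are the splitting exponents of $N_{d,k}$ over $\mathbb{C}$ given by Theorem~\ref{thm:main_char_not2}. The plan is to show that for such $n$, in characteristic $p$, there is a point of $\cM_{0,n}$ where the fiber of $\rho$ has the expected dimension. Equivalently, for a specific curve, the space of first-order deformations fixing the $n$ marked images, $H^0(N_f(-p_1-\cdots-p_n))$, has dimension $\chi$. This gives an upper bound on the local fiber dimension, and hence dominance by dimension count.

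The key input is that for $p\neq 2$ Theorem~\ref{thm:main_char_not2} already holds verbatim, so the splitting type agrees with characteristic 0 and the vanishing of $H^1(N_f(-\sum p_i))$ shows that $\rho$ is smooth at a general point. Only $p=2$ needs work. There I would first try to bound the fiber dimension via the tangent space to the fiber, which is still $H^0(N_f(-\sum p_i))$. Using the characteristic-2 splitting types from Theorems~\ref{thm:d>k(r-s-1)}, \ref{thm:d<k(r-s-1)_epsilon_not0}, \ref{thm:d<k(r-s-1)_epsilon_zero}, I need $h^0(N_f(-\sum p_i))=\dim\cM-\dim(X_{r,s})^n$ for $n$ up to the characteristic-0 threshold. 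This fails exactly when the characteristic-2 bundle has a smaller minimal summand. For those cases I would argue as in \cite[Lemma 9.1]{lv}: the Frobenius-pullback structure of $N^\vee(1)$ makes $\rho$ inseparable, and one then checks dominance by a degeneration to a reducible curve $C'\cup L_1\cup\cdots$ (Construction~\ref{construction}), or to the $k=0$ case of \cite{lv}. In that setting the fiber is finite or of the correct dimension, set-theoretically, by the characteristic-free induction of Proposition~\ref{prop:main_inductive}.

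The main obstacle is this characteristic-2 converse. A count of tangent-space dimensions does not see dominance for inseparable maps, so a set-theoretic fiber-dimension argument is needed. I expect to handle it by induction on $k$, attaching a $(1,1)$-line and $(1,0)$-lines through new general points, with the $k=0$ statement of \cite{lv} as the base case.
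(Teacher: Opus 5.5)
\textbf{Verdict: there is a genuine gap.} Your argument does not cover the converse direction in characteristic $2$.

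\textbf{What works.} Your first direction (dominance over $\bK$ implies dominance over $\mathbb{Q}$) is correct. Upper semicontinuity of local fiber dimension on $\cM$, together with flatness of $\cM$ over $\operatorname{Spec}\bZ$, suffices. This is lighter than the paper, which sends both directions through a proper model. Your converse for odd $p$ is also valid. Theorem~\ref{thm:main_char_not2} holds in every characteristic $\neq 2$, and $H^1(N_f(-p_1-\cdots-p_n))=0$ gives smoothness, hence dominance, of $\rho$ at $f$.

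\textbf{The gap: the converse for $p=2$.} This is the only case where the statement says something beyond the normal bundle computations, and you do not give an argument for it. The tools you name cannot supply one. When $\rho$ is dominant but inseparable in characteristic $2$ (this already happens for $k=0$; see the discussion after Theorem~\ref{thm:Pr}), $d\rho$ is surjective nowhere. So $h^0(N_f(-p_1-\cdots-p_n))$ exceeds $\dim\cM-\dim(X_{r,s})^n$ at \emph{every} point, including the reducible curves of Construction~\ref{construction}. Proposition~\ref{prop:main_inductive} is a statement about $h^0$ and $h^1$ of twists of $N_{C/X_{r,s}}$, which is purely tangent-space information. Its being characteristic-free therefore says nothing about set-theoretic fiber dimensions. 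Checking dominance by degenerating to a reducible curve would also need a family in which limits of fibers are controlled, and that is exactly the missing ingredient.

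\textbf{The missing idea: properness over $\operatorname{Spec}\bZ$.} The paper uses a compactification $\overline{\cM}\supset\cM$, namely basic log stable maps with the tangency condition \cite{RW20}. It has three properties:
\begin{enumerate}
\item[(1)] it is proper over $\bZ$;
\item[(2)] $\rho$ extends to a proper morphism $\bar\rho\colon\overline{\cM}\to(X_{r,s})^n$;
\item[(3)] it is irreducible of the expected dimension, with irreducible fibers of the expected dimension over every point of $\operatorname{Spec}\bZ$, so $\cM$ is dense in each fiber.
\end{enumerate}
The image $W$ of $\bar\rho$ is then closed and irreducible. If $\rho$ is dominant over $\mathbb{Q}$, then $W$ is dense, hence $W=(X_{r,s})^n$. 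In particular $\bar\rho$ surjects onto the characteristic-$p$ fiber. Since $\cM_{\mathbb{F}_p}$ is dense in $\overline{\cM}_{\mathbb{F}_p}$, $\rho$ is dominant over $\mathbb{F}_p$, hence over $\bK$. The density in (3) is essential; without it, the surjection could come from a boundary component.

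This argument is uniform in $p$, needs no knowledge of $N_{d,k}$, and is unaffected by inseparability. That is why it works exactly where your approach stalls.
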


Viewing $X_{r,s}$ and $\cM_{0,n}(X_{r,s},(d,k))$ as defined over $\bZ$, the proof relies on the existence of a suitable compactification $\overline{\cM}$ of the moduli space $\cM_{0,n}(X_{r,s},(d,k))$ that satisfies the following three properties:
\begin{enumerate}
    \item[$\bullet$] It is defined as a proper scheme over $\operatorname{Spec}(\mathbb{Z})$.
    \item[$\bullet$] The evaluation map extends to a proper global morphism $\bar{\rho} \colon \overline{\cM} \to (X_{r,s})^n$ over $\mathbb{Z}$.
    \item[$\bullet$] $\overline{\cM}$ is irreducible of expected dimension, as are its fibers over every $(p)\in \mathbb{Z}$ (including $p=0$). In particular, $\overline{\cM}$ contains $\cM=\cM_{0,n}(X_{r,s},(d,k))$ as a dense open subscheme.
\end{enumerate}
Such a compactification is provided by the moduli stack 
of basic $n$-marked genus $0$ stable logarithmic maps to $X_{r,s}$ of degree $(d,k)$ with transverse tangency conditions \cite[Corollary~8]{RW20}.

\begin{proof}
Note also that $\rho$ is dominant over $\bK$ if and only if it is so over $\mathbb{F}_p$.  Consider the global evaluation map $\bar{\rho} \colon \overline{\cM} \to (X_{r,s})^n$, and let $W\subset (X_{r,s})^n$ be the image of $\bar{\rho}$, which must be an irreducible closed subscheme.

Assume first that $\rho$ is dominant upon restriction to the characteristic $p$ fiber. Then, $W$ contains the entire characteristic $p$ fiber of $(X_{r,s})^n$, which is an irreducible divisor in $(X_{r,s})^n$. On the other hand, $W$ meets all other fibers of $(X_{r,s})^n\to \operatorname{Spec}(\bZ)$ non-trivially. Therefore, $\bar{\rho}$ must be surjective. Because the open locus $\cM\subset \overline{\cM}$ is dense in every fiber over $\operatorname{Spec}(\bZ)$, we conclude that $\rho$ is dominant in characteristic 0.

Conversely, if $W$ contains a dense open subset of the characteristic 0 fiber of $(X_{r,s})^n$, then it is itself dense, hence $W=(X_{r,s})^n$ once more.
\end{proof}

\section{Normal bundles in characteristic 2}\label{sec:char2}

In this section, we determine all normal bundles  $N_{(d,k;r,s)}$ assuming $\charac(\bK)=2$. First, we summarize the necessary and sufficient conditions for balancing, giving a more precise version of Theorem \ref{thm:intro_char_2}.

\begin{theorem}\label{thm:main_char2}
Suppose (here and throughout this section) that $\charac(\bK) = 2$. Then, $(d,k;r,s)$ is balanced if and only if both of the following conditions hold.
\begin{enumerate}
\item[(i)] Let $\ell \in [0,r-2]$ be the residue of $d-1 \pmod{r-1}$. Then, either $\ell=0$, or $s \neq 0$ and
\begin{equation}\label{eq:k_lower_bound}
k \geq \left\lceil\frac{\ell}{r-s-1}\right\rceil, \left\lceil\frac{r-1-\ell}{s}\right\rceil
\end{equation}
\item[(ii)] Let $\epsilon \in [0,r-s-3]$ be the residue of $d-k-1 \pmod{r-s-2}$. Then, either $\epsilon=0$ and $d \geq L_{r,s}(k)$, or $\epsilon \neq 0$ and both of the following inequalities hold:
\begin{align}
\frac{s+2}{s+1} \cdot k - 1 &< 2\left\lfloor \frac{d-k-1}{r-s-2} \right\rfloor, \label{eq:no_splitting} \\
d+(s+1)k-s-2 &\leq (r+s)\left\lfloor \frac{d-k-1}{r-s-2} \right\rfloor. \label{eq:no_low}
\end{align}
\end{enumerate}
If $s=r-2$, then condition (ii) is considered to be vacuously true.
\end{theorem}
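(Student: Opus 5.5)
The statement has two halves, and I would handle them separately.

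\emph{Necessity.} Each failure of (i) or (ii) is already covered by an obstruction from \S\ref{sec:obstructions}.
\begin{itemize}
\item If (i) fails with $\ell>0$, then either $s=0$ or \eqref{eq:k_lower_bound} fails. In both cases Corollary~\ref{cor:char2_obstruction} shows that $N_{d,k}$ is not balanced.
\item If (ii) fails with $\epsilon=0$, then $d<L_{r,s}(k)$. Proposition~\ref{prop:projection_split_char_not2}(ii) then gives the split, unbalanced bundle $T_{\rel}\oplus N_{C/\bP^{r-s-1}}$. This uses the non-degeneracy of the projection to $\bP^{r-s-1}$; the degenerate projection cases are handled through Corollary~\ref{cor:most_degenerate_not_balanced}.
\item If (ii) fails with $\epsilon\neq 0$, then either \eqref{eq:no_splitting} fails, and Proposition~\ref{prop:projection_split_char2} applies, or \eqref{eq:no_low} fails, and Proposition~\ref{prop:projection_char2_low_obstruction} applies.
\end{itemize}

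\emph{Sufficiency.} This is the real work. I would mirror the strategy of \S\ref{sec:char_not2}: induct on $r$ and on $k$, and feed the inductive steps Proposition~\ref{prop:main_inductive} and Proposition~\ref{prop:main_inductive_refinement}, both of which are characteristic-free. The base cases would be handled as follows.
\begin{itemize}
\item For $k=0$, condition (i) forces $\ell=0$, and Theorem~\ref{thm:Pr}(b) gives balancedness.
\item For $k=1$, I would pass from $N_{d,0}$ by an elementary modification at a general point toward a general $s$-dimensional subspace. This is exactly the $k=1$ argument of Lemma~\ref{lem:k_zero_or_one}, but now starting from the unbalanced bundle $\cO(\alpha)^{r-1-\ell}\oplus\cO(\alpha+2)^\ell$. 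A general negative modification of rank-$(r-s-1)$ codimension kills $\min(\ell,r-s-1)$ of the top summands and fills in the bottom ones. Hence it is balanced exactly when $\ell\le r-s-1$ and $r-1-\ell\le s$, which is (i) for $k=1$.
\end{itemize}
More generally, I would use that $N_{d,0}$ is $m$-almost balanced with
\[
m=\max\left(\left\lceil \frac{\ell}{r-s-1}\right\rceil,\left\lceil\frac{r-1-\ell}{s}\right\rceil\right),
\]
after shifting the indexing of $a$ appropriately. Iterating Proposition~\ref{prop:main_inductive}(b) $m$ times from $(d-m(r-s-1),0)$ then handles all $(d,k)$ with $k\ge m$ whose $k$-fold predecessor chain stays non-degenerate. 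Here I use that $\ell$ is invariant under $d\mapsto d+(r-s-1)$ only modulo $(r-1)$, so this needs care.

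For large $d$ relative to $k$, when $d>U_{r,s}(k)$, I would argue as in Lemma~\ref{lem:U_L_gap}. Conditions (i) and (ii) for $(d,k)$ should imply them for the predecessor $(d-(r-s-1),k-1)$, or else imply that the predecessor is almost balanced; either way Proposition~\ref{prop:main_inductive} applies. For $d$ near the projection threshold, I would instead show directly that $N_{d,k}$ is balanced from the projection sequence \eqref{eq:proj_seq}. In the window where \eqref{eq:no_splitting} and \eqref{eq:no_low} hold but the sequence need not split, I would degenerate to Construction~\ref{construction} and use Proposition~\ref{prop:bound_collide_points}, choosing $D'$ to compute $h^0$ and $h^1$. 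For the case $k=2$, Remark~\ref{rem:both_arguments} warns that Proposition~\ref{prop:lv_inductive}(c) is unavailable in characteristic 2, so I would rely on Proposition~\ref{prop:lv_inductive}(a),(b) and the almost-balanced route instead.

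\emph{Main obstacle.} I expect the hardest step to be the combinatorial bookkeeping in sufficiency. One must show that whenever (i) and (ii) hold, some predecessor chain either ends at a balanced pair or passes through an $m$-almost balanced pair with enough steps remaining. This is especially delicate in the $\epsilon\neq0$ regime, where $N_{C/\bP^{r-s-1}}$ is itself unbalanced, and in degenerate predecessors (case (3) of Situation~\ref{sit:degenerate}), where $\ell$ and $\epsilon$ must be tracked through Proposition~\ref{prop:degenerate_splittings}. I would split into the cases $d\ge k(r-s-1)$ versus $d<k(r-s-1)$, with $\epsilon$ zero or nonzero, and verify the needed almost-balancedness inequalities separately in each.
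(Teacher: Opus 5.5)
Your necessity argument matches the paper's: Corollary \ref{cor:char2_obstruction}, together with Propositions \ref{prop:projection_split_char_not2}(ii), \ref{prop:projection_split_char2} and \ref{prop:projection_char2_low_obstruction}. The detour through Corollary \ref{cor:most_degenerate_not_balanced} is unnecessary, because in characteristic 2 Theorem \ref{thm:Pr}(b) covers every $d\ge 1$, degenerate or not.

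\textbf{The gap is in sufficiency.} There you defer exactly the step that carries the proof. An induction whose hypothesis is only ``$(d,k)$ is balanced iff (i) and (ii)'' cannot close in characteristic 2. Balanced pairs routinely have unbalanced predecessors. Under $(d,k)\mapsto(d-(r-s-1),k-1)$ the residue $\ell$ shifts by $s$ modulo $r-1$, so (i) is not inherited; for example, $(18,3;7,2)$ is balanced while $(14,2;7,2)$ is not. Propositions \ref{prop:main_inductive} and \ref{prop:main_inductive_refinement} need the predecessor's actual splitting. This is why the paper proves the stronger statement determining $N_{d,k}$ in all cases, starting from explicit bases.

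\textbf{Case $d>k(r-s-1)$.} The paper decrements all $k$ times, to $(d',0)$ with $d'=d-k(r-s-1)\ge 1$. The bundle $N_{d',0}$ is given by Theorem \ref{thm:Pr}(b), with residue $\ell'\equiv\ell+ks \pmod{r-1}$. Writing $\ell+ks=j(r-1)+\ell'$ with $1\le j\le k-1$ when $\ell'>0$, condition (i) yields $\ell'\le ks$ and $r-1-\ell'\le k(r-s-1)$. So $N_{d',0}$ is $k$-almost balanced, and $k$ applications of Proposition \ref{prop:main_inductive}(b) finish.

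Your version of this step is mis-indexed as written:
\begin{itemize}
\item $\max(\lceil \ell/(r-s-1)\rceil,\lceil (r-1-\ell)/s\rceil)$ is not the almost-balancedness index of $N_{d,0}\cong\cO(\alpha)^{r-1-\ell}\oplus\cO(\alpha+2)^{\ell}$. The roles of the two multiplicities are swapped.
\item Iterating $m$ times from $(d-m(r-s-1),0)$ lands at $(d,m)$, not at $(d,k)$.
\end{itemize}
The residue shift you flag as ``needing care'' is precisely what converts the one into the other.

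\textbf{Case $d\le k(r-s-1)$.} Here one cannot decrement to $k=0$, and your proposal names no base case. The ``window'' step via Construction \ref{construction} and Proposition \ref{prop:bound_collide_points} is just Proposition \ref{prop:main_inductive_refinement} again, and it presupposes a known predecessor. The missing idea is to decrement only until \eqref{eq:proj_seq} splits.
\begin{itemize}
\item Each decrement raises $\mu(T_{\rel})-a$ by $\frac{s}{s+1}$ and preserves $\epsilon$.
\item For $\epsilon\neq 0$, take $m=\lceil((s+1)(a+1)-(s+1)d-k)/s\rceil$ steps; this is legitimate by Lemma \ref{lem:decrement_sensible}. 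One then reaches $a'+1\le\mu(T'_{\rel})<a'+2$. There Proposition \ref{prop:projection_split_char2} gives the explicit bundle $\cO(a')^{r-s-2-\epsilon}\oplus\cO(a'+1)^{y'}\oplus\cO(a'+2)^{w'+\epsilon}$.
\item Apply Proposition \ref{prop:main_inductive_refinement}(b) $m$ times. The surjection $N_{d,k}\to N_{C/\bP^{r-s-1}}$ forces at least $r-s-2-\epsilon$ summands $\cO(a)$. Together these pin down $N_{d,k}$.
\item The identity $w'+\epsilon-ms=d+(s+1)k-s-2-(r+s)\lfloor\frac{d-k-1}{r-s-2}\rfloor$ then shows that balancedness is exactly \eqref{eq:no_low}.
\item For $\epsilon=0$, the analogous base is the window $a-1\le\mu(T_{\rel})\le a+1$, which Proposition \ref{prop:projection_split_char_not2} handles. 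Proposition \ref{prop:main_inductive}(a) then propagates balancedness.
\end{itemize}
Induction on $r$, non-degeneracy of predecessors, and Proposition \ref{prop:lv_inductive} play no role in characteristic 2. Note also that the paper uses only (i) when $d>k(r-s-1)$, and only (ii) when $d\le k(r-s-1)$.
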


Recall that (i) is necessary by Corollary~\ref{cor:char2_obstruction}. If $\epsilon=0$, then $d \geq L_{r,s}(k)$ is necessary by Proposition~\ref{prop:projection_split_char_not2}. If $\epsilon \neq 0$, then \eqref{eq:no_splitting} and \eqref{eq:no_low} are necessary by Propositions~\ref{prop:projection_split_char2} and \ref{prop:projection_char2_low_obstruction}, respectively. It therefore suffices to prove the ``if'' direction of Theorem \ref{thm:main_char2}.

In fact, we prove more, determining for all $(d,k;r,s)$ the normal bundle $N_{d,k}$. We will decrement
\[
(d,k) \mapsto (d-(r-s-1),k-1),
\]
repeatedly until reaching $(d',k')$ for which $N_{d',k'}$ is determined, either by Theorem~\ref{thm:main_char2} $(k'=0)$, or by Proposition~\ref{prop:projection_split_char_not2} or \ref{prop:projection_split_char2}, depending on $\epsilon$. We consider three cases:
\begin{enumerate}
\item[(i)] $d > k(r-s-1)$ (automatic if $s=r-2$)
\item[(ii)] $d \le k(r-s-1)$ and  $(r-s-2)\nmid (d-k-1)$
\item[(iii)] $d \le k(r-s-1)$ and  $(r-s-2)\mid (d-k-1)$
\end{enumerate}

The normal bundles $N_{d,k}$ are determined in these three cases in Theorems \ref{thm:d>k(r-s-1)}, \ref{thm:d<k(r-s-1)_epsilon_not0}, and \ref{thm:d<k(r-s-1)_epsilon_zero}, respectively.

\subsection{$d > k(r-s-1)$}

\begin{theorem}\label{thm:d>k(r-s-1)}
Assume that $d > k(r-s-1)$ and that $\charac(\bK)=2$. As in Theorem~\ref{thm:Pr}, write $\ell \in [0,r-2]$ for the residue of $d-1 \pmod{r-1}$, and
\[
\alpha = d+2 \cdot  \frac{d-1-\ell}{r-1},
\]
so that
\[
N_{d,0} \cong \cO(\alpha)^{r-1-\ell} \oplus \cO(\alpha+2)^\ell.
\]
\begin{enumerate}
\item[(a)] Suppose that $k < \left\lceil \frac{\ell}{r-s-1} \right\rceil$. Then,
\[
N_{d,k} \cong \cO(\alpha)^{r-1-\ell} \oplus \cO(\alpha+1)^{k(r-s-1)} \oplus \cO(\alpha+2)^{\ell-k(r-s-1)}.
\]
\item[(b)] Suppose that $k < \left\lceil \frac{r-1-\ell}{s} \right\rceil$ (or that $s=0$). Then,
\[
N_{d,k} \cong \cO(\alpha-k)^{(r-1-\ell)-ks} \oplus \cO(\alpha-k+1)^{ks} \oplus \cO(\alpha-k+2)^\ell.
\]
\item[(c)] Suppose that $k \geq \left\lceil \frac{\ell}{r-s-1} \right\rceil, \left\lceil \frac{r-1-\ell}{s} \right\rceil$. Then, $N_{d,k}$ is balanced.
\end{enumerate}
\end{theorem}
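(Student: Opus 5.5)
We argue by induction on $k$, using the decrement $(d,k)\mapsto(d-(r-s-1),k-1)$ of Proposition~\ref{prop:main_inductive_refinement}. The hypothesis $d>k(r-s-1)$ is preserved, since $d-(r-s-1)>(k-1)(r-s-1)$. The residue $\ell$ is also preserved, because $d-(r-s-1)\equiv d+s \pmod{r-1}$ is not $d$ in general. We therefore track the splitting type of $N_{d,k}$ directly, starting from the $k=0$ case of Theorem~\ref{thm:Pr}(b), which in characteristic 2 gives $N_{d,0}\cong\cO(\alpha)^{r-1-\ell}\oplus\cO(\alpha+2)^\ell$.

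The cleaner route is to induct on $k$ with $d$ varying, starting from $N_{d-k(r-s-1),0}$. The base degree $d_0=d-k(r-s-1)>0$ has residue $\ell_0\equiv \ell+ks \pmod{r-1}$ and some $\alpha_0$. At each step, apply Proposition~\ref{prop:main_inductive_refinement}: part (a) applies when at least $r-s-1$ summands of the lowest degree remain, and part (b) when at least $s$ summands of the highest degree remain. These give upper bounds on the extremal multiplicities of $N_{d_j,j}$. For the matching lower bounds, use the lower-bound propagation of Lemma~\ref{lem:char2_obstruction_general}. Its proof works verbatim starting from $N_{d,0}$ for the same $d$, giving $h^0(N_{d,k}(-\alpha-2))\ge \ell-k(r-s-1)$ and $h^1(N_{d,k}(-\alpha-2+k))\ge (r-1-\ell)-ks$. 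Combined with the degree formula $\deg N_{d,k}=(r-1)\alpha+2\ell-(r-s-1)k$ and the rank $r-1$, these lower bounds pin down the splitting types claimed in (a) and (b) once we know that all summands lie in a window of width 2.

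Concretely, in case (a) I would show that every summand of $N_{d,k}$ has degree in $[\alpha,\alpha+2]$. The upper bound at $\alpha+2$ follows from the elementary-modification description $N_{d,k}\subset N_{C/\bP^r}$ in \eqref{eq:subsheaf}, via semicontinuity applied to $h^0(N_{d,k}(-\alpha-3))=0$. The lower bound is the content of the induction. Given the window, let $x,y,z$ be the multiplicities of degrees $\alpha,\alpha+1,\alpha+2$. The rank and degree determine $x-z=(r-1-\ell)-(\ell-k(r-s-1))$, and Lemma~\ref{lem:char2_obstruction_general}(a) gives $z\ge\ell-k(r-s-1)$. An upper bound $z\le \ell-k(r-s-1)$ then closes the argument, and this is exactly what iterating Proposition~\ref{prop:main_inductive_refinement}(a) $k$ times from $N_{d-k(r-s-1),0}$ should deliver, through the count ``$x'\le x-(r-s-1)$'' together with the slope bookkeeping. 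Case (b) is symmetric: use part (b) of the refinement, the $h^1$ bound, and the window $[\alpha-k,\alpha-k+2]$. For case (c), decrement until the first index $k_1\le k$ with $k_1\ge\lceil\ell/(r-s-1)\rceil$ and $k_1\ge\lceil(r-1-\ell)/s\rceil$. At the previous step $k_1-1$, the formula from (a) or (b) shows that $N$ is $1$-almost balanced. Then Proposition~\ref{prop:main_inductive}(b) makes step $k_1$ balanced, and Proposition~\ref{prop:main_inductive}(a) propagates balancedness up to $k$.

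The main obstacle is the bookkeeping of $\ell$ and $\alpha$ under the decrement. Passing from $(d,k)$ to $(d-(r-s-1),k-1)$ changes $\alpha$ by $r-s-1$ plus a possible carry of $\pm2$, and the inductive hypothesis must be phrased in terms of the target data $(\ell,\alpha)$ rather than the intermediate ones. I would first check that $N_{d-j(r-s-1),k-j}$ has the shape predicted by (a) with $\alpha$ shifted by $-j(r-s-1)$. That this is consistent should follow because the (a)-formula at $k-j$ has $\ge r-s-1$ summands of degree $\alpha$. With that check in hand, each application of Proposition~\ref{prop:main_inductive_refinement}(a) advances exactly one step, and the hypothesis $d>k(r-s-1)$ guarantees that the chain stays away from the degenerate or projection-split regimes at $k=0$.
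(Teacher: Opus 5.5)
For (a) and (b) your plan is the paper's argument. You start from $N_{d-k(r-s-1),0}$ and iterate Proposition~\ref{prop:main_inductive_refinement}(a) (resp.\ (b)) along the chain. This is legitimate because the case-(a) condition $k(r-s-1)<\ell$ is invariant along the chain, and for $j\ge1$ the formula at $(d-j(r-s-1),k-j)$ has $r-1-\ell+j(r-s-1)\ge r-s-1$ summands of lowest degree. You then match the resulting upper bounds against Lemma~\ref{lem:char2_obstruction_general} and close with the degree count.

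Note that $\ell$ is \emph{not} preserved by the decrement, contrary to your first paragraph. In case (a) the intermediate residues are $\ell-j(r-s-1)$, with no wrap-around.

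One justification is wrong and should be dropped. Semicontinuity cannot give $h^0(N_{C/\bP^r}(-\alpha-3))=0$: $b\circ f$ is a \emph{special} degree-$d$ curve, since it meets $\bP^s$ in $k$ points. Upper semicontinuity only says a special member has \emph{at least} as many sections as the general one, which is exactly why Lemma~\ref{lem:char2_obstruction_general} yields only lower bounds. You don't need this step. Proposition~\ref{prop:main_inductive_refinement}(a), applied to the inductive hypothesis, already gives the full shape $\cO(\alpha)^{x'}\oplus\cO(\alpha+1)^{y'}\oplus\cO(\alpha+2)^{z'}$ with $x'\le r-1-\ell$ and $z'\le\ell-k(r-s-1)$. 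So both ends of the window come from the induction.

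For (c) your route is genuinely different from the paper's. The paper never uses (a) or (b) here. With $d_0=d-k(r-s-1)$ and residue $\ell_0$, it writes $\ell+ks=j(r-1)+\ell_0$. When $\ell_0>0$, the hypotheses force $1\le j\le k-1$. This gives $\ell_0\le ks$ and $r-1-\ell_0\le k(r-s-1)$, i.e.\ $N_{d_0,0}$ is $k$-almost balanced, and one application of Proposition~\ref{prop:main_inductive}(b) finishes.

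You instead stop at the first index $k_1$ where the (c) inequalities hold. These must be evaluated with the residue $\ell_{k_1}$ of $d-(k-k_1)(r-s-1)-1$, not with the target $\ell$ as you wrote. You then assert $1$-almost balancedness at $j=k_1-1$. That claim is true, but it needs the following check, where $\ell_{j+1}\equiv\ell_j-s \pmod{r-1}$:
\begin{itemize}
\item If $k_1=1$, the (c) inequalities at step $1$ force $\ell_1=r-s-1$. Hence $\ell_0=0$ and $N_{d_0,0}$ is balanced.
\item If $j\ge1$ falls under (a), then $z=\ell_j-j(r-s-1)\le s-1$ automatically. If $\ell_j\le s-1$, then $\ell_{j+1}=\ell_j+r-s-1$, and (c) at $j+1$ would give $\ell_j\le j(r-s-1)$, contradicting (a). So $x=r-1-\ell_j\le r-s-1$.
\item If $j\ge1$ falls under (b), then $x=r-1-\ell_j-js\le r-s-1$ automatically. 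If $\ell_j\ge s$, then $\ell_{j+1}=\ell_j-s$, and (c) at $j+1$ would give $\ell_j+js\ge r-1$, contradicting (b). So $z=\ell_j\le s-1$.
\end{itemize}
With this check your route works. It needs only the $m=1$ case of Proposition~\ref{prop:main_inductive}(b), but it depends on (a) and (b) at every intermediate degree plus this case analysis. The paper's version is a single residue computation that is independent of (a) and (b).
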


\begin{proof}
Assume first that $k < \left\lceil \frac{\ell}{r-s-1} \right\rceil$. There is nothing to check if $k=0$, so assume that $k\ge 1$. Write $d' = d-k(r-s-1)$. Then,
\[
\ell':=\ell-k(r-s-1) \equiv d'-1\pmod{r-1},
\]
and $\ell' \in [0,r-2]$. In particular, we have
\[
N_{d',0} \cong \cO(\alpha')^{r-1-\ell'} \oplus \cO(\alpha'+2)^{\ell'},
\]
where
\begin{align*}
\alpha' &= d' + 2 \cdot \frac{d'-1-\ell'}{r-1} \\
&=\alpha-k(r-s-1).
\end{align*}

By Proposition~\ref{prop:main_inductive_refinement}(a), we have
\begin{equation}\label{eq:Ndk_shape}
N_{d,k} \cong \cO(\alpha)^{x'} \oplus \cO(\alpha+1)^{y'} \oplus \cO(\alpha+2)^{z'},
\end{equation}
where $x' \leq (r-1-\ell')-k(r-s-1)=r-1-\ell$ and $z' \leq \ell'=\ell-k(r-s-1)$. On the other hand, by Lemma \ref{lem:char2_obstruction_general}(a), we also have $z' \geq \ell-k(r-s-1)$, so equality must hold. By comparing degrees of both sides of \eqref{eq:Ndk_shape}, we must also have $x'=r-1-\ell$ and $y'=k(r-s-1)$. This proves (a). The proof of (b) is similar, using instead Proposition~\ref{prop:main_inductive_refinement}(b) and Lemma \ref{lem:char2_obstruction_general}(b).

Finally, we prove (c). Assume that $\ell \leq k(r-s-1)$ and $r-1-\ell \leq ks$. (In particular, $s\neq 0$.) As above, let $\ell' \in [0,r-2]$ be the residue of $d'-1 \pmod{r-1}$, so that
\[
N_{d',0} \cong \cO(\alpha')^{r-1-\ell'} \oplus \cO(\alpha'+2)^{\ell'}.
\]
If $\ell'=0$, then we are done by Proposition~\ref{prop:main_inductive}(a), so assume that $\ell'>0$. By Proposition~\ref{prop:main_inductive}(b), it suffices to show that $\ell' \leq sk$ and $r-1-\ell' \leq k(r-s-1)$.

Write $\ell+ks = j(r-1)+\ell'$. Then, we have
\[
\ell+ks \leq k(r-s-1)+ks = k(r-1)
\Rightarrow j \leq k-1,
\]
because we assume $\ell'>0$. Therefore,
\begin{align*}
\ell' &= \ell+ks-j(r-1) \\
&\geq \ell+ks-(k-1)(r-1) \\
&= \ell-k(r-s-1)+(r-1) \\
&\geq (r-1)-k(r-s-1),
\end{align*}
hence $r-1-\ell' \leq k(r-s-1)$.

For the other inequality, note first that
\[
r-1-\ell \leq ks \Longleftrightarrow \ell+ks \geq r-1 \Rightarrow j \geq 1.
\]
Therefore,
\begin{align*}
\ell' &= \ell+ks-j(r-1) \\
&\leq \ell+ks-(r-1) \\
&\leq ks,
\end{align*}
as needed.
\end{proof}

\subsection{$d \le k(r-s-1)$ and $(r-s-2)\nmid (d-k-1)$}

\begin{theorem}\label{thm:d<k(r-s-1)_epsilon_not0}
Suppose that $d\le k(r-s-1)$ and that we are in Situation \ref{sit:char2_notdiv}. 
\begin{enumerate}
\item[(a)] (Proposition \ref{prop:projection_split_char2}) If $\mu(T_{\rel}) \ge a+1$, then $N_{d,k} \cong T_{\rel} \oplus N_{C/\bP^{r-s-1}}$. The summands are determined by Lemma \ref{lem:Trel_balanced} and Theorem \ref{thm:Pr}, respectively, and $N_{d,k}$ is not balanced.
\item[(b)] (cf. Proposition \ref{prop:projection_char2_low_obstruction}) If $\mu(T_{\rel}) < a+1$ and 
\[
z:=d+(s+1)k-s-2 - (r+s)\left\lfloor \frac{d-k-1}{r-s-2} \right\rfloor>0, 
\]
then 
\[
N_{d,k} \cong \cO(a)^{r-s-2-\epsilon} \oplus \cO(a+1)^{y} \oplus \cO(a+2)^{z},
\]
where $y=(r-1)-(r-s-2-\epsilon)-z$. In particular, $N_{d,k}$ is not balanced.
\item[(c)] If $\mu(T_{\rel}) < a+1$ and 
\[
z:=d+(s+1)k-s-2 - (r+s)\left\lfloor \frac{d-k-1}{r-s-2} \right\rfloor\le 0, 
\]
then $N_{d,k}$ is balanced.
\end{enumerate}
\end{theorem}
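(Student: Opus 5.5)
Part (a) is exactly Proposition \ref{prop:projection_split_char2}, so only (b) and (c) need an argument. The plan is to run the decrement $(d,k)\mapsto(d-(r-s-1),k-1)$. This keeps $d-k$ fixed modulo $r-s-2$, and the condition $d\le k(r-s-1)$ is preserved. So $\epsilon$ and $\lfloor (d-k-1)/(r-s-2)\rfloor =: m$ stay constant along the chain, and $a=(d-k)+2m$ is constant as well. Meanwhile $\mu(T_{\rel})=d+\frac{k}{s+1}$ drops, and the quantity $z$ changes by $-(r-s-1)+(s+1)=-(r-2s-2)$ per step, which needs care. Instead of tracking $z$ directly, I will induct on $k$ among all pairs with $\mu(T_{\rel})<a+1$. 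The base case is the first pair $(d_0,k_0)$ on the chain, going backwards, that lands in the splitting regime $\mu(T_{\rel})\ge a+1$ of part (a), or that has $k_0=0$. In the splitting regime, $N_{d_0,k_0}\cong T_{\rel}\oplus N_{C/\bP^{r-s-1}}$ is known explicitly: $T_{\rel}$ is balanced of slope in $[a+1,a+2)$, because the previous pair was not split. Hence $N_{d_0,k_0}\cong \cO(a')^{r-s-2-\epsilon}\oplus\cO(a'+1)^{*}\oplus\cO(a'+2)^{*}$ with $a'=a_0$.

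The inductive step uses Proposition \ref{prop:main_inductive_refinement}. Suppose $N_{d',k'}\cong\cO(a)^x\oplus\cO(a+1)^y\oplus\cO(a+2)^z$ with $x=r-s-2-\epsilon$. That lemma requires $x\ge r-s-1$ or $z\ge s$. Since $x<r-s-1$, I will apply part (b) when $z\ge s$. This yields $N_{d,k}$ of the shape $\cO(a+(r-s))^{x''}\oplus\cdots$ with $x''\le x$ and $z''\le z-s$. Note the shift is $r-s$, which matches $a\mapsto a+(r-s-1)+1$, so the new value of $a$ agrees with the formula $a=(d-k)+2m$ (here $d-k$ grows by $r-s-2$ and $m$ grows by one).

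To pin the bundle down exactly I will combine three inputs. First, the lower bound $x''\ge r-s-2-\epsilon$: the surjection $N_{d,k}\to N_{C/\bP^{r-s-1}}$ forces at least $r-s-2-\epsilon$ summands of degree $\le a$, which is the argument of Proposition \ref{prop:projection_char2_low_obstruction}. Second, degree bookkeeping: $\deg N_{d,k}$ together with $x''=r-s-2-\epsilon$ and the upper bound on $z''$ forces $z''=z_{\text{formula}}$ whenever that quantity is positive. Third, a check that the number $z$ of the statement equals $\deg N_{d,k}-(r-s-2-\epsilon)a-(s+1+\epsilon)(a+1)$. This is exactly the excess degree in Proposition \ref{prop:projection_char2_low_obstruction}, so it is a direct computation. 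When $z\le 0$, the same bounds show there are no $\cO(a+2)$ summands and also no $\cO(a)$ summands beyond what degree allows, so the bundle is balanced. Together these give (b) and (c).

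The main obstacle is the inductive step when the predecessor has $z<s$, so that neither part of Proposition \ref{prop:main_inductive_refinement} applies. For this case I would first check numerically that $z<s$ can happen only when $z_{\text{new}}\le 0$. In that case the predecessor is $1$-almost balanced in the sense $x\le r-s-1$, $z\le s$, and Proposition \ref{prop:main_inductive}(b) yields balancedness directly. A second worry is a chain that reaches $k=0$ without ever splitting. There I would instead use Theorem \ref{thm:d>k(r-s-1)}, or directly the condition $d\le k(r-s-1)$, to rule it out: descending $k$ steps would give $d_0\le 0$. So the chain must hit the split regime of Proposition \ref{prop:projection_split_char2} first, and that regime serves as the base case.
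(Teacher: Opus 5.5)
Your architecture is the paper's. You descend along $(d,k)\mapsto(d-(r-s-1),k-1)$ to the first pair in the split regime of Proposition \ref{prop:projection_split_char2}. You then climb back using Proposition \ref{prop:main_inductive_refinement}(b), the lower bound on $\cO(a)$-summands forced by the surjection onto $N_{C/\bP^{r-s-1}}$, and a degree count, switching to Proposition \ref{prop:main_inductive} once the excess drops below $s$. Going one step at a time makes explicit the intermediate pinning of $x$ and $z$ that the paper's $m$-fold application of Proposition \ref{prop:main_inductive_refinement}(b) leaves implicit. Identifying $z$ with the excess degree of Proposition \ref{prop:projection_char2_low_obstruction} is a tidy way to finish.

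However, the numerics that drive the recursion are misstated. A backward step lowers $d-k$ by $r-s-2$, so $q=\lfloor\frac{d-k-1}{r-s-2}\rfloor$ drops by $1$ and $a$ drops by $r-s$. They are not constant. If they were, $\mu(T_{\rel})-(a+1)$ would only decrease going backwards, and your chain would never reach the split regime. The fact you need is that $\mu(T_{\rel})-(a+1)$ increases by exactly $\frac{s}{s+1}$ per backward step. This is why the split regime is reached, and, since $\frac{s}{s+1}<1$, why the base has $\mu(T_{\rel})\in[a_0+1,a_0+2)$. Similarly, $z$ changes by exactly $-s$ per forward step, not by $-(r-2s-2)$. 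Your key check that $z<s$ forces $z_{\mathrm{new}}\le 0$ is exactly this identity, and your stated formula would not give it: for $r=6$, $s=2$ it predicts no change at all.

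Your termination argument is also incomplete. Knowing that $k$ backward steps would give $d_0\le 0$ does not show that the split regime is reached while the pairs are still legitimate, i.e. $d'>k'\ge 0$ and still in Situation \ref{sit:char2_notdiv}. A priori the chain could become invalid first; this is what Lemma \ref{lem:decrement_sensible} provides. A direct fix:
\begin{itemize}
\item $d\le k(r-s-1)$ is equivalent to $k\ge q+1$, and $k-q$ is invariant under the step.
\item So after at most $q$ steps you reach $q'=0$ with $k'\ge 1$, where the split inequality $\frac{s+2}{s+1}k'-1\ge 2q'$ holds.
\item All earlier pairs have $q'\ge 0$, hence $d'-k'=q'(r-s-2)+\epsilon+1\ge 2$.
\end{itemize}
This argument also disposes of $s=0$, which you never address. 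There $\mu(T_{\rel})-(a+1)$ is constant along the chain, so the original pair was already split. This matters because almost balancedness is only defined for $s>0$.

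Finally, once a pair on the chain is balanced, later pairs follow from Proposition \ref{prop:main_inductive}(a). Your inductive step, phrased for $x=r-s-2-\epsilon$, does not literally cover this case. With these repairs your argument coincides with the paper's.
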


Only (b) and (c) require proofs. We first show that, in the setting of (b) and (c), we can ignore the case $s=0$, in addition to the case $s=r-2$.
\begin{lemma}
Suppose that $s=0$, so $d \le (r-1)k$. Then, $\mu(T_{\rel}) \geq a+1$, so we are in case (a) of Theorem~\ref{thm:d<k(r-s-1)_epsilon_not0}.
\end{lemma}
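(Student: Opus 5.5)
This is a direct inequality check. With $s=0$ we have $\rk(T_{\rel})=1$ and $\deg(T_{\rel})=d+k$, so $\mu(T_{\rel})=d+k$. In Situation \ref{sit:char2_notdiv},
\[
a=(d-k)+2\left\lfloor \frac{d-k-1}{r-2}\right\rfloor .
\]
The claim $\mu(T_{\rel})\ge a+1$ is therefore equivalent to
\[
2k-1\ \ge\ 2\left\lfloor \frac{d-k-1}{r-2}\right\rfloor .
\]
This is exactly the criterion of Proposition \ref{prop:projection_split_char2} specialized to $s=0$, where $\frac{s+2}{s+1}k-1=2k-1$.

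Next I bound the floor from above using $d\le (r-1)k$, which is the hypothesis $d\le k(r-s-1)$ with $s=0$. It gives $d-k-1\le (r-2)k-1<(r-2)k$, hence
\[
\left\lfloor \frac{d-k-1}{r-2}\right\rfloor\le k-1 .
\]
Since $r-s-2=r-2\ge 2$ in Situation \ref{sit:char2_notdiv}, the division makes sense. We then get $2\lfloor\cdot\rfloor\le 2k-2<2k-1$, which is the required inequality, even with strict inequality to spare.

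So $\mu(T_{\rel})\ge a+1$, and we are in case (a) of Theorem \ref{thm:d<k(r-s-1)_epsilon_not0}. No step here is a real obstacle. The only points needing care are recording $\mu(T_{\rel})$ correctly for rank one, and noting that the strictness of $d-k-1<(r-2)k$ is what brings the floor down to $k-1$.
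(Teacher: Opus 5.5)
Your proof is correct and is essentially the paper's argument. Both compute $\mu(T_{\rel})=d+k$ for $s=0$ and use $d\le (r-1)k$, i.e.\ $d-k-1<(r-2)k$, to get $\left\lfloor \frac{d-k-1}{r-2}\right\rfloor\le k-1$; the paper writes this as a chain of equivalences, while you prove only the forward implication, which is all the lemma needs.
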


\begin{proof}
We have
\[
\begin{array}{r@{\quad}r@{\;}c@{\;}l}
& \displaystyle d &<& \displaystyle (r-1)k+1 \\
\Leftrightarrow & \displaystyle (r-2)k &> & \displaystyle d-k-1 \\
\Leftrightarrow & \displaystyle k &> & \displaystyle \left\lfloor \frac{d-k-1}{r-2} \right\rfloor \\
\Leftrightarrow & \displaystyle d+k &\ge & \displaystyle d-k+2\cdot\left\lfloor \frac{d-k-1}{r-2} \right\rfloor+1 \\
\Leftrightarrow & \displaystyle \mu(T_{\rel}) &\geq & \displaystyle a+1.
\end{array}
\]
\end{proof}

Suppose that $\mu(T_{\rel}) < a+1$, so that $s\neq0$. The decrement
\[
(d,k) \mapsto (d-(r-s-1),k-1)
\]
has the effect of decreasing $\mu(T_{\rel})$ by $r-s-\frac{s}{s+1}$, and decreasing $a$ by $r-s$. Note also that this decrement leaves the value of $\epsilon$ unchanged. As $s\neq 0$, decrementing
\[
m := \left\lceil \frac{(s+1)(a+1)-\left((s+1)d+k\right)}{s} \right\rceil
\]
times results in a degree $(d',k') := (d-m(r-s-1),k-m)$ with the property that
\[
a'+1 \leq \mu(T'_{\rel}) < a'+2.
\]
Here, $T'_{\rel}$, $a'$ denote the relative tangent bundle and integer, respectively, associated to $(d',k')$.

In order for this to be sensible, we require the following.
\begin{lemma}\label{lem:decrement_sensible}
With notation as above, we have $d' > k' \geq 0$.
\end{lemma}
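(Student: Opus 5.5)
The plan is to reduce both inequalities to one upper bound on $m$, namely $m\le q$, where $q:=\left\lfloor \frac{d-k-1}{r-s-2}\right\rfloor$. That bound will follow from the hypothesis $d\le k(r-s-1)$.

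First we rewrite $m$ in terms of $q$ and $k$. In Situation \ref{sit:char2_notdiv} we have $a=(d-k)+2q$, so
\[
(s+1)(a+1)-\big((s+1)d+k\big)=(s+1)(2q+1)-(s+2)k,
\qquad
m=\left\lceil \frac{(s+1)(2q+1)-(s+2)k}{s}\right\rceil .
\]
The hypothesis $\mu(T_{\rel})<a+1$ makes the numerator positive, so $m\ge 1$.

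Next we translate the two conclusions into conditions on $m$. The decrement gives $k'=k-m$ and $d'-k'=(d-k)-m(r-s-2)$. Write $d-k-1=q(r-s-2)+\epsilon$ with $\epsilon\in[1,r-s-3]$. Then
\[
d'-k'-1=(q-m)(r-s-2)+\epsilon .
\]
If $m\le q$, this gives $d'-k'\ge \epsilon+1\ge 2$. (Conversely, $m\ge q+1$ would force $d'\le k'$, so $m\le q$ is exactly the condition needed for $d'>k'$.) If moreover $q\le k-1$, then $k'=k-m\ge k-q\ge 1$. So it suffices to prove $m\le q$ and $q\le k-1$.

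For $q\le k-1$, use the hypothesis $d\le k(r-s-1)$ of Theorem \ref{thm:d<k(r-s-1)_epsilon_not0}. It gives $d-k-1\le k(r-s-2)-1$, and hence $q\le k-1$.

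For $m\le q$: since $q$ is an integer, $m\le q$ is equivalent to
\[
(s+1)(2q+1)-(s+2)k\le sq ,
\]
which rearranges to $(s+2)(k-q)\ge s+1$. This holds because $k-q\ge 1$, by the previous step. Here $s\ge 1$ is used, which was established in the preceding lemma under the assumption $\mu(T_{\rel})<a+1$.

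The only delicate point is the ceiling in the definition of $m$. Because $q$ is an integer, $m\le q$ is equivalent to the un-ceilinged quotient being at most $q$, so the inequality $(s+2)(k-q)\ge s+1$ suffices. The rest is bookkeeping with the residue $\epsilon$, which is unchanged under the decrement.
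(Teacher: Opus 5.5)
Your proof is correct and follows essentially the paper's argument: both derive $q\le k-1$ from $d\le k(r-s-1)$, and both use that to prove $m\le q$ via the same inequality, which gives $d'-k'>0$. The only cosmetic difference is that the paper obtains $k'\ge 0$ from a chain of equivalences ($k\ge m \Leftrightarrow d\le k(r-s-1)$), whereas you deduce the slightly stronger $k'\ge 1$ directly from $m\le q\le k-1$.
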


\begin{proof}
Write $q=\left\lfloor \frac{d-k-1}{r-s-2} \right\rfloor$, so that $a=d-k+2q$. We have
\[
\renewcommand{\arraystretch}{1.25}
\begin{array}{r@{\quad}r@{\;}c@{\;}l}
& \displaystyle k(r-s-1) &\geq& \displaystyle d \\
\Leftrightarrow & \displaystyle k(r-s-2) &>& \displaystyle d-k-1 \\
\Leftrightarrow & \displaystyle k &>& \displaystyle q \\
\Leftrightarrow & \displaystyle 2k &\geq& \displaystyle 2q+1 \\
\Leftrightarrow & \displaystyle sk &\geq& \displaystyle (s+1)(-k+2q+1)-k \\
\Leftrightarrow & \displaystyle k &\geq& \displaystyle \left\lceil \frac{(s+1)(d-k+2q+1)-(s+1)d-k}{s} \right\rceil \\
\Leftrightarrow & \displaystyle k &\geq& \displaystyle m \\
\Leftrightarrow & \displaystyle k' &\geq& \displaystyle 0.
\end{array}
\]
Moreover, because $k \geq q+1$, we have
\begin{align*}
m
&= \left\lceil \frac{(s+1)(-k+2q+1)-k}{s} \right\rceil \\
&\leq \left\lceil \frac{(s+1)(2q+1)-(s+2)(q+1)}{s} \right\rceil \\
&= \left\lceil \frac{sq-1}{s} \right\rceil \\
&\leq q.
\end{align*}
Therefore, $d'-k'=d-k-m(r-s-2) \geq d-k-q(r-s-2)>0$.
\end{proof}

\begin{proof}[Proof of Theorem~\ref{thm:d<k(r-s-1)_epsilon_not0}]
Because $a'+1 \leq \mu(T'_{\rel}) < a'+2$, we have
\[
N'_{d',k'} \cong T'_{\rel} \oplus N_{d'-k',0} \cong
\left(\cO(a'+1)^{y'} \oplus \cO(a'+2)^{w'}\right)
\oplus
\left(\cO(a')^{r-s-2-\epsilon} \oplus \cO(a'+2)^\epsilon\right),
\]
where
\[
w' = \deg (T'_{\rel})-(a'+1)\rk(T'_{\rel}) = (s+1)d'+k'-(a'+1)(s+1).
\]
Recall also that $a'=a-m(r-s)$. 

Now, by Proposition~\ref{prop:main_inductive_refinement}(b), if $w'+\epsilon > ms$, then
\[
N_{d,k} \cong \cO(a)^x \oplus \cO(a+1)^y \oplus \cO(a+2)^z,
\]
where $x \leq r-s-2-\epsilon$ and $z \leq w'+\epsilon-ms$. On the other hand, $N_{d,k}$ surjects onto
\[
N_{C/\bP^{r-s-1}} \cong \cO(a)^{r-s-2-\epsilon} \oplus \cO(a+2)^\epsilon,
\]
so we must also have $x \geq r-s-2-\epsilon$. By comparing degrees, it follows that, in fact,
\[
N_{d,k} \cong \cO(a)^{r-s-2-\epsilon} \oplus \cO(a+1)^y \oplus \cO(a+2)^{w'+\epsilon-ms}.
\]
If instead $w'+\epsilon \leq ms$, then by Proposition~\ref{prop:main_inductive}, $N_{d,k}$ is balanced.

Now, to complete the proofs of (b) and (c), it is enough to show that
\[
d+(s+1)k-s-2 - (r+s)\left\lfloor \frac{d-k-1}{r-s-2} \right\rfloor=w'+\epsilon-ms.
\]
Indeed, writing again $q=\left\lfloor \frac{d-k-1}{r-s-2} \right\rfloor$, we have
\begin{align*}
w'+\epsilon-ms
&= (s+1)d'+k'-(s+1)(a'+1)+\epsilon-ms \\
&= (s+1)d+k-(s+1)(a+1)+\epsilon \\
&= (s+1)d+k-(s+1)(d-k+2q+1)+\epsilon \\
&= (s+1)(k-2q-1)+k+\epsilon \\
&= (s+2)k-2(s+1)q-(s+1)+\epsilon \\
&=d+(s+1)k-s-2-(r+s)q,
\end{align*}
using that $d=k+1+q(r-s-2)+\epsilon$. This completes the proof.
\end{proof}

\subsection{$d \le k(r-s-1)$ and $(r-s-2)\mid (d-k-1)$}

\begin{theorem}\label{thm:d<k(r-s-1)_epsilon_zero}
Suppose that $d \leq k(r-s-1)$ and $(r-s-2) \mid (d-k-1)$. Write as usual $q=\frac{d-k-1}{r-s-2}$ and $a=d-k+2q$, so that
\[
N_{C/\bP^{r-s-1}} \cong \cO(a)^{r-s-2}.
\]
\begin{enumerate}
\item[(a)] (Proposition \ref{prop:projection_split_char_not2}) If $d < L_{r,s}(k) \Longleftrightarrow \mu(T_{\rel}) > a+1$, then $N_{d,k} \cong T_{\rel} \oplus N_{C/\bP^{r-s-1}}$ is not balanced. $T_{\rel}$ is determined by Lemma~\ref{lem:Trel_balanced}.
\item[(b)] If $d \ge L_{r,s}(k) \Longleftrightarrow \mu(T_{\rel}) \leq a+1$, then $N_{d,k}$ is balanced.
\end{enumerate}
\end{theorem}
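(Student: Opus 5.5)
The idea is to run the same decrement argument as in the proof of Theorem~\ref{thm:d<k(r-s-1)_epsilon_not0}, now with $\epsilon=0$. Part (a) is exactly Proposition~\ref{prop:projection_split_char_not2}, via hypothesis (ii) there, since $(r-s-2)\mid(d-k-1)$. So only (b) needs proof.

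\emph{Easy cases.} If $\mu(T_{\rel})\ge a$, i.e. $d\le U_{r,s}(k)$, then Proposition~\ref{prop:projection_split_char_not2}(b) applies directly: the sequence \eqref{eq:proj_seq} splits, and the hypothesis $d\ge L_{r,s}(k)$ says the split bundle $T_{\rel}\oplus\cO(a)^{r-s-2}$ is balanced. The case $s=r-2$ is excluded by the divisibility convention. When $s=0$, I will redo the computation of the lemma in the previous subsection. From $d\le (r-1)k$ it gives $\mu(T_{\rel})=d+k\ge a+1$, so together with $\mu(T_{\rel})\le a+1$ we get $\mu(T_{\rel})=a+1$, which lands in the split balanced range just treated. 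Hence assume from now on that $s\ge1$ and $\mu(T_{\rel})<a$.

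\emph{Decrement.} I will apply $(d,k)\mapsto(d-(r-s-1),k-1)$ a total of $m$ times, with $m$ defined as before:
\[
m=\left\lceil \frac{(s+1)(a+1)-\left((s+1)d+k\right)}{s}\right\rceil .
\]
Each step lowers $\mu(T_{\rel})$ by $r-s-\frac{s}{s+1}$ and lowers $a$ by $r-s$, while preserving the divisibility $(r-s-2)\mid(d-k-1)$. The result is $(d',k')$ with $a'+1\le\mu(T'_{\rel})<a'+2$. The computation of Lemma~\ref{lem:decrement_sensible} goes through verbatim (it never used $\epsilon\neq0$), so $d'>k'\ge0$ and $m\le q$. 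Since $\mu(T'_{\rel})\ge a'+1>a'=\mu(N_{C'/\bP^{r-s-1}})$, we have $d'\le U_{r,s}(k')$. Proposition~\ref{prop:projection_split_char_not2}(ii)(a) then gives the splitting
\[
N_{d',k'}\cong \cO(a')^{r-s-2}\oplus\cO(a'+1)^{y'}\oplus\cO(a'+2)^{w'},\qquad w'=(s+1)d'+k'-(s+1)(a'+1).
\]

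\emph{Almost balancedness and conclusion.} The plan is to show $N_{d',k'}$ is $m$-almost balanced and then invoke Proposition~\ref{prop:main_inductive}(b). The count of $\cO(a')$ summands is $r-s-2\le m(r-s-1)$, which holds because $m\ge1$. For the top summands, the same computation as at the end of the previous proof, now with $\epsilon=0$, gives
\[
w'-ms=(s+1)d+k-(s+1)(a+1).
\]
This is $\le 0$ precisely when $\mu(T_{\rel})\le a+1$, i.e. when $d\ge L_{r,s}(k)$. Hence $w'\le ms$, the bundle $N_{d',k'}$ is $m$-almost balanced, and Proposition~\ref{prop:main_inductive}(b) yields that $N_{d,k}$ is balanced.

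The main point requiring care is the bookkeeping. First, $m\ge1$ must genuinely hold in the non-split range; this follows since $\mu(T_{\rel})<a$ forces the numerator above to exceed $s$. Second, one must confirm that the equivalence $d\ge L_{r,s}(k)\Leftrightarrow\mu(T_{\rel})\le a+1$ stated in the theorem matches the ceiling in the definition of $L_{r,s}(k)$ when $(r-s-2)\mid(d-k-1)$. I would check both by direct substitution of $d=k+1+q(r-s-2)$.
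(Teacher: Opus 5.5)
Your argument is correct, but it is organized differently from the paper's proof.

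\textbf{The paper's route.} The paper proves (b) by induction on $k$, decrementing only one step at a time.
\begin{enumerate}
\item If $a-1\le \mu(T_{\rel})\le a+1$, Proposition~\ref{prop:projection_split_char_not2} applies directly.
\item Otherwise $\mu(T_{\rel})<a-1$. One decrement raises $\mu(T_{\rel})-(a+1)$ by only $\frac{s}{s+1}<1$. So the predecessor $(d-(r-s-1),k-1)$ still satisfies all the hypotheses, including $\mu\le a+1$ and $d>k$. It is therefore balanced by induction, and Proposition~\ref{prop:main_inductive}(a) finishes.
\end{enumerate}
A small correction to your easy case: the splitting range $d\le U_{r,s}(k)$ corresponds to $\mu(T_{\rel})\ge a-1$, not to $\mu(T_{\rel})\ge a$. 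This is harmless for you, since your decrement argument also covers the window $a-1\le\mu(T_{\rel})<a$.

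\textbf{Your route.} You jump $m$ steps at once into the window $a'+1\le\mu(T'_{\rel})<a'+2$ and read off the split bundle there. You then verify $m$-almost balancedness and invoke Proposition~\ref{prop:main_inductive}(b), exactly as in Theorem~\ref{thm:d<k(r-s-1)_epsilon_not0}.

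\textbf{Comparison.} The paper's route is lighter in several ways:
\begin{enumerate}
\item It uses only part (a) of Proposition~\ref{prop:main_inductive}, not part (b), which rests on Proposition~\ref{prop:main_inductive_refinement}.
\item It needs no separate treatment of $s=0$.
\item It needs only a one-line validity check for the predecessor, instead of Lemma~\ref{lem:decrement_sensible} and the computation of $w'$.
\end{enumerate}
This works because, when $\epsilon=0$, the split range already contains balanced bundles. That fails when $\epsilon\neq0$, since then $N_{C/\bP^{r-s-1}}$ has summands of degrees $a$ and $a+2$; this is why the $m$-step argument was needed in that case.

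Your route buys uniformity. The identity $w'-ms=(s+1)\bigl(\mu(T_{\rel})-(a+1)\bigr)$ handles both values of $\epsilon$ with one computation, and it makes the threshold $d\ge L_{r,s}(k)$ transparent.

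A minor point: $s=r-2$ is excluded not by a divisibility convention, but because $d\le k(r-s-1)=k$ would contradict $d>k$.
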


\begin{proof}
We prove (b) by induction on $k$. The case $k=0$ is empty, because the hypothesis $d \leq k(r-s-1)$ cannot hold. Now, for a given $d,k$ with $k>0$, suppose that $\mu(T_{\rel}) \leq a+1$. If in addition $\mu(T_{\rel}) \geq a-1$, then we are done by Propositions~\ref{prop:projection_split_char_not2}. Assume instead that $\mu(T_{\rel}) < a-1$.

Consider now
\[
(d',k') := (d-(r-s-1),k-1),
\]
which still satisfies the hypotheses $d' \leq k'(r-s-1)$ and $(r-s-2) \mid (d'-k'-1)$. The degree $(d',k')$ is valid as long as $d-k > r-s-2$. By the hypothesis that $(r-s-2) \mid (d-k-1)$, we can only have $d-k\le r-s-2$ if $d-k=1$. However, if $d-k=1$, then 
\[
\mu(T_{\rel})=k+1+\frac{k}{s+1} \leq a+1=2
\] 
only if $k=0$, a contradiction.

Let $T'_{\rel}$ and $a'$ denote the bundle and integer associated to $(d',k')$, respectively. We have
\[
\mu(T'_{\rel})-(a'+1)=\mu(T_{\rel})-(a+1)+\frac{s}{s+1}<0,
\]
so $N_{d',k'}$ is balanced by the inductive hypothesis. We conclude by Proposition~\ref{prop:main_inductive}. 
\end{proof}

\section{Tangent bundles}\label{sec: interpolation_tangent}

In this section, we prove Theorem \ref{thm:intro_tangent}, determining the restricted tangent bundle $T_{d,k}$ of the general map $f:C\to X_{r,s}$ of degree $(d,k)$. The calculation is independent of characteristic.

Fix $r,s$ with $0 \leq s \leq r-2$. Recall \eqref{eq:proj_seq_tangent}
\[
0\to T_{\rel} \to T_{d,k} \to T_{\bP^{r-s-1}}|_{C} \to 0.
\]
\begin{proposition}\label{prop:projection_split_tangent}
\,
\begin{enumerate}
\item[(a)] The exact sequence \eqref{eq:proj_seq_tangent} splits if
\[
\mu(T_{\bP^{r-s-1}}|_{C} )=\frac{(r-s)(d-k)}{r-s-1} \leq \left\lfloor \frac{(s+1)d+k}{s+1} \right\rfloor + 1 = \left\lfloor \mu(T_{\rel}) \right\rfloor + 1.
\]
This inequality is equivalent to
\[
d \leq (r-s)(k+1)+(r-s-1)\left\lfloor \frac{k}{s+1}\right\rfloor - 1 =: U^t_{r,s}(k).
\]
\item[(b)] If $d \leq U^t_{r,s}(k)$, then $T_{d,k} \cong T_{\rel} \oplus T_{\bP^{r-s-1}}|_{C}$ is balanced if and only if
\[
\mu(T_{\bP^{r-s-1}}|_{C} )=\frac{(r-s)(d-k)}{r-s-1} \geq \left\lceil \frac{(s+1)d+k}{s+1} \right\rceil - 1 = \left\lceil \mu(T_{\rel}) \right\rceil - 1.
\]
This inequality is equivalent to
\[
d \geq (r-s)(k-1)+(r-s-1)\left\lceil \frac{k}{s+1}\right\rceil+ 1=: L^t_{r,s}(k).
\]
\end{enumerate}
In particular, if $d<L^t_{r,s}(k)$, then $T_{d,k}$ is not balanced.
\end{proposition}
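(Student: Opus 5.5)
The plan follows the proof of Proposition \ref{prop:projection_split_char_not2}, with $N_{C/\bP^{r-s-1}}$ replaced by $T_{\bP^{r-s-1}}|_{C}$.

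\textbf{Balancedness of the two ends.} The composition $\pi\circ f\colon C\to\bP^{r-s-1}$ is given by the sections $f_1,\ldots,f_{r-s}$ of $\cL(-D)$. It is therefore a general rational curve of degree $d-k>0$, so $T_{\bP^{r-s-1}}|_C$ is balanced by Theorem \ref{thm:Pr_tangent}, independently of the characteristic. Its rank is $r-s-1$ and its degree is $(r-s)(d-k)$. By Lemma \ref{lem:Trel_balanced}, $T_{\rel}$ is balanced of rank $s+1$ and degree $(s+1)d+k$.

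\textbf{Part (a).} Write $T_{\rel}\cong\bigoplus\cO(t_i)$ and $T_{\bP^{r-s-1}}|_C\cong\bigoplus\cO(u_j)$. Then
\[
\operatorname{Ext}^1\bigl(T_{\bP^{r-s-1}}|_C,T_{\rel}\bigr)=\bigoplus_{i,j}H^1(\cO(t_i-u_j)),
\]
and this vanishes exactly when $u_j\le t_i+1$ for all $i,j$, that is, when $\max u_j\le\min t_i+1$. Balancedness gives $\min t_i=\lfloor\mu(T_{\rel})\rfloor$ and $\max u_j=\lceil\mu(T_{\bP^{r-s-1}}|_C)\rceil$. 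Since the right-hand side $\lfloor\mu(T_{\rel})\rfloor+1$ is an integer, the condition $\max u_j\le\lfloor\mu(T_{\rel})\rfloor+1$ is equivalent to the slope inequality in (a). When it holds, \eqref{eq:proj_seq_tangent} splits.

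\textbf{Part (b).} When the sequence splits, $T_{d,k}\cong T_{\rel}\oplus T_{\bP^{r-s-1}}|_C$ is a direct sum of two balanced bundles. Such a sum is balanced if and only if all summand degrees lie in an interval of length $1$, that is, if and only if $\max u_j\le\min t_i+1$ and $\max t_i\le\min u_j+1$. The first condition holds by (a). The second reads $\lceil\mu(T_{\rel})\rceil-1\le\lfloor\mu(T_{\bP^{r-s-1}}|_C)\rfloor$, which, the left side being an integer, is equivalent to the slope inequality in (b).

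\textbf{Conversion to bounds on $d$.} Write $\mu(T_{\rel})=d+k/(s+1)$, so $\lfloor\mu(T_{\rel})\rfloor=d+\lfloor k/(s+1)\rfloor$, and $\mu(T_{\bP^{r-s-1}}|_C)=(d-k)+(d-k)/(r-s-1)$. The inequality in (a) becomes $(d-k)/(r-s-1)\le k+\lfloor k/(s+1)\rfloor+1$. Clearing the denominator gives $d\le U^t_{r,s}(k)+1$ rather than $U^t_{r,s}(k)$, so this is the step that needs care. I expect the resolution is that, because $\lfloor\mu(T_{\rel})\rfloor+1$ is an integer, the condition is really the integer inequality $\lceil\mu(T_{\bP^{r-s-1}}|_C)\rceil\le\lfloor\mu(T_{\rel})\rfloor+1$. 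Taking ceilings should shift the boundary case, namely $(d-k)/(r-s-1)$ exactly equal to the integer bound, by one and recover the stated $U^t_{r,s}(k)$. I would verify this against the defining formulas before finalizing. The inequality in (b) becomes $d-k+(d-k)/(r-s-1)\ge d+\lceil k/(s+1)\rceil-1$; this is equivalent to $d\ge L^t_{r,s}(k)$ up to the same kind of integer bookkeeping.

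\textbf{Final assertion.} If $d<L^t_{r,s}(k)\le U^t_{r,s}(k)$, the sequence splits and the split bundle is unbalanced, so $T_{d,k}$ is not balanced. The inequality $L^t_{r,s}(k)\le U^t_{r,s}(k)$ follows from comparing the floor and ceiling of $k/(s+1)$.
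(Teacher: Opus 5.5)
Your argument is the paper's argument, written out in more detail. Both use balancedness of $T_{\rel}$ and $T_{\bP^{r-s-1}}|_C$ (Lemma \ref{lem:Trel_balanced} and Theorem \ref{thm:Pr_tangent}) and the criterion $\max u_j\le \min t_i+1$ for vanishing of $\operatorname{Ext}^1$. You add the four-way comparison of extreme summand degrees for balancedness of the direct sum. Parts (a), (b) and the final assertion are argued correctly.

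The one real problem is your conversion paragraph. The off-by-one you found does not exist: you dropped a $-1$ when clearing denominators. In fact
\[
k+(r-s-1)\Bigl(k+\Bigl\lfloor \tfrac{k}{s+1}\Bigr\rfloor+1\Bigr)=(r-s)(k+1)-1+(r-s-1)\Bigl\lfloor \tfrac{k}{s+1}\Bigr\rfloor=U^t_{r,s}(k),
\]
so the slope inequality in (a) is exactly $d\le U^t_{r,s}(k)$.

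The repair you propose could not have worked anyway. For an integer $n$, $\lceil x\rceil\le n$ if and only if $x\le n$, which you used yourself a paragraph earlier, so passing to ceilings cannot move the boundary. Concretely, at $d=U^t_{r,s}(k)$ you get $T_{\bP^{r-s-1}}|_C\cong\cO(\lfloor\mu(T_{\rel})\rfloor+1)^{r-s-1}$, and $\operatorname{Ext}^1$ does vanish there.

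Part (b) likewise needs no integer bookkeeping. The inequality $(d-k)/(r-s-1)\ge k+\lceil k/(s+1)\rceil-1$ clears directly to $d\ge (r-s)(k-1)+1+(r-s-1)\lceil k/(s+1)\rceil=L^t_{r,s}(k)$. Once these two computations are written out correctly, your proof is complete.
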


\begin{proof}
$T_{\rel}$ and $T_{\bP^{r-s-1}}|_{C}$ are balanced, by Lemma \ref{lem:Trel_balanced} and Theorem \ref{thm:Pr_tangent}, respectively. The condition $d\le U^t_{r,s}(k)$ is equivalent to
    \[
    \operatorname{Ext}^1\!\bigl(
    T_{\bP^{r-s-1}}|_{C},
    \, T_{\mathrm{rel}}|_{C}
    \bigr)=0.
    \]
\end{proof}

\begin{lemma}\label{lemma: base_case_tangent}
    For every $r,s,k$ such that $0 \leq s \leq r-2$, the set of integers $d \geq k$ satisfying
    \[
    L^t_{r,s}(k) \leq d \leq U^t_{r,s}(k)
    \]
    is non-empty.
\end{lemma}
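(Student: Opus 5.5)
The plan is a direct comparison of the explicit formulas for $L^t_{r,s}(k)$ and $U^t_{r,s}(k)$ from Proposition \ref{prop:projection_split_tangent}. It suffices to prove two inequalities: first $L^t_{r,s}(k)\le U^t_{r,s}(k)$, and second $U^t_{r,s}(k)\ge k$ (indeed $U^t_{r,s}(k)\ge k+1$, in case one wants $d>k$). Given these, the integer $d=\max(L^t_{r,s}(k),k)$ (or $\max(L^t_{r,s}(k),k+1)$) lies in the required range.

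For the first inequality, I will subtract the two definitions. The terms $(r-s)k$ cancel, leaving
\[
U^t_{r,s}(k)-L^t_{r,s}(k) = 2(r-s)-2-(r-s-1)\left(\left\lceil \frac{k}{s+1}\right\rceil-\left\lfloor \frac{k}{s+1}\right\rfloor\right).
\]
The difference between the ceiling and the floor is $0$ or $1$. Hence the right-hand side is at least $2(r-s-1)-(r-s-1)=r-s-1$, which is at least $1$ because $s\le r-2$.

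For the second inequality, I will discard the nonnegative floor term in $U^t_{r,s}(k)$ and use $r-s\ge 2$ together with $k\ge 0$:
\[
U^t_{r,s}(k)\ge (r-s)(k+1)-1\ge 2k+1\ge k+1.
\]

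Both steps are routine arithmetic and I do not expect a real obstacle. The only points needing care are the sign of the ceiling-minus-floor correction term and the use of the standing hypothesis $s\le r-2$, which gives $r-s-1\ge1$ and $r-s\ge 2$.
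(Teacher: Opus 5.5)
Your proof is correct and follows the paper's approach. The paper's proof consists only of the observation that $U^t_{r,s}(k)-L^t_{r,s}(k)\ge 1$ when $r-s\ge 2$, which is your first computation. Your second check, $U^t_{r,s}(k)\ge k+1$, supplies the requirement $d\ge k$ that the paper leaves implicit; it genuinely matters only for $k=0$, where $L^t_{r,s}(0)=1-(r-s)<0$.
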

\begin{proof}
    The difference $U_{r,s}^t(k) -L_{r,s}^t(k)$ is always at least $1$ for $r-s \geq 2$.
\end{proof}

\begin{proof}[Proof of Theorem \ref{thm:intro_tangent}]
Fix \(r,s,k\). We prove by induction on \(d\) that the bundle $T_{X_{r,s}}|_C$ satisfies interpolation. The induction is initiated by Lemma \ref{lemma: base_case_tangent}. For the inductive step, degenerate $C$ to $C' \cup_p L$, where $L$ is a line of degree $(1,0)$, and apply Lemma \ref{lem: criterion interpolation} with $D=2x$ a divisor supported in $L \smallsetminus \{ p \}$. By \cite[Proposition 3.1]{larson}, we have
$$
T_{X_{r,s}}|_L \simeq \cO(1)^{ r-2} \oplus T_L ,
$$
which implies $H^0(L, T_{X_{r,s}}(-D-p))=0$. Furthermore, let
\[
V = \mathrm{ev}_{C'}^{-1}\big(\mathrm{ev}_L(H^0(L, T_{X_{r,s}}|_L(-D)))\big) .
\]
Then, $V$ can be identified with the space of sections $\sigma \in H^0(C', T_{X_{r,s}})$ such that $\sigma(p) \in T_L|_p$. Since $L$ is general, $T_L|_p \subseteq T_{X_{r,s}}|_p$ is a general line. Thus, by Lemma \ref{lem:interpolation_subspace}, $V$ satisfies interpolation. Its dimension is given by
\[
\chi(T_{X_{r,s}}|_{C'}) - (r-1)= \chi(T_{X_{r,s}|_{C'}})+ \chi(T_{X_{r,s}}|_L(-p-D)).
\]
The conclusion then follows from Lemma \ref{lem: criterion interpolation}.
\end{proof}

We deduce the answer to Question \ref{q:interpolation}(b).

\begin{corollary}
\label{cor:interpolation_tangent}
Let $p_1,\ldots,p_n \in \bP^1$ and $x_1,\ldots,x_n \in X_{r,s}$
be general points. Consider the property of the existence of a map
$f \colon \bP^1 \to X_{r,s}$ of degree $(d,k)$ satisfying $f(p_i)=x_i$
for all $i=1,\ldots,n$. Then, the largest value of $n$ for which this
property holds is
\[
n_{\max}=\min\left(
\left\lfloor \mu(T_{\bP^{r-s-1}}|_C) \right\rfloor,
\left\lfloor \mu(T_{d,k}) \right\rfloor
\right)+1.
\]
\end{corollary}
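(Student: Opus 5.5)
The plan is to identify $n_{\max}$ with $\min(b_j)+1$, where $f^*T_{X_{r,s}}=T_{d,k}\cong\bigoplus\cO(b_j)$. We do this first in characteristic $0$ using the splitting type from Theorem \ref{thm:intro_tangent}, and then transfer the answer to arbitrary characteristic.

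\emph{Step 1 (characteristic 0).} As explained in \S\ref{sec:bundles}, $\tau$ is dominant if and only if it is smooth at a general point, i.e. if and only if $H^1(T_{d,k}(-p_1-\cdots-p_n))=0$. This holds if and only if $n\le \min(b_j)+1$. So it suffices to show
\[
\min(b_j)=\min\bigl(\lfloor\mu(T_{\bP^{r-s-1}}|_C)\rfloor,\lfloor\mu(T_{d,k})\rfloor\bigr).
\]
We split into the two cases of Theorem \ref{thm:intro_tangent}.

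\begin{itemize}
\item[$\bullet$] Suppose $d\ge L^t_{r,s}(k)$. Then $T_{d,k}$ is balanced, so $\min(b_j)=\lfloor\mu(T_{d,k})\rfloor$. Moreover, $T_{d,k}$ surjects onto $T_{\bP^{r-s-1}}|_C$, which is balanced by Theorem \ref{thm:Pr_tangent}. A quotient of $\cO(b)^x\oplus\cO(b+1)^y$ with $b=\min(b_j)$ has every summand of degree at least $b$. Hence $\lfloor\mu(T_{\bP^{r-s-1}}|_C)\rfloor\ge b$, and the minimum is attained by the second term.
\item[$\bullet$] Suppose $d<L^t_{r,s}(k)$. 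Then $d\le U^t_{r,s}(k)$, since the difference $U^t-L^t$ is at least $1$ (Lemma \ref{lemma: base_case_tangent}). So Proposition \ref{prop:projection_split_tangent} gives $T_{d,k}\cong T_{\rel}\oplus T_{\bP^{r-s-1}}|_C$, with both summands balanced. The failure of the inequality in Proposition \ref{prop:projection_split_tangent}(b) says $\mu(T_{\bP^{r-s-1}}|_C)<\lceil\mu(T_{\rel})\rceil-1$. This forces $\lfloor\mu(T_{\bP^{r-s-1}}|_C)\rfloor\le\lfloor\mu(T_{\rel})\rfloor-1$, so $\min(b_j)=\lfloor\mu(T_{\bP^{r-s-1}}|_C)\rfloor$. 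Finally, $\mu(T_{d,k})$ is a weighted average of the two slopes, so it is at least $\mu(T_{\bP^{r-s-1}}|_C)$. Hence the minimum in the formula is again the correct one.
\end{itemize}

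\emph{Step 2 (arbitrary characteristic).} We transport the answer exactly as in Proposition \ref{prop:rho_char_indep}. Work with the compactification $\overline{\cM}$ over $\operatorname{Spec}\bZ$ by stable log maps \cite{RW20}. It is proper, irreducible of expected dimension in every fiber, and contains $\cM_{0,n}(X_{r,s},(d,k))$ as a fiberwise dense open. Its evaluation map extends to a proper map to $\overline{M}_{0,n}\times (X_{r,s})^n$ through the stabilization of the source curve. The image $W$ is then closed and irreducible. If $W$ contains a characteristic $p$ fiber, or a dense subset of the characteristic $0$ fiber, then $W$ is everything. The density argument of Proposition \ref{prop:rho_char_indep} then gives that $\tau$ is dominant over $\bK$ if and only if it is dominant over $\mathbb{C}$. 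The formula, being characteristic-free, therefore holds for all $\bK$.

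\emph{Obstacles.} One concern is that in positive characteristic $\tau$ might be dominant but inseparable, so that the $H^1$ criterion fails, as happens for normal bundles in characteristic 2. Step 2 sidesteps this entirely, so no direct characteristic $p$ tangent-space analysis is needed. In one direction there is also a direct argument: the vanishing $H^1(T_{d,k}(-\sum p_i))=0$, with $T_{d,k}$ independent of characteristic, already gives smoothness and hence dominance. The only point needing care is checking that the compactification really maps to $\overline{M}_{0,n}\times(X_{r,s})^n$ with the required fiberwise irreducibility. For this I would cite the same reference as in Proposition \ref{prop:rho_char_indep}.
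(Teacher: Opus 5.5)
Your proposal is correct and follows the paper's route: in characteristic $0$, identify $n_{\max}$ with one more than the smallest summand of $T_{d,k}$, then transfer to arbitrary characteristic by the same properness and fiberwise-irreducibility argument as in Proposition \ref{prop:rho_char_indep}. Your case check that this smallest summand equals $\min(\lfloor\mu(T_{\bP^{r-s-1}}|_C)\rfloor,\lfloor\mu(T_{d,k})\rfloor)$ is correct, and it supplies a step the paper leaves implicit.
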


\begin{proof}
If $\charac(\bK)=0$, then $n_{\max}$ is equal to 1 more than the smallest degree summand of $T_{d,k}$. In arbitrary characteristic, the claim follows from a similar argument as in Proposition \ref{prop:rho_char_indep}.
\end{proof}

\appendix

\section{Quasimap invariants}

In the appendix, we provide a refinement of Corollary \ref{cor:interpolation_tangent}, determining the \emph{number} of interpolating maps (when finite) with respect to \emph{arbitrary} incidence conditions at general linear subspaces of $X_{r,s}$. That is, we address the following question:

\begin{question}\label{q:enumerative}
 Let \(p_1,\ldots,p_n \in \bP^1\) be general points, and let $\Lambda_1,\ldots,\Lambda_n\subset X_{r,s}$ be linear subspaces (\S\ref{sec:subspaces}) of any dimension. How many \(f \colon \bP^1 \to X_{r,s}\) of degree \((d,k)\) are there, such that $f(p_i)\in \Lambda_i$ for all $i$?
\end{question}

One should take the sum of the codimensions of the $\Lambda_i\subset X_{r,s}$ to equal the dimension of the moduli space of maps $f$ of degree $(d,k)$ in order to expect a finite answer, see Situation \ref{situation} and Proposition \ref{prop:integral_value}. In positive characteristic, the enumeration should in general be taken with possibly non-reduced multiplicities. 

We will obtain these counts as intersection numbers on moduli space of quasimaps. The techniques are parallel to those developed in \cite{cl2,lsakran}, where the primary interest was in the case in which the $\Lambda_i$ are points (``Tevelev degrees''). We refer to the reader to those works for discussion of the history of the fixed-domain curve-counting problem. While variants of the moduli space of quasimaps adapted to their respective settings were employed in \cite{cl2,lsakran}, the ordinary space of quasimaps suffices here. 

\subsection{Linear subspaces}\label{sec:subspaces}

As before, let $X_{r,s}=\Bl_{\bP^s}(\bP^r)=\bP(\cO(-1)\oplus\cO^{s+1})\to \bP^{r-s-1}$. Let $\pi:X_{r,s}\to \bP^{r-s-1}$ be the projection map and let $b:X_{r,s}\to \bP^{r}$ be the blow-up. Let $\H,\E\in A^1(X_{r,s})$ be the classes of the hyperplane (pulled back under $b$) and exceptional divisor, respectively.

We represent a point $x\in X$ by
\begin{equation*}
x=[x_0x_1:\cdots:x_0x_{r-s}:x_{r-s+1}:\cdots:x_{r+1}],
\end{equation*}
where the $x_j\in \bK$ satisfy the following non-vanishing conditions:
\begin{enumerate}
\item[(NZ1)] $x_1,\ldots,x_{r-s}$ are not all 0,
\item[(NZ2)] $x_0,x_{r-s+1},\ldots,x_{r+1}$ are not all 0,
\end{enumerate}
and taken up to simultaneous scaling of $x_1,\ldots,x_{r+1}$ and of $x_0,x_{r-s+1},\ldots,x_{r+1}$.

A \emph{linear space} $\Lambda\subset X_{r,s}$ is cut out by $r-s-1-\ell_1$ independent linear equations in $x_1,\ldots,x_{r-s}$ and $s+1+\ell_1-\ell_2$ additional independent linear equations in all of $x_0x_1,\ldots,x_{r+1}$, see also \cite[\S 2.6]{cl_complete}. The dimension of $\Lambda$ is $\ell_2$ and the dimension of its image in $\bP^{r-s-1}$ is $\ell_1$. The ordered pair $(\ell_1,\ell_2)$ is referred to as the \emph{bi-dimension}.

Let $\Gamma(\ell_1,\ell_2)$ be the variety parametrizing linear spaces of bi-dimension $(\ell_1,\ell_2)$. The natural morphism $\Gamma(\ell_1,\ell_2)\to \Gr(\ell_1+1,r-s)$ remembering the image in $\bP^{r-s-1}$ has the structure of an open subset of a Grassmannian bundle, so in particular $\Gamma(\ell_1,\ell_2)$ is smooth and quasi-projective. Let $\Lambda^{\univ}\subset\Gamma(\ell_1,\ell_2)\times X$ be the universal subscheme, proper and flat of relative dimension $\ell_2$ over $\Gamma(\ell_1,\ell_2)$. 

\begin{lemma}\label{lem:point_imposes_expected_conditions_on_Gamma}
Let $x\in X_{r,s}$ be any point. Then, the locus in $\Gamma(\ell_1,\ell_2)_x\subset \Gamma(\ell_1,\ell_2)$ of linear spaces containing $x$ is non-empty and smooth of codimension $r-\ell_2$, the expected.
\end{lemma}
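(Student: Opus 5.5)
The plan is to describe $\Gamma(\ell_1,\ell_2)$ as a tower of Grassmannian-type fibrations and check the incidence condition level by level. A linear space $\Lambda$ of bi-dimension $(\ell_1,\ell_2)$ is determined by two pieces of data. The first is a subspace $A\subset \bK^{r-s}$ (coordinates $x_1,\ldots,x_{r-s}$) of dimension $\ell_1+1$, i.e. the image $\pi(\Lambda)=\bP(A)$. The second is a linear space in the preimage $\pi^{-1}(\bP(A))\cong X_{\ell_1+s+1,s}$, cut out by $s+1+\ell_1-\ell_2$ further equations in $x_0x_1,\ldots,x_{r+1}$. The projection map $b$ sends $\pi^{-1}(\bP(A))$ to the linear span $P_A:=\bP(A\oplus \bK^{s+1})\subset\bP^r$, of dimension $\ell_1+s+1$. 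The second datum is therefore a linear subspace $\Pi\subset P_A$ of codimension $s+1+\ell_1-\ell_2$, i.e. of dimension $\ell_2$, subject to the open condition that the equations restricted to $\bP^s$ together with those defining $\bP(A)$ have the right rank. Concretely, $\Lambda$ is the strict transform of $\Pi$ intersected with $\pi^{-1}(\bP(A))$, and the open condition is that $\Pi$ surjects onto $\bP(A)$ under projection from $\bP^s$, i.e. $\dim(\Pi\cap\bP^s)=\ell_2-\ell_1-1$. I will fix this description first, as it is what the paper means by the open subset of a Grassmannian bundle over $\Gr(\ell_1+1,r-s)$.

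I then split into two cases according to whether $x$ lies on the exceptional divisor $\E$.

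\textbf{Case $x\notin \E$.} Write $y=\pi(x)$ and $z=b(x)\in\bP^r\smallsetminus\bP^s$. The condition $x\in\Lambda$ becomes two conditions: $y\in\bP(A)$ and $z\in\Pi$. The first holds on a Schubert subvariety $\{A\ni y\}\subset\Gr(\ell_1+1,r-s)$, which is smooth of codimension $r-s-1-\ell_1$. Over it, the condition $z\in\Pi$ inside the fiber $\Gr(\ell_2+1,A\oplus\bK^{s+1})$ is again a smooth Schubert locus, of codimension $(\ell_1+s+1)-\ell_2$. The total codimension is $(r-s-1-\ell_1)+(\ell_1+s+1-\ell_2)=r-\ell_2$, and smoothness follows since we have a smooth fibration over a smooth base. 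It then remains to check that the open condition on $\dim(\Pi\cap\bP^s)$ is not violated on the whole locus, i.e. to exhibit one $\Pi\ni z$ with the generic intersection with $\bP^s$. This is possible because $z\notin\bP^s$: take $\Pi$ spanned by $z$, $\ell_1$ further general points, and a general $(\ell_2-\ell_1-1)$-plane in $\bP^s$. This also gives non-emptiness.

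\textbf{Case $x\in\E$.} Now $x$ corresponds to a pair $(y,w)$ with $w\in\bP^s$, and $x$ is a normal direction at $w$ pointing toward $y$. Here $x\in\Lambda$ means that $y\in\bP(A)$ and that $\Pi$ contains $w$ and contains the line direction at $w$ determined by $y$; equivalently, $\Pi$ contains a specified 2-dimensional subspace $\langle w,\tilde y\rangle$, where $\tilde y$ lifts $y$ into $A\oplus\bK^{s+1}$ modulo $w$. The codimension count becomes $(r-s-1-\ell_1)+2(\ell_1+s+1-\ell_2)-(\text{correction})$. This naive count is off, and I expect this to be the main obstacle. The fix is to use the open condition that $\Pi\cap\bP^s$ has dimension exactly $\ell_2-\ell_1-1$ and surjects onto $\bP(A)$. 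Under this condition, requiring $w\in\Pi\cap\bP^s$ costs $s-(\ell_2-\ell_1-1)$ conditions. The direction condition then costs only $\ell_1$ further conditions, because the fiber of $\Pi\to\bP(A)$ over $y$ is an affine family over $\Pi\cap\bP^s$. Together with the $r-s-1-\ell_1$ conditions from $y\in\bP(A)$, this totals $r-\ell_2$.

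To make this rigorous, I would parametrize $\Pi$ in the open locus as the graph data: a subspace $B=\Pi\cap\bK^{s+1}$ together with a linear map $A\to\bK^{s+1}/B$. In these coordinates, the three conditions are linear, or Schubert, conditions cut out transversally in successive fibrations. This gives smoothness and non-emptiness directly, since one can choose $B\ni w$ and then the map sending $\tilde y$ to the prescribed class.
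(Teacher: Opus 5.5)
Your strategy of fibering $\Gamma(\ell_1,\ell_2)_x$ over $\{A\ni y\}\cong\Gr(\ell_1,r-s-1)$ and imposing a Schubert condition in the Grassmannian fiber is exactly the paper's one-line argument. Your case $x\notin\E$ is correct, including the non-emptiness check that the paper leaves implicit.

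The case $x\in\E$, however, breaks down. The point $x=(y,w)$ records only the normal direction $y$ at $w$. This determines a lift $\tilde y$ only modulo all of $\bK^{s+1}$, not modulo $w$, so there is no prescribed line $\langle w,\tilde y\rangle$ that $\Pi$ must contain. Indeed, setting $x_0=0$ in the second batch of equations leaves only their restrictions to $\bK^{s+1}$. Hence the zero scheme is $\Lambda=b^{-1}(\Pi)\cap\pi^{-1}(\bP(A))$, with $\Lambda\cap\E=\bP(A)\times(\Pi\cap\bP^s)$. So $x\in\Lambda$ if and only if $y\in\bP(A)$ and $w\in\Pi$, and your ``direction condition'' is free.

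Your count also fails on its own terms: $(r-s-1-\ell_1)+(s-\ell_2+\ell_1+1)+\ell_1=r-\ell_2+\ell_1$, not $r-\ell_2$. Moreover, in your graph coordinates the condition $\phi(\tilde y)\equiv u$ in $\bK^{s+1}/B$ would cost $s+1-\ell_2+\ell_1$ conditions, not $\ell_1$.

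There is a second problem, with non-emptiness. Under your open condition $\dim(\Pi\cap\bP^s)=\ell_2-\ell_1-1$, the case $\ell_1=\ell_2$ forces $B=0$. Then $\Lambda\cap\E=\emptyset$, so $\Gamma(\ell_1,\ell_1)_x$ is empty for every $x\in\E$, and ``choose $B\ni w$'' is impossible.

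To get the lemma for all $x$, you must use the condition implicit in the paper's definition: the zero scheme of the equations has dimension $\ell_2$. This also allows $\dim(\Pi\cap\bP^s)=\ell_2-\ell_1$, in which case $\Lambda$ acquires the extra component $\bP(A)\times(\Pi\cap\bP^s)$. With this, the uniform criterion ``$x\in\Lambda$ if and only if $\pi(x)\in\bP(A)$ and $b(x)\in\Pi$'' handles both cases at once. Namely, $\Gamma(\ell_1,\ell_2)_x$ is an open subset of the subbundle $\{\tilde y\in A,\ \widetilde{b(x)}\in\tilde\Pi\}$ over $\Gr(\ell_1,r-s-1)$. Its codimension is $(r-s-1-\ell_1)+(\ell_1+s+1-\ell_2)=r-\ell_2$. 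It is non-empty: take $\tilde\Pi=\langle\widetilde{b(x)}\rangle+V$ with $V$ general of dimension $\ell_2$. This meets $\bK^{s+1}$ in dimension $\max(\ell_2-\ell_1,1)$ if $x\in\E$, and $\ell_2-\ell_1$ otherwise, both of which are permitted.
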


\begin{proof}
It is immediate that the composition 
\begin{equation*}
\Gamma(\ell_1,\ell_2)_x\subset \Gamma(\ell_1,\ell_2)\to\Gr(\ell_1+1,r-s)
\end{equation*}
is itself an open subset of a Grassmannian subbundle over its image, which is identified with $\Gr(\ell_1,r-s-1)$.
\end{proof}

\subsection{Quasimaps}

Let $Q=Q_\beta(\bP^1,X_{r,s})$ be the moduli space of quasimaps of degree $\beta=(d,k)$ \cite{cfk}, whose precise definition is recalled below. $Q$ contains the moduli space of maps $f:\bP^1\to X_{r,s}$ as a dense open subset, allowing maps to degenerate to ``maps with base-points.''

Assume $d\ge k\ge0$. Explicitly,
\begin{equation*}
Q=\bP (H^0(\cO(-1)\otimes H^0(\bP^1,\cO_{\bP^1}(k)) \oplus H^0(\bP^1,\cO_{\bP^1}(d))^{s+1})) \to \bP (H^0(\bP^1,\cO_{\bP^1}(d-k))^{r-s})
\end{equation*}
Here, $\cO(-1)$ denotes the tautological sub-line bundle $\bP(H^0(\bP^1,\cO(d-k))^{r-s})$. Let $\pi:Q\to \bP(H^0(\bP^1,\cO_{\bP^1}(d-k))^{r-s}$ denote the projection, and let $\cO_Q(1)$ denote the relative hyperplane line bundle on $Q$.

A point of $Q$ is given by sections
\begin{itemize}
\item $u_0\in H^0(\bP^1,\cO(k))$,
\item $u_1,\ldots,u_{r-s}\in H^0(\bP^1,\cO(d-k))$,
\item $u_{r-s+1},\ldots,u_{r+1}\in H^0(\bP^1,\cO(d))$
\end{itemize}
up to the same scaling factors as points $x\in X_{r,s}$, satisfying (NZ1) and (NZ2) (where the role of the scalars $x_j$ is replaced by the sections $u_j$).

Let $\cM\subset Q$ be the dense open subset where the sections $u$ satisfy both the \emph{base-point-free conditions}:
\begin{enumerate}
\item[(BPF1)] For all $p\in\bP^1$, the sections $u_1,\ldots,u_{r-s}\in H^0(\bP^1,\cO(d-k))$ do not all vanish at $p$.
\item[(BPF2)] For all $p\in\bP^1$, the sections $u_0\in H^0(\bP^1,\cO(k))$ and $u_{r-s+1},\ldots,u_{r+1}\in H^0(\bP^1,\cO(d))$ do not all vanish at $p$.
\end{enumerate}
We also say that $u\in Q$ satisfies (BPF1) or (BPF2) at a \emph{given} point $p$ if the non-vanishing holds at that particular point. $\cM$ is identified with the space of maps $f:\bP^1\to X_{r,s}$ of degree $\beta$. In particular, for any point $p\in\bP^1$, there is an evaluation map $\ev_p:\cM\to X_{r,s}$ defined by $\ev_p(f)=f(p)$. We say that $u\in Q$ is \emph{bpf} if $u\in \cM$. The space of maps $\cM$ is manifestly irreducible, so we may speak of a general point $f\in \cM$.

Let $\zeta_1\in A^1(Q)$ be the pullback of the hyperplane class on $\bP(H^0(\bP^1,\cO(d-k))^{r-s})$, and let $\zeta_2\in A^1(Q)$ be the relative hyperplane class.

\subsection{Incidence loci}

Recall that we are interested in the loci of maps $f:\bP^1\to X_{r,s}$ satisfying conditions $f(p_i)\in \Lambda_i$, where $\Lambda_i\subset X_{r,s}$ is a linear subspace. In this section, we consider the compactification of this locus inside the quasimap space $Q$.

For a point $p\in \bP^1$ and a linear space $\Lambda\in \Gamma(\ell_1,\ell_2)$, define the \emph{incidence locus} $\Inc(p,\Lambda)\subset Q$ by the same linear equations (in the variables $u_j$) which cut out $\Lambda$ (in the variables $x_j$). Upon restriction to $\cM$, we have $\Inc(p,\Lambda)=\ev_p^{-1}(\Lambda)$ as subschemes.

\begin{proposition}\label{prop:inc_class}
For any $\Lambda\in\Gamma(\ell_1,\ell_2)$, we have
\begin{equation*}
[\Inc(p,\Lambda)]=\zeta_1^{(r-s-1)-\ell_1}\zeta_2^{(s+1+\ell_1)-\ell_2}.
\end{equation*}
\end{proposition}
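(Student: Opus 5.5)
The plan is to realize $\Inc(p,\Lambda)$ as a complete intersection of $r-\ell_2$ divisors on the smooth projective variety $Q$, each of class $\zeta_1$ or $\zeta_2$. First I will justify that $Q$ is a projective bundle. It is $\bP(\cO(-1)\otimes H^0(\cO(k))\oplus H^0(\cO(d))^{s+1})$ over $P_1:=\bP(H^0(\cO(d-k))^{r-s})$. This shows that $Q$ is smooth and irreducible, and that its Chow ring is generated by $\zeta_1,\zeta_2$.

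Next I will identify the defining equations. $\Lambda$ is cut out by $(r-s-1)-\ell_1$ linear forms in $x_1,\ldots,x_{r-s}$ and by $(s+1+\ell_1)-\ell_2$ linear forms in $x_0x_1,\ldots,x_0x_{r-s},x_{r-s+1},\ldots,x_{r+1}$.
\begin{itemize}
\item A form $\sum c_jx_j$ of the first type, evaluated at $p$, gives $\sum c_ju_j(p)$. This is a linear functional on $H^0(\cO(d-k))^{r-s}$, so it is a section of $\cO_{P_1}(1)$ pulled back to $Q$, with class $\zeta_1$.
\item A form of the second type gives $u_0(p)\sum c_ju_j(p)+\sum c'_ju_j(p)$. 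In the coordinates of the relative projective bundle, $u_0\otimes(u_1,\ldots,u_{r-s})$ is exactly the $\cO(-1)\otimes H^0(\cO(k))$ factor. So this is a linear form on the fibers, i.e. a section of $\cO_Q(1)$ relative to $P_1$, with class $\zeta_2$.
\end{itemize}
Trivializing $\cO(d),\cO(k),\cO(d-k)$ at $p$ is harmless because it only rescales the sections. Hence $\Inc(p,\Lambda)$ is the common zero scheme of $(r-s-1)-\ell_1$ sections of $\cO(\zeta_1)$ and $(s+1+\ell_1)-\ell_2$ sections of $\cO(\zeta_2)$.

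It then remains to show that the zero scheme has the expected codimension $r-\ell_2$. Once that holds, the fundamental class of a zero scheme of the correct codimension is the product of the Chern classes, and the formula follows. For the codimension I will compute directly.
\begin{itemize}
\item The first-type equations are independent linear conditions on the vector $(u_1(p),\ldots,u_{r-s}(p))$, since evaluation at $p$ is surjective. They therefore cut a linear subspace $P'_1\subset P_1$ of codimension $(r-s-1)-\ell_1$.
\item Over a point of $P'_1$, each second-type equation is a linear form on the fiber vector space. Its coefficients involve $u_0(p)$ through the products $u_0(p)\,u_j(p)$, together with $u_{r-s+1}(p),\ldots,u_{r+1}(p)$.
\end{itemize}

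The main obstacle is the loci where these fiberwise forms become dependent, for instance where $u_j(p)=0$ for all $j$, so that the $x_0$-coefficients drop out. Such loci could make the fibers jump in dimension. I will handle this by stratifying $P'_1$ according to whether the vector $(u_j(p))$ vanishes.
\begin{itemize}
\item On the open stratum where $(u_j(p))\neq 0$, independence of the forms defining $\Lambda$ (together with (NZ1) for $\Lambda$) should give fiberwise independence. The fiber there is then a linear space of the correct codimension.
\item On the stratum where $(u_j(p))=0$, the base has extra codimension $\ell_1+1$. The equations lose at most $\ell_1+1$ conditions, because at most that many of the forms can depend on $x_0$ modulo the others. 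So this stratum cannot contribute a component of excess dimension.
\end{itemize}
With dimensions correct everywhere, $Q$ is Cohen–Macaulay, so the zero scheme is a local complete intersection. Its class is then $\zeta_1^{(r-s-1)-\ell_1}\zeta_2^{(s+1+\ell_1)-\ell_2}$, with multiplicities counted scheme-theoretically.
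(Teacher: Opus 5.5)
For the class itself you argue exactly as the paper does. Each first-type equation is a section of the pullback of $\cO(1)$ from $\bP(H^0(\cO(d-k))^{r-s})$, and each second-type equation is a section of $\cO_Q(1)$. So once $\Inc(p,\Lambda)$ has codimension $r-\ell_2$, its class is $\zeta_1^{(r-s-1)-\ell_1}\zeta_2^{(s+1+\ell_1)-\ell_2}$.

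The difference is the codimension check. The paper does not do it in this proof. It points forward to Proposition \ref{prop:main_transversality}, whose proof (an incidence correspondence over $\cM$ plus the twisting analysis on $Q\smallsetminus\cM$) is much heavier and is stated only for \emph{general} $\Lambda$. It also yields that $\Inc(p,\Lambda)$ is the closure of $\ev_p^{-1}(\Lambda)$, which you do not need. Your stratification by whether $v:=(u_1(p),\ldots,u_{r-s}(p))$ vanishes is elementary, self-contained, and applies to every $\Lambda$, which fits the ``for any $\Lambda$'' of the statement better. Your closed stratum is handled correctly. There the lost fiber conditions are the combinations of second-type forms involving only $x_0x_1,\ldots,x_0x_{r-s}$. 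There are at most $\ell_1+1$ of these, since they are independent of the first-type forms. This exactly offsets the codimension $\ell_1+1$ of the stratum in $P'_1$.

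The open stratum, however, is not justified by what you cite. Over a point of $P'_1$ with $v\neq0$, the fiberwise forms are the restrictions of the second-type forms to the $(s+2)$-dimensional space spanned by $v$ and the last $s+1$ coordinates. Their common zero locus is $\Lambda\cap\pi^{-1}([v])$. Independence of the defining forms does not force these restrictions to have rank $(s+1+\ell_1)-\ell_2$. For instance, on $X_{3,0}$ take $x_3=0$ and $x_0x_2=0$. The forms are independent, and the zero locus is a curve whose image in $\bP^2$ is a line. Yet over $[v]=[1:0:0]$ the fiberwise equation $u_0(p)u_2(p)$ vanishes identically. The codimension happens to come out right in this example, but not for your reason.

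There are two ways to repair this. You can build into the definition of a linear space that every fiber of $\Lambda$ over the $\ell_1$-plane $\Pi\subseteq\bP^{r-s-1}$ cut out by the first-type forms has dimension $\ell_2-\ell_1$; this excludes reducible examples like the one above. Alternatively, argue directly from $\dim\Lambda=\ell_2$:
\begin{enumerate}
\item Let $\Pi_j\subseteq\Pi$ be the locus over which the fiberwise rank drops by $j$. Then $\Lambda$ has fibers of dimension $\ell_2-\ell_1+j$ over $\Pi_j$, so $\dim\Pi_j\le\ell_1-j$.
\item The map $[u_1:\cdots:u_{r-s}]\mapsto[v]$ from the open stratum of $P'_1$ to $\Pi$ is a linear projection. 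Hence the preimage of $\Pi_j$ has codimension at least $j$ in $P'_1$.
\item This exactly compensates the $j$ lost fiber conditions.
\end{enumerate}
With this repair your proof is complete.
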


It is a special case of Proposition \ref{prop:main_transversality} that $\Inc(p,\Lambda)$ has expected codimension below, and in fact that it is the closure of the locus of maps $f$ with $f(p)\in\Lambda$. In particular, the meaning of the cycle class $[\Inc(p,\Lambda)]$ is umambiguous. 

\begin{proof}
The locus on $Q$ where a linear combination of $u_1,\ldots,u_{r-s}$ vanishes at $p$ is the degeneracy locus of a map
\begin{equation}\label{eq:degen1}
\cO_{ \bP(H^0(\bP^1,\cO_{\bP^1}(d-k))^{r-s})}(-1) \to H^0(\bP^1,\cO(d-k)|_p),
\end{equation}
and the target line bundle is trivial. Similarly, the locus on $Q$ where a linear combination of $u_0u_1,\ldots,u_{r+1}$ vanishes at $p$ is the degeneracy locus of a map
\begin{equation}\label{eq:degen2}
\cO_{Q}(-1) \to H^0(\bP^1,\cO(d)|_p).
\end{equation}

The subvariety $\Inc(p,\Lambda)\subset Q$ is cut out by $(r-s-1)-\ell_1$ maps of the form \eqref{eq:degen1} and $(s+1+\ell_1)-\ell_2$ of the form \eqref{eq:degen2}, so the cycle class formula follows.
\end{proof}

\subsection{Transversality}

In this section, we show that general incidence loci $\Inc(p_i,\Lambda_i)\subset Q$ always intersect in the expected dimension, and generically in the open locus $\cM\subset Q$ of maps.

\begin{situation}\label{situation}
Let $p_1,\ldots,p_n\in\bP^1$ be distinct points.

Let $\Lambda_1,\ldots,\Lambda_n\subset X_{r,s}$ be a general collection of linear spaces. Write $(\ell^i_1,\ell^i_2)$ for the bi-dimension of $\Lambda_i$. Write also
\begin{align*}
(m^i_1,m^i_2)=((r-s-1)-\ell^i_1,(s+1+\ell^i_1)-\ell^i_2)
\end{align*}
and $m^i=m^i_1+m^i_2$ for the codimension of $\Lambda_i$. Write
\begin{align*}
m&=\sum_i m^i,\\
m_j&=\sum_i m^i_j,\text{ for }j=1,2.
\end{align*}
Define
\begin{equation*}
I:=\bigcap_{i=1}^{n}\Inc(p_i,\Lambda_i)\subset Q.
\end{equation*}
\end{situation}

\begin{proposition}\label{prop:main_transversality}
In Situation \ref{situation}, the intersection $I$ is pure of codimension $m$, the expected. Furthermore, every general point of $I$ is contained in $\cM$.

\end{proposition}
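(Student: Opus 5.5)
Since $Q$ is a projective bundle over a projective space and each $\Inc(p_i,\Lambda_i)$ is cut out by $m^i$ equations, every component of $I$ has codimension at most $m$. It therefore suffices to show that $I$ has dimension at most $\dim Q-m$. We stratify $Q$ by the base-point behavior of $u$ and show that every stratum meets $I$ in dimension at most $\dim Q-m$, with strict inequality for the boundary strata. The open stratum is $\cM$. There, for each fixed $f\in\cM$, the condition $f(p_i)\in\Lambda_i$ for a general $\Lambda_i$ has codimension exactly $m^i$ in $\Gamma(\ell^i_1,\ell^i_2)$, by Lemma \ref{lem:point_imposes_expected_conditions_on_Gamma}. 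We apply this to the incidence correspondence $\{(f,\Lambda_1,\ldots,\Lambda_n)\}\subset \cM\times\prod_i\Gamma(\ell^i_1,\ell^i_2)$. It has dimension $\dim\cM+\sum_i\dim\Gamma_i-m$, so the general fiber over $\prod\Gamma_i$ has dimension at most $\dim Q-m$ (it is empty if the correspondence does not dominate). This standard Kleiman-type dimension count works in any characteristic and needs no smoothness.

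For the boundary $Q\smallsetminus\cM$, we parametrize quasimaps with base points. Fix a base locus: a divisor $B_1$ of degree $b_1$ where (BPF1) fails and a divisor $B_2$ of degree $b_2$ where (BPF2) fails. Then $u$ is obtained from a quasimap $\tilde u$ of lower degree by multiplying $u_1,\ldots,u_{r-s}$ by an equation of $B_1$, and $u_0,u_{r-s+1},\ldots,u_{r+1}$ by an equation of $B_2$, with appropriate compatibility of degrees. Here $\tilde u$ satisfies (BPF1)/(BPF2) generically. A dimension count shows that such a stratum has codimension at least $b_1(r-s-1)+b_2(s+1)$ minus the $b_1+b_2$ parameters for the positions of the base points, i.e. codimension at least $b_1(r-s-2)+b_2 s$. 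One must be slightly careful here, since multiplying only $u_0$ versus the whole second group gives different strata. Now consider the incidence conditions. At a point $p_i$ not in the base locus, the condition $\Inc(p_i,\Lambda_i)$ is exactly $\tilde f(p_i)\in\Lambda_i$ for the lower-degree map $\tilde f$, and it imposes $m^i$ conditions by the same argument as on $\cM$, applied inductively on the degree. At a point $p_i$ that lies in the base locus, some equations become automatic. If $p_i\in B_1$, the $m^i_1$ equations of type \eqref{eq:degen1} hold automatically, but the second-type equations still impose conditions on the remaining coordinates. Since the $p_i$ are fixed, though, forcing a base point to sit at $p_i$ costs one dimension, namely the position of that base point.

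The main obstacle is to show that this trade-off is strictly favorable. We must check that the lost conditions, at most $m^i_1\le r-s-1$ or $m^i_2\le s+1$, are outweighed by the codimension $r-s-1$ (respectively $s+1$) of having a base point pinned at $p_i$, together with the remaining $\tilde f(p_i)$-type conditions. The delicate cases are when $s=r-2$ or $s=0$, where the margins $r-s-2$ or $s$ vanish. There we would try to use the generality of $\Lambda_i$ inside its fiber of $\Gamma$ to recover one extra condition: for instance, when $u_1=\cdots=u_{r-s}=0$ at $p_i$, the surviving equations $x_0x_j$ degenerate and the remaining $m^i_2$ equations cut a general linear condition on the values of $u_{r-s+1},\ldots,u_{r+1}$ at $p_i$. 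Summing over strata then shows that every boundary stratum meets $I$ in dimension strictly less than $\dim Q-m$. Consequently $I$ is pure of codimension $m$, and its general points lie in $\cM$.
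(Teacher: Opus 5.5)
Your overall route is the paper's. You do an incidence-correspondence count on $\cM$, which is fine and is exactly Proposition~\ref{prop:transversality_M}. You then remove base points and compare moduli lost against conditions lost. The boundary half, however, does not go through as written.

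First, your stratum count subtracts the position parameters twice. A (BPF1) base point at a \emph{fixed} point forces all $r-s$ sections $u_1,\ldots,u_{r-s}$ to vanish there, so it costs $r-s$ conditions, and $r-s-1\ge 1$ once its position floats. Likewise, when $u_0\not\equiv 0$, a (BPF2) base point costs $s+2$ (resp.\ $s+1\ge 1$), since $u_0$ must vanish too. You instead charge $r-s-1$ (resp.\ $s+1$) for a base point pinned at $p_i$. Against the at most $m^i_1\le r-s-1$ (resp.\ $m^i_2\le s+1$) destroyed conditions, your bookkeeping therefore gives only a non-strict inequality at pinned base points, for every $s$. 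The ``delicate cases'' $s=0$, $s=r-2$ are artifacts of this miscount, and the appeal to generality of $\Lambda_i$ is not an argument. With the correct numbers, which are precisely the paper's bookkeeping for the twists (T1), (T2), every twist is strictly favorable, uniformly in $s$. Also, removing a (BPF1) base locus $B_1$ only makes sense if $u_0$ is multiplied by its equation. That gives degree $(d,k+b_1)$, not a lower degree, so the induction should be on $\dim Q$.

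Second, you never treat the locus $u_0\equiv 0$, and there your stratification breaks. The (BPF2) base locus is then the common zero locus of $u_{r-s+1},\ldots,u_{r+1}$, and it can have degree $b_2>k$; for $s=0$ it always has degree $d$. Dividing it out produces $k-b_2<0$, where quasimap spaces have excess dimension, so your inductive appeal to the $\cM$-argument does not apply as stated. Moreover, this stratum has codimension $k+1+sb_2$ rather than $(s+1)b_2$. At a (BPF2) base point pinned at $p_i$ the local trade-off is a genuine tie. Pinning costs only $s+1$, since $u_0$ already vanishes. Meanwhile all $m^i_2$ second-type equations (up to $s+1$ of them) vanish identically there, whatever $\Lambda_i$ is.

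Strictness must therefore come from the codimension $k+1$ of $\{u_0=0\}$. The paper handles this locus separately: $\{u_0=0\}\cong Q_0=\bP(H^0(\cO(d-k))^{r-s})\times\bP(H^0(\cO(d))^{s+1})$. Each $\Inc(p_i,\Lambda_i)$ pulls back, with the same codimension, to a product of incidence loci for $\bP^{r-s-1}$ and $\bP^s$. Running the transversality-plus-twisting argument on each factor shows $I\cap Q_0$ has codimension $m$ in $Q_0$. Since $\operatorname{codim} Q_0=k+1>0$, this is too small to be a component of $I$.
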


Proposition \ref{prop:main_transversality} is proven in two parts, by restricting separately to $\cM$ and its complement.

\begin{proposition}\label{prop:transversality_M}
Let $I$ be as in Situation \ref{situation}. Then, $I\cap \cM\subset \cM$ is pure of codimension $m$.
\end{proposition}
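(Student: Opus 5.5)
Write $\ev:\cM\times\bP^1\to X_{r,s}$ for the universal evaluation and consider the map
\[
\ev_{p_1,\ldots,p_n}=(\ev_{p_1},\ldots,\ev_{p_n}):\cM\to (X_{r,s})^n .
\]
By definition $I\cap\cM=\ev_{p_1,\ldots,p_n}^{-1}(\Lambda_1\times\cdots\times\Lambda_n)$. The plan is a Kleiman-type transversality argument, but using the family of linear spaces rather than a group action on $X_{r,s}$. The group acting on $X_{r,s}$ (automorphisms preserving the blow-up structure) is not transitive: it has the exceptional divisor as a closed orbit. So instead of moving the $\Lambda_i$ by a group, I vary them over the parameter spaces $\Gamma(\ell^i_1,\ell^i_2)$.

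Form the incidence correspondence
\[
\mathcal{J}=\Bigl\{(f,\Lambda_1,\ldots,\Lambda_n)\in \cM\times\prod_i\Gamma(\ell^i_1,\ell^i_2) \;:\; f(p_i)\in\Lambda_i\ \text{for all } i\Bigr\}.
\]
Project $\mathcal{J}$ to $\cM$. The fiber over $f$ is $\prod_i\Gamma(\ell^i_1,\ell^i_2)_{f(p_i)}$. By Lemma \ref{lem:point_imposes_expected_conditions_on_Gamma}, each factor is nonempty and smooth of codimension $r-\ell^i_2=m^i$, for \emph{every} point $f(p_i)$, including points on the exceptional divisor. The lemma is uniform in $x$, so the fibers of $\mathcal{J}\to\cM$ are smooth of constant dimension $\dim\prod\Gamma-m$.

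To upgrade this to flatness, observe that $\mathcal{J}$ is the pullback along $\ev_{p_1,\ldots,p_n}$ of $\prod_i\Lambda_i^{\univ}\to\prod_i X_{r,s}$. The map $\Lambda^{\univ}\to X_{r,s}$ is smooth, because the group preserving the blow-up structure acts compatibly and the fiber computation in Lemma \ref{lem:point_imposes_expected_conditions_on_Gamma} is uniform in $x$. Hence $\mathcal{J}\to\cM$ is smooth, and since $\cM$ is smooth and irreducible, $\mathcal{J}$ is smooth and irreducible of dimension $\dim\cM+\dim\prod\Gamma-m$.

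Next, project $\mathcal{J}$ to $\prod_i\Gamma(\ell^i_1,\ell^i_2)$. Its fiber over a general point $(\Lambda_i)$ is exactly $I\cap\cM$. By generic flatness (or simply by dimension theory of dominant morphisms onto an irreducible base), the general fiber is either empty or pure of dimension $\dim\mathcal{J}-\dim\prod\Gamma=\dim\cM-m$. The empty case is allowed, since the proposition only asserts purity.

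The step I expect to be the main obstacle is the purity claim in positive characteristic. The statement that a general fiber of a morphism between irreducible varieties has every component of the expected dimension does not need separability: it follows from upper semicontinuity of fiber dimension together with generic flatness. I would therefore invoke generic flatness of $\mathcal{J}\to\prod\Gamma$ over a dense open subset, which gives equidimensionality of the fibers there. Since $\mathcal{J}$ is irreducible, flatness over a dense open forces every fiber component in that open to have dimension $\dim\mathcal{J}-\dim\prod\Gamma$. No reducedness is claimed, which is consistent with the possible non-reduced multiplicities mentioned earlier. The other point needing care is the smoothness of $\Lambda^{\univ}\to X_{r,s}$ at points of the exceptional divisor, where the defining equations involve products $x_0x_j$. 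I would check this directly from the description of $\Gamma(\ell_1,\ell_2)$ as an open subset of a Grassmannian bundle over $\Gr(\ell_1+1,r-s)$, as in the proof of Lemma \ref{lem:point_imposes_expected_conditions_on_Gamma}.
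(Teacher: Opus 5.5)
Your proposal is correct and follows the paper's proof. The paper uses the same incidence correspondence (its $Z$, your $\mathcal{J}$) inside $\cM\times\prod_i\Gamma(\ell^i_1,\ell^i_2)$, deduces from Lemma \ref{lem:point_imposes_expected_conditions_on_Gamma} that it is smooth of dimension $\dim\cM+\dim\prod_i\Gamma(\ell^i_1,\ell^i_2)-m$, and identifies $I\cap\cM$ with a general fiber of the projection to $\prod_i\Gamma(\ell^i_1,\ell^i_2)$. Your extra attention to flatness of $\mathcal{J}\to\cM$ fills in a step the paper asserts without comment. Equivariance alone does not give this along the closed orbit $E$, but miracle flatness does: $\Lambda^{\univ}$ is Cohen--Macaulay, since its fibers over $\Gamma$ are complete intersections of Cartier divisors, and the fibers $\Gamma(\ell_1,\ell_2)_x$ are smooth of constant dimension.
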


In fact, in the proof below, the points $p_i\in\bP^1$ need not be distinct.

\begin{proof}
Consider the map
\begin{equation*}
\ev:\cM\times\prod_{i=1}^n \Gamma(\ell^i_1,\ell^i_2) \to \prod_{i=1}^n X\times \Gamma(\ell^i_1,\ell^i_2) 
\end{equation*}
given by $\prod_{i=1}^{n}\ev_i$. Let $Z=\ev^{-1}(\prod_{i=1}^{n}\Lambda^{\univ}_i)$. 

Consider the projection $\pr:Z\to \cM$. The fiber over $f\in \cM$ is the locus of linear spaces 
\begin{equation*}
(\Lambda_1,\ldots,\Lambda_n)\in \prod_{i=1}^n \Gamma(\ell^i_1,\ell^i_2)
\end{equation*}
containing the points $f(p_1),\ldots,f(p_n)$. By Lemma \ref{lem:point_imposes_expected_conditions_on_Gamma}, the fibers of $\pr$ are non-empty and smooth of dimension
\begin{equation*}
\dim\left(\prod_{i=1}^n\Gamma(\ell^i_1,\ell^i_2)\right)-m,
\end{equation*}
hence $Z$ is smooth of dimension
\begin{equation*}
\dim(Q)+\dim\left(\prod_{i=1}^n\Gamma(\ell^i_1,\ell^i_2)\right)-m.
\end{equation*}
Now, the intersection $I$ is a general fiber of the other projection $Z\to \prod_{i=1}^n\Gamma(\ell^i_1,\ell^i_2)$, so the Proposition follows by comparing dimensions of the source and target.
\end{proof}

\begin{proposition}\label{prop:transversality_twisting}
Let $I$ be as in Situation \ref{situation}. Then, no irreducible component of $I$ is supported in $Q\backslash \cM$.
\end{proposition}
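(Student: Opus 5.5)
Stratify $Q\setminus\cM$ by base-point data and bound the dimension of $I$ on each stratum by strictly less than $\dim Q - m$. Since every component of $I$ has dimension at least $\dim Q-m$ ($I$ is cut out by $m$ degeneracy conditions, Proposition \ref{prop:inc_class}), this suffices.

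A quasimap $u\in Q\setminus\cM$ has finitely many base points. At each point $x\in\bP^1$, record the order $a_x$ to which $u_1,\ldots,u_{r-s}$ vanish (failure of (BPF1)) and the order $b_x$ to which $u_0,u_{r-s+1},\ldots,u_{r+1}$ vanish (failure of (BPF2)). Dividing out the common zeros produces a genuine map $\bar u\in\cM_{(d',k')}$ of lower degree, where $d-k$ drops by $A=\sum a_x$ and the second factor drops by $B=\sum b_x$.

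Strata with fixed combinatorial type and fixed base-point positions are thus parametrized by $\cM_{(d',k')}$, with dimension loss $(r-s)A+(s+1)B$ relative to $\dim Q$ after accounting for the positions. I would write these as explicit products: base divisors times $\cM_{(d',k')}$.

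On such a stratum, the condition $u\in\Inc(p_i,\Lambda_i)$ splits by case:
\begin{itemize}
\item If $p_i$ is not a base point, it is the condition $\bar u(p_i)\in\Lambda_i$, of codimension $m^i$ by Proposition \ref{prop:transversality_M} applied to $\cM_{(d',k')}$.
\item If $p_i$ is a base point of the first kind, all $m^i_1$ equations in $u_1,\ldots,u_{r-s}$ hold automatically. The $m^i_2$ equations in $u_0u_j, u_j$ may also be partly or wholly automatic.
\end{itemize}

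Since the $p_i$ are distinct and fixed, the base points located at some $p_i$ are pinned (no positional freedom). Base points elsewhere contribute one parameter each.

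The resulting inequality to check has the following shape. For a free base point, the stratum loses at least $(r-s)a-1\ge 1$ or $(s+1)b-1\ge 1$ dimensions, while all $m$ conditions are still imposed on $\bar u$. For a base point at $p_i$, the stratum loses $(r-s)a_{p_i}+(s+1)b_{p_i}$, with no positional parameter, while at most $m^i\le r$ conditions are lost. Because $r-s\ge 2$ and $s+1\ge1$, I need $(r-s)a+(s+1)b>$ (number of lost conditions). A base point of the first kind costs $r-s$ and saves at most $m^i_1+m^i_2$. A base point of the second kind costs $s+1$ and saves at most $m^i_2$ equations.

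These crude bounds are not obviously strict, so the main obstacle is to make the count sharp. My refinement is to observe that when $u_1,\ldots,u_{r-s}$ vanish at $p_i$, the equations involving $u_0u_j$ reduce to equations in $u_{r-s+1},\ldots,u_{r+1}$ evaluated at $p_i$ on the reduced quasimap. Generality of $\Lambda_i$ then makes these still impose independent conditions, except those genuinely absorbed. Verifying the strict inequality case by case across first-kind, second-kind, and mixed base points is where I expect the real work.

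A cleaner alternative would be induction on $(d,k)$: view the stratum as a product $\mathrm{Sym}$(base points)$\times Q_{(d',k')}$ and apply the codimension statement for $(d',k')$ with modified incidence conditions. This would need Proposition \ref{prop:transversality_M} for $\cM_{(d',k')}$, including possibly degenerate linear conditions at the $p_i$. I would establish that first via Lemma \ref{lem:point_imposes_expected_conditions_on_Gamma}.
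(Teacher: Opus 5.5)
Your strategy is the same as the paper's: twist away base points, compare the codimension they cost with the number of incidence conditions they destroy, then invoke Proposition~\ref{prop:transversality_M} in lower degree. But your count for (BPF2) base points is wrong, and that count is exactly where the strict inequality comes from.

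At a fixed point $p$, failure of (BPF2) means $u_0(p)=0$ \emph{and} $u_{r-s+1}(p)=\cdots=u_{r+1}(p)=0$. That is $s+2$ conditions, not $s+1$; equivalently, twisting down gives $\dim Q_{(d-1,k-1)}=\dim Q_{(d,k)}-(s+2)$. With your value $s+1$, two things go wrong, and the proposal resolves neither:
\begin{itemize}
\item a (BPF2) base point at $p_i$ with $m^i_2=s+1$ (e.g.\ $\Lambda_i$ a point, the Tevelev case) gives equality rather than strict inequality;
\item your bound $(s+1)b-1\ge 1$ for a free base point already fails for $s=0$, $b=1$.
\end{itemize}
With the correct count, a (BPF2) base point at $p_i$ costs $s+2$ and destroys at most $m^i_2\le s+1$ conditions. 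A (BPF1) base point costs $r-s$ and destroys only the $m^i_1\le r-s-1$ equations in $u_1,\ldots,u_{r-s}$. Your refinement correctly sees that the other $m^i_2$ equations survive as conditions on $u_{r-s+1},\ldots,u_{r+1}$ at $p_i$. Also note that a (BPF1) base point must be removed by multiplying $u_0$ by the local equation of the point, so that $d$ is fixed, $k\mapsto k+1$, and the products $u_0u_j$ are unchanged. Dividing only $u_1,\ldots,u_{r-s}$ does not give a quasimap of any degree $(d',k')$.

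Second, the extra condition $u_0(p)=0$ is a genuine condition only when $u_0\not\equiv 0$, and you never treat the locus $\{u_0\equiv 0\}$ separately. There a (BPF2) base point really costs only $s+1$. The stratum of $\{u_0\equiv 0\}$ with $B$ fixed (BPF2) base points has codimension $(k+1)+(s+1)B$, which is smaller than $(s+2)B$ once $B\ge k+2$. Dividing $u_0=0$ by the base divisor can also push $k'$ below $0$, where the quasimap space has larger than expected dimension. So the cost accounting, and the appeal to Proposition~\ref{prop:transversality_M} in degree $(d',k')$, both break down.

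The paper handles this case as follows:
\begin{itemize}
\item $Q_0=\{u_0\equiv0\}\subset Q$ has codimension $k+1$ and is identified with $Q_{d-k}(\bP^1,\bP^{r-s-1})\times Q_d(\bP^1,\bP^s)$;
\item the incidence loci pull back to products of incidence loci on the two factors;
\item a non-strict expected-codimension count on each factor shows that the pullback of $I$ has codimension $m$ in $Q_0$;
\item the extra $k+1\ge 1$ then gives strictness in $Q$.
\end{itemize}
Without the $s+2$ count and this separate $u_0\equiv 0$ analysis, the proposal does not yet establish the strict inequality, which is the whole content of the statement.
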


\begin{proof}
For sake of contradiction, let $J\subset I$ be such a component, and let $u\in J$ be a general point, and write $u_j$ for the sections underlying $u$. We will estimate the codimension of $J$ by counting parameters. That the conditions enumerated are independent, so that the codimension estimate is correct, follows from an incidence correspondence argument as in the previous case.

First, assume that the section $u_0$ is not identically zero. We apply the twisting operations of \cite[\S 2.5]{cl_complete} and \cite[\S 3.2]{lsakran}. Namely, consider the operations
\begin{enumerate}
\item[(T1)] If (BPF1) fails at some $p\in\bP^1$, then twist each of $u_1,\ldots,u_{r-s}$ down by $p$, and twist $u_0$ up by $p$, thereby increasing $k$ by 1 and keeping $d$ constant.
\item[(T2)] If (BPF2) fails at some $p\in\bP^1$, then twist each of $u_0,u_{r-s+1},\ldots,u_{r+1}$ down by $p$, thereby decreasing both $d$ and $k$ by 1.
\end{enumerate}
Then, apply \cite[Algorithm 3.2.2]{lsakran} to $u$. Note that these operations make sense even if some of the sections $u_j$ are identically zero for $j>0$. In contrast to the settings of \cite[\S 2.5]{cl_complete} and \cite[\S 3.2]{lsakran}, we do not keep track of any data of effective divisors along which the $u_j$ vanish. However, we assume that $u_0\neq0$ to ensure that operation (T2) always leaves the value of $k$ non-negative.

Every twist strictly decreases the difference between the number of moduli for $Q$ (as a function of $d,k$) and number of conditions retained by $\Inc(p_i,\Lambda_i)$. Indeed, (T1) decreases number of moduli by $r-s$ and destroys at most $r-s-1$ conditions imposed by $\Inc(p_i,\Lambda_i)$. (T2) decreases number of moduli by $s+2$ and destroys at most $s+1$ conditions imposed by $\Inc(p_i,\Lambda_i)$. [Obviously applying a twist at $q\neq p_i$ only decreases moduli.] Then, apply Proposition \ref{prop:transversality_M}, which is valid for the new values of $d,k$ after twisting because it remains the case that $d\ge k\ge 0$.\footnote{If $k<0$, but $d\ge0$, then the moduli space of quasimaps of degree $\beta$ is well-defined, but in general has larger than expected dimension, and a general point may not correspond to a map $f:\bP^1\to X_{r,s}$.}

It remains to consider the case in which $u_0=0$ identically. The locus on $Q$ where $u_0=0$ has codimension $k+1$, and is identified with the moduli space 
\begin{equation*}
Q_0:=Q_{d-k}(\bP^1,\bP^{r-s-1})\times Q_{d}(\bP^1,\bP^{s})=\bP(H^0(\cO_{\bP^1}(d-k))^{r-s}) \times \bP(H^0(\cO_{\bP^1}(d))^{s+1}) 
\end{equation*}
quasimaps of bi-degree $(d-k,d)$ from $\bP^1$ to $\bP^{r-s-1}\times \bP^{s}$. The incidence loci $\Inc(p_i,\Lambda_i)$ pull back on $Q_0$ to products of incidence loci $\Inc(p_i,\Lambda^1_i)\times\Inc(p_i,\Lambda^2_i)$. Here, $\Lambda^1_i$ is the image of $\Lambda_i$ under $\pi$, and $\Lambda^2_i$ is the linear subspace of $\bP^{s}$ corresponding to the remaining $s+1+\ell_1-\ell_2$ equations cutting out $\Lambda_i$, now in the sections $u_{r-s+1},\ldots,u_{r+1}$. If $\ell_1=\ell_2$, then $\Lambda^2_i$ is taken to be empty, and $\Inc(p_i,\Lambda^2_i)$ is simply the locus where
\begin{equation*}
[u_{r-s+1}:\cdots:u_{r+1}]\in Q_{d}(\bP^1,\bP^{s})
\end{equation*}
has a base-point at $p_i$. The codimension of $\Inc(p_i,\Lambda_i)$ is preserved under pullback.

Repeating now the previous arguments of Proposition \ref{prop:transversality_M} and the twisting of base points above to $Q_{d-k}(\bP^1,\bP^{r-s-1})$ and $Q_{d}(\bP^1,\bP^{s})$ separately, we see that the pullback of the intersection $I$ to $Q_0$ has the expected codimension of $m$ in a neighborhood of $u$. The conclusion follows, because $Q_0\subset Q$ has positive codimension.
\end{proof}

Propositions \ref{prop:transversality_M} and \ref{prop:transversality_twisting} together imply Proposition \ref{prop:main_transversality}.

\begin{proposition}\label{prop:reduced}
    In Situation \ref{situation}, suppose one of the following assumptions holds.
    \begin{enumerate}
        \item[(i)] $\charac(\bK)=0$, or
        \item[(ii)] every $\Lambda_i$ is the proper transform of a linear subspace of $\bP^r$ that is transverse to the blow-up center $\bP^s$. Equivalently, $m_2^i=\min(s+1,m^i)$. 
    \end{enumerate}
    Then, $I$ is generically reduced.
\end{proposition}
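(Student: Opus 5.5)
By Proposition \ref{prop:main_transversality}, $I$ is pure of the expected codimension $m$, and each of its components meets the open locus $\cM$. It therefore suffices to show that $I\cap\cM$ is generically reduced. On $\cM$ we have $I\cap\cM=\bigcap_i \ev_{p_i}^{-1}(\Lambda_i)$ scheme-theoretically. In the notation of the proof of Proposition \ref{prop:transversality_M}, $I\cap \cM$ is then the scheme-theoretic fiber of the projection
\[
\phi\colon Z\to \prod_{i=1}^n\Gamma(\ell^i_1,\ell^i_2)
\]
over a general point. By Lemma \ref{lem:point_imposes_expected_conditions_on_Gamma}, $Z$ is smooth, since it is a smooth fibration over the smooth variety $\cM$.

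Under assumption (i), generic smoothness in characteristic $0$ applies to $\phi$ restricted to each component of $Z$. Either a component does not dominate the target, in which case it does not meet the general fiber, or its general fiber is smooth of the expected dimension. Hence the general fiber is smooth, and in particular reduced.

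Under assumption (ii), generic smoothness is unavailable, so I would instead prove generic reducedness directly by exhibiting a tangent space computation. The plan is to show that the differential of $\phi$ is surjective at a general point of each component of $Z$ that dominates the target. Equivalently, I need a point $f\in I\cap\cM$ on each component at which the conditions $f(p_i)\in\Lambda_i$ are transverse, meaning that the map
\[
H^0(f^*T_{X_{r,s}})\to\bigoplus_i T_{f(p_i)}X_{r,s}/T_{f(p_i)}\Lambda_i
\]
is surjective.

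The main obstacle is the choice of local coordinates in which this surjectivity can be seen. Under (ii), each $\Lambda_i$ is the proper transform of a linear space $\overline\Lambda_i\subset\bP^r$ transverse to $\bP^s$. It is therefore cut out by linear equations in the coordinates $x_0x_1,\ldots,x_{r+1}$ of $\bP^r$, together with the $m^i_1$ equations involving only $x_1,\ldots,x_{r-s}$ when $m^i>s+1$.

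I would work on the affine chart of the parameter space $\cM$ given by the coefficients of the sections $u_0,\ldots,u_{r+1}$. In this chart the incidence conditions become polynomial equations in those coefficients, and their Jacobian can be computed explicitly. The key observation I would use is that, after normalizing $u_0$ at $p_i$, the conditions are \emph{linear} in the coefficients of $u_1,\ldots,u_{r+1}$. Consequently the Jacobian is a linear evaluation map whose rank is independent of the characteristic. I expect transversality of $\overline\Lambda_i$ to $\bP^s$ to be exactly what allows this normalization, since it lets one choose the equations so that the $u_0$-dependence drops out.

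Granting this, $I\cap\cM$ is locally cut out in the affine space of coefficients by an affine-linear system of equations of the expected codimension. Such a scheme is reduced, because it is an open subset of a linear space.
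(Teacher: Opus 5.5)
Your treatment of (i) is the paper's argument. For (ii), your reduction is also correct: it suffices to show surjectivity of $H^0(f^*T_{X_{r,s}})\to\bigoplus_i T_{f(p_i)}X_{r,s}/T_{f(p_i)}\Lambda_i$ at a general point of the irreducible variety $Z$, which is equivalent to $H^1$-vanishing of the modified bundle described below.

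\textbf{The gap.} The gap is in how you propose to verify this surjectivity. The $m_2^i$ equations of $\Lambda_i$ read $\sum_{j\le r-s}a_j\,u_0(p_i)u_j(p_i)+\sum_{j>r-s}b_j\,u_j(p_i)=0$. These are bilinear in the coefficients of $u_0$ and of $u_1,\ldots,u_{r-s}$. The $(\bK^*)^2$-scaling rescales $u_0$ only globally, so you cannot normalize its $n$ values $u_0(p_i)$.

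The only linearity available is this: for \emph{fixed} $[u_0]\in\bP H^0(\cO(k))$, the conditions are linear in $(u_1,\ldots,u_{r+1})$. But the fiber of $\cM$ over $[u_0]$ is an open subset of a projective space of dimension $\dim Q-k$. So the Jacobian with respect to those variables has rank at most $\dim Q-k$. Whenever $m>\dim Q-k$, the required rank $m$ must come from the $u_0$-directions, which is exactly where the equations are nonlinear. This happens, for example, in every zero-dimensional case with $k\ge 1$, such as Theorem \ref{thm:tev}.

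For the same reason, no choice of coordinates on $X_{r,s}$ makes the $u_0$-dependence drop out, so transversality to $\bP^s$ cannot play the role you assign it. If it did drop out, $u_0$ could be varied freely near any point of $I$. Then every component of $I$ would have dimension at least $k$, contradicting $\dim I=\dim Q-m<k$.

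In such cases $I\cap\cM$ is essentially the locus of $[u_0]$ where a matrix depending on $u_0$ drops rank. Linearity of the fibers says nothing about the reducedness of such a locus. Nor is $I\cap\cM$ an open subset of a linear space: for $(r,s,d,k,n)=(2,0,5,1,8)$ with point conditions it consists of two points. Finally, your claim that the rank is independent of the characteristic is asserted, not proved.

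\textbf{The missing idea.} What you need is the tangent-space argument the paper uses. The Zariski tangent space of $I$ at $f\in I\cap\cM$ is $H^0$ of $f^*T_{X_{r,s}}[p_1\xrightarrow{-}T_{f(p_1)}\Lambda_1]\cdots[p_n\xrightarrow{-}T_{f(p_n)}\Lambda_n]$. Its Euler characteristic is $\dim Q-m$, so $I$ is smooth at $f$ exactly when $H^1$ of this modified bundle vanishes.

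Hypothesis (ii) is what makes the modification directions general. Linear subspaces of $\bP^r$ through a point off $\bP^s$ that are transverse to $\bP^s$ have tangent spaces filling a dense open subset of the relevant Grassmannian of $T_{f(p_i)}X_{r,s}$. So at a general point of $Z$, the $T_{f(p_i)}\Lambda_i$ are general subspaces. Without (ii), $T_{f(p_i)}\Lambda_i$ meets $T_{X_{r,s}/\bP^{r-s-1}}$ in dimension $s+1-m_2^i$, which is more than expected.

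The $H^1$-vanishing then becomes a characteristic-free statement about general elementary modifications of a bundle on $\bP^1$. The paper deduces it from positivity of $f^*T_{X_{r,s}}$ (Lemma \ref{lem:free}) together with the bound $\deg\ge -r$, which comes from $m\le\dim Q$. One caution: where $\Lambda_i$ is a point, the modification is the full twist by $-p_i$, with no direction to choose. There positivity alone is not enough, and you must also use the splitting of $T_{d,k}$ from Theorem \ref{thm:intro_tangent}.
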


For example, the hypothesis of (ii) is satisfied for points in $X_{r,s}$.

\begin{proof}
    If $\charac(\bK)=0$, then generic reducedness of $I$ follows from generic smoothness applied to the morphism $Z\to \prod_{i=1}^n\Gamma(\ell^i_1,\ell^i_2)$ from the proof of Proposition \ref{prop:transversality_M}, and the fact, by Proposition \ref{prop:transversality_twisting}, that $I$ lies generically in $\cM$. This proves (i).

    In the setting of (ii), let $f\in I$ be a point, necessarily a point of $\cM$. The space of deformations of $f$ preserving the conditions $f(p_i)\in \Lambda_i$ is
    \[
    H^0(\bP^1,f^*T_{X_{r,s}}[p_1\xrightarrow[]{-}T_{f(p_1)}\Lambda_1]\cdots [p_n\xrightarrow[]{-}T_{f(p_n)}\Lambda_n]).
    \]
    Under the assumption on the $\Lambda_i$, the $\Lambda_i\subset T_{(p_i)}X_{r,s}$ are \emph{general} subspaces. Therefore, the modifications can be taken in general directions. By the proof of Lemma \ref{lemma:ev_is_surjective}, all summands of $f^*T_{X_{r,s}}$ have positive degree. If $I$ is non-empty, then $m\le \dim(Q)$, and the degree of the modified bundle
    \[
    f^*T_{X_{r,s}}[p_1\xrightarrow[]{-}T_{p_1}\Lambda_1]\cdots [p_n\xrightarrow[]{-}T_{p_n}\Lambda_n]
    \]
    is at least $-r$. Because the modifications are in general directions, the modifications can be arranged in such a way that all summands of the modified bundle have degree at least $-1$.
\end{proof}

\subsection{Invariants}

\begin{proposition}\label{prop:integral_value}
If $\dim(Q)=m_1+m_2$, then
\begin{equation*}
\int_Q \zeta_1^{m_1}\zeta_2^{m_2}=
\begin{cases}
\binom{(r-s)(d-k+1)-1-m_1+k}{k}\text{ if }m_1\le (r-s)(d-k+1)-1,\\
0\text{ otherwise}
\end{cases}
.
\end{equation*}
\end{proposition}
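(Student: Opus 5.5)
Since $Q$ is a projective bundle over the projective space $P:=\bP(H^0(\bP^1,\cO(d-k))^{r-s})$, I will compute the integral by pushing forward along $\pi:Q\to P$ and reducing to an integral of a power of $\zeta_1$ on $P$. Write $N:=(r-s)(d-k+1)-1=\dim P$. The bundle whose projectivization is $Q$ is
\[
\cV:=\cO_P(-1)\otimes H^0(\cO(k))\;\oplus\;H^0(\cO(d))^{s+1},
\]
of rank $R:=(k+1)+(s+1)(d+1)$, so that $\dim Q=N+R-1$. The point to settle first is the convention: $\zeta_2$ is the relative hyperplane class $c_1(\cO_Q(1))$, where $\cO_Q(-1)\subset\pi^*\cV$ is the tautological subbundle (this matches the description of points of $Q$ as lines in the fibers of $\cV$).

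With this convention, the standard relation $\sum_i c_i(\pi^*\cV)\zeta_2^{R-i}=0$ holds, and the pushforward is
\[
\pi_*\zeta_2^{R-1+j}=s_j(\cV),
\]
where $s_\bullet$ denotes the Segre class with the sign convention $s(\cV)=c(\cV)^{-1}$ in Fulton's sense for the subbundle projectivization. (If the convention were for quotient lines, one would instead get $s(\cV^\vee)$; I will fix the sign by checking a trivial case, say $k=0$.) Since the summand $H^0(\cO(d))^{s+1}$ is trivial, only $\cO_P(-1)^{k+1}$ contributes, so
\[
c(\cV)=(1-\zeta_1)^{k+1},\qquad s(\cV)=(1-\zeta_1)^{-(k+1)}=\sum_j\binom{k+j}{j}\zeta_1^j .
\]

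By the projection formula, the integral becomes
\[
\int_Q\zeta_1^{m_1}\zeta_2^{m_2}=\int_P\zeta_1^{m_1}\,s_{m_2-R+1}(\cV).
\]
The dimension condition $m_1+m_2=N+R-1$ gives $j:=m_2-R+1=N-m_1$. If $m_1>N$, then $\zeta_1^{m_1}=0$ on $P$ and the integral vanishes. Otherwise the integral equals $\binom{k+N-m_1}{N-m_1}=\binom{N-m_1+k}{k}$, which is the claimed formula with $N=(r-s)(d-k+1)-1$. The case $j<0$ cannot occur when $m_1\le N$, since then $j=N-m_1\ge 0$.

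The main obstacle is the sign convention in the Segre class. The sign of $c_1(\cO_P(-1))$ together with the sub-versus-quotient convention determines whether the coefficients are $\binom{k+j}{j}$ or $(-1)^j\binom{k+1}{j}$, and only the former can match the positive enumerative count. I will verify this carefully against the tautological sequence: for the subbundle convention, $\pi_*\zeta_2^{R-1+j}$ is the $j$-th complete homogeneous term of the Chern roots of $\cV^\vee$, and these roots are $\zeta_1$ with multiplicity $k+1$ and $0$ otherwise, giving $(1-\zeta_1)^{-(k+1)}$ up to the substitution $\zeta_1\mapsto\zeta_1$. I will also sanity-check the formula against the Tevelev case of Theorem \ref{thm:tev}.
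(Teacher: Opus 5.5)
Your proposal is correct and is essentially the paper's proof: identify $Q$ with the projective bundle $\bP(\cO(-1)^{k+1}\oplus\cO^{(d+1)(s+1)})$ of lines over $\bP^{N}$, $N=(r-s)(d-k+1)-1$, push $\zeta_2^{m_2}$ forward to the Segre class $(1-\zeta_1)^{-(k+1)}$, and read off the coefficient of $\zeta_1^{N-m_1}$, with vanishing when $m_1>N$. Your sign convention (lines in $\cV$, $s(\cV)=c(\cV)^{-1}$) is the correct one and gives the positive coefficients $\binom{k+j}{j}$, exactly as in the paper's computation.
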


\begin{proof}
We identify $Q$ with the projective bundle $\pi:\bP(\cO(-1)^{k+1}\oplus\cO^{(d+1)(s+1)})\to \bP^{(r-s)(d-k+1)-1}$. If $m_1>(r-s)(d-k+1)-1$, then $\zeta_1^{m_1}=0$. Otherwise,
\begin{align*}
\int_Q \zeta_1^{m_1}\zeta_2^{m_2}&=\int_{ \bP^{(r-s)(d-k+1)-1}} \zeta_1^{m_1}\cdot s(\cO(-1)^{k+1}\oplus\cO^{(d+1)(s+1)})\\
&=\int_{ \bP^{(r-s)(d-k+1)-1}} \zeta_1^{m_1}\cdot (1-\zeta_1)^{-(k+1)}\\
&=(-1)^{(r-s)(d-k+1)-1-m_1}\binom{-(k+1)}{(r-s)(d-k+1)-1-m_1}\\
&=\binom{(r-s)(d-k+1)-1-m_1+k}{k}.
\end{align*}
\end{proof}

Combining Propositions \ref{prop:main_transversality} and \ref{prop:integral_value}, we obtain:

\begin{corollary}\label{cor:map_existence}
Suppose that we are in Situation \ref{situation}, that $m=\dim(Q)$, and that 
\begin{equation*}
m_1\le (r-s)(d-k+1)-1.
\end{equation*}
Then, the set of maps $f:\bP^1\to X_{r,s}$ of degree $\beta$ with $f(p_i)\in\Lambda_i$ for all $i$ is finite and non-empty.
\end{corollary}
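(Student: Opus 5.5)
The plan is to combine the transversality result (Proposition \ref{prop:main_transversality}) with the intersection number computed in Proposition \ref{prop:integral_value}. Take general $\Lambda_i$ as in Situation \ref{situation} with $m=\dim(Q)$. By Proposition \ref{prop:main_transversality}, the intersection $I=\bigcap_i\Inc(p_i,\Lambda_i)$ is pure of codimension $m=\dim Q$, hence zero-dimensional. Therefore $I$ is a finite set of points. Moreover, every point of $I$ lies in $\cM$, since every irreducible component of $I$, which here is a point, is its own general point.

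Next I will use that the incidence loci are cut out as degeneracy loci of maps of the form \eqref{eq:degen1} and \eqref{eq:degen2}, which are sections of $\cO(\zeta_1)$ or $\cO(\zeta_2)$ on $Q$. Because $Q$ is smooth, namely a projective bundle over a projective space, and the intersection of these $m=\dim Q$ divisors has the expected dimension $0$, the refined intersection product is supported on $I$. Each point of $I$ then contributes a positive local multiplicity, which is the length of the local ring; this is valid because the intersection is proper on a smooth (Cohen--Macaulay) variety. Consequently $\int_Q\zeta_1^{m_1}\zeta_2^{m_2}$ equals the sum of the lengths of the points of $I$, using Proposition \ref{prop:inc_class} to identify $[\Inc(p_i,\Lambda_i)]$ with the corresponding product of $\zeta$ classes.

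Under the hypothesis $m_1\le (r-s)(d-k+1)-1$, Proposition \ref{prop:integral_value} gives the value $\binom{(r-s)(d-k+1)-1-m_1+k}{k}$, which is at least $1$. So $I$ is nonempty. Since $I\subset\cM$, its points are genuine maps $f:\bP^1\to X_{r,s}$ of degree $\beta$ with $f(p_i)\in\Lambda_i$, using the identification $\Inc(p,\Lambda)\cap\cM=\ev_p^{-1}(\Lambda)$. This gives both finiteness and nonemptiness.

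The main point requiring care is the step asserting that the intersection number is a positive sum of local contributions supported on $I$, with no excess contribution from the boundary $Q\setminus\cM$. This is exactly what properness of the intersection in Proposition \ref{prop:main_transversality} guarantees, through the standard fact that proper intersections of Cartier divisors on a smooth variety have positive local intersection multiplicities. No reducedness is needed, so the argument works in any characteristic.
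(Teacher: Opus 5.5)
Your proposal is correct and takes the same approach as the paper. Proposition \ref{prop:main_transversality} makes $I$ a zero-dimensional subscheme lying in $\cM$, and its length is computed by the integral of Proposition \ref{prop:integral_value}, which is positive when $m_1\le (r-s)(d-k+1)-1$. Your version also spells out the intersection-theoretic justification the paper leaves implicit: $I$ is a proper intersection of $\dim Q$ effective Cartier divisors on the smooth $Q$, so the local multiplicities equal the lengths of its local rings.
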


\begin{proof}
    Proposition \ref{prop:main_transversality} shows that the integral of Proposition \ref{prop:integral_value} computes the length of the zero-dimensional subscheme $I\subset \cM$, parametrizing the maps in question. In particular, under the stated hypothesis, $I$ is non-empty. 
\end{proof}

Corollary \ref{cor:map_existence} gives an alternative proof of Corollary \ref{cor:interpolation_tangent}. (In fact, in characteristic zero, one can also re-compute $T_{d,k}$ only from the data of when the integral of Proposition \ref{prop:integral_value} is non-zero, by varying the bi-dimensions of the input linear spaces.)

The (set-theoretic) \emph{number} of interpolating maps in Question \ref{q:enumerative} and Corollary \ref{cor:map_existence} is equal to the value of the integral of Proposition \ref{prop:integral_value} whenever $I$ is reduced, e.g. under the assumptions of Proposition \ref{prop:reduced}. It is not immediate to us whether the reducedness holds in more generality. However, in the setting of Theorem \ref{thm:tev}, in which the $\Lambda_i$ are points, Corollary \ref{cor:map_existence} computes the set-theoretic number of interpolating maps in arbitrary characteristic. In particular, we obtain Theorem \ref{thm:tev} set-theoretically in arbitrary characteristic.

\bibliographystyle{alpha} 
\bibliography{interpolation_secant.bib}

\end{document}